\documentclass{gtpart}
\usepackage{pinlabel}
\usepackage{graphicx}
\title[Ordering Garside groups]{Ordering Garside groups}
\author[D Arcis]{$\,\,$Diego Arcis$^*$}
\givenname{Diego}
\surname{Arcis}
\address{IMB UMR 5584\\CNRS, Univ. Bourgogne Franche-Comt\'e\\21000 Dijon\\France}
\email{arcisd@gmail.com}	
\urladdr{}
\author[L Paris]{Luis Paris}
\givenname{Luis}
\surname{Paris}
\address{IMB, UMR 5584,CNRS, Univ. Bourgogne Franche-Comté, 21000 Dijon, France}
\email{lparis@u-bourgogne.fr}
\urladdr{}
\subject{primary}{msc2000}{20F36}

\volumenumber{}
\issuenumber{}
\publicationyear{}
\papernumber{}
\startpage{}
\endpage{}
\doi{}
\MR{}
\Zbl{}
\received{}
\revised{}
\accepted{}
\published{}
\publishedonline{}
\proposed{}
\seconded{}
\corresponding{}
\editor{}
\version{}
\newtheorem{thm}{Theorem}[section]
\newtheorem{lem}[thm]{Lemma}
\newtheorem{prop}[thm]{Proposition}
\newtheorem{corl}[thm]{Corollary}

\theoremstyle{definition}
\newtheorem*{rem}{Remark}
\newtheorem*{expl}{Example}
\newtheorem*{defin}{Definition}

\numberwithin{equation}{section}

\makeatletter
\renewcommand{\thefigure}{\ifnum \c@section>\z@ \thesection.\fi
 \@arabic\c@figure}
\@addtoreset{figure}{section}
\makeatother


\begin{document}

\def\BB{\mathcal B} \def\Div{{\rm Div}} \def\SS{\mathcal S}
\def\Z{\mathbb Z} \def\clg{{\rm clg}} \def\ninf{{\rm ninf}}
\def\N{\mathbb N} \def\bh{{\rm bh}} \def\dpt{{\rm dpt}}
\def\com{{\rm com}} \def\rev{{\rm rev}}


\newcommand\blfootnote[1]{%
  \begingroup
  \renewcommand\thefootnote{}\footnote{#1}%
  \addtocounter{footnote}{-1}%
  \endgroup
}

\blfootnote{$^*$ Supported by CONICYT Beca Doctorado "Becas Chile" 72130288.}

\begin{abstract}
We introduce a structure on a Garside group that we call Dehornoy structure and we show that an iteration of such a structure leads to a left-order on the group. 
We define two conditions on a Garside group $G$ and we show that, if $G$ satisfies these two conditions, then $G$ has a Dehornoy structure. 
Then we show that the Artin groups of type $A$ and of type $I_2(m)$, $m \ge 4$, satisfy these conditions, and therefore have Dehornoy structures. 
As indicated by the terminology, one of the orders obtained by this method on the Artin groups of type $A$ coincides with the Dehornoy order.
\end{abstract}

\maketitle


\section{Introduction}

A group $G$ is said to be \emph{left-orderable} if there exists a total order $<$ on $G$ invariant by left-multiplication.
Recall that a subset $P$ of $G$ is a \emph{subsemigroup} if $\alpha \beta \in P$ for all $\alpha, \beta \in P$.
It is easily checked that a left-order $<$ on $G$ is determined by a subsemigroup $P$ such that $G = P \sqcup P^{-1} \sqcup \{ 1 \}$: we have $\alpha < \beta$ if and only if $\alpha^{-1} \beta \in P$.
In this case the subsemigroup $P$ is called the \emph{positive cone} of $<$.

The first explicit left-order on the braid group $\BB_n$ was determined by Dehornoy \cite{Dehor1}.
The fact that $\BB_n$ is left-orderable is important, but, furthermore, the Dehornoy order is interesting by itself, and there is a extensive literature on it.
We refer to Dehornoy--Dynnikov--Rolfsen--Wiest \cite{DDRW1} for a complete report on left-orders on braid groups and on the Dehornoy order in particular. 
The definition of the Dehornoy order is based on the following construction.

Let $G$ be a group and let $S = \{s_1, s_2, \dots, s_n \}$ be a finite ordered generating set for $G$.
Let $i \in \{1, 2, \dots, n\}$.
We say that $\alpha \in G$ is \emph{$s_i$-positive} (resp. \emph{$s_i$-negative}) if $\alpha$ is written in the form $\alpha = \alpha_0 s_i \alpha_1 \cdots s_i \alpha_m$ (resp. $\alpha = \alpha_0 s_i^{-1} \alpha_1 \cdots s_i^{-1} \alpha_m$) with $m \ge 1$ and $\alpha_0, \alpha_1, \dots, \alpha_m \in \langle s_{i+1}, \dots, s_n \rangle$.
For each $i \in \{1, 2, \dots, n\}$ we denote by $P_i^+$ (resp. $P_i^-$) the set of $s_i$-positive elements (resp. $s_i$-negative elements) of $G$.
The key point in the definition of the Dehornoy order is the following.

\begin{thm}[Dehornoy \cite{Dehor1}]\label{thm1_1}
Let $G = \BB_{n+1}$ be the braid group on $n+1$ strands and let $S = \{s_1, s_2, \dots, s_n \}$ be its standard generating set.
For each $i \in \{1,2, \dots, n \}$ we have the disjoint union $\langle s_i, s_{i+1}, \dots, s_n \rangle = P_i^+ \sqcup P_i^{-} \sqcup \langle s_{i+1}, \dots, s_n \rangle$.
\end{thm}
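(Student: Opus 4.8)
The plan is to reduce to the case $i=1$ and then to split the statement into a ``completeness'' part and an ``acyclicity'' part, the latter carrying essentially all the difficulty. For the reduction I would use the classical fact that the standard parabolic subgroup $\langle s_i,\dots,s_n\rangle$ of $\BB_{n+1}$ is again a braid group: the natural homomorphism $\BB_{n+2-i}\to\BB_{n+1}$ sending the standard generators to $s_i,\dots,s_n$ is injective (elementary for braid groups, and van der Lek's theorem for Artin groups in general). Since every product $\alpha_0 s_i^{\pm1}\alpha_1\cdots s_i^{\pm1}\alpha_m$ with all $\alpha_j\in\langle s_{i+1},\dots,s_n\rangle$ already lies in $\langle s_i,\dots,s_n\rangle$, this isomorphism carries $P_i^{+}$, $P_i^{-}$ and $\langle s_{i+1},\dots,s_n\rangle$ onto the corresponding sets $P_1^{+}$, $P_1^{-}$ and $\langle s_2,\dots,s_n\rangle$ of $\BB_{n+2-i}$, so it suffices to treat $i=1$, i.e.\ to show $\BB_{n+1}=P_1^{+}\sqcup P_1^{-}\sqcup H$ with $H:=\langle s_2,\dots,s_n\rangle$. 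I would then invoke the elementary observations that the inverse of an $s_1$-negative word is an $s_1$-positive word (and conversely), that the concatenation of two $s_1$-positive words is $s_1$-positive, and that $H$ is closed under inversion: they imply that an element lying in two of $P_1^{+}$, $P_1^{-}$, $H$ yields an $s_1$-positive word representing $1$, and that the three sets are exhaustive as soon as every element of $\BB_{n+1}$ admits \emph{some} representative word that is $s_1$-positive, $s_1$-negative, or $s_1$-free. Thus Theorem~\ref{thm1_1} reduces to \emph{(C)}: every $\alpha\in\BB_{n+1}$ has an $s_1$-positive, $s_1$-negative, or $s_1$-free representative word; and \emph{(A)}: no $s_1$-positive word represents $1$.

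For \emph{(C)} I would start from the fact that $\BB_{n+1}$ is the group of fractions of the braid monoid, so every $\alpha$ can be written $\alpha=a\,b^{-1}$ with $a,b$ positive braids (concretely, $\Delta^{2N}\alpha$ is represented by a positive word once $N$ is large, since $\Delta^{2}$ is central and each generator right-divides $\Delta$). Collecting the occurrences of $s_1$ in a positive word shows that a positive braid is either $s_1$-free or $s_1$-positive, hence $b^{-1}$ is $s_1$-free or $s_1$-negative, and therefore $\alpha=ab^{-1}$ is already of the required form unless both $a$ and $b$ involve $s_1$ --- the only genuine case. There I would use \emph{handle reduction}: one repeatedly replaces an innermost $s_1$-handle $s_1^{e}\,v\,s_1^{-e}$ ($e=\pm1$, with $v$ a word on $s_2^{\pm1},\dots,s_n^{\pm1}$) by the word obtained from $v$ through the braid-relation identities $s_1^{e}s_2^{\delta}s_1^{-e}=s_2^{-e}s_1^{\delta}s_2^{e}$ ($\delta=\pm1$) and $s_1^{e}s_j^{\delta}s_1^{-e}=s_j^{\delta}$ for $j\ge3$. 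A word with no $s_1$-handle is automatically $s_1$-positive, $s_1$-negative, or $s_1$-free, so \emph{(C)} follows once one knows that handle reduction always terminates; for this I would cite Dehornoy's theorem. (Alternatively, \emph{(C)} drops out of the curve-diagram treatment of \emph{(A)} indicated below, in which the relevant geometric quantity is defined for every braid.)

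The crux is \emph{(A)}. Here I would use the faithful Artin representation $\rho\colon\BB_{n+1}\to\mathrm{Aut}(F_{n+1})$ on the free group $F_{n+1}=\langle x_1,\dots,x_{n+1}\rangle$, where $\rho(s_i)$ sends $x_i\mapsto x_ix_{i+1}x_i^{-1}$ and $x_{i+1}\mapsto x_i$ and fixes the other generators. Every $s_i$ with $i\ge2$ fixes $x_1$, so $\rho(\beta)(x_1)=x_1$ whenever $\beta\in H$, in particular whenever $\beta=1$; hence it is enough to prove that $\rho(\beta)(x_1)\ne x_1$ for every $s_1$-positive $\beta$, which forces $P_1^{+}\cap H=\emptyset$ and $1\notin P_1^{+}$, and so, via the reformulation above, yields all of \emph{(A)}. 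I would argue by induction on the number of occurrences of $s_1$ in an $s_1$-positive word $w=w_0s_1w_1\cdots s_1w_m$ for $\beta$, carrying along a bookkeeping statement about the reduced form of $\rho(\beta)(x_1)\in F_{n+1}$: each $\rho(w_j)$ fixes $x_1$ and preserves $\langle x_2,\dots,x_{n+1}\rangle$, while $\rho(s_1)$ wraps $x_1$ with $x_1^{\pm1}$ on its two sides (it sends $x_1\mapsto x_1x_2x_1^{-1}$) whereas $\rho(s_1^{-1})$ does not, and tracking this one extracts that $\rho(\beta)(x_1)$ is a reduced word beginning with $x_1$ and ending with $x_1^{-1}$, in particular distinct from $x_1$. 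The delicate point --- and the \emph{main obstacle} --- is to identify the precise inductive statement that survives both left multiplication by $s_1$ and left multiplication by an arbitrary element of $H$, in spite of the cancellations $\rho(s_1)$ creates, and to verify that it does; this is where the real content of Dehornoy's theorem lies. Larue's ``$x_1$-breadth'' analysis of the Artin action provides such an argument, and so, by quite different routes, do Dehornoy's original $\sigma$-consistency lemma, the curve-diagram argument of Fenn--Greene--Rolfsen--Rourke--Wiest, and the Nielsen--Thurston approach of Short--Wiest. Granting \emph{(A)} and \emph{(C)}, the reformulation of the first paragraph delivers the stated disjoint decomposition.
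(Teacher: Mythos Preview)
The paper does not give its own proof of Theorem~\ref{thm1_1}: the result is simply stated with attribution to Dehornoy~\cite{Dehor1} and then used as input for the rest of the article. There is therefore no ``paper's proof'' to compare against; what you have written is a survey of the standard approaches to Dehornoy's theorem rather than something to be matched to the text.

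As a sketch of those approaches your outline is broadly accurate: the reduction to $i=1$ via the parabolic embedding is correct, the reformulation into completeness (C) and acyclicity (A) is the usual dichotomy, and the routes you list (handle reduction, Larue's analysis of the Artin action, the curve-diagram and Nielsen--Thurston arguments) are the ones in the literature. Two small points. First, invoking Dehornoy's termination theorem for handle reduction to get (C) is a bit circular in spirit, since in Dehornoy's original paper completeness is obtained by other means and handle reduction came later; you might prefer to say that (C) follows directly from the Garside fraction form together with the comparison property, or simply cite~\cite{Dehor1} outright for (C) as well. Second, your inductive claim for (A) via the Artin representation is stated loosely; the precise invariant (that the reduced word $\rho(\beta)(x_1)$ begins with $x_1$ and ends with $x_1^{-1}$, and more specifically the behaviour of its $x_1$-prefix blocks) needs to be formulated carefully to survive multiplication by elements of $H$, and you rightly flag this as the real content --- but as written it remains a pointer to Larue's or Dehornoy's argument rather than a proof.
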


Let $G = \BB_{n+1}$ be the braid group on $n+1$ strands.
Set $P_D = P_1^+ \sqcup P_2^+ \sqcup \cdots \sqcup P_n^+$.
Then, by Theorem \ref{thm1_1}, $P_D$ is the positive cone for a left-order $<_D$ on $G$.
This is the \emph{Dehornoy order}.

A careful reader will notice that Theorem \ref{thm1_1} leads to more than one left-order on $\BB_{n+1}$.
Indeed, if $\epsilon = (\epsilon_1, \epsilon_2, \dots, \epsilon_n) \in \{ +,- \}^n$, then $P^\epsilon = P_1^{\epsilon_1} \sqcup P_2^{\epsilon_2} \sqcup \cdots \sqcup P_n^{\epsilon_n}$ is a positive cone for a left-order on $\BB_{n+1}$.
The case $\epsilon = (+,-,+, \dots)$ is particularly interesting because, by Dubrovina--Dubrovin \cite{DubDub1}, in this case $P^\epsilon$ determines an isolated left-order in the space of left-orders on $\BB_{n+1}$.

Our goal in the present paper is to extend the Dehornoy order to some Garside groups.

A first approach would consist on keeping the same definition, as follows.
Let $G$ be a group and let $S = \{s_1, s_2, \dots, s_n\}$ be a finite ordered generating set for $G$.
Again, we denote by $P_i^+$ (resp. $P_i^-$) the set of $s_i$-positive elements (resp. $s_i$-negative elements) of $G$.
Then we say that $S$ determines a \emph{Dehornoy structure} (\emph{in Ito's sense}) if, for each $i \in \{1, \dots, n \}$, we have the disjoint union $\langle s_i, s_{i+1}, \dots, s_n \rangle = P_i^+ \sqcup P_i^- \sqcup \langle s_{i+1}, \dots, s_n \rangle$.
In this case, as for the braid group, for each $\epsilon \in \{ +, - \}^n$ the set $P^\epsilon = P_1^{\epsilon_1} \sqcup P_2^{\epsilon_2} \sqcup \cdots \sqcup P_n^{\epsilon_n}$ is the positive cone for a left-order on $G$.
This approach was used by Ito \cite{Ito1} to construct isolated left-orders in the space of left-orders of some groups.

In the present paper we will consider another approach of the Dehornoy order in terms of Garside groups (see Dehornoy \cite{Dehor2}, Fromentin \cite{Fromen1}, Fromentin--Paris \cite{FroPar1}), and our definition of Dehornoy structure will be different from that in Ito's sense given above.

In Section \ref{Sec2} we recall some basic and preliminary definitions and results on Garside groups. 
We refer to Dehornoy et al. \cite{DehEtAl} for a full account on the theory.
In Section \ref{Sec3} we give our (new) definition of Dehornoy structure and show how such a structure leads to a left-order on the group (see Proposition \ref{prop3_1}).
Then we define two conditions on a Garside group, that we call Condition A and Condition B, and show that a Garside group which satisfies these two conditions has a Dehornoy structure (see Theorem \ref{thm3_2}).

The aim of the rest of the paper is to apply Theorem \ref{thm3_2} to the Artin groups of type A, that is, the braid groups, and the Artin groups of dihedral type. 
In Section \ref{Sec4} we prove that a braid group with its standard Garside structure satisfies Condition A and Condition B (see Theorem \ref{thm4_1}), and therefore has a Dehornoy structure in the sense of the definition of Section \ref{Sec3} (see Corollary \ref{corl4_2}). 
We also prove that the left-orders on the group induced by this structure are the same as the left-orders induced by Theorem \ref{thm1_1} (see Proposition \ref{prop4_4}), as expected.
Section \ref{Sec5} and  Section \ref{Sec6} are dedicated to the Artin groups of dihedral type. 
There is a difference between the even case, treated in Section \ref{Sec5}, and the odd case, treated in Section \ref{Sec6}.
The latter case requires much more calculations.
In both cases we show that such a group satisfies Condition A and Condition B, and therefore admits a Dehornoy structure. 
Then we show that the left-orders obtained from this Dehornoy structure can also be obtained via an embedding of the group in a braid group defined by Crisp \cite{Crisp1}. 


\section{Preliminaries}\label{Sec2}

Let $G$ be a group and let $M$ be a submonoid of $G$ such that $M \cap M^{-1}= \{ 1\}$.
Then we have two partial orders $\le_R$ and $\le_L$ on $G$ defined by $\alpha \le_R \beta$ if  $\beta \alpha^{-1} \in M$, and   $\alpha \le_L \beta$ if $\alpha^{-1} \beta \in M$.
For each $a \in M$ we set $\Div_R (a) = \{ b \in M \mid b  \le_R a\}$ and $\Div_L (a) = \{ b \in M \mid b  \le_L a\}$.
We say that $a \in M$ is \emph{balanced} if $\Div_R (a) = \Div_L (a)$.
In that case we set $\Div (a) = \Div_R (a) = \Div_L (a)$.   
We say that $M$ is \emph{Noetherian} if for each element $a \in M$ there is an integer $n \ge 1$ such that $a$ cannot be written as a product of more than $n$ non-trivial elements. 

\begin{defin}
Let $G$ be a group, let $M$ be a submonoid of $G$ such that $M \cap M^{-1}= \{1\}$, and let $\Delta$ be a balanced element of $M$.
We say that $G$ is a \emph{Garside group} with \emph{Garside structure} $(G, M, \Delta)$ if:
\begin{itemize}
\item[(a)]
$M$ is Noetherian; 
\item[(b)]
$\Div(\Delta)$ is finite, it generates $M$ as a monoid, and it generates $G$ as a group; 
\item[(c)]
$(G, \le_R)$ is a lattice.
\end{itemize}
\end{defin}

Let $(G, M, \Delta)$ be a Garside structure on $G$.
Then $\Delta$ is called the \emph{Garside element} and the elements of $\Div (\Delta)$ are called the \emph{simple elements} (of the Garside structure).
The lattice operations of $(G, \le_R)$ are denoted by $\wedge_R$ and $\vee_R$.
The ordered set $(G, \le_L)$ is also a lattice and its lattice operations are denoted by $\wedge_L$ and $\vee_L$.

Now take a Garside group $G$ with Garside structure $(G, M, \Delta)$ and set $\SS = \Div (\Delta) \setminus \{1\}$.
The word length of an element $\alpha \in G$ with respect to $\SS$ is denoted by $\lg (\alpha) = \lg_\SS (\alpha)$.
The right \emph{greedy normal form} of an element $a \in M$ is the unique expression $a = u_p \cdots u_2 u_1$ of $a$ over $\SS$ satisfying $(u_p \cdots u_i) \wedge_R \Delta = u_i$ for all $i \in \{1, \dots, p\}$.
We define the left \emph{greedy normal form} of an element of $M$ in a similar way.
The following two theorems contain several key results of the theory of Garside groups. 

\begin{thm}[Dehornoy--Paris \cite{DehPar1}, Dehornoy \cite{Dehor3}]\label{thm2_1}
\begin{itemize}
\item[(1)]
Let $a \in M$ and let $a = u_p \cdots u_2 u_1$ be the greedy normal form of $a$.
Then $\lg (a) = p$.
\item[(2)]
Let $\alpha \in G$.
There exists a unique pair $(a,b) \in M \times M$ such that $\alpha = a b^{-1}$ and $a \wedge_R b = 1$.
In that case we have $\lg (\alpha) = \lg (a) + \lg (b)$.
\end{itemize}
\end{thm}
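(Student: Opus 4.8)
The plan is to prove the two assertions in order, deriving everything from the lattice hypothesis on $(G,\le_R)$ (equivalently on $(G,\le_L)$) and from Noetherianity, the length identity in (2) using (1). For part (1), the inequality $\lg(a)\le p$ is immediate, since $a=u_p\cdots u_1$ is itself an expression of $a$ over $\SS$ with $p$ letters. For the reverse inequality the essential ingredient is the \emph{locality} of greedy normal forms: a word $w_q\cdots w_1$ over $\SS$ is the greedy normal form of the element it represents if and only if each length-two factor $w_{i+1}w_i$ is greedy, i.e.\ satisfies $(w_{i+1}w_i)\wedge_R\Delta=w_i$. I would establish this from the lattice structure together with a routine analysis of the head map $b\mapsto b\wedge_R\Delta$. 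Granting it, start from an arbitrary $\SS$-word $v_r\cdots v_1$ representing $a$; whenever a factor $v_{i+1}v_i$ fails to be greedy, replace it by the greedy normal form of the product $v_{i+1}v_i$, which has at most two letters, with exactly one when $v_{i+1}v_i$ is simple (in which case one letter disappears). Each such move leaves the number of letters unchanged or decreases it, and by Noetherianity the process terminates at a word all of whose length-two factors are greedy, hence at the greedy normal form of $a$. Thus $p\le r$ for every $\SS$-word representing $a$, which gives $p\le\lg(a)$ for positive geodesic words; the case of words over $\SS\cup\SS^{-1}$ reduces to this by multiplying $a$ by a large power of $\Delta$ and sweeping all the factors $\Delta^{-1}$ to one side, using that conjugation by $\Delta$ permutes $\SS$.

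For the existence and uniqueness in part (2), I would work in the lattice $(G,\le_L)$, which is invariant under left multiplication and for which $M=\{x\in G\mid 1\le_L x\}$. Put $b=\alpha^{-1}\vee_L 1\in M$ and $a=\alpha b$; since $b\ge_L\alpha^{-1}$ we get $a\ge_L 1$, so $a\in M$, and $\alpha=ab^{-1}$. Suppose $g:=a\wedge_R b\ne 1$. Then $bg^{-1}\in M$, and using $g\le_R a$ also $\alpha(bg^{-1})=ag^{-1}\in M$; hence $bg^{-1}\in S_\alpha:=\{m\in M\mid\alpha m\in M\}$. But a direct computation shows $S_\alpha=\{m\in G\mid m\ge_L b\}$, so $b\le_L bg^{-1}$, i.e.\ $g^{-1}\in M$, which with $g\in M$ forces $g=1$ — a contradiction. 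For uniqueness, let $\alpha=a'b'^{-1}$ with $a',b'\in M$ and $a'\wedge_R b'=1$; then $\alpha b'=a'\in M$ gives $b'\in S_\alpha$, so $b\le_L b'$, whence $b'=bd$ and $a'=ad$ with $d\in M$. Then $d\le_R a'$ and $d\le_R b'$, so $d\le_R a'\wedge_R b'=1$, forcing $d=1$ and $(a',b')=(a,b)$.

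It remains to prove $\lg(\alpha)=\lg(a)+\lg(b)$ for this decomposition. Concatenating the greedy normal forms of $a$ and $b$ and invoking (1) exhibits $\alpha=ab^{-1}$ as a word over $\SS\cup\SS^{-1}$ with $\lg(a)+\lg(b)$ letters, so $\lg(\alpha)\le\lg(a)+\lg(b)$. For the reverse inequality I would use the invariants $\inf(\alpha)=\max\{m\in\Z\mid\Delta^m\le_L\alpha\}$ and $\sup(\alpha)=\min\{m\in\Z\mid\alpha\le_L\Delta^m\}$, which are finite because each simple divides $\Delta$ and $\alpha$ is a fraction. Using (1) and the normal forms of $a$ and $b$ one checks $\sup(\alpha)=\lg(a)$ and $\inf(\alpha)=-\lg(b)$, so $\sup(\alpha)-\inf(\alpha)=\lg(a)+\lg(b)$. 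On the other hand, right multiplication by a simple element or by the inverse of one increases $\sup-\inf$ by at most $1$; since $\sup(1)-\inf(1)=0$, it follows that $\sup(\alpha)-\inf(\alpha)\le r$ for any word $t_1\cdots t_r$ over $\SS\cup\SS^{-1}$ representing $\alpha$, and taking $r=\lg(\alpha)$ gives $\lg(a)+\lg(b)\le\lg(\alpha)$.

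The one genuinely non-formal point is the locality of greedy normal forms used in part (1) — equivalently, the confluence and termination of the rewriting above at the normal form without ever lengthening the word; this is exactly where the lattice hypothesis and the structure of the head map are really needed. The remaining ingredients — the reduction via powers of $\Delta$, the lattice bookkeeping in the existence and uniqueness argument, and the behaviour of $\inf$ and $\sup$ under multiplication by simples — are standard and essentially mechanical.
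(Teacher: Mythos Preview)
The paper does not give a proof of this theorem: it is quoted as a known result, with attribution to Dehornoy--Paris and Dehornoy, and is used only as background in Section~\ref{Sec2}. So there is no in-paper argument to compare your proposal against.

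That said, your outline is essentially the standard route taken in those references, and is broadly sound. Two places deserve tightening. First, in part~(1), the sentence ``by Noetherianity the process terminates'' is not quite enough: a local rewrite that leaves the number of letters unchanged merely shifts mass between adjacent simples, and Noetherianity of $M$ by itself does not preclude looping. The usual fix is to order $\SS$-words by a suitable lexicographic or multiset invariant (e.g.\ the tuple of letter-lengths read from the right) and to check that each non-trivial local rewrite strictly decreases it; this is where the ``head transfers to the right'' property of the map $b\mapsto b\wedge_R\Delta$ is actually used. Second, in part~(2), the claim $\sup(\alpha)=\lg(a)$ and $\inf(\alpha)=-\lg(b)$ is correct but not as immediate as you suggest: the inequality $\sup(\alpha)\le\lg(a)$ is easy, but showing equality (and likewise for $\inf$) amounts to checking that no cancellation occurs between the greedy form of $a$ and the inverted greedy form of $b$, which is exactly where the hypothesis $a\wedge_R b=1$ must be invoked. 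Once that is made explicit, the argument goes through.
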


The expression of $\alpha$ given in Theorem \ref{thm2_1}\,(2) is called the (right) \emph{orthogonal form} of $\alpha$.
The  \emph{left orthogonal form} of an element of $G$ is defined in a similar way.

We say that an element $a \in M$ is \emph{unmovable} if $\Delta \not \le_R a$ or, equivalently, if $\Delta \not \le_L a$.

\begin{thm}[Dehornoy--Paris \cite{DehPar1}, Dehornoy \cite{Dehor3}]\label{thm2_2}
Let $\alpha \in G$.
There exists a unique pair $(a,k) \in M \times \Z$ such that $a$ is unmovable and $\alpha = a \Delta^k$.
\end{thm}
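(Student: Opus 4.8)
The plan is to dispose of uniqueness first and then to produce the pair $(a,k)$ by a maximality argument.

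For uniqueness, suppose $a\Delta^k=a'\Delta^{k'}$ with $a,a'\in M$ unmovable and, say, $k\le k'$. Then $a\Delta^{-1}=a'\Delta^{k'-k-1}$, and if $k<k'$ the exponent $k'-k-1$ is $\ge 0$, so $a'\Delta^{k'-k-1}\in M$; thus $a\Delta^{-1}\in M$, i.e.\ $\Delta\le_R a$, contradicting the unmovability of $a$. Hence $k=k'$, and then $a=a'$.

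For existence, fix $\alpha\in G$ and write it in orthogonal form $\alpha=a_0b_0^{-1}$ with $a_0,b_0\in M$ and $a_0\wedge_R b_0=1$ (Theorem \ref{thm2_1}(2)); put $K=\{k\in\Z\mid\alpha\Delta^{-k}\in M\}$. The set $K$ is downward closed, because $\alpha\Delta^{-k'}=(\alpha\Delta^{-k})\Delta^{k-k'}\in M$ whenever $k\in K$ and $k'\le k$. Granting for the moment that $K$ is non-empty and bounded above, set $k_0=\max K$ and $a=\alpha\Delta^{-k_0}\in M$; then $\alpha=a\Delta^{k_0}$, and $a$ is unmovable, for $\Delta\le_R a$ would give $\alpha\Delta^{-k_0-1}=a\Delta^{-1}\in M$, contradicting the maximality of $k_0$. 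So the statement reduces to the two properties of $K$.

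To see that $K$ is non-empty I would first record two standard facts about the Garside monoid $M$: (i) conjugation by $\Delta$ restricts to an automorphism $\tau$ of $M$; and (ii) every element of $M$ left-divides some power $\Delta^N$. Fact (i) follows in a few lines from $\Delta$ being balanced: for $s\in\SS$ one checks that $s^{-1}\Delta$ and $\Delta s^{-1}$ again belong to $\Div(\Delta)$, whence $\Delta^{-1}s\Delta,\Delta s\Delta^{-1}\in M$, and $\SS$ generates $M$. Fact (ii) then follows by induction on the length of an expression of the element over $\SS$, using the left-invariance of $\le_L$ and the identity $(t\Delta^{m-1})^{-1}\Delta^m=\Delta^{-(m-1)}(t^{-1}\Delta)\Delta^{m-1}$ for $t\in\SS$, whose right-hand side lies in $M$ by (i). Applying (ii) to $b_0$ yields $N$ with $b_0^{-1}\Delta^N\in M$, hence $\alpha\Delta^N=a_0(b_0^{-1}\Delta^N)\in M$, i.e.\ $-N\in K$. (If one prefers, (i) and (ii) may simply be quoted from \cite{DehEtAl}.)

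To see that $K$ is bounded above, suppose $k\in K$ with $k\ge 1$. Then $\alpha=(\alpha\Delta^{-k})\Delta^k\in M$, so $a_0b_0^{-1}\in M$, i.e.\ $b_0\le_R a_0$; since also $b_0\le_R b_0$ and $(G,\le_R)$ is a lattice, $b_0\le_R a_0\wedge_R b_0=1$, which forces $b_0=1$ and $\alpha=a_0\in M$. Now $\Delta^k\le_R a_0$, so $a_0$ can be written as a product of at least $k$ non-trivial elements of $M$, and the Noetherianity of $M$ therefore bounds $k$; hence $K$ is bounded above, by $0$ if it contains no positive integer. I expect this last step to be the one requiring the most care: it is only by passing to the orthogonal form of $\alpha$ that one sees an element with a non-trivial ``denominator'' cannot be pushed below $M$ by multiplication with powers of $\Delta$.
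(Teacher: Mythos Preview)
The paper does not prove Theorem~\ref{thm2_2}; it is quoted as a known result from Dehornoy--Paris \cite{DehPar1} and Dehornoy \cite{Dehor3}, so there is no ``paper's own proof'' to compare against. Your argument is correct and is essentially the standard one found in those references: uniqueness by comparing exponents, existence by taking the maximal $k$ with $\alpha\Delta^{-k}\in M$.

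A couple of minor remarks. Your inductive proof of fact~(ii) is right but terse; the key step is that $c'\le_L\Delta^{m-1}$ gives $tc'\le_L t\Delta^{m-1}$ by left-invariance, and then your identity shows $t\Delta^{m-1}\le_L\Delta^m$, so transitivity finishes it---you might spell that chain out. For the upper bound on $K$, you implicitly use $\Delta\neq 1$ when invoking Noetherianity; this is harmless (a Garside structure with $\Delta=1$ is trivial) but worth a word. Finally, the closing sentence about the orthogonal form being ``the step requiring the most care'' is commentary rather than mathematics; in a final write-up you can drop it, since the boundedness argument is in fact routine once you know $\alpha\in M$.
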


The expression of $\alpha$ given above is called the (right) \emph{$\Delta$-form} of $\alpha$.
We define the \emph{left $\Delta$-form} of an element of $G$ in a similar way.

\begin{defin}
Let $\delta$ be a balanced element of $M$.
Denote by $G_\delta$ (resp. $M_\delta$) the subgroup of $G$ (resp. the submonoid of $M$) generated by $\Div (\delta)$.
We say that $(G_\delta, M_\delta, \delta)$ is a \emph{parabolic substructure} of $(G,M,\Delta)$ if $\delta$ is balanced and $\Div (\delta) = \Div (\Delta) \cap M_\delta$.
In that case $G_\delta$ is called a \emph{parabolic subgroup} of $G$ and $M_\delta$ is called a \emph{parabolic submonoid} of $M$. 
\end{defin}

\begin{rem}
Let $H$ be a parabolic subgroup of $G$.
Then there exists a unique parabolic substructure $(G_\delta, M_\delta, \delta)$ of $(G,M, \Delta)$ such that $H = G_\delta$.
Indeed, the above element $\delta$ should be the greatest element in $H \cap \Div (\Delta)$ for the order relation $\le_R$, hence $\delta$ is entirely determined by $H$.
Similarly, if $N$ is a parabolic submonoid of $M$, then there exists a unique parabolic substructure $(G_\delta, M_\delta, \delta)$ such that $N = M_\delta$, where $\delta$ is the greatest element of $\Div (\Delta) \cap N$ for the order relation $\le_R$.
So, we can speak of a parabolic subgroup or of a parabolic submonoid without necessarily specifying the corresponding element $\delta$ or the triple $(G_\delta, M_\delta, \delta)$.
\end{rem}

\begin{thm}[Godelle \cite{Godel1}]\label{thm2_3}
Let $(H,N, \delta)$ be a parabolic substructure of $(G, M, \Delta)$.
\begin{itemize}
\item[(1)]
$H$ is a Garside group with Garside structure $(H, N, \delta)$.
\item[(2)]
Let $a \in N$ and let $a = u_p \cdots u_2 u_1$ be the greedy normal form of $a$ with respect to $(G, M, \Delta)$.
Then $u_i \in \Div (\delta)$ for all $i \in \{1,2, \dots, p\}$ and $a = u_p \cdots u_2 u_1$ is the greedy normal form of $a$ with respect to $(H, N, \delta)$.
\item[(3)]
Let $\alpha, \beta \in H$ and $\gamma \in G$ such that $\alpha \le_R \gamma \le_R \beta$.
Then $\gamma \in H$.
\item[(4)]
Let $\alpha, \beta \in H$.
Then $\alpha \wedge_R \beta, \alpha \vee_R \beta \in H$.
\item[(5)]
Let $\alpha \in H$ and let $\alpha = a b^{-1}$ be the orthogonal form of $\alpha$ with respect to $(G, M, \Delta)$.
Then $a,b \in N$ and $\alpha = a b^{-1}$ is the orthogonal form of $\alpha$ with respect to $(H, N, \delta)$.
\end{itemize}
\end{thm}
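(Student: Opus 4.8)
The plan is to reduce the whole statement to a single combinatorial lemma describing how $N=M_\delta$ lies inside $M$; once that lemma is in hand, assertions (1)--(5) follow by short formal arguments, which I would carry out in the order $N=M\cap H$, then (1), then (2), then (3), (4), (5). First I would record two elementary consequences of $\delta$ being balanced. Since $\delta$ is an upper bound of $\Div(\delta)$ for $\le_R$, the set $\Div(\delta)$ is closed under $\wedge_R$, $\vee_R$, $\wedge_L$ and $\vee_L$; moreover the left and right complement maps $s\mapsto s^{-1}\delta$ and $s\mapsto\delta s^{-1}$ preserve $\Div(\delta)$, hence so does $\delta$-conjugation $\phi_\delta\colon x\mapsto\delta^{-1}x\delta$. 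From this, a routine clearing of denominators shows that every element of $H=G_\delta$ can be written both as $y\,\delta^{-m}$ and as $\delta^{-m'}y'$ with $y,y'\in N$ and $m,m'\ge 0$.

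The key lemma I would isolate asserts, for all $a,b\in N$: \textup{(i)} every right-divisor and every left-divisor of $a$ in $M$ lies in $N$; \textup{(ii)} $a\wedge_R b$, $a\vee_R b$, $a\wedge_L b$ and $a\vee_L b$ lie in $N$; \textup{(iii)} $a\wedge_R\Delta\in\Div(\delta)$, so that $a\wedge_R\Delta=a\wedge_R\delta$ (and symmetrically on the left). I would prove \textup{(i)}--\textup{(iii)} by simultaneous Noetherian induction. The base case, in which $a$ and $b$ are simple and hence lie in $\Div(\Delta)\cap N=\Div(\delta)$, is immediate from the observations above. For the inductive step one writes $a=a_1a_2$ with $a_1,a_2\in N$ proper nontrivial divisors and appeals to the standard transfer (``domino'') identities of Garside calculus, which express $(a_1a_2)\wedge_R\Delta$, $a\wedge_R b$ and $a\vee_R b$ by a bounded sequence of meet, join and complement operations applied to $a_1$, $a_2$ and to $a_2\wedge_R\Delta$ (resp.\ $b\wedge_R\Delta$): the induction hypothesis places these ingredients in $N$ and the first paragraph keeps the outcome inside $\Div(\delta)$. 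The precise place where the hypothesis $\Div(\delta)=\Div(\Delta)\cap N$ is genuinely used is that a simple divisor of an element of $N$ automatically lies in $\Div(\delta)$; making the divisor-transfer argument for \textup{(iii)} rigorous is the main obstacle, the rest being bookkeeping.

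Granting the lemma, I would first obtain $N=M\cap H$: if $x\in M\cap H$, write $x=y\,\delta^{-m}$ with $y\in N$; then $x\le_L y$ since $x^{-1}y=\delta^m\in M$, and $x\in M$, so $x\in N$ by \textup{(i)}. For \textup{(1)}: $N$ inherits Noetherianity and $N\cap N^{-1}=\{1\}$ from $M$; $\Div(\delta)$ is finite and generates $N$ and $H$ by definition of a parabolic substructure; $\delta$ is balanced in $N$ because for $b\in N$ the conditions $\delta b^{-1}\in N$, $\delta b^{-1}\in M$ and $b\in\Div(\delta)$ are equivalent (and symmetrically on the left); and $(H,\le_R)$ is a lattice because, by $N=M\cap H$, the divisibility order of $(H,N,\delta)$ is the restriction of $\le_R$, which by \textup{(ii)} is closed under the meet and join of the lattice $(G,\le_R)$. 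For \textup{(2)}: if $a=u_p\cdots u_1$ is the greedy normal form of $a\in N$ in $(G,M,\Delta)$, then $u_1=a\wedge_R\Delta\in\Div(\delta)$ by \textup{(iii)} and $au_1^{-1}$ is a left-divisor of $a$, hence lies in $N$ by \textup{(i)}; inducting on $p$ gives $u_j\in\Div(\delta)$ for all $j$, and since $u_i\le_R\delta$ one has $(u_p\cdots u_i)\wedge_R\delta=(u_p\cdots u_i)\wedge_R\Delta=u_i$, so $u_p\cdots u_1$ is also the greedy normal form in $(H,N,\delta)$.

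Finally, for \textup{(3)} I would right-multiply $\alpha\le_R\gamma\le_R\beta$ by $\alpha^{-1}$ to get $1\le_R\gamma\alpha^{-1}\le_R\beta\alpha^{-1}$ with $\beta\alpha^{-1}\in M\cap H=N$; then $\gamma\alpha^{-1}$ is a right-divisor of $\beta\alpha^{-1}$ lying in $M$, so $\gamma\alpha^{-1}\in N$ by \textup{(i)}, whence $\gamma\in H$. For \textup{(4)}: choose $k\ge0$ with $\alpha\delta^k,\beta\delta^k\in N$; right-multiplication by $\delta^k$ is an automorphism of $(G,\le_R)$, so $(\alpha\wedge_R\beta)\delta^k=(\alpha\delta^k)\wedge_R(\beta\delta^k)\in N$ by \textup{(ii)}, and likewise for $\vee_R$, so $\alpha\wedge_R\beta,\alpha\vee_R\beta\in H$ (the symmetric argument with $\le_L$ and left multiplication gives the analogous statement for $\wedge_L,\vee_L$). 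For \textup{(5)}: if $\alpha=ab^{-1}$ is the orthogonal form in $(G,M,\Delta)$, then $a=\alpha\vee_L1$ and $b=\alpha^{-1}\vee_L1$ (a standard characterization of the orthogonal form), so $a,b\in H$ by the $\le_L$-analogue of \textup{(4)}, hence $a,b\in M\cap H=N$; since $a\wedge_R b=1$ is also the meet in $N$, the uniqueness in Theorem~\ref{thm2_1}\,(2) applied to the Garside group $(H,N,\delta)$ shows that $\alpha=ab^{-1}$ is the orthogonal form there.
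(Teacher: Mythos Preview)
The paper does not prove Theorem~\ref{thm2_3}; it is stated as a result of Godelle \cite{Godel1} and used as a black box. There is therefore no proof in the paper to compare your proposal against.

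As for the proposal itself, your outline is a reasonable route to the result and is close in spirit to how such statements are established in the Garside literature. The reductions of (1)--(5) to the key lemma are clean and correct; in particular your identifications $a=\alpha\vee_L 1$, $b=\alpha^{-1}\vee_L 1$ for the orthogonal form, and the use of right (resp.\ left) multiplication by $\delta^k$ as an order automorphism of $(G,\le_R)$ (resp.\ $(G,\le_L)$) are valid. The genuine content, as you yourself flag, is the closure property \textup{(i)} that divisors in $M$ of elements of $N$ already lie in $N$; once this holds, \textup{(iii)} is immediate since $a\wedge_R\Delta\le_R a$ forces $a\wedge_R\Delta\in N\cap\Div(\Delta)=\Div(\delta)$, and \textup{(ii)} follows by the lattice identities you allude to. Your inductive scheme for \textup{(i)} is the right idea, but the step ``$c\le_R a_1a_2$ with $a_1,a_2\in N$ implies $c\in N$'' needs a concrete transfer lemma (e.g.\ factoring $c$ through $c\vee_R a_2$ and controlling both pieces), and you would need to spell this out rather than invoke ``domino identities'' generically. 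That is the one place where the argument, as written, is a sketch rather than a proof.
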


\begin{expl}
Let $S$ be a finite set.
A \emph{Coxeter matrix} over $S$ is a square matrix $M = (m_{s,t})_{s,t \in S}$ indexed by the elements of $S$ with coefficients in $\N \cup \{ \infty\}$ such that $m_{s,s} = 1$ for all $s \in S$ and $m_{s,t} = m_{t,s} \ge 2$ for all $s,t \in S$, $s \neq t$.
If $s,t$ are two letters and $m$ is an integer $\ge 2$ we denote by $\Pi (s,t,m)$ the word $sts \cdots$ of length $m$.
In other words $\Pi (s,t,m) = (st)^{\frac{m}{2}}$ if $m$ is even and $\Pi (s, t, m) = (st)^{\frac{m-1}{2}}s$ if $m$ is odd. 
The \emph{Artin group} associated with $M$ is the group $A = A_M$ defined by the presentation
\[
A = \langle S \mid \Pi (s, t, m_{s,t}) = \Pi (t, s, m_{s,t}) \text{ for } s,t \in S,\ s \neq t \text{ and } m_{s,t} \neq \infty \rangle\,.
\]
The \emph{Coxeter group} associated with $M$ is the quotient $W = W_M$ of $A$ by the relations $s^2 =1$, $s \in S$.
We say that $A$ is of \emph{spherical type} if $W$ is finite.
The braid groups are the star examples of Artin groups of spherical type.

We denote by $A^+$ the monoid having the following monoid presentation. 
\[
A^+ = \langle S \mid \Pi (s, t, m_{s,t}) = \Pi (t, s, m_{s,t}) \text{ for } s,t \in S,\ s \neq t \text{ and } m_{s,t} \neq \infty \rangle^+\,.
\]
By Paris \cite{Paris1} the natural homomorphism $A^+ \to A$ is injective.
So, we can consider $A^+$ as a submonoid of $A$.
It is easily checked that $A^+ \cap (A^+)^{-1} = \{1\}$, hence we can consider the order relations $\le_R$ and $\le_L$ on $A$.
Suppose that $A$ is of spherical type. 
Then, by Brieskorn--Saito \cite{BriSai1} and Deligne \cite{Delig1}, for all $\alpha, \beta \in A$ the elements $\alpha \wedge_R \beta$ and $\alpha \vee_R \beta$ exist, and $(A, A^+, \Delta)$ is a Garside structure, where $\Delta = \vee_R S$.
Let $X$ be a subset of $S$ and let $A_X$ be the subgroup of $A$ generated by $X$.
Then, again by Brieskorn--Saito \cite{BriSai1} and Deligne \cite{Delig1}, $A_X$ is a parabolic subgroup of $A$ and it is an Artin group of spherical type. 
\end{expl}

The triple $(G,M,\Delta)$ denotes again an arbitrary Garside structure on a group $G$.
Besides the greedy normal forms, we will use some other normal forms of the elements of $M$ defined from a pair $(N_2, N_1)$ of parabolic submonoids of $M$.
Their definition is based on the following.

\begin{prop}[Dehornoy \cite{Dehor2}]\label{prop2_4}
Let $N$ be a parabolic submonoid of $M$.
For each $a \in M$ there exists a unique $b \in N$ such that $\{ c \in N \mid c \le_R a \} = \{ c \in N \mid c \le_R b \}$.
\end{prop}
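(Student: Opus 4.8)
\ The element $b$ we are after should be the $\le_R$-largest right divisor of $a$ that lies in $N$; the plan is to show that this largest element exists and still belongs to $N$ (not merely to the ambient parabolic subgroup), to check directly that it has the stated property, and then to deduce uniqueness from antisymmetry of $\le_R$.

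First I would record the identity $M \cap H = N$ for any parabolic substructure $(H, N, \delta) = (G_\delta, M_\delta, \delta)$ of $(G, M, \Delta)$. The inclusion $N \subseteq M \cap H$ is clear. Conversely, if $x \in M \cap H$, then $x = x \cdot 1^{-1}$ is the orthogonal form of $x$ with respect to $(G, M, \Delta)$, since $x \wedge_R 1 = 1$ for every $x \in M$; by Theorem \ref{thm2_3}(5) this is also the orthogonal form of $x$ with respect to $(H, N, \delta)$, hence $x \in N$.

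Next, given $a \in M$, set $D = \{\, c \in N \mid c \le_R a \,\}$. This set contains $1$, and it is closed under $\vee_R$: for $c, c' \in D$ we have $c \vee_R c' \le_R a$ because $a$ is a $\le_R$-upper bound of $\{c, c'\}$, while $c \vee_R c' \in H$ by Theorem \ref{thm2_3}(4) and $c \vee_R c' \in M$ since it is $\ge_R c \in M$; therefore $c \vee_R c' \in M \cap H = N$, so $c \vee_R c' \in D$. Being non-empty and closed under $\vee_R$, the set $D$ is $\le_R$-directed; and since $\Div_R(a)$ is finite — a standard consequence of the Noetherianity of $M$ — the directed set $D$ has a $\le_R$-greatest element. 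Call it $b$; by construction $b \in N$ and $b \le_R a$. Now I would check the required equality. If $c \in N$ and $c \le_R b$, then $c \le_R a$ by transitivity of $\le_R$. Conversely, if $c \in N$ and $c \le_R a$, then $c \in D$, hence $c \le_R b$ because $b = \max D$. Thus $\{ c \in N \mid c \le_R a \} = \{ c \in N \mid c \le_R b \}$. Finally, for uniqueness, if $b' \in N$ also satisfies the conclusion, then $b \in \{ c \in N \mid c \le_R b \} = \{ c \in N \mid c \le_R a \} = \{ c \in N \mid c \le_R b' \}$, so $b \le_R b'$; by symmetry $b' \le_R b$, and since $M \cap M^{-1} = \{1\}$ makes $\le_R$ antisymmetric, $b = b'$.

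I expect the one genuinely non-formal ingredient to be the identity $M \cap H = N$ — equivalently, the fact that taking right lcms of elements of $N$ keeps us inside $N$ rather than merely inside the parabolic subgroup $G_\delta$; once that is available, everything else is routine manipulation of the lattice operations and the order axioms already in place. If one wished to avoid invoking the finiteness of $\Div_R(a)$, one could argue directly with Noetherianity: along a hypothetical infinite strictly $\le_R$-increasing chain $c_1 <_R c_2 <_R \cdots$ in $D$, cancellativity in $M$ turns the cofactors $a c_k^{-1}$ into an infinite strictly $\le_L$-decreasing chain of left divisors of the fixed element $a c_1^{-1}$, which would express $a c_1^{-1}$ as a product of arbitrarily many non-trivial elements, contradicting the Noetherian condition.
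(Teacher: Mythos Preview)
The paper does not supply its own proof of Proposition~\ref{prop2_4}; the result is simply quoted from Dehornoy~\cite{Dehor2}. So there is no in-paper argument to compare against.

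That said, your proposal is a correct self-contained proof. The identification $M \cap H = N$ via Theorem~\ref{thm2_3}\,(5) is valid (the orthogonal form of $x \in M$ is indeed $(x,1)$, since any common $\le_R$-lower bound of $x$ and $1$ lies in $M^{-1}$, whose $\le_R$-maximum is $1$). The closure of $D = \{c \in N \mid c \le_R a\}$ under $\vee_R$ then follows exactly as you say, using Theorem~\ref{thm2_3}\,(4) together with $M \cap H = N$. The existence of a $\le_R$-maximum in $D$ is fine either way you argue it: the set $\Div_R(a)$ is finite because $M$ is generated by the finite set $\Div(\Delta)$ and is Noetherian, and your alternative chain argument also works. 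The verification that this maximum $b$ has the stated property and the uniqueness argument are routine and correct.
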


The element $b$ of Proposition \ref{prop2_4} is called the (right) \emph{$N$-tail} of $a$ and is denoted by $b = \tau_N(a) = \tau_{N,R} (a)$.
We define in a similar way the left \emph{$N$-tail} of $a$, denoted by $\tau_{N,L} (a)$.

Now, assume that $N_1$ and $N_2$ are two parabolic submonoids of $M$ such that $N_2 \cup N_1$ generates $M$.
Then each nontrivial element $a \in M$ is uniquely written in the form $a= a_p \cdots a_2 a_1$ where $a_p \neq 1$, $a_i=\tau_{N_1}(a_p \cdots a_i)$ if $i$ is odd, and $a_i = \tau_{N_2} (a_p \cdots a_i)$ if $i$ is even.
This expression is called the (right) \emph{alternating form} of $a$ with respect to $(N_2, N_1)$.
Note that we may have $a_1=1$, but $a_i \neq 1$ for all $i \in \{2, \dots, p \}$.
The number $p$ is called the \emph{$(N_2, N_1)$-breadth} of $a$ and is denoted by $p = \bh(a) = \bh_{N_2,N_1} (a)$.
By extension we set $\bh (1) = 1$ so that $a \in N_1 \Leftrightarrow \bh(a)=1$.

Now, consider the standard Garside structure $(\BB_{n+1}, \BB_{n+1}^+, \Delta)$ on the braid group $\BB_{n+1}$.
Let $S = \{s_1, s_2, \dots, s_n\}$ be the standard generating system of $\BB_{n+1}$, $N_1$ be the submonoid of $\BB_{n+1}^+$ generated by $\{s_2, \dots, s_n\}$, and $N_2$ be the submonoid generated by $\{s_1, \dots, s_{n-1} \}$.
Then $N_1$ and $N_2$ are parabolic submonoids of $\BB_{n+1}^+$ and they are both isomorphic to $\BB_n^+$.
Observe that $N_1 \cup N_2$ generates $\BB_{n+1}^+$, hence we can consider alternating forms with respect to $(N_2, N_1)$.
The definitions of the next section are inspired by the following.

\begin{thm}[Fromentin--Paris \cite{FroPar1}]\label{thm2_5}
Let $a \in \BB_{n+1}^+$ and $k \in \Z$.
Then $\Delta^{-k} a$ is $s_1$-negative if and only if $k \ge \max \{1, \bh (a) -1 \}$.
\end{thm}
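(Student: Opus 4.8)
The plan is to exploit how conjugation by $\Delta$ interacts with the alternating form. In $\BB_{n+1}$ conjugation by $\Delta$ is the ``flip'' $\phi\colon s_i\mapsto s_{n+1-i}$; since $\Delta^2$ is central it is an involution, and it exchanges the parabolic submonoids $N_1=\langle s_2,\dots,s_n\rangle^+$ and $N_2=\langle s_1,\dots,s_{n-1}\rangle^+$. I will also use the half-twist decomposition $\Delta=(s_ns_{n-1}\cdots s_2)\,s_1\,\widetilde\Delta$ with $\widetilde\Delta$ the Garside element of $\langle s_2,\dots,s_n\rangle^+$, that is $\Delta=u_0s_1u_1$ and $\Delta^{-1}=u_1^{-1}s_1^{-1}u_0^{-1}$ with $u_0,u_1\in N_1$ (hence $s_1$-free); in particular $\Delta^{-j}$ is $s_1$-negative for $j\ge1$, and a product of $s_1$-negative elements is $s_1$-negative. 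Finally, two elementary facts about the alternating form $a=a_p\cdots a_1$ of $a$ (with $p=\bh(a)$): (i) if $p\ge2$ then $a_{p-1}\cdots a_1$ is its own alternating form and has breadth $p-1$ (this uses transitivity of $\le_R$ and the defining property of $N$-tails); (ii) the leading factor $a_p$ lies in $N_1\setminus N_2$ when $p$ is odd and in $N_2\setminus N_1$ when $p$ is even, hence involves $s_n$, respectively $s_1$ — otherwise $a_pa_{p-1}$ would lie in a single parabolic, contradicting that $a_{p-1}$ is the full $N$-tail at that stage.

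I would argue by induction on $\bh(a)$, with the help of three reductions. \emph{Monotonicity}: the set of $k$ with $\Delta^{-k}a$ $s_1$-negative is upward closed, since $\Delta^{-k}a=\Delta^{-(k-k_0)}\,\Delta^{-k_0}a$. \emph{Positivity}: for $k\le0$ the element $\Delta^{-k}a$ is a positive braid, hence $s_1$-free or visibly $s_1$-positive, hence not $s_1$-negative by the disjointness in Theorem~\ref{thm1_1}. \emph{Descent}: when $p+k$ is odd one has $\Delta^{-k}a_p=\phi^k(a_p)\,\Delta^{-k}$ with $\phi^k(a_p)\in N_1$ (this is exactly what the parity of $p+k$ forces), so $\Delta^{-k}a=\phi^k(a_p)\cdot\Delta^{-k}(a_{p-1}\cdots a_1)$ is $s_1$-negative if and only if $\Delta^{-k}a'$ is — prepending the $s_1$-free factor $\phi^k(a_p)$ changes nothing — where $a'=a_{p-1}\cdots a_1$ has breadth $p-1$ by (i). Since $\max\{1,p-1\}$ and $\max\{1,p-2\}$ cut out the same subset inside the relevant parity class of $k$'s, the descent together with the inductive hypothesis for $a'$ handles every $k$ with $p+k$ odd; in particular it shows that $\Delta^{-(p-1)}a$ is $s_1$-negative (as $p+(p-1)$ is odd). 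Combined with monotonicity and positivity, this pins the threshold down to $\max\{1,p-1\}$ provided one also knows that $\Delta^{-(p-2)}a$ is \emph{not} $s_1$-negative when $p\ge3$; the base cases $p\le2$ are immediate.

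The remaining assertion — $\Delta^{-(p-2)}a$ is not $s_1$-negative for $p\ge3$ — is precisely where the descent breaks down, because $p+(p-2)$ is even. Here $\Delta^{-(p-2)}a=c\cdot\Delta^{-(p-2)}a'$ with $c=\phi^{\,p-2}(a_p)\in N_2$; by (ii) the braid $c$ genuinely involves $s_1$, so it is positive and $s_1$-positive. On the other side, $\Delta^{-(p-2)}a'=\Delta^{-((p-1)-1)}a'$ is, by the inductive hypothesis applied to $a'$, \emph{exactly the threshold} $s_1$-negative element attached to $a'$; in particular $\Delta\cdot\bigl(\Delta^{-(p-2)}a'\bigr)=\Delta^{-(p-3)}a'$ is not $s_1$-negative. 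So everything comes down to showing that a product $c\cdot w$, with $c\in N_2$ involving $s_1$ and $w$ the minimally $s_1$-negative element of $a'$, is not $s_1$-negative — morally, the positive $s_1$-content of $c$ is just enough to neutralise the borderline $s_1$-negativity of $w$.

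This last point is the expected main obstacle. It fails for an arbitrary positive $c$ and an arbitrary $s_1$-negative $w$, so one must exploit that $a=a_pa'$ is a genuine breadth-$p$ alternating form — equivalently that $c$ cannot be absorbed into $a'$ on the right, i.e.\ that $\tau_{N_1}(a_pa_{p-1})=a_{p-1}$. I would make this quantitative by comparing $c$ with $\Delta$ through their $\tau$-tails and greedy normal forms, reducing to a statement about the minimal number of occurrences of $s_1$ in positive-braid expressions, and then certifying that $c\cdot w$ admits an expression with no $s_1^{-1}$ by invoking Dehornoy's $\sigma_1$-handle reduction (equivalently, the acyclicity of $\sigma_1$-subword reversing) — the standard tool that controls exactly this kind of cancellation. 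Carrying out this reconciliation between the parabolic/alternating-form bookkeeping and $\sigma_1$-positivity is, I expect, the technical heart of the proof.
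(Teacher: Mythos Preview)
The paper does not prove Theorem~\ref{thm2_5}; it is quoted from Fromentin--Paris \cite{FroPar1} and used as a black box in Section~\ref{Sec4}. There is therefore no proof in this paper to compare your attempt against.

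On its own merits: your reductions (monotonicity in $k$, positivity for $k\le0$, and the parity descent $\Delta^{-k}a=\phi^k(a_p)\cdot\Delta^{-k}a'$ with $\phi^k(a_p)\in N_1$ when $p+k$ is odd) are correct and dispose of every $k$ except $k=p-2$ for $p\ge3$. But that remaining case is precisely where the substance lies, and you do not prove it. You rightly note that ``$c\cdot w$ is not $s_1$-negative'' fails for generic $c\in N_2$ involving $s_1$ and generic $s_1$-negative $w$; hence the argument must exploit that $a=a_p\,a'$ is a genuine alternating form of breadth $p$. Your proposed route---comparing $\tau$-tails, counting $s_1$-occurrences, then invoking handle reduction---is a sketch, not an argument: handle reduction certifies $s_1$-nonnegativity of a \emph{specific} word, but you have not produced such a word for $c\cdot w$, nor explained how the alternating-form constraint on $a$ forces one to exist. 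This is a genuine gap, and it sits at the hardest point of the theorem.
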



\section{Orders on Garside groups}\label{Sec3}

We consider a Garside structure $(G, M, \Delta)$ on a Garside group $G$ and two parabolic substructures $(H, N, \Lambda)$ and $(G_1, M_1, \Delta_1)$.
We assume that $N \neq M$, $M_1 \neq M$, $N \cup M_1$ generates $M$, $\Delta$ is central in $G$, and $\Delta_1$ is central in $G_1$.
Note that the assumption ``$\Delta$ is central in $G$'' is not so restrictive since, by Dehornoy \cite{Dehor3}, if $(G,M, \Delta)$ is a Garside structure, then $(G, M, \Delta^k)$ is also a Garside structure for each $k \ge 1$, and there exists $k \ge 1$ such that $\Delta^k$ is central in $G$.
We will consider alternating forms with respect to $(N,M_1)$.

The \emph{depth} of an element $a \in M$, denoted by $\dpt (a)$, is $\dpt (a) = \frac{ \bh (a) -1}{2}$ if $\bh (a)$ is odd and is $\dpt (a) = \frac{\bh (a)}{2}$ if $\bh (a)$ is even. 
In other words, if $a = a_p \cdots a_2 a_1$ is the alternating form of $a$, then $\dpt (a)$ is the number of indices $i \in \{1, \dots, p\}$ such that $a_i\not \in M_1$ (that is, the number of even indices). 
Note that $a \in M_1$ if and only if $\dpt(a)=0$.

\begin{defin}
Let $\alpha \in G$ and let $\alpha = a \Delta^{-k}$ be its $\Delta$-form.
We say that $\alpha$ is \emph{$(H,G_1)$-negative} if $k \ge 1$ and $\dpt (a) < \dpt (\Delta^k)$.
We say that $\alpha$ is \emph{$(H, G_1)$-positive} if $\alpha^{-1}$ is $(H, G_1)$-negative. 
We denote by $P = P_{H,G_1}$ the set $(H,G_1)$-positive elements and by $P^{-1}$ the set of $(H, G_1)$-negative elements.
\end{defin}

\begin{defin}
We say that $(H,G_1)$ is a \emph{Dehornoy structure} if $P$ satisfies the following conditions:
\begin{itemize}
\item[(a)]
$P P \subset P$,
\item[(b)]
$G_1 P G_1 \subset P$,
\item[(c)]
we have the disjoint union $G = P \sqcup P^{-1} \sqcup G_1$.
\end{itemize}
\end{defin}

Our goal in this section is to prove a criterion for $(H,G_1)$ to be a Dehornoy structure.
But, before, we show how the orders appear in this context.

Suppose given two sequences of parabolic subgroups $G_0 = G, G_1, \dots, G_n$ and $H_1, \dots,
\allowbreak
H_n$ such that $G_{i+1}, H_{i+1} \subset G_{i}$ and $(H_{i+1},G_{i+1})$ is a Dehornoy structure on $G_{i}$ for all $i \in \{0,1, \dots, n-1\}$ and $G_n \simeq \Z$.
For each  $i \in \{0,1, \dots, n-1\}$ we denote by $P_i$ the set of $(H_{i+1}, G_{i+1})$-positive elements of $G_{i}$.
On the other hand, we choose a generator $\alpha_n$ of $G_n$ and we set $P_{n} = \{ \alpha_n^k \mid k \ge 1 \}$.
For each $\epsilon = (\epsilon_0, \epsilon_1, \dots, \epsilon_n) \in \{ \pm 1 \}^{n+1}$ we set $P^\epsilon = P_0^{\epsilon_0} \sqcup P_1^{\epsilon_1} \sqcup \cdots \sqcup P_n^{\epsilon_n}$.

\begin{prop}\label{prop3_1}
Under the above assumptions $P^\epsilon$ is the positive cone for a left-order on $G$.
\end{prop}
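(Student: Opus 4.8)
The plan is to verify directly that $P^\epsilon$ is a subsemigroup of $G$ and that $G = P^\epsilon \sqcup (P^\epsilon)^{-1} \sqcup \{1\}$, which by the discussion in the introduction is exactly what is needed for $P^\epsilon$ to be the positive cone of a left-order. The proof is by induction on $n$, the base case $n = 0$ being the observation that $G_0 = G \simeq \Z$ with $P_0 = \{\alpha_0^k \mid k \ge 1\}$ (up to the sign $\epsilon_0$), which is obviously the positive cone of a left-order on $\Z$.

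For the partition property, I would argue as follows. Since $(H_1, G_1)$ is a Dehornoy structure on $G_0 = G$, condition (c) of the definition gives $G = P_0 \sqcup P_0^{-1} \sqcup G_1$. Replacing $P_0$ by $P_0^{\epsilon_0}$ only swaps the first two pieces, so $G = P_0^{\epsilon_0} \sqcup P_0^{-\epsilon_0} \sqcup G_1$ still holds. Now $G_1$ carries the Dehornoy structures $(H_{i+1}, G_{i+1})$ for $i \ge 1$ and $G_n \simeq \Z$, so by the induction hypothesis applied to the group $G_1$ and the tuple $(\epsilon_1, \dots, \epsilon_n)$, the set $Q := P_1^{\epsilon_1} \sqcup \cdots \sqcup P_n^{\epsilon_n}$ satisfies $G_1 = Q \sqcup Q^{-1} \sqcup \{1\}$. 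Substituting, $G = P_0^{\epsilon_0} \sqcup P_0^{-\epsilon_0} \sqcup Q \sqcup Q^{-1} \sqcup \{1\}$. Since $P^\epsilon = P_0^{\epsilon_0} \sqcup Q$ and, after noting that $(P_0^{\epsilon_0})^{-1} = P_0^{-\epsilon_0}$ and $(P^\epsilon)^{-1} = P_0^{-\epsilon_0} \sqcup Q^{-1}$, this rearranges exactly to $G = P^\epsilon \sqcup (P^\epsilon)^{-1} \sqcup \{1\}$. The only subtlety worth a word is disjointness of $P_0^{\epsilon_0}$ from $Q \cup Q^{-1}$: this is automatic because $Q \cup Q^{-1} \subset G_1$ while $P_0^{\pm\epsilon_0} \cap G_1 = \emptyset$ by the displayed partition of $G$.

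For the subsemigroup property, take $\alpha, \beta \in P^\epsilon$ and show $\alpha\beta \in P^\epsilon$. There are four cases according to whether each of $\alpha, \beta$ lies in $P_0^{\epsilon_0}$ or in $Q$. If both lie in $P_0^{\epsilon_0}$, then $\alpha\beta \in P_0^{\epsilon_0} \subset P^\epsilon$ by condition (a) of the Dehornoy structure $(H_1, G_1)$ (applied with the sign $\epsilon_0$, which is harmless since $P^{-1}P^{-1} \subset P^{-1}$ follows from $PP \subset P$). If both lie in $Q \subset G_1$, then $\alpha\beta \in Q \subset P^\epsilon$ since $Q$ is a subsemigroup of $G_1$ by induction. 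The mixed cases are where condition (b) enters: if $\alpha \in P_0^{\epsilon_0}$ and $\beta \in Q \subset G_1$, I use $G_1 P_0^{\epsilon_0} G_1 \subset P_0^{\epsilon_0}$ — which is condition (b) of the definition of Dehornoy structure, valid for the sign $\epsilon_0$ by symmetry — applied with the left $G_1$-factor trivial, to conclude $\alpha\beta \in P_0^{\epsilon_0} \subset P^\epsilon$; the case $\alpha \in Q$, $\beta \in P_0^{\epsilon_0}$ is symmetric, with the right $G_1$-factor trivial.

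The main obstacle, such as it is, is purely bookkeeping: one must be careful that all three conditions (a), (b), (c) in the definition of a Dehornoy structure are manifestly stable under replacing $P$ by $P^{-1}$ (so that the signs $\epsilon_i$ cause no trouble), and that the induction is set up on the correct group, namely $G_1$ with its inherited sequences $G_1, \dots, G_n$ and $H_2, \dots, H_n$. There is no hard analytic or combinatorial content here; the work was all in Theorem \ref{thm3_2}, which produces the Dehornoy structures in the first place. I would therefore keep the write-up short, emphasizing the reduction to the two defining properties of a positive cone and the three-way bookkeeping above.
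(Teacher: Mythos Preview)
Your proof is correct and takes essentially the same approach as the paper. The paper is slightly more terse: it dismisses the partition as following ``directly from Condition~(c)'' (iterating down the chain $G_0 \supset G_1 \supset \cdots \supset G_n$), and for the semigroup property it picks indices $i,j$ with $\alpha \in P_i^{\epsilon_i}$, $\beta \in P_j^{\epsilon_j}$ and splits into the three cases $i<j$, $i>j$, $i=j$, invoking conditions (b), (b), (a) respectively---exactly what your inductive organization unfolds to.
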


\begin{proof}
We must prove that we have a disjoint union $G = P^\epsilon \sqcup (P^\epsilon)^{-1} \sqcup \{1 \}$ and that $P^\epsilon P^\epsilon \subset P^\epsilon$.
The fact that we have a disjoint union $G = P^\epsilon \sqcup (P^\epsilon)^{-1} \sqcup \{1\}$ follows directly from Condition (c)  of the definition. 
Let $\alpha, \beta \in P^\epsilon$.
Let $i,j \in \{0,1, \dots, n\}$ such that $\alpha \in P_i^{\epsilon_i}$ and $\beta \in P_j^{\epsilon_j}$.
If $i <j$, then, by Condition (b) of the definition, $\alpha \beta \in P_i^{\epsilon_i} \subset P^\epsilon$.
Similarly, if $i > j$, then $\alpha \beta \in P_j^{\epsilon_j} \subset P^\epsilon$.
If $i=j$, then, by Condition (a) of the definition, $\alpha \beta \in P_i^{\epsilon_i} \subset P^\epsilon$.
\end{proof}

\begin{defin}
Let $\zeta \ge 1$ be an integer.
We say that the pair $(H,G_1)$ satisfies \emph{Condition A with constant $\zeta$} if $\dpt (\Delta^k) = \zeta k+1$ for all $k \ge 1$.
\end{defin}

We set $\theta = \Delta \Delta_1^{-1} = \Delta_1^{-1} \Delta \in M$.
We say that an element $a \in M$ is a \emph{theta element} if it is of the form $a = \theta^k a_0$ with $k \ge 1$ and $a_0 \in M_1$.
We denote by $\Theta$ the set of theta elements of $M$ and we set $\bar \Theta = \Theta \cup M_1$.

\begin{defin}
Let $\zeta \ge 1$ be an integer. 
Let $(a,b) \in (M \times M) \setminus (\bar \Theta \times \bar \Theta)$ such that $a,b$ are both unmovable.
Let $ab = c \Delta^t$ be the $\Delta$-form of $ab$.
We say that $(a,b)$ satisfies \emph{Condition B with constant $\zeta$} if there exists $\varepsilon \in \{ 0, 1\}$ such that
\begin{itemize}
\item[(a)]
$\dpt(c) = \dpt(a) + \dpt(b) - \zeta t - \varepsilon$,
\item[(b)]
$\varepsilon=1$ if either $a \in \Theta$, or $b \in \Theta$, or $c \in M_1$.
\end{itemize}
We say that $(H,G_1)$ satisfies \emph{Condition B with constant $\zeta$} if each pair $(a,b) \in (M \times M) \setminus (\bar \Theta \times \bar \Theta)$ as above satisfies Condition B with constant $\zeta$.
\end{defin}

\begin{thm}\label{thm3_2}
If there exists a constant $\zeta \ge 1$ such that $(H,G_1)$ satisfies Condition A with constant $\zeta$ and Condition B with constant $\zeta$, then $(H,G_1)$ is a Dehornoy structure.
\end{thm}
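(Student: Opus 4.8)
The plan is to check the three defining properties (a), (b), (c) of a Dehornoy structure for the cone $P=P_{H,G_1}$ directly, reading everything off the $\Delta$-form. The two hypotheses play complementary roles. Condition A makes $\dpt(\Delta^k)=\zeta k+1$ an affine function of $k$, so that a $\Delta$-form $\alpha=a\Delta^{-k}$ with $k\ge 1$ is $(H,G_1)$-negative exactly when $\dpt(a)\le \zeta k$. Condition B says that depth is additive up to a controlled error: if $a,b$ are unmovable and $ab=c\Delta^t$ is the $\Delta$-form of $ab$, then $\dpt(c)=\dpt(a)+\dpt(b)-\zeta t-\varepsilon$ with $\varepsilon\in\{0,1\}$, and $\varepsilon=1$ as soon as a $\Theta$-factor or an $M_1$-core occurs. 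Thus the whole proof is a ``depth calculus'' for $\Delta$-forms fed by these two facts, the only real subtlety being the elements for which Condition B is unavailable, namely those built from $\bar\Theta=\Theta\cup M_1$.

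First I would record the preliminary facts. Since $\Delta$ is central in $G$ and $\Delta_1$ is central in $G_1$, the element $\theta=\Delta\Delta_1^{-1}$ commutes with every element of $G_1$; hence $\bar\Theta=\bigcup_{k\ge 0}\theta^kM_1$ is a submonoid satisfying $\theta\,\bar\Theta\subseteq\Theta$ and $M_1\bar\Theta M_1\subseteq\bar\Theta$, and one proves a direct formula for $\dpt$ on $\bar\Theta$ (essentially $\dpt(\theta^ka_0)=\zeta k+1$ for $k\ge 1$, and $0$ on $M_1$). Using Godelle's Theorem \ref{thm2_3} one also gets $M_1=M\cap G_1$ and the compatibility of $\Delta$-forms and of orthogonal forms for elements of $G_1$ computed in $G$ or in the parabolic substructure. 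Finally, for $\alpha=a\Delta^{-k}$ with $\lg(a)=p$, writing $a^\ast$ for the complement $aa^\ast=a^\ast a=\Delta^p$ gives $\alpha^{-1}=a^\ast\Delta^{k-p}$, from which, after extracting the unmovable part of $a^\ast$ (it may well be movable), one reads off the $\Delta$-form of $\alpha^{-1}$; applying Condition B to the pair $(a,a^\ast)$ — whenever it is not in $\bar\Theta\times\bar\Theta$ — yields the duality $\dpt(a)+\dpt(a^\ast)=\zeta p+1=\dpt(\Delta^p)$, which links the depth of $a$ to the negativity of $\alpha^{-1}$.

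With these in hand, each of (a)--(c) follows by a short computation. For (c): given $\alpha=a\Delta^{-k}$ one does a case analysis on the sign of $k$ and, for $k\ge 1$, on the comparison of $\dpt(a)$ with $\dpt(\Delta^k)$, using the duality identity and the formula on $\bar\Theta$; one checks that exactly one of ``$\alpha$ $(H,G_1)$-negative'', ``$\alpha^{-1}$ $(H,G_1)$-negative'', ``$\alpha\in G_1$'' holds, the borderline case being identified with $G_1$ via $M_1=M\cap G_1$. For (a) it is enough that the set $P^{-1}$ of $(H,G_1)$-negative elements is closed under products: if $\alpha=a\Delta^{-k}$ and $\beta=b\Delta^{-l}$ are negative and $ab=c\Delta^t$, then $\alpha\beta=c\Delta^{-(k+l-t)}$ and Condition B gives $\dpt(c)=\dpt(a)+\dpt(b)-\zeta t-\varepsilon\le\zeta(k+l-t)-\varepsilon<\dpt(\Delta^{k+l-t})$; one also needs $k+l-t\ge 1$, which holds because $\dpt(c)\ge 0$ and, in the extremal case $t=k+l$, the core $c$ lies in $M_1$ and forces $\varepsilon=1$, a contradiction. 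For (b) one reduces, taking inverses, to $G_1\,P^{-1}\,G_1\subseteq P^{-1}$; multiplying a negative $b\Delta^{-l}$ on either side by an element of $G_1$ and converting that element to a $\Delta$-form produces a power of $\theta$, i.e. a $\Theta$-factor, so the error term in Condition B is pinned to $\varepsilon=1$, which is precisely what preserves the inequality defining negativity.

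The step I expect to be the main obstacle is the interface with $\bar\Theta$. Condition B is silent on pairs in $\bar\Theta\times\bar\Theta$, so each of (a), (b), (c) needs a parallel treatment of the ``$\bar\Theta$-part'' carried out by hand from the direct depth formula on $\bar\Theta$ — and this is exactly where, in the iterated picture of Proposition \ref{prop3_1}, the order already built on $G_1$ enters. Tightly intertwined with this is the bookkeeping of the defect $\varepsilon$: Condition B(a) is an equality with $\varepsilon\in\{0,1\}$ whereas ``negative'' is the strict inequality $\dpt(a)<\dpt(\Delta^k)$, so one must track the clause ``$\varepsilon=1$ if $a\in\Theta$, $b\in\Theta$ or $c\in M_1$'' very carefully, as well as exactly how many powers of $\Delta$ are absorbed when normalizing $a^\ast$ or $ab=c\Delta^t$ — a count that couples the depth to the exponent of $\Delta$ through Condition A, and one that is made more delicate by the fact that the central $\Delta$ one is forced to use need not be primitive ($\lg(\Delta)>1$ in general). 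Everything else should be routine once this accounting is organized.
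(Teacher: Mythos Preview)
Your plan is essentially the paper's own: it first proves a preliminary lemma (Lemma~\ref{lem3_3}) establishing exactly the $\bar\Theta$-calculus you describe (commutation of $\theta$ with $M_1$, $\dpt(\theta^ka_0)=\zeta k+1$, the $\Delta$-form of elements of $G_1\setminus M_1$, and that $\bar\Theta$ is stable under the complement $\com$), and then proves (a), (b), (c) via the four lemmas $P^{-1}P^{-1}\subset P^{-1}$, $G_1P^{-1}G_1\subset P^{-1}$, $G_1\cap(P\cup P^{-1})=\emptyset$, $P\cap P^{-1}=\emptyset$, $G=P\cup P^{-1}\cup G_1$, each by the case split ``$a,b\in M_1$ / in $\Theta$ / outside $\bar\Theta$'' with Condition~B invoked only in the last case. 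Two small corrections: by El-Rifai--Morton the complement $\com(a)$ is already unmovable (no extraction is needed), and the proof of Theorem~\ref{thm3_2} is self-contained---no order on $G_1$ is used; the iteration in Proposition~\ref{prop3_1} happens \emph{after} the Dehornoy structure is established.
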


Let $\zeta \ge 1$ be an integer.
From here until the end of the section we assume that $(H,G_1)$ satisfies Condition A with constant $\zeta$ and Condition B with constant $\zeta$. 
Our goal is then to prove that $(H,G_1)$ is a Dehornoy structure, that is, to prove Theorem \ref{thm3_2}.

Let $a$ be an unmovable element of $M$ and let $p=\lg (a)$.
Then $p$ is the smallest integer $\ge 0$ such that $a \le_R \Delta^p$.
Let $\com (a) \in M$ such that $a\, \com(a) = \Delta^p$.
Then, by El-Rifai--Morton \cite{ElRMor1}, $\com (a)$ is unmovable,  $\lg(\com(a)) = p$, and $a^{-1} = \com (a) \Delta^{-p}$ is the $\Delta$-form of $a^{-1}$.
Note that $a\, \com(a) = \com(a)\, a = \Delta^p$ since $\Delta$ is central. 
In particular, $\com (\com(a)) = a$.

\begin{lem}\label{lem3_3}
\begin{itemize}
\item[(1)]
Let $a \in M_1$.
Then $\theta \wedge_R a = 1$ and $\theta \vee_R a = \theta a = a \theta$.
\item[(2)]
Let $a = \theta^k a_0$ be a theta element, where $k \ge 1$ and $a_0 \in M_1$.
Then $\dpt(a) = \zeta k+1$.
\item[(3)]
Let $a = \theta^k a_0$ be a theta element, where $k \ge 1$ and $a_0 \in M_1$.
Then $a$ is unmovable if and only if $a_0$ is unmovable in $M_1$ (that is, if and only if $\Delta_1 \not\le_R a_0$).
\item[(4)]
Let $a$ be an unmovable element of $M$.
We have $a \in \bar \Theta$ if and only if $\com(a) \in \bar \Theta$.
\item[(5)]
Let $\alpha \in G_1 \setminus M_1$.
Then $\alpha$ has a $\Delta$-form of the form $\alpha = a \Delta^{-k}$ where $k \ge 1$ and $a=\theta^k a_0 \in \Theta$ with $a_0 \in M_1$.
\item[(6)]
Let $a \in \bar \Theta$ and $b \in M \setminus \bar \Theta$.
Then $ab \in M \setminus \bar \Theta$ and $ba \in M \setminus \bar \Theta$.
\end{itemize}
\end{lem}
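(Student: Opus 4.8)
The plan is to establish the six assertions in the order given, the engine being a short list of identities about right $M_1$-tails of elements built from $\theta$ and $\Delta$. The most basic is $\tau_{M_1,R}(\theta)=1$: if $c\in M_1$ with $c\le_R\theta$, then $\theta\le_R\Delta$ forces $c$ to be simple, so $c\in\Div(\Delta)\cap M_1=\Div(\Delta_1)$ and hence $c\le_R\Delta_1$; writing $\Delta=\theta\Delta_1$ and using that $\Delta_1$ is central in $G_1$ one gets $c\Delta_1\in M_1$ with $c\le_R c\Delta_1\le_R\Delta$, so $c\Delta_1\le_R\Delta_1$ and $c=1$. The same computation on the left gives $\tau_{M_1,L}(\theta)=1$. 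Next, $\tau_{M_1,R}(\Delta^k)=\Delta_1^k$ for $k\ge 1$: if $c\in M_1$ and $c\le_R\Delta^k$ then $\lg_G(c)\le\lg_G(\Delta^k)=k$, and $\lg_G(c)=\lg_{M_1}(c)$ by Theorem \ref{thm2_3}(2), so $c\le_R\Delta_1^k$ in $M_1$. Repeating the first argument with $\theta^k,\Delta^k$ in place of $\theta,\Delta$ yields $\tau_{M_1,R}(\theta^k)=1$; and then for $w\in M_1$, $k\ge 1$ we get $\tau_{M_1,R}(w\theta^k)=w$: if $d=\tau_{M_1,R}(w\theta^k)$ then $w\le_R d$, so $d=ew$ with $e\in M_1$ by Theorem \ref{thm2_3}(3), and $ew\le_R\theta^k w$ gives $e\le_R\theta^k$, whence $e\le_R\tau_{M_1,R}(\theta^k)=1$ and $d=w$. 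Throughout one uses that $\theta$ commutes with every $g\in G_1$, since $\theta g=\Delta\Delta_1^{-1}g=\Delta g\Delta_1^{-1}=g\theta$.

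With these identities, (1) and (2) are quick. For (1), $\theta a=a\theta$ is the above commutation with $g=a$; then $\theta\wedge_R a\le_R\tau_{M_1,R}(\theta)=1$ since $\theta\wedge_R a\in M_1$ by Theorem \ref{thm2_3}(3); and because $\theta\wedge_L a=1$ as well, a short lattice manipulation (if $\mu=\theta\vee_R a$ and $\theta a=r\mu$, cancellativity and $\theta a=a\theta$ give $r\le_L\theta$ and $r\le_L a$, so $r=1$) shows $\theta\vee_R a=\theta a$. For (2), write $a=\theta^k a_0$. Using $\tau_{M_1,R}(\Delta^k)=\Delta_1^k$, $\tau_{M_1,R}(\theta^k)=1$ and $\tau_{M_1,R}(\theta^k a_0)=\tau_{M_1,R}(a_0\theta^k)=a_0$, one checks that the alternating forms of $\Delta^k$, of $\theta^k$ and of $\theta^k a_0$ agree in all positions $\ge 2$ (stripping the rightmost factor leaves $\theta^k$ in all three cases), so $\bh(\theta^k a_0)=\bh(\theta^k)=\bh(\Delta^k)$ and therefore $\dpt(\theta^k a_0)=\dpt(\Delta^k)=\zeta k+1$ by Condition A.

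For (3), a direct computation with $\Delta=\theta\Delta_1$ gives $\Delta\le_R\theta^k a_0\iff\theta^{k-1}a_0\Delta_1^{-1}\in M$; writing $a_0\Delta_1^{-1}=pq^{-1}$ in orthogonal form in $M_1$, the right-hand condition becomes $q\le_R p\theta^{k-1}$, which by $\tau_{M_1,R}(p\theta^{k-1})=p$ when $k\ge 2$ (and directly when $k=1$) together with $p\wedge_R q=1$ holds iff $q=1$, i.e. iff $\Delta_1\le_R a_0$; hence $\theta^k a_0$ is unmovable iff $a_0$ is unmovable in $M_1$. Part (5) follows: an element $\alpha\in G_1\setminus M_1$ has $\Delta_1$-form $\alpha=a_0\Delta_1^{-k}$ with $k\ge 1$ (Theorem \ref{thm2_2} inside $(G_1,M_1,\Delta_1)$), and $\Delta_1^{-k}=\Delta^{-k}\theta^k$ turns this into $\alpha=(\theta^k a_0)\Delta^{-k}$, a $\Delta$-form by (3). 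For (4), since $\com(\com(a))=a$ it suffices to show $a\in\bar\Theta\Rightarrow\com(a)\in\bar\Theta$; writing $a=\theta^k a_0$ ($k\ge 0$, $a_0\in M_1$, $a$ unmovable) and $p=\lg(a)$, one computes $\com(a)=a^{-1}\Delta^p=\theta^{p-k}(\Delta_1^p a_0^{-1})$, where $p-k\ge 0$ and $\Delta_1^p a_0^{-1}\in M_1$ follow from $\lg_G(\theta^k)=k\le p$ and $\lg_{M_1}(a_0)\le p$ (both from $\theta^k\le_R a$, $a_0\le_R a$), so $\com(a)\in\bar\Theta$. Finally (3) and (5) give (6): if $ab\in\bar\Theta$ with $a=\theta^j a_0\in\bar\Theta$, write $ab=\theta^\ell w$ ($j,\ell\ge 0$, $a_0,w\in M_1$), so $b=\theta^{\ell-j}z$ with $z=a_0^{-1}w\in G_1$; if $z\in M_1$ this puts $b$ in $\bar\Theta$ when $\ell\ge j$ and otherwise forces $\Delta^{j-\ell}\in M_1$ with $j-\ell\ge 1$, hence $\Delta=\Delta_1$ and $M=M_1$, a contradiction; if $z\notin M_1$, part (5) gives $z$ a $\Delta$-form with denominator $\Delta^m$, $m\ge 1$, and (3) then forces $b$ to have a $\Delta$-form with a negative power of $\Delta$ (or again $\Delta^{m'}\in M_1$, $m'\ge 1$), contradicting $b\in M$. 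The case of $ba$ is symmetric.

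The main obstacle is getting the tail identities $\tau_{M_1,R}(\theta)=1$ and $\tau_{M_1,R}(w\theta^k)=w$ off the ground without circularity: the naive manipulations — right-multiplying by a power of $\Delta_1$ — merely trade one such identity for another over a ``larger'' element, so the argument must be anchored at $\tau_{M_1,R}(\theta)=1$ through the defining property $\Div(\Delta)\cap M_1=\Div(\Delta_1)$ of a parabolic substructure, and at $\tau_{M_1,R}(\Delta^k)=\Delta_1^k$ through the equality $\lg_G=\lg_{M_1}$ on $M_1$ from Theorem \ref{thm2_3}(2). Everything afterwards is pushing $\theta$'s past $M_1$ and bookkeeping exponents of $\Delta$; the one other point needing a little care, used repeatedly in (4) and (6), is that $\Delta^m\notin M_1$ for $m\ge 1$, which follows from $\tau_{M_1,R}(\Delta^m)=\Delta_1^m$ and $M_1\ne M$.
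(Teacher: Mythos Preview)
Your argument is correct and follows essentially the paper's approach: both rest on the commutation of $\theta$ with $G_1$ and on the parabolic identity $\Div(\Delta)\cap M_1=\Div(\Delta_1)$, with only organizational differences (you frontload the tail identities, bring in $\lg_G=\lg_{M_1}$ on $M_1$ which the paper never uses, and route (6) through (5) and (3) where the paper goes through (4)). The one assertion that deserves a line of justification is $\lg_G(\theta^k)=k$ in your treatment of (4): it holds because $\theta^k\le_R\Delta^{k-1}=\Delta_1^{k-1}\theta^{k-1}$ would, after right-cancelling $\theta^{k-1}$, force $\theta\le_R\Delta_1^{k-1}\in M_1$, hence $\theta\in M_1$ by parabolic closure, contradicting your identity $\tau_{M_1,R}(\theta)=1$.
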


\begin{proof}
{\it Part (1):}
Let $a \in M_1$.
Let $u = a \wedge_R \theta$.
We have $u \le_R \theta$, hence $u \Delta_1 \le_R \theta \Delta_1 = \Delta$, and therefore $u \Delta_1 \in \Div (\Delta)$.
On the other hand, since $u \le_R a$, we have $u \in M_1$, hence $u \Delta_1 \in M_1$.
So, $u \Delta_1 \in \Div (\Delta) \cap M_1 = \Div (\Delta_1)$, thus $u=1$.
Let $v = a \vee_R \theta$.
Since $\Delta$ and $\Delta_1$ commute with $a$, we have $\theta a = a \theta$.
In particular, $v \le_R a \theta$.
Let $x_1 \in M$ such that $v = x_1 \theta$.
Then $x_1 \le_R a$ and, since $M_1$ is a parabolic submonoid, $x_1 \in M_1$ and there exists $x_2 \in M_1$ such that $x_2 x_1 = a$.
So, $a = x_2 x_1 \le_R v = x_1 \theta = \theta x_1$, hence $x_2 \le_R \theta$, and therefore, since $a \wedge_R \theta =1$, we have $x_2=1$.
Thus $x_1=a$ and $v = a \theta = \theta a$.

{\it Part (2):}
It is clear that $\dpt(a) = \dpt(aa_0)$ for all $a \in M$ and all $a_0 \in M_1$.
Let $a = \theta^k a_0$ be a theta element.
Then $\dpt(a) = \dpt( \theta^k) = \dpt(\theta^k \Delta_1^k) = \dpt (\Delta^k) = \zeta k+1$.

{\it Part (3):}
Let $a = \theta^k a_0$ be a theta element.
Suppose that $\Delta_1 \le_R a_0$.
Let $a_1 \in M_1$ such that $a_0 = a_1 \Delta_1$.
Then $a = \theta^k a_1 \Delta_1 = \theta^{k-1} a_1 \theta \Delta_1 = \theta^{k-1} a_1 \Delta$, hence $\Delta \le_R a$.
Now suppose that $\Delta \le_R a$.
By Part (1) we have $\tau_{M_1} (a) = a_0$.
Since $\Delta \le _R a$, we have $\Delta_1 \le_R a$, hence $\Delta_1 \le_R \tau_{M_1} (a) =a_0$.

{\it Part (4):}
Let $a$ be an unmovable element of $M$ and let $p = \lg (a)$.
Suppose that $a \in M_1$.
Let $b \in M_1$ such that $ab = \Delta_1^p$.
Then $a \theta^p b = \theta^p a b = \theta^p \Delta_1^p = \Delta^p$, hence $\com (a) = \theta^p b \in \bar \Theta$.
Suppose that $a = \theta^k a_0$ where $k \ge 1$ and $a_0 \in M_1$.
We have $a = \theta^k a_0 \le_R \Delta^p = \theta^p \Delta_1^p$ hence, by Part (1), $a_0 \le_R \Delta_1^p$ and $k \le p$.
Let $b_0 \in M_1$ such that $a_0 b_0 = \Delta_1^p$.
Then $a \theta^{p-k} b_0 = \theta^k a_0 \theta^{p-k} b_0 = \theta^p a_0 b_0 = \theta^p \Delta_1^p = \Delta^p$, hence $\com (a) = \theta^{p-k} b_0 \in \bar \Theta$.
So, if $a \in \bar \Theta$, then $\com (a) \in \bar \Theta$.
Now, since $\com (\com (a)) = a$ for each unmovable element $a$ of $M$, we have $a \in \bar \Theta$ if and only if $\com (a) \in \bar \Theta$.

{\it Part (5):}
Let $\alpha \in G_1 \setminus M_1$.
Since $\alpha \not\in M_1$ the $\Delta_1$-form of $\alpha$ is of the form $\alpha = a \Delta_1^{-k}$ with $a \in M_1$, $\Delta_1 \not\le_R a$ and $k \ge 1$.
Then $\alpha = a (\theta\Delta^{-1})^k = \theta^k a \Delta^{-k}$ and $\theta^k a$ is unmovable by Part (3) of the lemma. 

{\it Part (6):}
Take $a,b \in M$.
We assume that $a, ab \in \bar \Theta$ and we turn to prove that $b \in \bar \Theta$.
We write $ab = \theta^t c$ where $t \ge 0$ and $c \in M_1$.
On the other hand we know by Part (4) that $\com (a) \in \bar \Theta$, hence $\com(a)$ is of the form $\com (a) = \theta^k a_0$ with $k \ge 0$ and $a_0 \in M_1$, and therefore $a^{-1}$ is of the form $a^{-1} = \theta^k a_0 \Delta^{-\ell} = \theta^{k-\ell} a_0 \Delta_1^{-\ell}$ where $\ell = \lg (a)$.
So, $b \Delta_1^\ell=  \theta^{t+k-\ell} a_0 c$.
If we had $t+k-\ell <0$, then we would have $\theta^{\ell -t -k} b \Delta_1^\ell = a_0 c \in M_1$, hence we would have $\theta^{\ell - t - k} \in M_1$, which contradicts Part (1).
So, $t+k-\ell \ge 0$.
By Part (1) we have $\tau_{M_1}(\theta^{t+k-\ell}a_0c) = a_0 c$, hence $\Delta_1 \le_R a_0 c$.
Let $b_0 \in M_1$ such that $b_0 \Delta_1^\ell = a_0 c$.
Then $b = \theta^{t+k -\ell} b_0 \in \bar \Theta$.
We show in the same way that, if $a,ba \in \bar \Theta$, then $b \in \bar \Theta$.
\end{proof}

\begin{lem}\label{lem3_4}
We have $P^{-1} P^{-1} \subset P^{-1}$.
\end{lem}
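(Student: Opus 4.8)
First I would pass to $\Delta$-forms. Let $\alpha,\beta\in P^{-1}$, with $\Delta$-forms $\alpha=a\,\Delta^{-k}$ and $\beta=b\,\Delta^{-\ell}$; then $k,\ell\ge 1$, the elements $a,b$ are unmovable, and, since Condition A gives $\dpt(\Delta^k)=\zeta k+1$, the definition of $P^{-1}$ yields $\dpt(a)\le\zeta k$ and likewise $\dpt(b)\le\zeta\ell$. As $\Delta$ is central, $\alpha\beta=ab\,\Delta^{-(k+\ell)}$. Write $ab=c\,\Delta^{t}$ for the $\Delta$-form of $ab$ (Theorem \ref{thm2_2}); then $t\ge 0$ because $ab\in M$ and $c$ is unmovable, so $\alpha\beta=c\,\Delta^{-(k+\ell-t)}$ is the $\Delta$-form of $\alpha\beta$. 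Putting $m=k+\ell-t$, it remains to show that $m\ge 1$ and $\dpt(c)<\dpt(\Delta^m)$; once $m\ge 1$ is established, Condition A gives $\dpt(\Delta^m)=\zeta m+1$, so the second inequality reads $\dpt(c)\le\zeta m$. I would split into two cases according to whether $(a,b)\in\bar\Theta\times\bar\Theta$ or not.

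If $(a,b)\notin\bar\Theta\times\bar\Theta$, Condition B applies to $(a,b)$ and provides $\varepsilon\in\{0,1\}$ with $\dpt(c)=\dpt(a)+\dpt(b)-\zeta t-\varepsilon$ and with $\varepsilon=1$ whenever $c\in M_1$. The depth bounds then give $0\le\dpt(c)\le\zeta m-\varepsilon$, so $m\ge 0$. The point needing care is to exclude $m=0$: in that case $0\le\dpt(c)\le-\varepsilon$ forces $\dpt(c)=0$, hence $c\in M_1$, hence $\varepsilon=1$, contradicting $\dpt(c)\le-\varepsilon$. So $m\ge 1$, and then $\dpt(c)\le\zeta m-\varepsilon\le\zeta m$, as wanted.

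The case $a,b\in\bar\Theta$ is the one I expect to be the main obstacle, since Condition B is deliberately not available there and the $\Delta$-form of $ab$ has to be found by hand. I would write $a=\theta^{k_1}a_0$ and $b=\theta^{\ell_1}b_0$ with $k_1,\ell_1\ge 0$ and $a_0,b_0\in M_1$; since $k_1\ge 1$ would force $\dpt(a)=\zeta k_1+1$ by Lemma \ref{lem3_3}(2), the bound $\dpt(a)\le\zeta k$ gives $k_1<k$, and similarly $\ell_1<\ell$, so $k_1+\ell_1\le k+\ell-2$. As $\Delta$ and $\Delta_1$ commute with every element of $M_1$, so does $\theta$, whence $ab=\theta^{k_1+\ell_1}a_0b_0$ with $a_0b_0\in M_1$. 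I would then use the auxiliary fact, immediate from Lemma \ref{lem3_3}(1), that every element of $M_1$ is unmovable in $M$: if $\Delta\le_R y$ with $y\in M_1$ then, using $\theta\Delta_1=\Delta$, one gets $y\theta^{-1}\in M$, i.e.\ $\theta\le_R y$, contradicting $\theta\wedge_R y=1$ since $\theta\ne 1$ (as $M_1\ne M$). Writing the $\Delta_1$-form $a_0b_0=d\,\Delta_1^{s}$ and setting $r=\min(k_1+\ell_1,s)$, the identity $\theta^{r}\Delta_1^{r}=\Delta^{r}$ yields $ab=\bigl(\theta^{\,k_1+\ell_1-r}\,d\,\Delta_1^{\,s-r}\bigr)\Delta^{r}$, and the auxiliary fact together with Lemma \ref{lem3_3}(3) shows that the left factor is unmovable; hence $c=\theta^{\,k_1+\ell_1-r}d\,\Delta_1^{\,s-r}$ and $t=r\le k_1+\ell_1\le k+\ell-2$, so $m=k+\ell-t\ge 2$. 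Finally $\dpt(c)=\zeta(k_1+\ell_1-r)+1$ if $k_1+\ell_1>r$ and $\dpt(c)=0$ if $k_1+\ell_1=r$ (Lemma \ref{lem3_3}(2)), so in all cases $\dpt(c)\le\zeta(k_1+\ell_1-t)+1$; since $k_1+\ell_1-t<k+\ell-t=m$ gives $k_1+\ell_1-t\le m-1$, we conclude $\dpt(c)\le\zeta(m-1)+1\le\zeta m$, which finishes the argument.
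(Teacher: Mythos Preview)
Your proof is correct and follows the same strategy as the paper's: pass to $\Delta$-forms, invoke Condition B when $(a,b)\notin\bar\Theta\times\bar\Theta$, and compute the $\Delta$-form of $ab$ by hand when $a,b\in\bar\Theta$. Your organization is somewhat tidier---you merge the paper's three subcases ($M_1\times M_1$, $M_1\times\Theta$, $\Theta\times\Theta$) into one by writing $a=\theta^{k_1}a_0$, $b=\theta^{\ell_1}b_0$ with $k_1,\ell_1\ge 0$ and taking $r=\min(k_1+\ell_1,s)$, and you make explicit the fact that elements of $M_1$ are $\Delta$-unmovable, which the paper uses tacitly.
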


\begin{proof}
Let $\alpha, \beta \in P^{-1}$.
Let $\alpha = a \Delta^{-k}$ and $\beta = b \Delta^{-\ell}$ be the $\Delta$-forms of $\alpha$ and $\beta$, respectively.
Since $\alpha, \beta \in P^{-1}$, we have $k, \ell \ge 1$, $\dpt(a) \le \dpt(\Delta^k)-1 = \zeta k$ and $\dpt (b) \le \dpt(\Delta^\ell)-1 = \zeta \ell$.
Let $ab = c \Delta^t$ be the $\Delta$-form of $ab$.
Then the $\Delta$-form of $\alpha \beta$ is $\alpha \beta = c \Delta^{-k - \ell + t}$.
We must show that $\alpha \beta \in P^{-1}$, that is, $k + \ell -t \ge 1$ and $\dpt (c) \le \dpt (\Delta^{k+\ell-t})-1 = \zeta(k + \ell-t)$.

{\it Case 1: $a,b \in M_1$.}
Then $t=0$ and $c = ab$, hence $k+\ell -t = k+\ell \ge 1$ and $\dpt (c) = 0 \le \zeta (k+\ell) = \zeta(k + \ell -t)$.

{\it Case 2: $a \in M_1$ and $b \in \Theta$.}
We write $b = \theta^u b_0$ where $u \ge 1$ and $b_0 \in M_1$.
By Lemma \ref{lem3_3}\,(3) we have $\dpt(b) = \zeta u+1\le \zeta \ell$, hence $u < \ell$.
Let $a b_0 = c_0 \Delta_1^v$ be the $\Delta_1$-form of $a b_0$.
If $v < u$, then $t=v$ and $c = \theta^{u-v} c_0$, hence $k+\ell -t \ge \ell -v \ge \ell -u \ge 1$ and $\dpt (c) = \zeta(u-v)+1 = \zeta u - \zeta t +1 \le \zeta \ell - \zeta t \le \zeta (k+\ell -t)$.
If $v \ge u$, then $t=u$ and $c = \Delta_1^{v-u} c_0 \in M_1$, hence $k + \ell - t = k + \ell -u \ge \ell -u \ge 1$ and $\dpt (c) = 0 \le \zeta (k+\ell-t)$.
The case ``$a \in \Theta$ and $b \in M_1$'' can be proved in a similar way.

{\it Case 3: $a,b \in \Theta$.}
We set $a = \theta^u a_0$ and $b = \theta^v b_0$, where $u,v \ge 1$ and $a_0, b_0 \in M_1$.
Since $\dpt(a) = \zeta u+1 \le \zeta k$, we have $u < k$.
Similarly, we have $v < \ell$.
Let $a_0 b_0 = c_0 \Delta_1^w$ be the $\Delta_1$-form of $a_0 b_0$.
If $w < u+v$, then $t=w$ and $c = \theta^{u+v-w} c_0$, hence $k + \ell -t \ge k + \ell -(u+v) = (k-u) + (\ell -v) \ge 1$ and $\dpt (c) = \zeta (u+v-w)+1 = \zeta u+1 + \zeta v -\zeta t \le \zeta k + \zeta \ell - \zeta t = \zeta (k + \ell-t)$.
If $w \ge u+v$, then $t = u+v$ and $c = c_0 \Delta_1^{w-u-v} \in M_1$, hence $k + \ell -t = k + \ell - (u+v) = (k -u) + (\ell -v) \ge 1$ and $\dpt (c) = 0 \le \zeta (k + \ell-t)$.

{\it Case 4: either $a \not\in \bar \Theta$, or $b \not \in \bar \Theta$.}
Since $(H,G_1)$ satisfies Condition B with constant $\zeta$, there exists $\varepsilon \in \{ 0, 1 \}$ such that $\dpt (c) = \dpt(a) + \dpt(b) - \zeta t - \varepsilon$.
If $c \in M_1$, then $\varepsilon = 1$ and 
\[
0 = \dpt(c) = \dpt(a) + \dpt(b) - \zeta t - 1 \le \zeta k + \zeta \ell - \zeta t -1 < \zeta (k+\ell -t)\,.
\]
This (strict) inequality also implies that $k + \ell -t \ge 1$.
If $c \not \in M_1$, then 
\[
1 \le \dpt(c) \le \dpt(a) + \dpt(b) - \zeta t \le \zeta k + \zeta \ell - \zeta t = \zeta (k + \ell -t)\,.
\]
Again, this inequality also implies that $k + \ell -t \ge 1$.
\end{proof}

\begin{lem}\label{lem3_5}
We have $G_1 P^{-1} G_1 \subset P^{-1}$.
\end{lem}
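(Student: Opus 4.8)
The plan is to follow the pattern of the proof of Lemma~\ref{lem3_4}: pass everything to $\Delta$-forms, isolate the unmovable part of the relevant product, and estimate its depth using Condition~A, Condition~B and Lemma~\ref{lem3_3}.

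First I would record a normal form for the elements of $G_1$. By Lemma~\ref{lem3_3}(5), any $g\in G_1\setminus M_1$ has $\Delta$-form $g=b\Delta^{-r}$ with $r\ge 1$ and $b=\theta^r b_0\in\Theta$, and any $g\in M_1$ equals $b\Delta^0$ with $b=g\in M_1$, which is unmovable in $M$ since no element of $M_1$ is right-divisible by $\Delta$ (otherwise $1\le_R\Delta\le_R g$ would force $\Delta\in G_1$ by Theorem~\ref{thm2_3}(3), hence $\Delta\le_R\Delta_1$ and so $\Delta=\Delta_1$, contradicting $M_1\neq M$). So write the two given elements of $G_1$ as $g=b\Delta^{-r}$ and $g'=b'\Delta^{-r'}$ with $b,b'\in\bar\Theta$ unmovable and $r,r'\ge 0$, and let $\alpha=a\Delta^{-k}$ be the $\Delta$-form of the given $\alpha\in P^{-1}$, so that $k\ge 1$ and $\dpt(a)\le\dpt(\Delta^k)-1=\zeta k$ by Condition~A. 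As $\Delta$ is central, $g\alpha g'=(bab')\Delta^{-N}$ with $N=r+k+r'\ge 1$. Writing the $\Delta$-form $bab'=c\Delta^t$, we get $g\alpha g'=c\Delta^{-(N-t)}$, so it remains to prove $N-t\ge 1$ and $\dpt(c)\le\zeta(N-t)=\dpt(\Delta^{N-t})-1$.

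I would then distinguish two cases. If $a\in\bar\Theta$, then $a$, $b$ and $b'$ are each a power of $\theta$ times an element of $M_1$, so they commute with one another and with $\Delta$ (Lemma~\ref{lem3_3}(1)); using in addition $\Delta_1=\theta^{-1}\Delta$ one rewrites $bab'$, as in the proof of Lemma~\ref{lem3_3}(4), in the form $\theta^j c_0\Delta^t$ with $j\ge 0$ and $c_0\in M_1$ unmovable in $M_1$, so that $c=\theta^j c_0$ and $\dpt(c)$ is given outright by Lemma~\ref{lem3_3}(2); the required inequalities then follow from $\zeta\ge 1$, $k\ge 1$ and, when $a\notin M_1$, the inequality $i<k$ coming from $\dpt(a)=\zeta i+1\le\zeta k$ (with $a=\theta^i a_0$). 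If $a\notin\bar\Theta$, I would absorb $b'$ and then $b$ in two steps: let $ab'=c_1\Delta^{s_1}$ and $bc_1=c_2\Delta^{s_2}$ be $\Delta$-forms, so that $c=c_2$ and $t=s_1+s_2$. By Lemma~\ref{lem3_3}(6) one has $ab'\notin\bar\Theta$ and then $bab'\notin\bar\Theta$, and since $\bar\Theta$ is stable under right multiplication by powers of $\Delta$ this forces $c_1\notin\bar\Theta$ and $c_2\notin\bar\Theta$; in particular $c=c_2\notin M_1$, so $\dpt(c)\ge 1$. Hence Condition~B applies to the pairs $(a,b')$ and $(b,c_1)$ --- pairs of unmovable elements outside $\bar\Theta\times\bar\Theta$ --- and yields $\varepsilon_1,\varepsilon_2\in\{0,1\}$ with
\[
\dpt(c_1)=\dpt(a)+\dpt(b')-\zeta s_1-\varepsilon_1\,,\qquad \dpt(c_2)=\dpt(b)+\dpt(c_1)-\zeta s_2-\varepsilon_2\,.
\]
Computing $\dpt(b')$ and $\dpt(b)$ by Lemma~\ref{lem3_3}(2) and using clause~(b) of Condition~B --- which forces $\varepsilon_1=1$ when $b'\in\Theta$ and $\varepsilon_2=1$ when $b\in\Theta$, thereby cancelling the extra summand $1$ in $\dpt(\theta^m b_0)=\zeta m+1$ --- one obtains $\dpt(c_2)=\dpt(a)+\zeta(r+r'-t)-\delta$ for some integer $\delta\ge 0$. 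Therefore $\dpt(c)=\dpt(c_2)\le\zeta k+\zeta(r+r'-t)=\zeta(N-t)$, and since $\dpt(c)\ge 1$ this also forces $N-t\ge 1$.

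The step I expect to be the main obstacle is the case $a\notin\bar\Theta$, and within it the matching of the two applications of Condition~B: a naive estimate only gives $\dpt(c)\le\zeta(N-t)+1$, which is off by one, and it is precisely clause~(b) of Condition~B that absorbs this unit whenever $b$ or $b'$ lies in $\Theta$ --- the only case in which $\dpt(b)$ or $\dpt(b')$ fails to be a multiple of $\zeta$. One must also check carefully, via Lemma~\ref{lem3_3}(6), that the intermediate unmovable parts $c_1$ and $c_2$ really remain outside $\bar\Theta$, so that Condition~B is genuinely applicable at each step and $\dpt(c)\ge 1$.
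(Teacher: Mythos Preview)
Your approach is essentially the paper's: pass to $\Delta$-forms, handle the case where the unmovable part of the $P^{-1}$ element lies in $\bar\Theta$ by an explicit computation with $\theta$ and $\Delta_1$, and the case where it lies outside $\bar\Theta$ via Condition~B together with Lemma~\ref{lem3_3}(6). The only organisational difference is that the paper treats one $G_1$-factor at a time (showing $\alpha\beta\in P^{-1}$ for $\alpha\in G_1$, $\beta\in P^{-1}$, and symmetrically $\beta\alpha\in P^{-1}$), which corresponds exactly to your two successive applications of Condition~B; the paper's six sub-cases are just your two cases with the type of the $G_1$-factor spelled out.

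One small slip to fix: in the case $a\in\bar\Theta$ you assert that $a$, $b$, $b'$ ``commute with one another'', which is false in general since $M_1$ need not be commutative. What you actually need, and what suffices, is Lemma~\ref{lem3_3}(1): $\theta$ commutes with every element of $M_1$, so writing $a=\theta^i a_0$, $b=\theta^r b_0$, $b'=\theta^{r'}b_0'$ one gets $bab'=\theta^{r+i+r'}(b_0a_0b_0')$, and then the $\Delta_1$-form of $b_0a_0b_0'$ yields the decomposition $\theta^j c_0\Delta^t$ you describe.
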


\begin{proof}
We take $\alpha \in G_1$ and $\beta \in P^{-1}$ and we turn to prove that $\alpha \beta \in P^{-1}$.
The proof of the inclusion $\beta \alpha \in P^{-1}$ is made in a similar way.
Let $\alpha = a \Delta^{-k}$ and $\beta = b \Delta ^{-\ell}$ be the $\Delta$-forms of $\alpha$ and $\beta$, respectively.
Since $\beta \in P^{-1}$ we have $\ell \ge 1$ and $\dpt (b) \le \dpt (\Delta^\ell)-1 = \zeta \ell$.
Let $ab = c \Delta^t$ be the $\Delta$-form of $ab$.
Then the $\Delta$-form of $\alpha \beta$ is $\alpha \beta = c \Delta^{-k-\ell+t}$.
We must show that $k+\ell-t \ge 1$ and $\dpt(c) \le  \zeta (k + \ell -t)$.

{\it Case 1: $\alpha \in M_1$ and $b \in M_1$.}
We have $k=0$, $\alpha = a$, $t=0$ and $c=ab \in M_1$.
Thus $k+\ell-t = \ell \ge 1$ and $0 = \dpt (c) \le \zeta (k + \ell - t)$.

{\it Case 2: $\alpha \in M_1$ and $b \in \Theta$.}
We have $k=0$, $\alpha = a$ and $b = \theta^v b_0$ where $v \ge 1$ and $b_0 \in M_1$.
We also have $\dpt(b) = \zeta v+1 \le \zeta \ell$, hence $v < \ell$.
Let $ab_0 = c_0 \Delta_1^u$ be the $\Delta_1$-form of $ab_0$.
If $u < v$, then $t=u$ and $c=\theta^{v-u} c_0$, hence $k + \ell -t =\ell -u \ge \ell -v \ge 1$ and $\dpt (c) = \zeta (v-u)+1 = \zeta v+1 - \zeta t \le \zeta \ell - \zeta t = \zeta (k + \ell -t)$.
If $u \ge v$, then $t = v$ and $c = \Delta_1^{u-v} c_0 \in M_1$, hence $k + \ell -t = \ell -v \ge 1$ and $0 = \dpt (c) \le \zeta (k + \ell -t)$.

{\it Case 3: $\alpha \in M_1$ and $b \in M \setminus \bar \Theta$.}
We have $k=0$ and $\alpha = a$.
On the other hand, by Lemma \ref{lem3_3}\,(6), we have $ab \in M\setminus \bar \Theta$, hence $c \not\in M_1$, and therefore $\dpt (c) \ge 1$.
Since $(H, G_1)$ satisfies Condition B with constant $\zeta$, there exists $\varepsilon \in \{ 0, 1 \}$ such that $\dpt (c) = \dpt (a) + \dpt (b) - \zeta t - \varepsilon$.
So, 
\[
1 \le \dpt (c) \le 0 + \zeta \ell - \zeta t = \zeta (k + \ell -t)\,.
\]
This inequality also implies that $k + \ell -t \ge 1$.

{\it Case 4: $\alpha \not \in M_1$ and $b \in M_1$.}
By Lemma \ref{lem3_3}\,(5) we have $k \ge 1$ and $a = \theta^k a_0$ with $a_0 \in M_1$.
Let $a_0 b = c_0 \Delta_1^u$ be the $\Delta_1$-form of $a_0 b$.
If $u <k$, then $t=u$ and $c = \theta^{k-u} c_0$, hence $k + \ell -t \ge \ell \ge 1$ and $\dpt (c) = \zeta (k-u)+1 \le \zeta k -\zeta t + \zeta \ell \le \zeta (k+\ell -t)$.
If $u \ge k$, then $t=k$ and $c = c_0 \Delta_1^{u-k} \in M_1$, hence $k + \ell -t = \ell \ge 1$ and $0 = \dpt (c) \le \zeta (k + \ell -t)$.

{\it Case 5: $\alpha \not \in M_1$ and $b \in \Theta$.}
By Lemma \ref{lem3_3}\,(5) we have $k \ge 1$ and $a = \theta^k a_0$ with $a_0 \in M_1$.
On the other hand, $b$ is written $b = \theta^v b_0$ with $v \ge 1$ and $b_0 \in M_1$.
Since $\dpt(b) = \zeta v+1 \le \zeta \ell$, we have $v < \ell$.
Let $a_0b_0 = c_0 \Delta_1^w$ be the $\Delta_1$-form of $a_0 b_0$.
If $w < k+v$, then $t = w$ and $c = \theta^{k+v-w}c_0$, hence $k + \ell -t \ge k + v -w \ge 1$ and $\dpt (c) = \zeta (k + v - w)+1 = \zeta k + \zeta v+1 - \zeta t \le \zeta k + \zeta \ell - \zeta t = \zeta (k + \ell -t)$.
If $w \ge k+v$, then $t = k +v$ and $c = c_0 \Delta_1^{w-k-v} \in M_1$, hence $k + \ell - t = \ell -v \ge 1$ and $0 = \dpt (c) \le \zeta (k + \ell -t)$.

{\it Case 6: $\alpha \not \in M_1$ and $b \in M \setminus \bar \Theta$.}
By Lemma \ref{lem3_3}\,(5) we have $k \ge 1$ and $a = \theta^k a_0$ with $a_0 \in M_1$.
On the other hand, by Lemma \ref{lem3_3}\,(6), we have $ab \in M\setminus \bar \Theta$, hence $c \not\in M_1$, and therefore $\dpt (c) \ge 1$.
Since $(H, G_1)$ satisfies Condition B with constant $\zeta$ and $a \in \Theta$, $\dpt (c) = \dpt (a) + \dpt (b) - \zeta t -1$.
So, 
\[
1 \le \dpt (c) \le \zeta k+1 + \zeta \ell - \zeta t -1 = \zeta (k + \ell -t)\,.
\]
This inequality also implies that $k + \ell -t \ge 1$.
\end{proof}

\begin{lem}\label{lem3_6}
We have $G_1 \cap (P \cup P^{-1}) = \emptyset$.
\end{lem}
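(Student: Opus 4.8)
The plan is to reduce everything to proving $G_1 \cap P^{-1} = \emptyset$. Indeed, if $\alpha \in G_1 \cap P$, then $\alpha^{-1} \in G_1$ since $G_1$ is a subgroup, and $\alpha^{-1} \in P^{-1}$ by the definition of an $(H,G_1)$-positive element; so $\alpha^{-1} \in G_1 \cap P^{-1}$, and the emptiness of this set gives $G_1 \cap P = \emptyset$ as well, hence $G_1 \cap (P \cup P^{-1}) = \emptyset$. To prove $G_1 \cap P^{-1} = \emptyset$, I would take $\alpha \in G_1$ with $\Delta$-form $\alpha = a \Delta^{-k}$ and distinguish the cases $\alpha \in M_1$ and $\alpha \in G_1 \setminus M_1$.

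Suppose first $\alpha \in M_1$, so $\alpha \in M$. I claim $k \le 0$, which immediately prevents $\alpha$ from being $(H,G_1)$-negative, since that would require $k \ge 1$. Indeed, if $k \ge 1$ then $a = \alpha \Delta^k = (\alpha \Delta^{k-1}) \Delta$ with $\alpha \Delta^{k-1} \in M$, so $\Delta \le_R a$, contradicting the unmovability of $a$. Suppose now $\alpha \in G_1 \setminus M_1$. By Lemma \ref{lem3_3}\,(5), the $\Delta$-form of $\alpha$ is $\alpha = a \Delta^{-k}$ with $k \ge 1$ and $a = \theta^k a_0 \in \Theta$, $a_0 \in M_1$; then $\dpt(a) = \zeta k + 1$ by Lemma \ref{lem3_3}\,(2), while $\dpt(\Delta^k) = \zeta k + 1$ by Condition A. Hence $\dpt(a) = \dpt(\Delta^k)$, the strict inequality $\dpt(a) < \dpt(\Delta^k)$ fails, and $\alpha \notin P^{-1}$. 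Since these two cases exhaust $G_1$, we get $G_1 \cap P^{-1} = \emptyset$.

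I expect the only delicate point to be the case $\alpha \in G_1 \setminus M_1$, where the content is precisely that Lemma \ref{lem3_3}\,(5) together with Condition A conspire to place the unmovable part $a$ exactly at the threshold depth $\dpt(\Delta^k)$, so that the defining condition for negativity holds with equality rather than with the required strict inequality. The case $\alpha \in M_1$ is a routine observation about $\Delta$-forms of elements of $M$, and the passage from $P^{-1}$ to $P$ is purely formal via inversion.
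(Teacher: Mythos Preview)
Your proof is correct and follows essentially the same approach as the paper's: reduce to showing $G_1 \cap P^{-1} = \emptyset$ via the inversion symmetry, then split into the cases $\alpha \in M_1$ and $\alpha \in G_1 \setminus M_1$, using Lemma~\ref{lem3_3}\,(5), Lemma~\ref{lem3_3}\,(2), and Condition~A in the second case. The only cosmetic difference is that in the $M_1$ case the paper asserts $k = 0$ directly (since any $\alpha \in M_1$ is already unmovable), whereas you prove the weaker $k \le 0$; both suffice.
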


\begin{proof}
Let $\alpha \in G_1$ and let $\alpha = a \Delta^{-k}$ be the $\Delta$-form of $\alpha$.
If $\alpha \in M_1$, then $k=0$ and $\alpha = a$, thus $\alpha \not\in P^{-1}$.
If $\alpha \not\in M_1$, then, by Lemma \ref{lem3_3}\,(5), we have $k \ge 1$ and $a = \theta^k a_0$ where $a_0 \in M_1$, hence $\dpt(a) = \zeta k+1  = \dpt (\Delta^k)$, and therefore $\alpha \not\in P^{-1}$.
Since $\alpha^{-1} \in G_1$, we also have $\alpha^{-1} \not\in P^{-1}$, hence $\alpha \not\in P$.
\end{proof}

\begin{lem}\label{lem3_7}
We have $P \cap P^{-1} = \emptyset$.
\end{lem}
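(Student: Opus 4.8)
The plan is to reduce the statement to Lemma \ref{lem3_4}, which asserts $P^{-1}P^{-1}\subset P^{-1}$ and has already been established (under the standing hypotheses that $(H,G_1)$ satisfies Conditions A and B) with no restriction on its two arguments. Suppose, for contradiction, that there is an element $\alpha\in P\cap P^{-1}$. Since $\alpha\in P$, the element $\alpha$ is $(H,G_1)$-positive, which by definition means that $\alpha^{-1}$ is $(H,G_1)$-negative, i.e.\ $\alpha^{-1}\in P^{-1}$. Combining this with the hypothesis $\alpha\in P^{-1}$, we get that both $\alpha$ and $\alpha^{-1}$ lie in $P^{-1}$.

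The key step is then to apply Lemma \ref{lem3_4} to the pair $(\alpha,\alpha^{-1})$: it yields $1=\alpha\alpha^{-1}\in P^{-1}P^{-1}\subset P^{-1}$. It remains to observe that $1\notin P^{-1}$. Indeed, the $\Delta$-form of $1$ is $1=1\cdot\Delta^{0}$, so in the notation of the definition of $(H,G_1)$-negativity the exponent is $k=0$, whereas an element of $P^{-1}$ must have $k\ge 1$. This contradiction proves $P\cap P^{-1}=\emptyset$ (and, taking inverses, also that $1\notin P$).

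I do not expect any genuine obstacle here; the only point to be careful about is that Lemma \ref{lem3_4} is invoked with no extra hypothesis on its two arguments, which is legitimate since that lemma was proved in full generality. An alternative, more computational route would compare the $\Delta$-forms of $\alpha$ and $\alpha^{-1}$ directly: writing $\alpha=a\Delta^{-k}$ for the $\Delta$-form of $\alpha$, one has $\alpha^{-1}=\com(a)\,\Delta^{-(\lg(a)-k)}$ as the $\Delta$-form of $\alpha^{-1}$ by the facts on $\com$ recalled above, and negativity of both $\alpha$ and $\alpha^{-1}$ would then contradict a superadditivity estimate for $\dpt$ applied to the factorization $a\,\com(a)=\Delta^{\lg(a)}$ coming from Condition B. That argument would, however, require a case analysis according to whether $a$ and $\com(a)$ belong to $\bar\Theta$, so the short argument via Lemma \ref{lem3_4} is preferable.
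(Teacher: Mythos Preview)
Your argument is correct. Reducing to $1\in P^{-1}$ via Lemma \ref{lem3_4} is clean, and the verification that $1\notin P^{-1}$ from its $\Delta$-form $1=1\cdot\Delta^{0}$ is immediate (alternatively, $1\in G_1$ and Lemma \ref{lem3_6} gives this as well).

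The paper, however, takes precisely the computational route you sketch as an alternative and then set aside. Starting from $\alpha\in P^{-1}$ with $\Delta$-form $\alpha=a\Delta^{-k}$, it writes $\alpha^{-1}=\com(a)\Delta^{k-\ell}$ with $\ell=\lg(a)$ and shows directly that $\alpha^{-1}\notin P^{-1}$, splitting into the three cases $a\in M_1$, $a\in\Theta$, and $a\in M\setminus\bar\Theta$; in the last case Condition B applied to $a\cdot\com(a)=\Delta^{\ell}$ yields $\dpt(\com(a))=\zeta\ell+1-\dpt(a)\ge\zeta(\ell-k)+1$. Your approach is shorter and avoids the case analysis entirely, at the cost of relying on the full strength of Lemma \ref{lem3_4}; the paper's approach is self-contained at this point and makes explicit the interplay between $\dpt(a)$ and $\dpt(\com(a))$, which mirrors the structure of the proofs of Lemmas \ref{lem3_4}--\ref{lem3_8}.
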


\begin{proof}
Let $\alpha \in P^{-1}$ and let $\alpha = a \Delta^{-k}$ be its $\Delta$-form. 
By definition we have $k \ge 1$ and $\dpt (a) < \dpt (\Delta^k) = \zeta k+1$.
Let $\ell = \lg (a)$.
Then the $\Delta$-form of $\alpha^{-1}$ is $\alpha^{-1} = \com (a) \Delta^{k-\ell}$.
We are going to show that $\alpha^{-1} \not\in P^{-1}$, that is, either $k - \ell \ge 0$ or $\dpt (\com (a)) \ge \zeta (\ell-k)+1$.

{\it Case 1: $a \in M_1$.}
Let $b \in M_1$ such that $ab = \Delta_1^\ell$.
We have $a^{-1} = b \Delta_1^{-\ell} = b \theta^\ell \Delta^{-\ell} = \theta^\ell b \Delta^{-\ell}$, hence $\com (a) = \theta^\ell b$, and therefore, $\dpt (\com (a)) = \zeta \ell + 1 > \zeta (\ell -k)+1$, since $k \ge 1$.
So, $\alpha^{-1} \not\in P^{-1}$.

{\it Case 2: $a \in \Theta$.}
We write $a = \theta^u a_0$ where $a_0 \in M_1$ and $u \ge 1$.
We have $\dpt(a) = \zeta u+1 \le \zeta k$, hence $u < k$.
Let $t \ge 0$ be the length of $a_0$ and let $b_0 \in M_1$ such that $a_0 b_0 = \Delta_1^t$.
We have $a_0^{-1} = b_0 \Delta_1^{-t} =  \theta^t b_0 \Delta^{-t}$, hence $a^{-1} =  \theta^{t-u} b_0 \Delta^{-t}$, and therefore $\alpha^{-1} = \theta^{t-u} b_0 \Delta^{k-t}$.
If $u<t$, then $\com (a) = \theta^{t-u} b_0$ and $\dpt (\com (a)) = \zeta (t-u)+1 > \zeta (t-k) +1$, hence $\alpha^{-1} \not\in P^{-1}$.
If $u \ge t$, then $\alpha^{-1} =  \theta^{-u+t} b_0 \Delta^{k-t} = b_0 \Delta_1^{u-t} \Delta^{k-t-u+t} = b_0 \Delta_1^{u-t} \Delta^{k-u}$ and $k -u \ge 1$, hence $\alpha^{-1} \not\in P^{-1}$.

{\it Case 3: $a \in M \setminus \bar \Theta$.}
Recall that $a\, \com (a) = \Delta^\ell$.
Since $(H,G_1)$ satisfies Condition B with constant $\zeta$ and $1 \in M_1$, we have $0 = \dpt(1) = \dpt(a) + \dpt (\com (a)) - \zeta \ell -1$, hence 
\[
\dpt (\com (a)) = \zeta \ell + 1 - \dpt (a)  \ge \zeta \ell +1 - \zeta k = \zeta (\ell-k)+1\,,
\]
and therefore $\alpha^{-1} \not\in P^{-1}$.
\end{proof}

\begin{lem}\label{lem3_8}
We have $G = P \cup P^{-1} \cup G_1$.
\end{lem}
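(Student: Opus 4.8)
The plan is to fix an arbitrary $\alpha\in G$, take its $\Delta$-form $\alpha=a\Delta^{k}$ with $a$ unmovable and $k\in\Z$ (Theorem \ref{thm2_2}), set $\ell=\lg(a)$, and use that $\alpha^{-1}=\com(a)\,\Delta^{-k-\ell}$ is then the $\Delta$-form of $\alpha^{-1}$ (because $\Delta$ is central and, by El-Rifai--Morton \cite{ElRMor1}, $\com(a)$ is unmovable with $\lg(\com(a))=\ell$). Deciding which of $P$, $P^{-1}$, $G_1$ contains $\alpha$ then amounts, by the definition of $(H,G_1)$-positive and $(H,G_1)$-negative, to comparing $\dpt(a)$ with $\dpt(\Delta^{-k})=-\zeta k+1$ (relevant when $k\le -1$) and $\dpt(\com(a))$ with $\dpt(\Delta^{k+\ell})=\zeta(k+\ell)+1$ (relevant when $k+\ell\ge 1$), via Condition A. It is convenient to record beforehand the elementary inequality $\lg(a)\ge 2\dpt(a)-1$ for every $a\in M$: in the alternating form of $a$ at most the factor $a_1$ is trivial, and each of the $\dpt(a)$ even-indexed factors has length at least $1$. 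This bound is what forces the exponent $k+\ell$ to be $\ge 1$ whenever we want to conclude $\alpha\in P$.

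The only case carrying genuine content is $a\in M\setminus\bar\Theta$. By Lemma \ref{lem3_3}\,(4) we then also have $\com(a)\in M\setminus\bar\Theta$, so $a,\com(a)\notin M_1$ and $\dpt(a),\dpt(\com(a))\ge 1$. Apply Condition B to the pair $(\com(a),a)$: its product is $\com(a)\,a=\Delta^\ell$, whose $\Delta$-form is $1\cdot\Delta^\ell$, and since $c=1\in M_1$ the parameter $\varepsilon$ must equal $1$; hence $\dpt(\com(a))+\dpt(a)=\zeta\ell+1$. If $\dpt(a)\le -\zeta k$ (which forces $k\le -1$), then $\dpt(a)<\dpt(\Delta^{-k})$ and $\alpha\in P^{-1}$. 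Otherwise $\dpt(a)\ge -\zeta k+1$, and then $\dpt(\com(a))=\zeta\ell+1-\dpt(a)\le\zeta(k+\ell)<\dpt(\Delta^{k+\ell})$, while $\lg(a)\ge 2\dpt(a)-1$ yields $k+\ell\ge 1$; so $\alpha^{-1}\in P^{-1}$, that is $\alpha\in P$. Since every integer satisfies one of the two alternatives, $\alpha\in P\cup P^{-1}$ in this case. Note that this is essentially the computation already carried out in the proof of Lemma \ref{lem3_7}.

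It remains to treat $a\in\bar\Theta$. If $a\in M_1$ and $a\neq 1$, write $\com(a)=\theta^{\ell}b$ with $b\in M_1$ (from the proof of Lemma \ref{lem3_3}\,(4)), so $\dpt(\com(a))=\zeta\ell+1$ by Lemma \ref{lem3_3}\,(2); then $k=0$ gives $\alpha=a\in M_1\subset G_1$, $k\ge 1$ gives $\dpt(\com(a))=\zeta\ell+1<\zeta(k+\ell)+1$ hence $\alpha\in P$, and $k\le -1$ gives $\dpt(a)=0<-\zeta k+1$ hence $\alpha\in P^{-1}$ (the case $a=1$, i.e. $\alpha=\Delta^{k}$, being immediate). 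If $a\in\Theta$, write $a=\theta^{u}a_0$ with $u\ge 1$ and $a_0\in M_1$, so $\dpt(a)=\zeta u+1$ by Lemma \ref{lem3_3}\,(2), and use $\com(a)=\theta^{\ell-u}b_0$ with $a_0b_0=\Delta_1^{\ell}$ and $u\le\ell$ (again from the proof of Lemma \ref{lem3_3}\,(4)). When $k\le -1$ and $u<-k$ one checks $\dpt(a)\le -\zeta k$, so $\alpha\in P^{-1}$; when $k\le -1$ and $u=-k$ one has $\alpha=\theta^{u}a_0\,\theta^{k}\Delta_1^{k}=a_0\Delta_1^{k}\in G_1$; and in all remaining cases (namely $k\ge 0$, or $k\le -1$ with $u>-k$) one has $k+u\ge 1$, and since $\dpt(\com(a))=0$ if $\ell=u$ and $\dpt(\com(a))=\zeta(\ell-u)+1$ if $\ell>u$, a short computation gives $\dpt(\com(a))<\zeta(k+\ell)+1$ together with $k+\ell\ge 1$, so $\alpha\in P$. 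Putting the three cases together yields $G=P\cup P^{-1}\cup G_1$.

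I expect the arithmetic bookkeeping in the case $a\in\Theta$ — keeping track of when $\com(a)$ lands in $M_1$ rather than in $\Theta$, and of when $\alpha$ falls back into $G_1$ — to be the most delicate part to write out cleanly; by contrast, the only place where Condition B is actually invoked is the case $a\in M\setminus\bar\Theta$, and there the argument is short.
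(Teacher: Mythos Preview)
Your overall strategy matches the paper's: split according to whether $a\in M_1$, $a\in\Theta$, or $a\in M\setminus\bar\Theta$, invoke Condition~B only in the last case via $a\,\com(a)=\Delta^{\ell}$ with $c=1\in M_1$ forcing $\varepsilon=1$, and handle the $\bar\Theta$ cases by the explicit description of $\com(a)$ from Lemma~\ref{lem3_3}\,(4). The case analyses you give for $a\in M_1$ and $a\in\Theta$ are correct and amount to the same computations as in the paper (the paper organises them slightly differently by assuming $\alpha\notin P^{-1}\cup G_1$ upfront and then showing $\alpha^{-1}\in P^{-1}$, but the content is identical).

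There is, however, one genuine error. The inequality $\lg(a)\ge 2\dpt(a)-1$ that you record at the outset is \emph{false}: the factors in the alternating form lie in $N$ or $M_1$, not in $\SS=\Div(\Delta)\setminus\{1\}$, so counting them gives no lower bound on the $\SS$-word length. For a concrete counterexample, in the even dihedral group of Section~\ref{Sec5} with $m=2k\ge 4$ the element $a=(st)^{k-1}s$ is a proper divisor of $\Delta$, hence simple with $\lg(a)=1$, yet $\dpt(a)=k$ by Lemma~\ref{lem5_5}, so $2\dpt(a)-1=2k-1>1$. Fortunately you do not need this inequality at all. In the case $a\in M\setminus\bar\Theta$ you already observed (via Lemma~\ref{lem3_3}\,(4)) that $\com(a)\notin M_1$, hence $\dpt(\com(a))\ge 1$; combined with your bound $\dpt(\com(a))\le\zeta(k+\ell)$ this gives $k+\ell\ge 1$ directly. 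That is precisely how the paper derives $k+\ell\ge 1$ in its Case~3. Once you drop the spurious inequality and use this argument instead, your proof is complete and essentially identical to the paper's.
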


\begin{proof}
We take $\alpha \in G$ and we assume that $\alpha \not\in (P^{-1} \cup G_1)$.
We are going to show that $\alpha \in P$, that is, $\alpha^{-1} \in P^{-1}$.
Let $\alpha = a \Delta^k$ be the $\Delta$-form of $\alpha$ and let $\ell$ be the length of $a$.
Then the $\Delta$-form of $\alpha^{-1}$ is $\com (a) \Delta^{-k -\ell}$.

{\it Case 1: $a \in M_1$.}
Then $k \ge 1$ because $\alpha \not\in (P^{-1} \cup G_1)$.
If $a=1$, then $\alpha^{-1} = \Delta^{-k} \in P^{-1}$.
So, we can assume that $a \neq 1$, and therefore $\ell \ge 1$.
Let $b \in M_1$ such that $ab = \Delta_1^\ell$.
We have $a^{-1} = \theta^\ell b \Delta^{-\ell}$, hence $\alpha^{-1} = \theta^\ell b \Delta^{-k-\ell}$ and $\com (a) = \theta^\ell b$.
Then $k + \ell \ge 1$ and $\dpt (\com (a)) = \zeta \ell + 1 \le \zeta \ell + \zeta k = \zeta (\ell + k)$, hence $\alpha^{-1} \in P^{-1}$.

{\it Case 2: $a \in \Theta$.}
We write $a = \theta^u a_0$ where $u \ge 1$ and $a_0 \in M_1$.
Since $\alpha \not \in P^{-1}$ we have $\dpt(a) = \zeta u+1 \ge \zeta (-k)+1$, hence $u \ge -k$.
We also have $u \neq -k$, otherwise we would have $\alpha = a_0 \Delta_1^{-u} \in G_1$.
So, $u > -k$.
Let $t$ be the length of $a_0$ and let $b_0 \in M_1$ such that $a_0b_0 = \Delta_1^t$.
We have $a_0^{-1} = b_0 \Delta_1^{-t}$, hence $a^{-1} = \theta^{t-u} b_0 \Delta^{-t}$, and therefore $\alpha^{-1} = \theta^{t-u} b_0 \Delta^{-k-t}$.
If $u < t$, then $\com(a) = \theta^{t-u} b_0$, $k + t > k+u \ge 1$ and $\dpt (\com (a)) = \zeta (t-u)+1 < \zeta (t+k) +1 = \dpt (\Delta^{t+k})$, hence $\alpha^{-1} \in P^{-1}$.
If $u \ge t$, then $\alpha^{-1} = b_0 \Delta_1^{u-t} \Delta^{-k-u}$, $\com (a) = b_0 \Delta_1^{u-t} \in M_1$, $k+u \ge 1$, and $\dpt (\com (a)) = 0 \le \zeta (k+u)$, hence $\alpha^{-1} \in P^{-1}$.

{\it Case 3: $a \in M \setminus \bar \Theta$.}
Since $(H,G_1)$ satisfies Condition B with constant $\zeta$, we have $0 = \dpt (1) = \dpt (a) + \dpt (\com (a)) - \zeta \ell -1$.
On the other hand, since $\Delta^\ell \in \bar \Theta$, by Lemma \ref{lem3_3}\,(6), $\com (a) \not \in \bar \Theta$, hence $\com (a) \not \in M_1$, and therefore $\dpt (\com (a)) \ge 1$.
Moreover, since $\alpha \not \in P^{-1}$, we have $\dpt (a) \ge \zeta (-k)+1$.
So,
\[
1 \le \dpt (\com (a)) = \zeta \ell +1 - \dpt(a)  \le \zeta \ell + 1 + \zeta k -1 = \zeta (\ell +k)\,.
\]
This inequality also implies that $\ell + k \ge 1$.
Thus, $\alpha^{-1} \in P^{-1}$.
\end{proof}

\begin{proof}[Proof of Theorem \ref{thm3_2}]
We have $PP \subset P$ by Lemma \ref{lem3_4}, we have $G_1 P G_1 \subset P$ by Lemma \ref{lem3_5}, and we have the disjoint union $G = P \sqcup P^{-1} \sqcup G_1$ by Lemma \ref{lem3_6}, Lemma \ref{lem3_7} and Lemma \ref{lem3_8}.
\end{proof}


\section{Artin groups of type A}\label{Sec4}

In this section we assume that $G$ and $M$ are the Artin group and the Artin monoid of type $A_n$, respectively, where $n \ge 2$.
Recall that $G$ is defined by the presentation
\[
G = \langle s_1, \dots, s_n \mid s_i s_j s_i = s_j s_i s_j \text{ for } |i-j|=1,\ s_i s_j = s_j s_i \text{ for } |i-j| \ge 2 \rangle\,,
\]
and that $M$ is the submonoid of $G$ generated by $s_1, s_2, \dots, s_n$.
Recall also that $G$ is the braid group $\BB_{n+1}$ on $n+1$ strands and $M$ is the positive braid monoid $\BB_{n+1}^+$.
By Brieskorn--Saito \cite{BriSai1} and Deligne \cite{Delig1}, $(G,M, \Omega)$ is a Garside structure, where $\Omega = (s_1 \cdots s_n) \cdots (s_1 s_2 s_3) (s_1 s_2) s_1$.
The element $\Omega$ is not central in $G$ but $\Delta = \Omega^2 = (s_1 \cdots s_n)^{n+1}$ is central and, by Dehornoy \cite{Dehor3}, $(G,M,\Delta)$ is also a Garside structure on $G$.
The latter is the Garside structure that we consider in this section.

We denote by $G_1$ (resp. $M_1$) the subgroup of $G$ (resp. the submonoid of $M$) generated by $s_2, \dots, s_{n}$ and we set $\Delta_1 = (s_2 \cdots s_n)^n$.
Then  $(G_1, M_1, \Delta_1)$ is a parabolic substructure of $(G,M, \Delta)$ and $\Delta_1$ is central in $G_1$.
On the other hand, we denote by $H$ (resp. $N$) the subgroup of $G$ (resp. the submonoid of $M$) generated by $s_1, \dots, s_{n-1}$ and we set $\Lambda = (s_1 \cdots s_{n-1})^n$.
Again, $(H,N, \Lambda)$ is a parabolic substructure of $(G,M,\Delta)$.
Observe that $M_1 \cup N$ generates $M$.

The purpose of this section is to prove the following. 

\begin{thm}\label{thm4_1}
The pair $(H, G_1)$ satisfies Condition A with constant $\zeta = 1$ and Condition B with constant $\zeta = 1$.
\end{thm}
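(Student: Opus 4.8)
The plan is to translate both conditions into statements about the breadth function $\bh = \bh_{N,M_1}$ on $\BB_{n+1}^+$ and to feed these through Theorem \ref{thm2_5} together with the structure of the alternating form on which it rests.

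\emph{Condition A.} I would apply Theorem \ref{thm2_5} (in which the Garside element is the half-twist $\Omega$, so that the central element $\Delta$ used in this section equals $\Omega^2$) to the positive braid $\Delta^k = \Omega^{2k}$. For every $j \in \Z$ one has $\Omega^{-j}\Delta^k = \Omega^{2k-j}$. A nontrivial positive braid lying outside $M_1 = \langle s_2, \dots, s_n \rangle$ is never $s_1$-negative, so $\Omega^m$ is $s_1$-negative exactly when $m \le -1$; hence $\Omega^{2k-j}$ is $s_1$-negative if and only if $j \ge 2k+1$. Comparing with Theorem \ref{thm2_5} forces $\max\{1, \bh(\Delta^k) - 1\} = 2k+1$, hence $\bh(\Delta^k) = 2k+2$ for every $k \ge 1$; as this number is even, $\dpt(\Delta^k) = k+1$, which is Condition A with $\zeta = 1$.

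\emph{Condition B.} The core is a near-multiplicativity of $\dpt$ with respect to the alternating form over $(N, M_1)$. The steps I would carry out are: (i) read off divisibility by $\Omega$, hence by $\Delta = \Omega^2$, from the alternating form of an unmovable element, using that conjugation by $\Omega$ exchanges the parabolic submonoids $N$ and $M_1$; (ii) show that if $a = a_p \cdots a_1$ and $b = b_q \cdots b_1$ are alternating forms then the alternating form of $ab$ is obtained by amalgamating the two sequences at the junction $a_1 b_q$ and re-normalizing, the renormalization being controlled by Proposition \ref{prop2_4} and Theorem \ref{thm2_3}, so that $\dpt(ab) = \dpt(a) + \dpt(b) - \varepsilon'$ for some $\varepsilon' \in \{0, 1\}$; (iii) extract the maximal power $\Delta^t$ dividing $ab$, noting that each extracted $\Delta$ lowers the depth by exactly $\zeta = 1$, except that the passage into $M_1$ costs one extra unit --- the same offset ``$+1$'' that appears in Condition A and in Lemma \ref{lem3_3}\,(2). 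Tracking these two corrections and peeling off $\theta$-powers via Lemma \ref{lem3_3}\,(3)--(5) in the cases $a \in \Theta$ or $b \in \Theta$, one obtains in each case an $\varepsilon \in \{0,1\}$ with $\dpt(c) = \dpt(a) + \dpt(b) - t - \varepsilon$, and one checks that $\varepsilon = 1$ precisely when $a \in \Theta$, $b \in \Theta$, or $c \in M_1$. The verification is a finite case analysis organized by whether each of $a$ and $b$ lies in $M_1$, in $\Theta$, or outside $\bar\Theta$; when one of $a,b$ lies in $\bar\Theta$ one also uses Lemma \ref{lem3_3}\,(6) to keep the relevant products outside $\bar\Theta$ where needed.

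The main obstacle is step (ii): controlling how the alternating (rotating-type) normal form of a positive braid behaves under multiplication and under division by $\Delta$. This is genuine braid combinatorics, at the level of Fromentin--Paris \cite{FroPar1} rather than of formal Garside theory, and the parity bookkeeping --- breadth versus depth, even versus odd breadth, the constant offset --- must be arranged so that the junction correction $\varepsilon'$ and the $M_1$-offset can never combine to exceed $1$. A secondary subtlety is that Condition B is \emph{not} assumed for pairs in $\bar\Theta \times \bar\Theta$, so in the cases where $a$ or $b$ is a theta element one must take care to invoke it only for the genuinely non-theta pairs that arise, which is exactly where Lemma \ref{lem3_3}\,(4)--(6) enters.
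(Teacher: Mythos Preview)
Your argument for Condition A is fine and is essentially the paper's (the paper simply cites Dehornoy \cite{Dehor2} for $\bh(\Omega^{2k})=2k+2$; you recover this from Theorem \ref{thm2_5}, which is legitimate).

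For Condition B your outline has the right shape, but step (ii) is where the actual content lies, and the method you propose does not deliver it. Amalgamating two alternating forms at the junction $a_1 b_q$ and ``re-normalizing'' gives, via Lemma \ref{lem4_6}, only the \emph{upper} bound $\dpt(ab)\le \dpt(a)+\dpt(b)$; it does not give the lower bound $\dpt(ab)\ge \dpt(a)+\dpt(b)-1$, because the re-normalization of an alternating form after concatenation is \emph{not} local to the junction --- cancellations can in principle propagate. Proposition \ref{prop2_4} and Theorem \ref{thm2_3} control tails inside a fixed parabolic, not how far a cascade of tail-extractions can run. The paper sidesteps this entirely: it applies Theorem \ref{thm2_5} in both directions. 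From $\bh(a)\in\{2p,2p+1\}$ one gets that $\Omega^{-2p}a$ is $s_1$-negative and $\Omega^{-2p+2}a$ is $s_1$-nonnegative; likewise for $b$; since $s_1$-sign is multiplicative, $\Omega^{-2p-2q}ab$ is $s_1$-negative and $\Omega^{-2p-2q+4}ab$ is $s_1$-nonnegative; feeding this back into Theorem \ref{thm2_5} pins $\bh(ab)$ to the window $[2p+2q-2,\,2p+2q+1]$, hence $\dpt(ab)\in\{p+q-1,p+q\}$. That trick --- converting near-additivity of depth into multiplicativity of the Dehornoy sign --- is the engine of the proof, and it is absent from your sketch.

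The $c\in M_1$ clause also needs more than ``the passage into $M_1$ costs one extra unit''. One has to \emph{exclude} the possibility $\dpt(a)+\dpt(b)=t+2$ with $a,b\notin\bar\Theta$: the paper does this (Lemma \ref{lem4_11}) by showing, again via Theorem \ref{thm2_5}, that if $ab\Delta^{-t}\in M_1$ and $\dpt(a)+\dpt(b)=t+2$ then $a\Delta^{1-\dpt(a)}$ and $b\Delta^{1-\dpt(b)}$ both lie in $G_1$, whence $a,b\in\bar\Theta$ by Lemma \ref{lem4_10}. Your proposal gestures at this but does not supply the mechanism. In short: the architecture is right, but the load-bearing inequality and the $c\in M_1$ exclusion both rest on the bidirectional use of Theorem \ref{thm2_5}, not on local manipulation of alternating forms.
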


By applying Theorem \ref{thm3_2} we deduce the following.

\begin{corl}\label{corl4_2}
The pair $(H,G_1)$ is a Dehornoy structure.
\end{corl}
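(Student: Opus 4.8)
The statement is an immediate consequence of Theorem~\ref{thm3_2}, so the plan is to verify that the hypotheses of that theorem hold in the present setting and then invoke it. Theorem~\ref{thm3_2} is stated under the standing assumptions of Section~\ref{Sec3}: a Garside structure $(G,M,\Delta)$ with $\Delta$ central in $G$, together with two parabolic substructures $(H,N,\Lambda)$ and $(G_1,M_1,\Delta_1)$ such that $N \neq M$, $M_1 \neq M$, $N \cup M_1$ generates $M$, and $\Delta_1$ is central in $G_1$. First I would recall that all of these are recorded in the opening paragraphs of this section: $(G,M,\Delta)$ with $\Delta = \Omega^2 = (s_1\cdots s_n)^{n+1}$ is a Garside structure with $\Delta$ central by Dehornoy~\cite{Dehor3}; $(G_1,M_1,\Delta_1)$ and $(H,N,\Lambda)$ are parabolic substructures; $\Delta_1$ is central in $G_1$; $M_1 \cup N$ generates $M$ since $\{s_2,\dots,s_n\} \cup \{s_1,\dots,s_{n-1}\} = S$; and $N \neq M$, $M_1 \neq M$ because $n \ge 2$.

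It then remains to exhibit a constant $\zeta \ge 1$ for which $(H,G_1)$ satisfies Condition~A with constant $\zeta$ and Condition~B with constant $\zeta$. This is exactly the content of Theorem~\ref{thm4_1}, with $\zeta = 1$. Granting Theorem~\ref{thm4_1}, Theorem~\ref{thm3_2} applies directly and yields that $(H,G_1)$ is a Dehornoy structure, which is the assertion of the corollary; no further argument is needed.

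So the genuine work sits in Theorem~\ref{thm4_1}, not in the corollary, and that is where I expect the main obstacle to be. For Condition~A one has to show $\dpt(\Delta^k) = k+1$ for all $k \ge 1$: compute the $(N,M_1)$-alternating form of $\Delta^k = (s_1\cdots s_n)^{k(n+1)}$ and count its even-indexed syllables. Here I would try to leverage Theorem~\ref{thm2_5}, which ties the $s_1$-negativity of $\Delta^{-k}a$ to the breadth $\bh(a)$ of a positive braid and should let one read off $\bh(\Delta^k)$, hence $\dpt(\Delta^k)$. For Condition~B, given a pair $(a,b)$ of unmovable positive braids not both in $\bar\Theta$ and the $\Delta$-form $ab = c\Delta^t$, one must establish $\dpt(c) = \dpt(a) + \dpt(b) - t - \varepsilon$ for a suitable $\varepsilon \in \{0,1\}$ with $\varepsilon = 1$ whenever $a \in \Theta$, $b \in \Theta$, or $c \in M_1$. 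The hard part will be this additivity-up-to-$\varepsilon$ estimate: it forces a careful analysis of how the alternating decomposition with respect to $(N,M_1)$, and in particular the number of $M_1$-free syllables, changes when one multiplies two positive braids and then extracts the central power $\Delta^t$. Once Theorem~\ref{thm4_1} is secured, the corollary follows for free.
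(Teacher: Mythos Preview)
Your proposal is correct and matches the paper's approach exactly: the corollary is deduced immediately from Theorem~\ref{thm4_1} by invoking Theorem~\ref{thm3_2}, with the standing hypotheses of Section~\ref{Sec3} having been checked in the opening paragraphs of Section~\ref{Sec4}. Your additional sketch of how one might attack Theorem~\ref{thm4_1} is not part of the corollary's proof, but it accurately anticipates where the substantive work lies.
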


For $1 \le i \le n-1$ we set $G_i = \langle s_{i+1}, \dots, s_n \rangle$, $M_i = \langle s_{i+1}, \dots, s_n \rangle^+$, $\Delta_i = (s_{i+1} \cdots s_n)^{n+1-i}$ and $H_i = \langle s_i, \dots, s_{n-1} \rangle$.
By iterating Corollary \ref{corl4_2} and applying Proposition \ref{prop3_1} we get the following.

\begin{corl}\label{corl4_3}
\begin{itemize}
\item[(1)]
For each $1 \le i \le n-1$ the pair $(H_i, G_i)$ is a Dehornoy structure on $(G_{i-1}, M_{i-1}, \Delta_{i-1})$, where $(G_0, M_0, \Delta_0) = (G, M, \Delta)$.
\item[(2)]
For each $1 \le i \le n-1$ we denote by $P_i$ the set of $(H_i,G_i)$-positive elements of $G_{i-1}$.
Furthermore we set $P_n = \{ s_n^k \mid k \ge 1 \}$.
For each $\epsilon = (\epsilon_1, \dots, \epsilon_n) \in \{ \pm 1\}^n$ the set $P^\epsilon = P_1^{\epsilon_1} \sqcup \cdots \sqcup P_n^{\epsilon_n}$ is the positive cone for a left-order on $G$.
\end{itemize}
\end{corl}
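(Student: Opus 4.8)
The plan is to deduce both parts by iterating Corollary \ref{corl4_2} along the descending chain of parabolic subgroups $G = G_0 \supset G_1 \supset \cdots \supset G_{n-1}$ and then invoking Proposition \ref{prop3_1}; no new estimate is needed beyond what Theorem \ref{thm4_1} already gives for braid groups.

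For Part (1), the first step is to identify, for each $i \in \{1, \dots, n-1\}$, the subgroup $G_{i-1} = \langle s_i, \dots, s_n \rangle$ with the Artin group of type $A_{n-i+1}$ on the generators $s_i, \dots, s_n$, to note that $n-i+1 \ge 2$, and to observe that $M_{i-1}$ is its Artin monoid while $(G_{i-1}, M_{i-1}, \Delta_{i-1})$ is exactly the Garside structure studied in this section for that type: indeed $\Delta_{i-1} = (s_i \cdots s_n)^{n+2-i}$ is the full twist of the braid group $G_{i-1}$ on the strands $i, \dots, n+1$, hence central in $G_{i-1}$, and likewise $\Delta_i$ is central in $G_i$. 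The second step is to check, exactly as is done for $(H, G_1)$ at the start of the section, that $\{s_i, \dots, s_{n-1}\}$ and $\{s_{i+1}, \dots, s_n\}$ generate parabolic subgroups $H_i$ and $G_i$ of $G_{i-1}$, that the parabolic substructures of $(G_{i-1}, M_{i-1}, \Delta_{i-1})$ they carry are $(H_i, N_i, \Lambda_i)$ and $(G_i, M_i, \Delta_i)$, where $N_i = \langle s_i, \dots, s_{n-1}\rangle^+$ and $\Lambda_i = (s_i \cdots s_{n-1})^{n+1-i}$, that $N_i \cup M_i$ generates $M_{i-1}$, and that $N_i, M_i$ are proper in $M_{i-1}$. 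With these points recorded, $G_{i-1}$ carries precisely the data to which Corollary \ref{corl4_2} applies, so $(H_i, G_i)$ is a Dehornoy structure on $(G_{i-1}, M_{i-1}, \Delta_{i-1})$; this is Part (1).

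For Part (2), I would simply match the output of Part (1) with the hypotheses of Proposition \ref{prop3_1}: the chain $G_0 = G, G_1, \dots, G_{n-1}$ and the subgroups $H_1, \dots, H_{n-1}$ satisfy $G_{i+1}, H_{i+1} \subset G_i$, by Part (1) the pair $(H_{i+1}, G_{i+1})$ is a Dehornoy structure on $G_i$ for every $i \in \{0, \dots, n-2\}$, and $G_{n-1} = \langle s_n \rangle \simeq \Z$, so choosing $s_n$ as generator makes the last positive cone equal to $P_n = \{ s_n^k \mid k \ge 1 \}$. Proposition \ref{prop3_1} — with its ``$n$'' taken to be $n-1$ here, and with the indices of the positive cones and of $\epsilon$ shifted up by one to match the present numbering — then yields that $P^\epsilon = P_1^{\epsilon_1} \sqcup \cdots \sqcup P_n^{\epsilon_n}$ is the positive cone of a left-order on $G$ for every $\epsilon \in \{\pm 1\}^n$.

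The only step requiring real care, and where I would spend the effort, is the coherence of the iterated Garside structures used in Part (1): one must make sure that the parabolic substructure of $(G_{i-1}, M_{i-1}, \Delta_{i-1})$ carried by $G_i$ really has Garside element $\Delta_i$, and not the half twist $\Omega_i$ (with $\Omega_i^2 = \Delta_i$) or some other power of it, and similarly for $H_i$ and $\Lambda_i$. By the remark following Theorem \ref{thm2_3}, this Garside element is the $\le_R$-largest element of $G_i$ that right-divides $\Delta_{i-1}$, so it suffices to see that $\Delta_i \le_R \Delta_{i-1}$ and that every element of $G_i$ right-dividing $\Delta_{i-1}$ also right-divides $\Delta_i$. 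The divisibility is the identity $\Delta_{i-1} = (\Delta_{i-1}\Delta_i^{-1})\,\Delta_i$ together with $\Delta_{i-1}\Delta_i^{-1} \in M_{i-1}$ (the positive braid in which the $i$-th strand loops once around the strands $i+1, \dots, n+1$), and the maximality follows from Theorem \ref{thm2_3}\,(2) applied to the classical half-twist Garside structure: an element of $M_i$ right-dividing $\Omega_{i-1}^2 = \Delta_{i-1}$ has $\Omega_{i-1}$-length at most $2$, its greedy normal form is already a normal form over $\Div(\Omega_i)$, and hence it right-divides $\Omega_i^2 = \Delta_i$. This is the same reasoning that underlies the analogous assertion for $(H, G_1)$ at the beginning of the section, so once it is made explicit the corollary follows at once from Corollary \ref{corl4_2} and Proposition \ref{prop3_1}.
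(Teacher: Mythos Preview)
Your proposal is correct and follows exactly the approach the paper indicates: iterate Corollary~\ref{corl4_2} along the chain $G_0 \supset G_1 \supset \cdots \supset G_{n-1}$ and then invoke Proposition~\ref{prop3_1}. You have in fact supplied more detail than the paper does---in particular the verification that the parabolic Garside element of $G_i$ inside $(G_{i-1}, M_{i-1}, \Delta_{i-1})$ is $\Delta_i$ rather than $\Omega_i$---whereas the paper simply states the corollary as an immediate consequence without further argument.
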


Before proving Theorem \ref{thm4_1} we show that the orders on $G$ given in Corollary \ref{corl4_3}\,(2) coincide with those obtained using Theorem \ref{thm1_1}.
More precisely we prove the following.

\begin{prop}\label{prop4_4}
The set $P = P_{H,G_1}$ of $(H,G_1)$-positive elements is equal to the set of $s_1$-positive elements of $G = \BB_{n+1}$.
\end{prop}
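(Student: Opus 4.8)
The strategy is to show that the two descriptions of the ``positive cone'' $P_1^+$ in $\BB_{n+1}$ coincide by comparing them both to the single invariant built from $\Delta$-forms and alternating forms. Recall the setup: $G_1 = \langle s_2,\dots,s_n\rangle$ is exactly the subgroup of Theorem \ref{thm1_1} whose elements are neither $s_1$-positive nor $s_1$-negative, and $G = P \sqcup P^{-1} \sqcup G_1$ by Corollary \ref{corl4_2}, while $G = P_1^+ \sqcup P_1^- \sqcup G_1$ by Theorem \ref{thm1_1}. So it suffices to prove just the inclusion $P \subset P_1^+$ (equivalently $P^{-1}\subset P_1^-$); the reverse inclusion then follows automatically from these two disjoint-union decompositions, since $P_1^-$ is disjoint from $P_1^+$ and from $G_1$, hence $P_1^- \subset P^{-1}$, and similarly $P_1^+\subset P$. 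In fact it is cleaner to prove $P^{-1}\subset P_1^-$: an element $\alpha\in P^{-1}$ has $\Delta$-form $\alpha = a\,\Delta^{-k}$ with $k\ge 1$ and $\dpt(a)<\dpt(\Delta^k) = \zeta k+1 = k+1$, i.e. $\dpt(a)\le k$.

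The heart of the argument is Theorem \ref{thm2_5} of Fromentin--Paris: for $a\in\BB_{n+1}^+$ and $m\in\Z$, the element $\Delta^{-m}a$ is $s_1$-negative if and only if $m\ge\max\{1,\bh(a)-1\}$. First I would reconcile the two conventions. Theorem \ref{thm2_5} uses $\bh = \bh_{N_2,N_1}$ where $N_1=\langle s_2,\dots,s_n\rangle^+$ and $N_2 = \langle s_1,\dots,s_{n-1}\rangle^+$; in the notation of Section \ref{Sec3} (applied with $N = \langle s_1,\dots,s_{n-1}\rangle^+$ being the ``$H$''-monoid and $M_1 = \langle s_2,\dots,s_n\rangle^+$ being the ``$G_1$''-monoid), alternating forms are taken with respect to $(N,M_1)$, and $\dpt(a)$ counts the number of factors not lying in $M_1$. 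A short check shows $\dpt(a) = \lfloor \bh(a)/2\rfloor$ in the odd/even split as defined, and more to the point that the condition $\dpt(a)\le k$ with $k\ge 1$ is exactly equivalent to $\bh(a)-1\le 2k$, hence (using $k\ge 1$) to $\max\{1,\bh(a)-1\}\le 2k$; here I also use Condition A, $\dpt(\Delta^k) = k+1$, which is exactly Lemma \ref{lem3_3}\,(2) combined with the fact that $\Delta^k = \theta^k\Delta_1^k$ is a theta element. So $\alpha = a\Delta^{-k} = \Delta^{-2k}(\Delta^{k}a\Delta^{-k})$—and since $\Delta$ is central, $\alpha = \Delta^{-2k}a$—is $s_1$-negative by Theorem \ref{thm2_5} with $m = 2k$, because $2k\ge\max\{1,\bh(a)-1\}$.

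Wait—I must be careful that the exponent matching works: $\alpha = a\Delta^{-k}$, and I want to write this as $\Delta^{-m}a'$ with $a'\in M$ in the form covered by Theorem \ref{thm2_5}. Since $\Delta$ is central, $\alpha = \Delta^{-k}a$, so the relevant value is $m=k$, and I need $k\ge\max\{1,\bh(a)-1\}$, i.e. $\bh(a)-1\le k$, i.e. $\dpt(a)\le \lceil k/2\rceil$ roughly—this does not immediately match $\dpt(a)\le k$. The correct reconciliation is therefore the crux and the main obstacle: I must pin down the precise relation between $\bh_{N_2,N_1}$ (the breadth used in \cite{FroPar1}, which alternates starting from $N_1 = \langle s_2,\dots,s_n\rangle$) and the depth $\dpt$ defined in Section \ref{Sec3} via alternating forms with respect to $(N,M_1)$, and to confirm that ``$a$ unmovable of depth $\le k$'' translates exactly into the Fromentin--Paris inequality for the correct power of $\Delta$. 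I expect that unwinding the definition of $\bh$ and the fact that $\Delta^k$ has depth $k+1$ (so that an unmovable $a$ satisfies $\dpt(a)\le k$ precisely when $\alpha=a\Delta^{-k}\in P^{-1}$) yields $\bh(a) - 1 \le 2\dpt(a) \le 2k$, and since Theorem \ref{thm2_5} is about $\Delta^{-m}$ not $\Omega^{-m}$ one must track whether the relevant Garside element there is $\Omega$ or $\Delta = \Omega^2$; if it is $\Omega$, then $\alpha = a\Delta^{-k} = \Omega^{-2k}a$ gives $m = 2k$ and the inequality $2k\ge\max\{1,\bh(a)-1\}$ holds. Once this bookkeeping is settled, the proof is: $P^{-1}\subset P_1^-$ via Theorem \ref{thm2_5}, then the two disjoint unions force equality $P^{-1}=P_1^-$ and hence $P = P_1^+$. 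The only real work is the translation between the breadth/depth conventions and the two normalizations $\Omega$ versus $\Delta$ of the Garside element.
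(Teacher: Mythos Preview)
Your plan is correct once the bookkeeping you flag is resolved, and it does resolve: in Section~\ref{Sec4} the $\Delta$ of Theorem~\ref{thm2_5} is the half-twist $\Omega$, so $\alpha = a\Delta^{-k} = \Omega^{-2k}a$ (using centrality of $\Delta=\Omega^2$), and $\dpt(a)\le k$ gives $\bh(a)-1\le 2\dpt(a)\le 2k$, whence $2k\ge\max\{1,\bh(a)-1\}$ and Theorem~\ref{thm2_5} yields $\alpha\in P_1^-$. The disjoint-union argument then forces $P^{-1}=P_1^-$.

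However, your route is genuinely different from the paper's. The paper proves the \emph{other} inclusion $P_1^+\subset P$, and does so without invoking Theorem~\ref{thm2_5} at all: it simply checks that $s_1\in P$ (since $s_1=s_1\Delta^0$ is its own $\Delta$-form, so $s_1\notin P^{-1}$, and obviously $s_1\notin G_1$), and then uses the already-established closure properties $PP\subset P$ and $G_1PG_1\subset P$ of the Dehornoy structure to conclude that any $s_1$-positive word $\alpha_0 s_1\alpha_1\cdots s_1\alpha_p$ lies in $P$. This is more economical: Theorem~\ref{thm2_5} was the heavy input needed to prove Condition~B (Proposition~\ref{prop4_12}), and once Corollary~\ref{corl4_2} is in hand the paper extracts Proposition~\ref{prop4_4} from purely formal properties. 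Your approach re-invokes Theorem~\ref{thm2_5} a second time, which works but is slightly redundant; on the other hand it makes the connection between the depth condition defining $P^{-1}$ and $s_1$-negativity completely explicit, which has some expository value.
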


\begin{proof}
Let $P'$ denote the set of $s_1$-positive elements of $G$.
We know by Dehornoy \cite{Dehor1} that we have the disjoint union $G = P' \sqcup P'^{-1} \sqcup G_1$.
We also know by Corollary \ref{corl4_2} that $P P \subset P$, $G_1 P G_1 \subset P$ and $G = P \sqcup P^{-1} \sqcup G_1$.
Let $\alpha \in P'$.
By definition $\alpha$ is written $\alpha = \alpha_0 s_1 \alpha_1 \cdots s_1 \alpha_p$ where $p \ge 1$ and $\alpha_0, \alpha_1, \dots, \alpha_p \in G_1$.
The $\Delta$-form of $s_1$ is $s_1 = s_1 \Delta^0$, hence $s_1$ does not lie in $P^{-1}$.
The element $s_1$ does not lie in $G_1$ either, hence $s_1$ lies in $P$.
Since $P P \subset P$ and $G_1 P G_1 \subset P$ we deduce that $\alpha$ lies in $P$.
So, $P' \subset P$ and therefore $P'^{-1} \subset P^{-1}$.
Since we have disjoint unions $G = P \sqcup P^{-1} \sqcup G_1$ and $G = P' \sqcup P'^{-1} \sqcup G_1$ we conclude that $P = P'$ and $P^{-1} = P'^{-1}$.
\end{proof}

The rest of the section is dedicated to the proof of Theorem \ref{thm4_1}.
We recall once for all the expressions of $\Delta$ and $\theta$ over the standard generators. 
\begin{gather*}
\Delta = (s_1 s_2 \cdots s_n)^{n+1} = (s_1 \cdots s_{n-1} s_n^2 s_{n-1} \cdots s_1) \cdots (s_{n-1} s_n^2 s_{n-1}) s_n^2\,, \\
\theta = s_1 \cdots s_{n-1} s_n^2 s_{n-1} \cdots s_1\,.
\end{gather*}

\begin{prop}\label{prop4_5}
The pair $(H, G_1)$ satisfies Condition A with constant $\zeta = 1$.
\end{prop}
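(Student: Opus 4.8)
The plan is to reduce Condition~A to a single statement about the $(N,M_1)$-alternating form of the powers of $\theta$, and to prove that by induction on the exponent.

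I would start with two reductions. Since $\Delta=\theta\Delta_1$ with $\Delta_1\in M_1$ and $\Delta$ central, $\Delta$ and $\Delta_1$ commute, so $\Delta^k=\theta^k\Delta_1^k$. By the composition rule for $M_1$-tails, right multiplication by $a_0\in M_1$ changes an $(N,M_1)$-alternating form only in its rightmost factor: as $\tau_{M_1}(a_0)=a_0$ one gets $\tau_{M_1}(xa_0)=\tau_{M_1}(x)\,a_0$, after which the alternating forms of $xa_0$ and of $x$ coincide from the second factor on; in particular $\bh(\Delta^k)=\bh(\theta^k)$. Moreover, in any alternating form $a=a_p\cdots a_1$ every even-indexed factor lies in $N\setminus M_1$ (a nontrivial $a_{2j}\in M_1$ would be absorbable into $a_{2j-1}$, contradicting maximality of the $M_1$-tail $a_{2j-1}$), so $\dpt(a)=\lfloor\bh(a)/2\rfloor$ for all $a$. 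Hence Condition~A with $\zeta=1$, that is $\dpt(\Delta^k)=k+1$, is equivalent to
\[
\bh(\theta^k)=2k+2 \qquad\text{for all }k\ge1 ,
\]
and this is what I would prove.

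Set $v=s_{n-1}s_{n-2}\cdots s_1\in N$ and $w=s_2s_3\cdots s_{n-1}s_n^2\in M_1$, so that $\theta=s_1\,w\,v$; note that $s_1$, $v=s_{n-1}\cdots s_1$ and $vs_1=s_{n-1}\cdots s_2s_1^2$ all lie in $N\setminus M_1$. The central claim, proved by induction on $k\ge1$, is that the $(N,M_1)$-alternating form of $\theta^k$ is
\[
\theta^k \;=\; s_1\cdot w\cdot(vs_1\cdot w)^{k-1}\cdot v\cdot 1 ,
\]
a word of breadth $2k+2$ whose even-indexed factors are $v,vs_1,\dots,vs_1,s_1$ (thus $k+1$ of them, all outside $M_1$) and whose odd-indexed factors are $1,w,\dots,w$. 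The base case $k=1$ reads $\theta=s_1\cdot w\cdot v\cdot1$, which I would check by hand: $\tau_{M_1}(\theta)=1$ (no generator $s_j$ with $j\ge2$ right-divides $s_1\cdots s_{n-1}s_n^2s_{n-1}\cdots s_1$), then $\tau_N(\theta)=v$ because $\theta=(s_1\cdots s_{n-1}s_n^2)\,v$ and $1$ is the only element of $N$ right-dividing $s_1\cdots s_{n-1}s_n^2$, and finally $w$ and $s_1$ peel off. For the inductive step I would use $\theta^{k+1}=\theta^k\,\theta$: the rightmost nontrivial factor $v$ of $\theta^k$ and the leftmost factor $s_1$ of $\theta$ both lie in $N$, hence combine, and multiplying the two words out gives the predicted $s_1\cdot w\cdot(vs_1\cdot w)^{k}\cdot v\cdot1$. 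It then remains to verify that this factorization is genuinely the alternating form, i.e.\ that the successive $M_1$- and $N$-tails of the partial products are exactly $1,v,w,vs_1,w,vs_1,\dots,w,s_1$ and nothing larger. Each of these verifications reduces to one of two right-divisibility facts in $\BB_{n+1}^+$: no generator $s_j$ with $j\ge2$ right-divides a positive word ending in $vs_1=s_{n-1}\cdots s_2s_1^2$, and no generator $s_j$ with $j\le n-1$ right-divides a positive word ending in $w=s_2\cdots s_{n-1}s_n^2$; in each case the trailing square ($s_1^2$, resp.\ $s_n^2$) together with the adjacent generator blocks the only braid relation that could bring a foreign generator to the right end, and the statements can be confirmed by word reversing.

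Granting the displayed alternating form, the $k+1$ even-indexed factors of $\theta^k$ all lie outside $M_1$, so $\dpt(\theta^k)=k+1$, whence $\dpt(\Delta^k)=\dpt(\theta^k)=k+1$ and Condition~A holds with constant $\zeta=1$. The delicate point — essentially the only real work — is the tail bookkeeping in the inductive step: one must rule out that an $N$-tail or $M_1$-tail over-reaches and absorbs letters coming from further left in the word. I expect this is cleanest to isolate as a short auxiliary lemma on right-divisors of positive braid words of the two shapes above (or to extract from the structure of the $\phi$-normal forms recalled earlier); everything else is bookkeeping.
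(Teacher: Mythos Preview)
Your overall strategy is sound and differs from the paper's one-line proof, which simply invokes Dehornoy's computation $\bh(\Omega^{2k})=2k+2$ from \cite{Dehor2}. Your self-contained inductive computation of the alternating form of $\theta^k$ is in fact the special case $a=\theta^{k-1}$ of the paper's later Lemma~\ref{lem4_8}, which proves $\bh(a\theta)=\bh(a)+2$ for any $a\notin M_1$; so your route is close to one the paper takes anyway, just deployed earlier and on a narrower target.

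There is, however, a genuine gap in the tail verifications: the two general right-divisibility facts you state are both false. For $n=3$, the word $s_2s_3\cdot(s_2s_1^2)$ ends in $vs_1=s_2s_1^2$, yet
\[
s_2s_3s_2s_1^2=s_3s_2s_3s_1^2=s_3s_2s_1^2\,s_3,
\]
so $s_3$ right-divides it. Likewise $s_2s_1\cdot(s_2s_3^2)$ ends in $w=s_2s_3^2$, yet
\[
s_2s_1s_2s_3^2=s_1s_2s_1s_3^2=s_1s_2s_3^2\,s_1,
\]
so $s_1$ right-divides it. The trailing square does not by itself block a foreign generator from reaching the right end when the prefix supplies the missing braid partner. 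What is true, and what you actually need, concerns only the \emph{specific} prefixes occurring in $\theta^k$: for these one argues via joins rather than suffixes. For instance, $\tau_{M_1}(X\theta)=1$ whenever $\tau_{M_1}(X)=1$ follows from Lemma~\ref{lem3_3}(1), since $x\vee_R\theta=x\theta$ for $x\in M_1$ forces $x\le_R X$; and $\tau_N(Y\cdot s_1w)=1$ whenever $\tau_{M_1}(Y)=1$, using the identity $(s_1w)\vee_R s_i=s_{i+1}(s_1w)$ for $i\le n-1$. These are exactly the computations carried out in the paper's proof of Lemma~\ref{lem4_8}; with them in place your induction goes through cleanly.
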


\begin{proof}
Let $k \ge 1$.
Then, by Dehornoy \cite{Dehor2}, $\bh (\Delta^k) = \bh(\Omega^{2k}) = 2k+2$, hence $\dpt (\Delta^k) = k+1$.
\end{proof}

It remains to show that $(H,G_1)$ satisfies Condition B with constant $\zeta = 1$ (see Proposition \ref{prop4_12}).
This is the goal of the rest of the section.

An \emph{$(N,M_1)$-expression of length $p$} of an element $a \in M$ is defined to be an expression of $a$ of the form $a = a_p \cdots a_2 a_1$ with $a_i \in N$ if $i$ is even and $a_i \in M_1$ if $i$ is odd.

\begin{lem}[Dehornoy \cite{Dehor2}, Burckel \cite{Burck1}]\label{lem4_6}
Let $a \in M$ and let $a = a_p \cdots a_2 a_1$ be an $(N,M_1)$-expression of $a$.
Then $p \ge \bh (a)$.
\end{lem}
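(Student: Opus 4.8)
The plan is to show that, among all $(N,M_1)$-expressions of $a$, the alternating form defining $\bh(a)$ has minimal length; equivalently, the alternating form is ``geodesic'' in the slotted sense. The engine of the argument is the extremal property of tails recalled in Proposition \ref{prop2_4}: for a parabolic submonoid $N'$ of $M$ and $x\in M$, the element $\tau_{N'}(x)$ is exactly the $\le_R$-greatest right-divisor of $x$ lying in $N'$, and consequently $\tau_{N'}$ is monotone for $\le_R$. I would argue by induction on $p$, proving simultaneously the symmetric statement for $(M_1,N)$-expressions, since peeling off one tail interchanges the roles of $N$ and $M_1$.

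First I would dispose of the easy reductions. If $a\in M_1$ then $\bh(a)=1\le p$, so one may assume $a\notin M_1$; in particular $p\ge 2$. If the leftmost factor $a_p$ is trivial, then $a=a_{p-1}\cdots a_1$ is an $(N,M_1)$-expression of length $p-1$ and the induction hypothesis applies. If some interior factor $a_i$ (with $1<i<p$) is trivial, then $a_{i-1}$ and $a_{i+1}$ lie in the same factor ($N$ or $M_1$, according to the common parity of $i-1$ and $i+1$), so their product may be amalgamated into a single slot, producing an $(N,M_1)$-expression of length $p-2$, and the induction hypothesis applies again.

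There remains the case $a\notin M_1$. Writing $\bar a_1=\tau_{M_1}(a)$ for the rightmost term of the alternating form of $a$, one has the identity $\bh_{N,M_1}(a)=1+\bh_{M_1,N}(a\,\bar a_1^{-1})$, because after removing the $M_1$-tail the remaining alternation of $a\,\bar a_1^{-1}$ starts with an $N$-tail. The key step — and the main obstacle — is an \emph{absorption} claim: the element $a\,\bar a_1^{-1}$ admits an $(M_1,N)$-expression of length at most $p-1$. Intuitively, removing the full $M_1$-tail $\bar a_1$ from $a=a_p\cdots a_1$ ``empties'' the rightmost $M_1$-slot, at worst after pushing the surplus $\bar a_1 a_1^{-1}\in M_1$ leftward through the preceding slots; making this precise is where the maximality of the successive tails is genuinely used, and where the bookkeeping between the two parity conventions has to be tracked with care. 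Granting the claim, the induction hypothesis applied to the shorter $(M_1,N)$-expression of $a\,\bar a_1^{-1}$ gives $\bh_{M_1,N}(a\,\bar a_1^{-1})\le p-1$, hence $\bh_{N,M_1}(a)=1+\bh_{M_1,N}(a\,\bar a_1^{-1})\le p$, which is the desired inequality. The detailed verification of the absorption step — the only non-formal part of the argument — is the content of the computations in Dehornoy \cite{Dehor2} and Burckel \cite{Burck1}.
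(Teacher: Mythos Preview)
The paper gives no proof of this lemma; it is quoted as a result of Dehornoy and Burckel and used as a black box. Your proposal is therefore not being compared against an argument in the paper but against a bare citation.

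As an outline your sketch is sound: the recursion $\bh_{N,M_1}(a)=1+\bh_{M_1,N}\bigl(a\,\tau_{M_1}(a)^{-1}\bigr)$ for $a\notin M_1$ is correct, the trivial-factor reductions are correct, and running a simultaneous induction over the two parity conventions is the right bookkeeping device. But you yourself isolate and then defer the only substantive step --- the ``absorption'' claim that $a\,\tau_{M_1}(a)^{-1}$ admits an $(M_1,N)$-expression of length at most $p-1$ --- to the very references the paper cites. That step is not a formality: knowing that $c=\tau_{M_1}(a)\,a_1^{-1}\in M_1$ right-divides $a_p\cdots a_2$ does not by itself yield a shorter alternating expression, since $c$ need not right-divide $a_2$ and may have to be propagated through several slots; making this rigorous is exactly where the specific combinatorics of $\BB_{n+1}^+$ (Burckel) or the locally Garside machinery (Dehornoy) enter. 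So your write-up, like the paper itself, ultimately rests on the cited sources rather than supplying an independent proof.
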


Let $a \in M$.
Choose an expression $a = s_{i_\ell} \cdots s_{i_2} s_{i_1}$ of $a$ over $S$ and set $\rev (a) = s_{i_1} s_{i_2} \cdots s_{i_\ell}$.
Since the relations that define $M$ are symmetric, the definition of $\rev (a)$ does not depend on the choice of the expression of $a$. 
It is easily checked that $\rev (\Omega)= \Omega$, $\rev(\Delta) = \Delta$ and $\rev (\theta) = \theta$.
Moreover, $\rev (a) \in M_1$ for all $a \in M_1$ and $\rev (a) \in N$ for all $a \in N$.

\begin{lem}\label{lem4_7}
Let $a \in M$.
Then $\dpt (\rev (a)) = \dpt(a)$.
\end{lem}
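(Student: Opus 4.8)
The plan is to show that the operation $a \mapsto \rev(a)$ interacts well with alternating forms with respect to $(N,M_1)$, and since $\dpt$ is computed directly from the breadth of the alternating form (equivalently, from the number of non-trivial even-indexed factors), the equality $\dpt(\rev(a))=\dpt(a)$ will follow. The key observation is that $\rev$ is an anti-automorphism of the monoid $M$ that preserves both parabolic submonoids $N$ and $M_1$ (as already noted just before the statement), so it should send the alternating form of $a$ to the alternating form of $\rev(a)$ but read in the reverse order.

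Concretely, first I would record that $\rev$ reverses products, $\rev(xy)=\rev(y)\rev(x)$, that $\rev$ is an involution, and that $\rev(N)=N$, $\rev(M_1)=M_1$. Next, I would establish the compatibility of $\rev$ with tails. The key point is that for a parabolic submonoid $P$ with $\rev(P)=P$, we have $\rev(\tau_{P,R}(a)) = \tau_{P,L}(\rev(a))$: indeed $c \le_R a$ iff $\rev(a) = \rev(c)\cdot(\text{something})$ iff $\rev(c) \le_L \rev(a)$, so $\rev$ carries the set $\{c \in P \mid c \le_R a\}$ bijectively onto $\{c' \in P \mid c' \le_L \rev(a)\}$, and hence carries the $P$-tail of $a$ to the left $P$-tail of $\rev(a)$. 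But one should be a little careful here: the alternating form in the paper is defined using right tails only. The cleaner route is to argue directly on the alternating form itself. If $a = a_p \cdots a_2 a_1$ is the right alternating form of $a$ with respect to $(N,M_1)$, then I claim $\rev(a) = \rev(a_1)\rev(a_2)\cdots\rev(a_p)$ is the \emph{left} alternating form of $\rev(a)$ with respect to $(M_1,N)$ — where the left alternating form is defined symmetrically using left tails. One then needs the (easy, symmetric) fact that the left alternating form has the same length as the right alternating form, so $\bh(\rev(a)) = p = \bh(a)$. Since the left alternating form of $\rev(a)$ starts (on the left) with $\rev(a_p) \in N$ if $p$ is even and $\rev(a_p)\in M_1$ if $p$ is odd, the parities match up, and counting the even-indexed non-trivial factors gives $\dpt(\rev(a)) = \dpt(a)$.

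Alternatively, and perhaps more economically, I would avoid introducing left alternating forms: it suffices to verify that $\rev(a_1)\rev(a_2)\cdots\rev(a_p)$ satisfies the defining recursion of the \emph{right} alternating form of $\rev(a)$ after reindexing $b_i := \rev(a_{p+1-i})$. One checks that $b_i \in N$ exactly when $i$ is even or odd according to the parity of $p$, that $b_p = \rev(a_1) \ne 1$ when $a_1 \ne 1$ (handling the degenerate case $a_1 = 1$ separately, where the breadth drops appropriately and the depth is unaffected since $a_1$ is odd-indexed hence never counted), and that $b_i$ is the appropriate tail of $b_p \cdots b_i$ — this last verification is where one invokes the tail-compatibility $\rev(\tau_{P,R}) = \tau_{P,L}$ together with the symmetry between left and right. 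The main obstacle I anticipate is precisely this bookkeeping around parity and the possibly-trivial factor $a_1$: one must make sure that "the number of even indices with non-trivial factor" is genuinely preserved under the order-reversal, and that the breadth matches even when $p$ is odd (so that $\rev$ swaps the roles of $N$ and $M_1$ at the two ends). Once the alternating form of $\rev(a)$ is identified, the depth computation is immediate from the definition $\dpt(a) = \#\{i : a_i \notin M_1\}$.
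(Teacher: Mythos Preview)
Your approach is different from the paper's, and it contains a genuine gap: the claim that ``the left alternating form has the same length as the right alternating form'' is false. Take $n=2$ in $\BB_3$, so $M_1=\langle s_2\rangle^+$ and $N=\langle s_1\rangle^+$, and let $a=s_1s_2$. Its right alternating form is $s_1\cdot s_2$ (so $\bh(a)=2$), while $\rev(a)=s_2s_1$ has right alternating form $s_2\cdot s_1\cdot 1$ (so $\bh(\rev(a))=3$). Equivalently, the left and right alternating forms of $s_2s_1$ have lengths $2$ and $3$. The depths happen to agree, but you cannot get this by matching breadths, and your Route~2 bookkeeping runs into exactly this parity mismatch without ever resolving it. The ``easy, symmetric'' fact you invoke is therefore not available, and what would actually be needed (that left and right forms give the same \emph{depth}) is essentially the statement of the lemma itself.

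The paper's argument is much shorter and sidesteps all of this. It does not try to identify the alternating form of $\rev(a)$ at all. Instead, from the alternating form $a=a_p\cdots a_1$ one observes that $\rev(a)=\rev(a_1)\cdots\rev(a_p)$ (appending a trivial $M_1$-factor on the right when $p$ is even) is merely an $(N,M_1)$-\emph{expression} of $\rev(a)$, of length $p$ or $p+1$. Lemma~\ref{lem4_6} then gives $\bh(\rev(a))\le p$ or $p+1$ respectively, which in either parity yields $\dpt(\rev(a))\le\dpt(a)$. Since $\rev$ is an involution, the reverse inequality follows, and you are done. The moral: an inequality plus the involution is enough; you do not need to pin down the alternating form of $\rev(a)$.
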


\begin{proof}
Let $a = a_p \cdots a_2 a_1$ be the alternating form of $a$. 
If $p$ is even, then $\rev(a) = \rev(a_1)\, \rev(a_2) \cdots \rev(a_p)\,1$ is a $(N,M_1)$-expression of $\rev(a)$ hence, by Lemma \ref{lem4_6}, $p+1 \ge \bh(\rev(a))$, and therefore $\dpt(a) = \frac{p}{2} \ge \dpt(\rev(a))$.
If $p$ is odd, then $\rev(a) = \rev(a_1)\, \rev(a_2) \cdots \rev(a_p)$ is a $(N,M_1)$-expression of $\rev(a)$ hence, by Lemma \ref{lem4_6}, $p \ge \bh (\rev (a))$, and therefore $\dpt(a) = \frac{p-1}{2} \ge \dpt (\rev (a))$.
So, $\dpt (a) \ge \dpt (\rev(a))$ in both cases. 
Since $\rev (\rev (a)) = a$, we also have $\dpt (\rev(a)) \ge \dpt(a)$, hence $\dpt (\rev(a)) = \dpt (a)$.
\end{proof}

\begin{lem}\label{lem4_8}
Let $a \in M \setminus M_1$ and $k \ge 1$.
Then $\dpt(a \theta^k) = \dpt(a) + k$.
\end{lem}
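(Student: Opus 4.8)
The plan is to prove the identity first for $k=1$ and then bootstrap to arbitrary $k$ by induction; within the case $k=1$ I would reduce to the situation where $a$ has trivial right $M_1$-tail, derive the upper bound $\dpt(a\theta)\le\dpt(a)+1$ from a short $(N,M_1)$-expression of $a\theta$, and derive the matching lower bound by computing the first few terms of its alternating form. The tool used throughout is that $\dpt$ is unaffected by multiplication by $M_1$ on either side: $\dpt(ca)=\dpt(ac)=\dpt(a)$ for $c\in M_1$. The right-hand version is recorded inside the proof of Lemma~\ref{lem3_3}(2), and the left-hand version follows from it and Lemma~\ref{lem4_7} since $\rev(c)\in M_1$; in particular $\dpt(ac^{-1})=\dpt(a)$ whenever $c\in M_1$ and $ac^{-1}\in M$.

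To reduce to $k=1$, I would first note that if $a\notin M_1$ then $a\theta^{j}\notin M_1$ for all $j\ge 0$: writing $\theta=\Delta\Delta_1^{-1}$ with $\Delta$ central, the inclusion $a\theta^{j}\in M_1$ would give $a\Delta^{j}=(a\theta^{j})\Delta_1^{j}\in M_1$, and since $a\le_R a\Delta^{j}$ and $M_1$ is closed under right division (Theorem~\ref{thm2_3}), this would force $a\in M_1$. Granting the case $k=1$, for $k\ge 2$ one applies it to $a\theta^{k-1}\in M\setminus M_1$ to obtain $\dpt(a\theta^{k})=\dpt(a\theta^{k-1})+1$, and the statement follows by induction on $k$.

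Now assume $k=1$. Set $c=\tau_{M_1}(a)\in M_1$ and $a'=ac^{-1}\in M$; maximality of $c$ among the $M_1$-right-divisors of $a$ forces $\tau_{M_1}(a')=1$, while $a'\neq 1$ because $a\notin M_1$, so $a'\in M\setminus M_1$. Since $c$ commutes with $\theta$ we have $a\theta=a'\theta c$, hence $\dpt(a\theta)=\dpt(a'\theta)$ and $\dpt(a)=\dpt(a')$, so we may assume from the start that $\tau_{M_1}(a)=1$. Write the alternating form of $a$ as $a=a_p\cdots a_2\cdot 1$ with $a_2=\tau_N(a)\in N$, $p=\bh(a)$ and $\dpt(a)=\lfloor p/2\rfloor$. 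For the upper bound I would use the factorization $\theta=(s_1\cdots s_{n-1})\,s_n^2\,(s_{n-1}\cdots s_1)$ together with $a_2(s_1\cdots s_{n-1})\in N$: then $a\theta=a_p\cdots a_3\cdot(a_2 s_1\cdots s_{n-1})\cdot s_n^2\cdot(s_{n-1}\cdots s_1)\cdot 1$ is an $(N,M_1)$-expression of $a\theta$ of length $p+2$, so by Lemma~\ref{lem4_6} we get $\bh(a\theta)\le p+2$ and therefore $\dpt(a\theta)\le\lfloor (p+2)/2\rfloor=\dpt(a)+1$.

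For the lower bound I would compute the alternating form of $a\theta$ outright. The three points to check, inside the Artin monoid of type $A$, are that $\tau_{M_1}(a\theta)=1$, that $\tau_N(a\theta)=s_{n-1}\cdots s_1$, and that the right $M_1$-tail of $a\,(s_1\cdots s_{n-1})\,s_n^2$ equals $s_2\cdots s_{n-1}s_n^2$; peeling these three factors off leaves $as_1$, whose $N$-tail is $a_2 s_1$, and removing that leaves $a_p\cdots a_3$, which is already in alternating form. Thus the alternating form of $a\theta$ is $a_p\cdots a_3\cdot(a_2 s_1)\cdot(s_2\cdots s_{n-1}s_n^2)\cdot(s_{n-1}\cdots s_1)\cdot 1$, so $\bh(a\theta)=p+2$ and $\dpt(a\theta)=\dpt(a)+1$, as wanted. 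The reductions and the upper bound are formal, given the explicit word for $\theta$; the main obstacle is the lower bound, that is, verifying these three tail identities, which is a genuine computation with divisibility by the generators in the type $A$ monoid --- the sort of combinatorial control the theory of braid monoids supplies --- rather than a consequence of the abstract Garside formalism.
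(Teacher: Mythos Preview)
Your proposal is correct and follows essentially the same route as the paper: both reduce to $k=1$, write $\theta=(s_1\cdots s_{n-1})s_n^2(s_{n-1}\cdots s_1)$, and verify that the alternating form of $a\theta$ is $a_p\cdots a_3\cdot(a_2s_1)\cdot(s_2\cdots s_{n-1}s_n^2)\cdot(s_{n-1}\cdots s_1)\cdot a_1$, which gives $\bh(a\theta)=\bh(a)+2$. The only differences are cosmetic: the paper handles the $M_1$-tail $a_1$ directly by commuting it past $\theta$ (via Lemma~\ref{lem3_3}(1)) rather than first reducing to $a_1=1$, and it computes the alternating form in one pass rather than splitting into an upper bound from an $(N,M_1)$-expression and a lower bound from the tails; the three tail identities you flag as the crux are exactly what the paper verifies, using the explicit joins $(s_1\cdots s_{n-1}s_n^2)\vee_R s_i=s_{i+1}(s_1\cdots s_{n-1}s_n^2)$ and $s_1\vee_R s_i$ in $\BB_{n+1}^+$.
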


\begin{proof}
Let $a \in M \setminus M_1$.
It suffices to show that $\bh (a \theta) = \bh (a) + 2$.
Let $a = a_p \cdots a_2 a_1$ be the alternating form of $a$.
Note that, since $a \not\in M_1$, we have $p \ge 2$.
Note also that, by Lemma \ref{lem3_3}\,(1), we have $a_1 \theta = \theta a_1$.
Then 
$a \theta = a_p \cdots a_3 a_2 \theta a_1 = 
a_p \cdots a_3 b_4 b_3 b_2 a_1$,
where $b_4 = a_2 s_1 \in N$, $b_3 = s_2 \cdots s_{n-1} s_n^2 \in M_1$ and $b_2 = s_{n-1} \cdots s_2 s_1 \in N$.
We turn to show that $a \theta = a_p \cdots a_2 b_4 b_3 b_2 a_1$ is the alternating form of $a \theta$.
This will prove the lemma.

Let $x = \tau_{M_1} (a_p \cdots a_3 b_4 b_3 b_2) = \tau_{M_1} (a_p \cdots a_3 a_2 \theta)$.
We know by Lemma \ref{lem3_3}\,(1) that $x \vee_R \theta = \theta x = x \theta$, hence $x \le_R a_p \cdots a_2$, and therefore $x=1$, since $\tau_{M_1}(a_p \cdots a_3 a_2)=1$. 
We have $a_p \cdots a_3 b_4 b_3 = a_p \cdots a_3 a_2 s_1 s_2 \cdots s_{n-1} s_n^2$.
It is easily checked that $(s_1 \cdots s_{n-1} s_n^2) \vee_R s_i = s_{i+1}(s_1 \cdots s_{n-1} s_n^2)$ for all $i \in \{1, \dots, n-1\}$.
Thus, if there exists $i \in \{1, \dots, n-1\}$ such that $s_i \le_R a_p \cdots a_3 b_4 b_3$, then there exists $j \in \{2, \dots, n\}$ such that  $s_j \le_R a_p \cdots a_3 a_2$.
But, since $\tau_{M_1}(a_p \cdots a_3 a_2)=1$, such a $j$ does not exist, hence such an $i$ does not exist either, hence $\tau_N (a_p \cdots a_3 b_4 b_3) = 1$.
We have $a_p \cdots a_3 b_4 = a_p \cdots a_3 a_2 s_1$.
We have $s_1 \vee_R s_i = s_i s_1$ for all $i \in \{3, \dots, n\}$, and $s_1 \vee_R s_2 = s_1 s_2 s_1$.
Thus, for $i \in \{2, \dots, n\}$, if $s_i \le_R a_p \cdots a_3 b_4$, then $s_i \le_R a_p \cdots a_3 a_2$.
Since such an $i$ does not exist, we have $\tau_{M_1} (a_p \cdots a_3 b_4)=1$.
This finishes the proof that $a_p \cdots a_3 b_4 b_3 b_2 a_1$ is the alternating form of $a \theta$ since $a_p \cdots a_3$ is an alternating form and $\tau_N (a_p \cdots a_3) =1$.
\end{proof}

\begin{lem}\label{lem4_9}
\begin{itemize}
\item[(1)]
Let $a \in M_1$ and $b \in M \setminus M_1$.
Then $\dpt (ab) = \dpt(ba) = \dpt(b)$.
\item[(2)]
Let $a \in \Theta$ and $b \in M \setminus M_1$.
Then $\dpt (ab) = \dpt(ba) = \dpt(a) + \dpt(b) -1$.
\end{itemize}
\end{lem}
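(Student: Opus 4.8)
The plan is to derive part~(2) from part~(1) together with Lemmas~\ref{lem3_3}(2), \ref{lem4_7} and~\ref{lem4_8}, so the real content lies in part~(1). Throughout I would use the reversal anti-automorphism $\rev$ of $M$: it satisfies $\rev(xy)=\rev(y)\rev(x)$, $\rev(M_1)=M_1$, $\rev(N)=N$, $\rev(\theta)=\theta$, and $\dpt(\rev(x))=\dpt(x)$ by Lemma~\ref{lem4_7}, so every assertion about left multiplication converts into one about right multiplication. In particular, for part~(1) it suffices to prove $\dpt(ba)=\dpt(b)$ for $a\in M_1$ and $b\in M\setminus M_1$, because then $\dpt(ab)=\dpt(\rev(b)\,\rev(a))=\dpt(\rev(b))=\dpt(b)$ with $\rev(a)\in M_1$ and $\rev(b)\in M\setminus M_1$.

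For part~(1), since $\dpt$ depends only on $\bh$, I would prove $\bh(ba)=\bh(b)$. Let $b=b_p\cdots b_1$ be the alternating form of $b$ with respect to $(N,M_1)$, so $p=\bh(b)$ and $b_1\in M_1$. Then $ba=b_p\cdots b_2\,(b_1a)$ is an $(N,M_1)$-expression of $ba$ of length $p$ (only the rightmost factor has changed, and $b_1a\in M_1$), so $\bh(ba)\le p$ by Lemma~\ref{lem4_6}. Conversely, let $ba=c_q\cdots c_1$ be the alternating form of $ba$, so $q=\bh(ba)$ and $c_1=\tau_{M_1}(ba)$. From $ba\,a^{-1}=b\in M$ we get $a\le_R ba$, hence $a\le_R c_1$ since $a\in M_1$; write $c_1=c_1'a$ with $c_1'\in M$. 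Then $c_1'\le_L c_1$ and $1\le_L c_1'$, and since $1,c_1\in M_1$ the left-hand analogue of Theorem~\ref{thm2_3}(3) (obtained by applying $\rev$), or Theorem~\ref{thm2_3}(5) applied to the orthogonal form $c_1'=c_1'\cdot 1^{-1}$, gives $c_1'\in M_1$. Right-cancelling $a$ in $c_q\cdots c_2c_1'a=ba$ yields $b=c_q\cdots c_2c_1'$, an $(N,M_1)$-expression of $b$ of length $q$, so $\bh(b)\le q$ by Lemma~\ref{lem4_6}. Thus $\bh(ba)=\bh(b)$ and $\dpt(ba)=\dpt(b)$.

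For part~(2), write $a=\theta^ka_0$ with $k\ge1$ and $a_0\in M_1$; then $\dpt(a)=k+1$ by Lemma~\ref{lem3_3}(2) (recall $\zeta=1$ here). For $ba$: since $b\in M\setminus M_1$, Lemma~\ref{lem4_8} gives $\dpt(b\theta^k)=\dpt(b)+k\ge1$, so $b\theta^k\notin M_1$, and part~(1) applied to $(b\theta^k)a_0$ gives $\dpt(ba)=\dpt(b\theta^k)=\dpt(b)+k=\dpt(a)+\dpt(b)-1$. For $ab$: part~(1) gives $\dpt(a_0b)=\dpt(b)\ge1$, so $a_0b\notin M_1$, and then, using $\rev(\theta^k)=\theta^k$ together with Lemmas~\ref{lem4_7} and~\ref{lem4_8},
\[
\dpt(ab)=\dpt(\theta^ka_0b)=\dpt\bigl(\rev(a_0b)\,\theta^k\bigr)=\dpt(\rev(a_0b))+k=\dpt(a_0b)+k=\dpt(b)+k\,,
\]
which again equals $\dpt(a)+\dpt(b)-1$.

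The step I expect to be the main obstacle is the lower bound $\bh(b)\le\bh(ba)$ in part~(1): everything hinges on the fact that the right $M_1$-tail of $ba$ is right-divisible by $a$ with quotient still in $M_1$. The divisibility $a\le_R\tau_{M_1}(ba)$ is immediate, but keeping the quotient inside $M_1$ requires a closure property of parabolic submonoids under left-division, which I would isolate as a short preliminary observation (the left-right mirror of Theorem~\ref{thm2_3}(3)); once that is in place, the rest is routine bookkeeping with alternating forms and the reversal map.
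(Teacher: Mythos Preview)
Your proof is correct and follows essentially the same route as the paper's: part~(1) via the breadth together with the reversal anti-automorphism, and part~(2) via part~(1), Lemma~\ref{lem4_8}, and $\rev$. The only difference is that the paper dismisses $\bh(ba)=\bh(b)$ as obvious (indeed, the alternating form of $ba$ is literally $b_p\cdots b_2(b_1a)$, since $\tau_{M_1}(ba)=b_1a$ and the remaining tails are unchanged), whereas you establish it by two inequalities via Lemma~\ref{lem4_6}; your justification that $c_1'\in M_1$ could be shortened to $c_1'=c_1a^{-1}\in G_1\cap M=M_1$, but the argument you give is sound.
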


\begin{proof}
Let $a \in M_1$ and $b \in M \setminus M_1$.
We obviously have $\bh (ba) = \bh (b)$, hence $\dpt(ba) = \dpt(b)$.
On the other hand, since $\rev(a) \in M_1$, By Lemma \ref{lem4_7} we have $\dpt(ab) = \dpt (\rev (ab)) = \dpt (\rev(b)\, \rev(a)) = \dpt( \rev (b)) = \dpt(b)$.

Let $a \in \Theta$ and $b \in M \setminus M_1$.
Write $a = \theta^k a_0$ with $a_0 \in M_1$ and $k \ge 1$.
By the above and Proposition \ref{prop4_5} we have $\dpt(a) = \dpt(\theta^k) = \dpt(\Delta^k) = k+1$.
Then, by the above and Lemma \ref{lem4_8}, $\dpt(ba) = \dpt(b \theta^k) = \dpt(b) + k = \dpt(a) + \dpt(b) -1$.
On the other hand, since $\rev(a) \in \Theta$, we have $\dpt(ab) = \dpt(\rev(ab)) = \dpt (\rev(b)\, \rev(a)) = \dpt(\rev(a)) + \dpt(\rev(b)) -1 = \dpt(a) + \dpt(b)-1$.
\end{proof}

\begin{lem}\label{lem4_10}
Let $a \in M$ and $k \ge 0$.
If $a \Delta^{-k} \in G_1$ then $a \in \bar \Theta$.
\end{lem}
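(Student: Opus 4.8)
The plan is to rewrite $a$ in the form $\theta^{k}\eta$ with $\eta\in G_{1}$, put $\eta$ into its $\Delta_{1}$-form, and then decide whether $a$ is a theta element simply by looking at the sign of the resulting exponent of $\Delta_{1}$. First I would exploit the commutation relations. Since $\Delta$ is central and $\Delta=\theta\Delta_{1}=\Delta_{1}\theta$, we have $\Delta^{-k}=\theta^{-k}\Delta_{1}^{-k}$, so the hypothesis $a\Delta^{-k}\in G_{1}$ gives $\eta:=a\theta^{-k}=(a\Delta^{-k})\Delta_{1}^{k}\in G_{1}$. By Lemma~\ref{lem3_3}(1), $\theta$ commutes with every element of $M_{1}$, hence with every element of $G_{1}$ and with every power of such an element; in particular $a=\theta^{k}\eta$.

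Next I would apply Theorem~\ref{thm2_2} to the Garside structure $(G_{1},M_{1},\Delta_{1})$ to write the $\Delta_{1}$-form of $\eta$ as $\eta=c\,\Delta_{1}^{\,l}$ with $c\in M_{1}$ unmovable in $M_{1}$ (that is, $\Delta_{1}\not\le_{R}c$) and $l\in\Z$. Substituting $\Delta_{1}^{\,l}=\theta^{-l}\Delta^{\,l}$ and moving $\theta^{-l}$ past $c$ (again by Lemma~\ref{lem3_3}(1)), this becomes
\[
a=\theta^{\,k-l}\,c\,\Delta^{\,l}\,.
\]

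The heart of the argument is to show that $l\ge 0$. Suppose $l\le -1$. Then $k-l\ge 1$, so $\theta^{\,k-l}c$ is a theta element whose $M_{1}$-part $c$ is unmovable in $M_{1}$, and therefore $\theta^{\,k-l}c$ is unmovable in $M$ by Lemma~\ref{lem3_3}(3). On the other hand $\theta^{\,k-l}c=a\Delta^{\,-l}=a\Delta^{\,|l|}$; writing the $\Delta$-form of $a$ as $a=\hat a\,\Delta^{\,q}$ with $\hat a$ unmovable and $q\ge 0$ (the latter because $a\in M$), we obtain that $a\Delta^{\,|l|}=\hat a\,\Delta^{\,q+|l|}$ has $\Delta$-form with exponent $q+|l|\ge 1$, contradicting the unmovability of $\theta^{\,k-l}c$. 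Hence $l\ge 0$. Then $c\,\Delta_{1}^{\,l}\in M_{1}$ and $a=\theta^{k}\,(c\,\Delta_{1}^{\,l})$: if $k\ge 1$ this exhibits $a$ as an element of $\Theta$, and if $k=0$ then $a=c\,\Delta_{1}^{\,l}\in M_{1}$. In either case $a\in\bar\Theta$.

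I expect the only delicate point to be this sign argument: recognizing that $a\in M$ together with the unmovability criterion for theta elements (Lemma~\ref{lem3_3}(3)) forces the $\Delta_{1}$-exponent of $\eta$ to be nonnegative. Everything else is formal bookkeeping with the relations between $\theta$, $\Delta$, $\Delta_{1}$ and $M_{1}$, so I would expect the write-up to be short once that observation is in place.
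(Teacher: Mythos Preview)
Your argument is correct and follows the same overall strategy as the paper: rewrite $a$ as $\theta^{k}$ times an element of $G_{1}$ and then argue that this element actually lies in $M_{1}$. The only difference is in how that last point is established: the paper takes the $\Delta_{1}$-form of $a\Delta^{-k}$ and uses the $M_{1}$-tail computation from Lemma~\ref{lem3_3}(1) to absorb any excess power of $\Delta_{1}$ directly, whereas you rule out a negative $\Delta_{1}$-exponent by contradiction via Lemma~\ref{lem3_3}(3) together with uniqueness of the $\Delta$-form; both devices do the same job.
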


\begin{proof}
Let $a \Delta^{-k} = a_0 \Delta_1^{-t}$ be the $\Delta_1$-form of $a \Delta^{-k}$.
We have $a = a_0 \Delta_1^{-t} \Delta^k = \theta^k a_0 \Delta_1^{k-t}$.
If $k \ge t$ then we clearly have $a \in \bar \Theta$.  
Suppose that $k < t$.
Then $a \Delta_1^{t-k} = \theta^k a_0$, hence $\Delta_1^{t-k} \le_R \tau_{M_1} (\theta^k a_0)$.
By Lemma \ref{lem3_3}\,(1) we have $\tau_{M_1} (\theta^k a_0) = a_0$, hence $\Delta_1^{t-k} \le_R a_0$.
Let $b_0 \in M_1$ such that $a_0 = b_0 \Delta_1^{t-k}$.
Then $a = \theta^k b_0 \in \bar \Theta$.
\end{proof}

\begin{lem}\label{lem4_11}
Let $a,b \in M\setminus M_1$, $c \in M_1$ and $k \ge 0$ such that $ab = c \Delta^k$ and $\dpt(a) + \dpt(b) = k+2$.
Then $(a,b) \in (\Theta \times \Theta)$.
\end{lem}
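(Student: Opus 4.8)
The plan is, after disposing of a couple of easy cases, to reduce the statement to a clean identity about the complement map $\com$ of unmovable elements, and then to establish that identity using Theorem~\ref{thm2_5}.

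First the reductions. If $k=0$ then $b\le_R ab=c\in M_1$, so $b\in M_1$ by Theorem~\ref{thm2_3}(3), contradicting $b\in M\setminus M_1$; hence the hypotheses are never satisfied when $k=0$, and we may assume $k\ge1$. Then $ab=c\Delta^k=\theta^k(c\Delta_1^k)\in\Theta\subset\bar\Theta$. From $ab\in\bar\Theta$ and Lemma~\ref{lem3_3}(6) (used in both orders, together with the fact that $\rev$ preserves $M_1$ and $\theta$, hence $\bar\Theta$, cf. the remarks before Lemma~\ref{lem4_7}) we get $a\in\bar\Theta\Leftrightarrow b\in\bar\Theta$; since $a,b\notin M_1$ and $\bar\Theta=\Theta\sqcup M_1$ this reads $a\in\Theta\Leftrightarrow b\in\Theta$. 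If both lie in $\Theta$ we are done, so it remains to rule out the case $a,b\in M\setminus\bar\Theta$. Writing the $\Delta$-forms $a=\alpha\Delta^i$, $b=\beta\Delta^j$ ($i,j\ge0$) and $\alpha\beta=\gamma\Delta^t$, and comparing $ab=\gamma\Delta^{i+j+t}$ with $c\Delta^k$ (using $\Delta\notin G_1$ to see that $c$ has $\Delta$-exponent $0$), one gets $\gamma=c$ and $\alpha\beta=c\Delta^{k-i-j}$ with $\alpha,\beta$ unmovable; by Lemmas~\ref{lem3_3}, \ref{lem4_8}, \ref{lem4_9} this passage keeps us outside $M_1$ and outside $\bar\Theta$, subtracts $i$ resp.\ $j$ from the depths, and replaces $k$ by $k'=k-i-j$, where $k'\ge1$ exactly as above. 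So from now on we assume that $a,b$ are unmovable, $ab=c\Delta^k$ is the $\Delta$-form of $ab$ with $c\in M_1$ unmovable, $k\ge1$, $a,b\in M\setminus\bar\Theta$, and $\dpt(a)+\dpt(b)=k+2$.

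Next, rewrite the hypothesis. Put $\ell=\lg(a)$, so $a^{-1}=\com(a)\Delta^{-\ell}$ with $\com(a)$ unmovable (El-Rifai--Morton~\cite{ElRMor1}). Then $b=a^{-1}(ab)=\com(a)\,c\,\Delta^{k-\ell}$, and since $b$ is unmovable and $\Delta$ is central this forces $\ell\ge k$. As $a\notin\bar\Theta$ gives $\com(a)\notin\bar\Theta$ (Lemma~\ref{lem3_3}(4)), hence $\com(a)\notin M_1$, Lemma~\ref{lem4_9}(1) yields $\dpt(\com(a)c)=\dpt(\com(a))$; combining this with the identity $\dpt(y\Delta^m)=\dpt(y)+m$ for $y\in M\setminus M_1$, $m\ge0$ (immediate from Lemmas~\ref{lem4_8}--\ref{lem4_9}), and using $b\notin M_1$, we obtain $\dpt(b)=\dpt(\com(a))-(\ell-k)$. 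Thus the hypothesis becomes
\[
\dpt(a)+\dpt(\com(a))=\ell+2 .
\]

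The key estimate, and with it the contradiction, is the claim that for every unmovable $x\in M\setminus\bar\Theta$ one has $\bh(x)+\bh(\com(x))=\lg(x)+3$, so that $\dpt(x)+\dpt(\com(x))\le\tfrac12\bigl(\bh(x)+\bh(\com(x))\bigr)=\tfrac12(\lg(x)+3)<\lg(x)+2$; applied to $x=a$ this contradicts the displayed equality, finishing the proof. To prove the claim, observe that $x\notin\bar\Theta$ together with Lemma~\ref{lem4_10} (and Lemma~\ref{lem3_3}(6) for the positive powers of $\Delta$) gives $\Delta^{-m}x\notin G_1$ for every $m\in\Z$; by the trichotomy $G=P_1^+\sqcup P_1^-\sqcup G_1$ of Theorem~\ref{thm1_1}, each $\Delta^{-m}x$ is therefore $s_1$-positive or $s_1$-negative. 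Since $x\notin M_1$ and $\com(x)\notin M_1$ we have $\bh(x),\bh(\com(x))\ge2$; Theorem~\ref{thm2_5} then says $\Delta^{-m}x$ is $s_1$-negative iff $m\ge\bh(x)-1$, and applying Theorem~\ref{thm2_5} to $(\Delta^{-m}x)^{-1}=\Delta^{-(\lg(x)-m)}\com(x)$ shows $\Delta^{-m}x$ is $s_1$-positive iff $m\le\lg(x)-\bh(\com(x))+1$. These two half-lines of integers $m$ are disjoint (no element is both $s_1$-positive and $s_1$-negative) and cover all of $\Z$; the only way this is possible is $\bh(x)-1=\bigl(\lg(x)-\bh(\com(x))+1\bigr)+1$, i.e.\ $\bh(x)+\bh(\com(x))=\lg(x)+3$.

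The main obstacle is exactly this last identity $\bh(x)+\bh(\com(x))=\lg(x)+3$ for unmovable $x\notin\bar\Theta$ (equivalently, pinning down $\bh(\com(x))$ in terms of $\bh(x)$); everything else is routine manipulation of $\Delta$-forms, $\theta$-decompositions and the structural Lemmas~\ref{lem3_3}, \ref{lem4_7}--\ref{lem4_10}. The argument above extracts the identity from Theorem~\ref{thm2_5} and Lemma~\ref{lem4_10}, which are already in place; a self-contained alternative would be a complementation argument carried out directly on alternating forms, but it appears considerably less economical.
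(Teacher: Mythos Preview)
Your overall strategy is sound, and the reduction to the identity $\dpt(a)+\dpt(\com(a))=\ell+2$ (with $\ell=\lg(a)$, $a$ unmovable, $a\notin\bar\Theta$) is carried out correctly. The problem is the ``key estimate''
\[
\bh(x)+\bh(\com(x))=\lg(x)+3,
\]
which is false. For instance, in $\BB_3$ take $x=s_1$: then $\lg(x)=1$ (with respect to $\Div(\Delta)$, $\Delta=\Omega^2$), $\com(s_1)=s_2^2s_1s_2^2$, and $\bh(s_1)=2$, $\bh(\com(s_1))=3$, so the left side is $5$ while the right side is $4$. The error is that Theorem~\ref{thm2_5} is stated for the \emph{standard} Garside element of $\BB_{n+1}$, which in Section~\ref{Sec4} is denoted $\Omega$, whereas $\Delta=\Omega^2$ there; your line ``$\Delta^{-m}x$ is $s_1$-negative iff $m\ge\bh(x)-1$'' is off by a factor of $2$. (Reading your $\Delta$ as $\Omega$ instead does not help, since $\Omega$ is not central and your formula $(\Omega^{-m}x)^{-1}=\Omega^{-(\lg(x)-m)}\com(x)$ fails, $\com$ and $\lg$ being taken with respect to $\Omega^2$.)

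Your argument is easily repaired. Applying Theorem~\ref{thm2_5} correctly to $\Delta^{-m}x=\Omega^{-2m}x$ and to $(\Delta^{-m}x)^{-1}=\Omega^{-2(\ell-m)}\com(x)$ gives: $\Delta^{-m}x$ is $s_1$-negative iff $m\ge\dpt(x)$, and $s_1$-positive iff $m\le\ell-\dpt(\com(x))$. The trichotomy then yields $\dpt(x)+\dpt(\com(x))=\ell+1$, contradicting $\ell+2$. This fixed version is a valid alternative to the paper's proof, which is more direct: the paper applies Theorem~\ref{thm2_5} to $a\Delta^{-p+1}$ and $b\Delta^{-q+1}$ separately (where $p=\dpt(a)$, $q=\dpt(b)$), observes that their product $c$ cannot be $s_1$-positive, and concludes via Lemma~\ref{lem4_10} that $a,b\in\bar\Theta$. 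The paper's route avoids the reduction to unmovable elements and the passage through $\com$.
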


\begin{proof}
Let $p = \dpt (a)$ and $q = \dpt (b)$.
Note that, since $a,b \not \in M_1$, we have $p,q \ge 1$.
We have $\bh (a) \ge 2p$, hence $\bh (a)-1 > 2p-2$, and therefore, by Theorem \ref{thm2_5}, $\Omega^{-2p+2} a = a \Delta^{-p+1}$ either lies in $G_1$ or is $s_1$-positive. 
Similarly, $b \Delta^{-q+1}$ either lies in $G_1$ or is $s_1$-positive. 
If either $a \Delta^{-p+1}$ was $s_1$-positive or $b \Delta^{-q+1}$ was $s_1$-positive, then $c = ab \Delta^{-k} = (a \Delta^{-p+1})(b \Delta^{-q+1})$ would be $s_1$-positive. 
Since $c \in M_1$, $c$ cannot be $s_1$-positive, hence both $a \Delta^{-p+1}$ and $b \Delta^{-q+1}$ lie in $G_1$.
We conclude by Lemma \ref{lem4_10} that $a,b \in \bar \Theta$, hence $a,b \in \Theta$ since we assumed that $a,b \not\in M_1$.
\end{proof}

Now we are ready to prove the second part of Theorem \ref{thm4_1}.

\begin{prop}\label{prop4_12}
The pair $(H, G_1)$ satisfies Condition B with constant $\zeta = 1$.
\end{prop}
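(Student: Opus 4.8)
The plan is to verify Condition~B with constant $\zeta=1$ directly from the definition. So I fix a pair $(a,b)\in (M\times M)\setminus(\bar\Theta\times\bar\Theta)$ with $a,b$ both unmovable, write $ab=c\Delta^t$ for the $\Delta$-form of $ab$, and I must produce $\varepsilon\in\{0,1\}$ with $\dpt(c)=\dpt(a)+\dpt(b)-t-\varepsilon$, subject to $\varepsilon=1$ whenever $a\in\Theta$, $b\in\Theta$, or $c\in M_1$. The natural case division is according to which of $a,b$ lie in $M_1$, in $\Theta$, or outside $\bar\Theta$; since $(a,b)\notin\bar\Theta\times\bar\Theta$, at least one of them lies in $M\setminus\bar\Theta$, and in particular $M\setminus M_1$, which will let me apply Lemma~\ref{lem4_9} and its companions.

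First I would treat the cases where one factor lies in $\bar\Theta=\Theta\cup M_1$: if $a\in M_1$ (so $b\in M\setminus M_1$) then Lemma~\ref{lem4_9}\,(1) gives $\dpt(ab)=\dpt(b)=\dpt(a)+\dpt(b)$; if $a\in\Theta$ then Lemma~\ref{lem4_9}\,(2) gives $\dpt(ab)=\dpt(a)+\dpt(b)-1$; and symmetrically for $b\in\bar\Theta$. In each of these subcases I still have to split according to whether the $\Delta$-form of $ab$ absorbs a power of $\Delta$, i.e.\ whether $t=0$ or $t\ge1$. When $t=0$ we have $c=ab$ and the displayed identities settle things with $\varepsilon=0$ (resp.\ $\varepsilon=1$ when a $\Theta$-factor is present), and one checks $c\notin M_1$ in the relevant places using Lemma~\ref{lem3_3}\,(6). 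When $t\ge1$, so $\Delta\le_R ab$ and $c=ab\Delta^{-t}$ is unmovable with $\dpt(c)=\dpt(ab)-t$ because $\dpt(x\Delta^{-t})=\dpt(x)-t$ whenever $\Delta^t\le_R x$ (this follows from Condition~A and the fact that $\Delta$ is balanced and central, or can be extracted from $\bh$ under multiplication by $\Delta$); combining gives the required formula, and if $c\in M_1$ here one has to upgrade $\varepsilon$ to $1$, which I expect to follow from Lemma~\ref{lem4_11}.

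The main case is $a,b\in M\setminus\bar\Theta$, hence both in $M\setminus M_1$. Here the clean identities of Lemma~\ref{lem4_9} are unavailable, so I would argue through breadths and the reversal map. The key inputs are: $\bh$ behaves additively up to a bounded error under the alternating concatenation of two alternating forms (splicing $a=a_p\cdots a_1$ and $b=b_q\cdots b_1$ at the junction $a_1b_q$), Lemma~\ref{lem4_6} bounding $\bh$ below by the length of any $(N,M_1)$-expression, and Lemma~\ref{lem4_7} ($\dpt\circ\rev=\dpt$) to handle the left/right asymmetry. This should yield $\dpt(ab)=\dpt(a)+\dpt(b)-\varepsilon'$ for some $\varepsilon'\in\{0,1\}$ governed by whether the junction letters $a_1$ and $b_q$ ``merge''. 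Then, as before, passing to the $\Delta$-form subtracts $t$ from the depth, giving $\dpt(c)=\dpt(a)+\dpt(b)-t-\varepsilon$ with $\varepsilon=\varepsilon'$; and when $c\in M_1$, i.e.\ $\dpt(c)=0$, Lemma~\ref{lem4_11} forces $(a,b)\in\Theta\times\Theta$, contradicting $a,b\notin\bar\Theta$ — so in fact $c\notin M_1$ automatically in this case and the constraint on $\varepsilon$ is vacuous.

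The hard part will be the precise bookkeeping of the junction in the main case: showing that gluing the alternating form of $a$ to that of $b$ costs at most one unit of depth and exactly one unit precisely in the degenerate situations, and that no further collapse of the breadth occurs deeper inside. Lemma~\ref{lem4_8} (the effect of right-multiplication by $\theta^k$ on depth) and the explicit factorization $\theta=s_1\cdots s_{n-1}s_n^2 s_{n-1}\cdots s_1$ are the model for the local computation, but here one multiplies two general unmovable elements rather than by a single $\theta$, so the argument must control the $M_1$- and $N$-tails at the seam. I would isolate this as the technical heart, reduce via $\rev$ to computing a single tail $\tau_{M_1}$ or $\tau_N$ of a product of two alternating forms, and then invoke Lemma~\ref{lem4_6} together with the lattice properties of Theorem~\ref{thm2_3} to pin down the breadth up to the claimed $\pm1$. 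Everything else is the routine case-chase over $t=0$ versus $t\ge1$ and over which factors lie in $M_1$, $\Theta$, or outside $\bar\Theta$, with Lemma~\ref{lem3_3}\,(4),(6) supplying the membership statements needed to certify $c\notin M_1$ or to trigger $\varepsilon=1$.
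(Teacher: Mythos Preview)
Your treatment of the ``boundary'' cases where one factor lies in $\bar\Theta$ matches the paper's argument: Lemma~\ref{lem3_3}\,(6) forces $c\notin M_1$ there, and Lemma~\ref{lem4_9} gives the exact depth of $ab$, from which the identity $\dpt(ab)=\dpt(c)+t$ (valid whenever $c\notin M_1$, by Lemma~\ref{lem4_9}\,(2) applied to $c\cdot\Delta^t$) yields the result.

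The genuine gap is in your main case $a,b\in M\setminus\bar\Theta$. The paper does \emph{not} obtain $\dpt(ab)\in\{\dpt(a)+\dpt(b)-1,\ \dpt(a)+\dpt(b)\}$ by splicing alternating forms at a seam; it imports Theorem~\ref{thm2_5}. Setting $p=\dpt(a)$, $q=\dpt(b)$, that theorem gives that $\Omega^{-2p}a$ is $s_1$-negative while $\Omega^{-2p+2}a$ is $s_1$-positive or in $G_1$, and similarly for $b$; multiplying and using that $s_1$-positive (resp.\ $s_1$-negative) elements form a subsemigroup then pins down $\bh(ab)$ up to the required $\pm1$. That subsemigroup property is exactly Dehornoy's Theorem~\ref{thm1_1}, and it is doing the real work: it is what prevents cascading collapse of the alternating form beyond the junction. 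Your sketch (``control $\tau_{M_1}$ or $\tau_N$ at the seam via Lemma~\ref{lem4_6} and lattice properties'') gives only the easy upper bound $\dpt(ab)\le\dpt(a)+\dpt(b)$; the lower bound $\dpt(ab)\ge\dpt(a)+\dpt(b)-1$ is the substantive inequality, and nothing in your outline establishes it. Lemma~\ref{lem4_8} is not a template here, since its proof exploits the very specific factorisation of $\theta$.

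There is also a concrete error in your handling of $c\in M_1$ in this case. It is \emph{not} true that $c\notin M_1$ automatically: take $b=\com(a)$, so $ab=\Delta^{\lg(a)}$ and $c=1\in M_1$. What the paper does is: if $c\in M_1$ then $\dpt(ab)=\dpt(\Delta^t)=t+1$ by Lemma~\ref{lem4_9}\,(1), so $\dpt(a)+\dpt(b)=t+1+\varepsilon'$ with $\varepsilon'\in\{0,1\}$; Lemma~\ref{lem4_11} then rules out $\varepsilon'=1$ (since its hypothesis $\dpt(a)+\dpt(b)=t+2$ would force $a,b\in\Theta$), leaving $\varepsilon'=0$ and hence $\dpt(c)=0=\dpt(a)+\dpt(b)-t-1$, i.e.\ $\varepsilon=1$ as Condition~B demands. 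So Lemma~\ref{lem4_11} does not exclude $c\in M_1$; it selects the correct value of $\varepsilon$ when $c\in M_1$.
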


\begin{proof}
We take $(a, b) \in (M \times M) \setminus (\bar \Theta \times \bar \Theta)$ such that $a$ and $b$ are unmovable.
We must show that $(a,b)$ satisfies Condition B with constant $\zeta = 1$.
Let $ab = c \Delta^t$ be the $\Delta$-form of $ab$.
So, we must show that there exists $\varepsilon \in \{ 0, 1\}$ such that $\dpt (c) = \dpt (a) + \dpt (b) - t - \varepsilon$, and $\varepsilon = 1$ if either $a \in \Theta$, or $b \in \Theta$, or $c \in M_1$.

{\it Case 1: $a \in M_1$ and $b \in M \setminus \bar \Theta$.}
By Lemma \ref{lem3_3}\,(6) we have $ab \not \in \bar \Theta$, hence $c \not \in M_1$.
Then, by Lemma \ref{lem4_9}, $\dpt(a) + \dpt (b) = \dpt (b) = \dpt (a b) = \dpt (c) + t$, hence $\dpt(c) = \dpt(a) + \dpt (b) -t -0$.
The case $a \in M \setminus \bar \Theta$ and $b \in M_1$ is proved in a similar way.

{\it Case 2: $a \in \Theta$ and $b \in M \setminus \bar \Theta$.}
We write $a = \theta^k a_0$ where $k \ge 1$ and $a_0 \in M_1$.
Again, by Lemma \ref{lem3_3}\,(6) we have $ab \not \in \bar \Theta$, hence $c \not\in M_1$.
Then, by Lemma \ref{lem4_9}, $\dpt (a) + \dpt(b) - 1 = \dpt (ab) = \dpt (c) +t$, hence $\dpt (c) = \dpt(a) + \dpt (b) - t - 1$.
The case $a \in M \setminus \bar \Theta$ and $b \in \Theta$ is proved in a similar way.

{\it Case 3: $a,b \in M \setminus \bar \Theta$.}
Set $p = \dpt (a)$ and $q = \dpt (b)$.
We have $\bh (a) \in \{ 2p, 2p+1 \}$ hence, by Theorem \ref{thm2_5}, $\Omega^{-2p} a$ is $s_1$-negative and $\Omega^{-2p+2} a$ either lies in $G_1$ or is $s_1$-positive. 
Similarly, $\Omega^{-2q} b$ is $s_1$-negative and $\Omega^{-2q+2} b$ either lies in $G_1$ or is $s_1$-positive.
So, $\Omega^{-2p-2q} ab$ is $s_1$-negative and $\Omega^{-2p-2q+4} ab$ either lies in $G_1$ or is $s_1$-positive. 
By Theorem \ref{thm2_5} it follows that $\bh(ab) - 1 \le 2p + 2q$ and $2p + 2q - 4 < \bh (ab)-1$, hence $2p + 2q - 2 \le \bh (ab) \le 2p + 2q + 1$, and therefore $p+q - 1 \le \dpt (ab) \le p+q$.
So, there exists $\varepsilon \in \{ 0, 1 \}$ such that $\dpt (ab) = p + q - \varepsilon = \dpt (a) + \dpt (b) - \varepsilon$.

Suppose that $c \not\in M_1$.
By Lemma \ref{lem4_9}\,(2), $\dpt (c) + t = \dpt(c) + \dpt (\Delta^t) - 1 = \dpt (c \Delta^t) = \dpt (ab) = \dpt(a) + \dpt(b) - \varepsilon$, hence $\dpt (c) = \dpt (a) + \dpt (b) - t - \varepsilon$.
Suppose that $c \in M_1$.
By Lemma \ref{lem4_9}\,(1), $\dpt (a) + \dpt (b) - \varepsilon = \dpt (ab) = \dpt (c \Delta^t) = \dpt (\Delta^t) = t+1$, hence $\dpt(a) + \dpt(b) = t + 1 + \varepsilon$.
Since $a,b \not\in \bar \Theta$ Lemma \ref{lem4_11} implies that $\varepsilon=0$.
So, $\dpt (c) = 0 = \dpt (a) + \dpt (b) -t -1$.
\end{proof}


\section{Artin groups of dihedral type, the even case}\label{Sec5}

Let $m \ge 4$ be an integer.
Recall that the \emph{Artin group of type $I_2(m)$} is the group $G = A_{I_2(m)}$ defined by the presentation $G = \langle s,t \mid \Pi (s,t,m) = \Pi (t,s,m) \rangle$.
Let $M$ be the submonoid of $G$ generated by $\{s,t\}$ and let $\Omega = \Pi (s, t, m)$.
Then, by Brieskorn--Saito \cite{BriSai1} and Deligne \cite{Delig1}, the triple $(G, M, \Omega)$ is a Garside structure on $G$.
If $m$ is even then $\Delta = \Omega$ is central. 
However, if $m$ is odd then $\Omega$ is not central but $\Delta = \Omega^2$ is central. 
In both cases, by Dehornoy \cite{Dehor3}, the triple $(G,M, \Delta)$ is a Garside structure on $G$.
In this section we study the case where $m$ is even and in the next one we will study the case where $m$ is odd.
So, from now until the end of the section we assume that $m = 2k$ is even and $\Delta = \Pi(s,t,m) = (st)^k = (ts)^k$.

\begin{rem}
By setting $\Delta = \Omega^2$ in the even case as in the odd case we could state global results valid for all $m \ge 4$, but it would be still necessary to differentiate the even case from the odd case in the proofs, and this would lengthen the proofs for the even case. 
\end{rem}

We denote by $G_1$ (resp. $M_1$) the subgroup of $G$ (resp. submonoid of $M$) generated by $t$, and by $H$ (resp. $N$) the subgroup of $G$ (resp. submonoid of $M$) generated by $s$.
We set $\Delta_1 = t$ and $\Lambda = s$.
By Brieskorn--Saito \cite{BriSai1} the triples $(G_1, M_1, \Delta_1)$ and $(H, N, \Lambda)$ are parabolic substructures of $(G, M, \Delta)$.
On the other hand it is obvious that $M_1 \cup N$ generates $M$.
The main result of the present section is the following.

\begin{thm}\label{thm5_1}
The pair $(H,G_1)$ satisfies Condition A with constant $\zeta = k-1$ and Condition B with constant $\zeta = k-1$.
\end{thm}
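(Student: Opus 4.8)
The plan is to follow the route used to prove Theorem~\ref{thm4_1} in Section~\ref{Sec4}, replacing the combinatorial input about the braid monoid (Theorem~\ref{thm2_5} and Lemmas~\ref{lem4_6}--\ref{lem4_11}) by the corresponding facts about the dihedral Artin monoid $M=A_{I_2(m)}^+$ with $m=2k$. First I would record the elementary combinatorics: the set $\SS$ of nontrivial simple elements is $\{\Pi(s,t,\ell),\Pi(t,s,\ell):1\le\ell\le m-1\}\cup\{\Delta\}$; one has $\theta=\Delta\Delta_1^{-1}=\Delta_1^{-1}\Delta=\Pi(s,t,m-1)=(st)^{k-1}s=s(ts)^{k-1}$, so in particular $\theta t=t\theta=\Delta$; and, since $s\vee_R t=\Delta$, for a nontrivial $a\in M$ at most one of the relations $s\le_R a$, $t\le_R a$ holds, unless $\Delta\le_R a$. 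Hence, with $N=\langle s\rangle^+$ and $M_1=\langle t\rangle^+$, the right tails $\tau_N(a)$ and $\tau_{M_1}(a)$ are exactly the maximal powers of $s$ and of $t$ that right-divide $a$, the alternating form of $a$ is obtained by alternately stripping these maximal powers off the right, and $\dpt(a)$ equals the number of (maximal, nontrivial) blocks of $s$'s appearing in that alternating form. Note that $\bar\Theta=\Theta\cup M_1=\langle\theta,t\rangle^+$, that Lemma~\ref{lem3_3} already supplies everything we need about $\theta$ and $\bar\Theta$, and that the proof of Lemma~\ref{lem4_10} uses only Lemma~\ref{lem3_3}\,(1) and hence carries over unchanged.

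For Condition~A I would prove by induction on $n$ that $\dpt(\Delta^n)=(k-1)n+1$. For $n=1$ this is a direct computation: the alternating form of $\Delta=(st)^k$ has all its blocks of size $1$, so $\bh(\Delta)=2k$ and $\dpt(\Delta)=k$. For the step, write $\Delta^{n+1}=\Delta^n\,\theta\,t$. Right multiplication by $\theta$ raises the depth by exactly $k-1$ on every element not in $M_1$ --- the dihedral analogue of Lemma~\ref{lem4_8}, proved just as there by exhibiting the alternating form of $a\theta$: it is obtained from that of $a$ by enlarging its last $s$-block with the leading letter of $\theta=s(ts)^{k-1}$ and then inserting the $k-1$ additional $s$-blocks coming from $(ts)^{k-1}$. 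Moreover $\Delta^n\theta=\theta\Delta^n$ ends in $t$ (as $\Delta\le_R\Delta^n$), so right multiplication by the remaining $t$ only enlarges the last $t$-block and leaves the depth unchanged. Thus $\dpt(\Delta^{n+1})=\dpt(\Delta^n)+(k-1)$, which closes the induction. (Once Condition~A holds, Lemma~\ref{lem3_3}\,(2) gives $\dpt(\theta^j a_0)=(k-1)j+1$ for all $j\ge1$ and all $a_0\in M_1$.)

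For Condition~B I would imitate Proposition~\ref{prop4_12} case by case. The supporting statements are the dihedral analogues of Lemmas~\ref{lem4_7}, \ref{lem4_8}, \ref{lem4_9} and \ref{lem4_11}: that $\dpt(\rev(a))=\dpt(a)$, where $\rev$ is word reversal (which fixes $s$ and $t$ and preserves $M_1$ and $N$), proved from Lemma~\ref{lem4_6} exactly as Lemma~\ref{lem4_7}; that $\dpt(ab)=\dpt(ba)=\dpt(b)$ for $a\in M_1$, $b\notin M_1$ and $\dpt(ab)=\dpt(ba)=\dpt(a)+\dpt(b)-1$ for $a\in\Theta$, $b\notin M_1$, deduced from the previous items and Condition~A exactly as Lemma~\ref{lem4_9}; and that if $a,b\in M\setminus M_1$ are unmovable, $c\in M_1$, $ab=c\Delta^t$ is the $\Delta$-form of $ab$ and $\dpt(a)+\dpt(b)=(k-1)t+2$, then $a,b\in\Theta$. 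Granting these, the case analysis of Proposition~\ref{prop4_12} --- according to whether each of $a$ and $b$ lies in $M_1$, in $\Theta$, or in $M\setminus\bar\Theta$, and whether or not $c\in M_1$ --- goes through essentially word for word and yields Condition~B with $\zeta=k-1$.

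The hard part is that the proof of Theorem~\ref{thm4_1} invokes the Fromentin--Paris theorem (Theorem~\ref{thm2_5}) twice --- in Lemma~\ref{lem4_11} and in Case~3 of Proposition~\ref{prop4_12}, to trap $\dpt(ab)$ between $\dpt(a)+\dpt(b)-1$ and $\dpt(a)+\dpt(b)$ for unmovable $a,b\in M\setminus\bar\Theta$ --- and there is no off-the-shelf analogue for dihedral monoids. The substitute I would aim for is a direct combinatorial bound, obtained by analysing how the alternating forms of $a$ and $b$ interact at their \emph{seam} when one forms the product $ab$: since the only defining relation of $M$ is $\Pi(s,t,m)=\Pi(t,s,m)$, a factor $\Delta$ can be extracted only out of an alternating run of length $\ge m$, and the unmovable elements of $M$ are \emph{staircase words} of a restricted shape, which makes the count feasible --- and markedly easier here than in the odd case, precisely because $\Delta=\Omega=(st)^k$ is itself the defining relator. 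Alternatively, since the section will in any event compare these orders with the ones pulled back along Crisp's embedding of $A_{I_2(2k)}$ into a braid group \cite{Crisp1}, one could instead transport Theorem~\ref{thm2_5} through that embedding; but the self-contained combinatorial route is cleaner and is, I expect, where most of the technical work of this section lies.
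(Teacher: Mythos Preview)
Your approach would work, but it is a substantial detour compared with what the paper actually does, and your identification of the ``hard part'' is off the mark.

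The key structural fact you allude to --- that unmovable elements of the even dihedral monoid are ``staircase words'' --- is much stronger than you seem to realise.  Because the only relation in $M$ involves a full alternating word of length $m$, every unmovable element has a \emph{unique} expression $a = t^{u_p} s^{v_p} \cdots t^{u_1} s^{v_1} t^{u_0}$ with the obvious positivity constraints, and $\dpt(a)=p$ is simply the number of $s$-blocks (this is the paper's Lemma~\ref{lem5_5}).  There is no rewriting possible inside an unmovable word.  From this, Condition~A is a one-line computation ($\theta^p=(s(ts)^{k-1})^p$ has exactly $p(k-1)+1$ maximal $s$-blocks), and for Condition~B the paper proceeds by an explicit decomposition $a=a_{p+1}a_p\cdots a_1$, $b=b_1\cdots b_p b_{p+1}$ with $a_ib_i=\Delta$ for $i\le p$ and $a_{p+1}b_{p+1}=c$, then reads off $\varepsilon=\varepsilon_a+\varepsilon_b-\varepsilon_c$ from the first and last letters of the pieces using the elementary additivity rules of Lemma~\ref{lem5_7}.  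No analogue of Theorem~\ref{thm2_5} is ever invoked; the ``seam'' interaction you worry about reduces to a single case split on $(\tau(a),\sigma(b))$, and the $\Delta$'s that can be extracted are counted directly by the decomposition.

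So your plan --- port Lemmas~\ref{lem4_7}--\ref{lem4_11} and rerun Proposition~\ref{prop4_12} --- trades a short bookkeeping argument for a longer one that re-proves weaker statements (e.g.\ your inductive proof of Condition~A, or the $\rev$-symmetry, are both immediate from Lemma~\ref{lem5_5}).  The paper's route buys a much cleaner case analysis; yours buys a closer parallel with Section~\ref{Sec4}, which might have pedagogical value but is not needed here.  If you want to follow the paper, drop the Section~\ref{Sec4} template entirely and instead prove Lemma~\ref{lem5_5} first; everything else is then a direct letter-count.
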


By Theorem \ref{thm3_2} this implies the following.

\begin{corl}\label{corl5_2}
The pair $(H, G_1)$ is a Dehornoy structure on $G$.
\end{corl}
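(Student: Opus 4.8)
The plan is to obtain Corollary \ref{corl5_2} as a direct consequence of Theorem \ref{thm5_1} together with the general criterion Theorem \ref{thm3_2}, so there is essentially no new work beyond checking that the standing hypotheses of Section \ref{Sec3} are in force for the present triple of substructures. First I would recall that Theorem \ref{thm3_2} was established for a Garside structure $(G,M,\Delta)$ equipped with two parabolic substructures $(H,N,\Lambda)$ and $(G_1,M_1,\Delta_1)$ subject to the assumptions $N \neq M$, $M_1 \neq M$, $N \cup M_1$ generates $M$, $\Delta$ central in $G$, and $\Delta_1$ central in $G_1$. For the even dihedral case all of these have already been verified in the preamble of this section: $N = \langle s \rangle^+$ and $M_1 = \langle t \rangle^+$ are proper parabolic submonoids of $M$ whose union generates $M$, the element $\Delta = (st)^k$ is central since $m$ is even, and $\Delta_1 = t$ is trivially central in $G_1 = \langle t \rangle \simeq \Z$. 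Thus the hypotheses under which Lemma \ref{lem3_3} through Lemma \ref{lem3_8}, and hence Theorem \ref{thm3_2}, were proved are all satisfied here.

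With the setup confirmed, the remaining step is purely formal. Theorem \ref{thm5_1} asserts that $(H,G_1)$ satisfies Condition A with constant $\zeta = k-1$ and Condition B with the same constant $\zeta = k-1$. Taking this single common value $\zeta = k-1 \geq 1$ (which is legitimate since $m = 2k \geq 4$ forces $k \geq 2$, so that $\zeta$ is a positive integer as Condition A requires), the hypothesis of Theorem \ref{thm3_2} is met, and that theorem immediately yields that $(H,G_1)$ is a Dehornoy structure. I do not expect any genuine obstacle in this deduction: the entire substantive content resides in Theorem \ref{thm5_1}, and the only point worth flagging is that Condition A and Condition B must hold for one and the same constant $\zeta$, which is precisely how Theorem \ref{thm5_1} is phrased.
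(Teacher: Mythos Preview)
Your proposal is correct and follows exactly the paper's approach: the corollary is stated as an immediate consequence of Theorem \ref{thm5_1} via Theorem \ref{thm3_2}, and your verification of the standing hypotheses of Section \ref{Sec3} is simply a careful unpacking of what the paper already recorded in the setup of Section \ref{Sec5}.
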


We denote by $P_1$ the set of $(H, G_1)$-positive elements of $G$ and we set $P_2 = \{ t^n \mid n \ge 1 \}$.
For each $\epsilon = ( \epsilon_1, \epsilon_2) \in \{ \pm 1\}^2$ we set $P^\epsilon = P_1^{\epsilon_1} \cup P_2^{\epsilon_2}$.
Then, by Proposition \ref{prop3_1}, we have the following.

\begin{corl}\label{corl5_3}
The set $P^\epsilon$ is the positive cone for a left-order on $G$.
\end{corl}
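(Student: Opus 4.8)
The plan is to verify Conditions A and B directly, working with the very explicit description of $M$, $M_1$, $N$, $\Delta=(st)^k$, $\Delta_1=t$, and $\theta=\Delta t^{-1}=\Delta_1^{-1}\Delta$ available in the dihedral case. The first step is Condition A: I would compute the $(N,M_1)$-alternating form of $\Delta^j$ for each $j\ge 1$. Since $\Delta^j=(st)^{jk}$, and alternating with respect to $(N,M_1)=(\langle s\rangle^+,\langle t\rangle^+)$ simply means breaking a positive word $stst\cdots$ into maximal blocks of $s$'s and $t$'s, one sees that $\Delta^j=(st)^{jk}$ has alternating form with $\bh(\Delta^j)=2jk$ (blocks $t,s,t,s,\dots$, ending in an $s$-block on the left, with $a_1=t$), so $\dpt(\Delta^j)=jk$. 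Hmm — but Condition A with $\zeta=k-1$ demands $\dpt(\Delta^j)=(k-1)j+1$, so the naive reading is wrong: the point is that in the \emph{dihedral} monoid the generators $s,t$ satisfy $\Pi(s,t,m)=\Pi(t,s,m)$, i.e.\ $(st)^k=(ts)^k$, and more importantly $\tau_{M_1}$ and $\tau_N$ of a word like $(st)^j$ collapse because, e.g., $(st)^k\wedge_R$ a power of $t$ is large. So the real computation is: $\bh((st)^j)$ for $1\le j$, using that $(st)^{k}\le_R$-divides in a way that makes consecutive syllables merge through the defining relation. I would carefully determine, for $1\le r\le k$, the alternating form of $(st)^r$ (which stays length $2r$ as long as $r<k$) and then show that multiplying by another factor of $\Delta$ only adds $2(k-1)$ to the breadth, not $2k$, because one syllable gets absorbed via the relation $\Delta t = t\,\theta$ from Lemma~\ref{lem3_3}(1). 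This yields $\bh(\Delta^j)=2(k-1)j+2$ and hence $\dpt(\Delta^j)=(k-1)j+1$, which is exactly Condition A with $\zeta=k-1$.

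The second and harder step is Condition B with the same constant $\zeta=k-1$. Here I would take a pair $(a,b)\in(M\times M)\setminus(\bar\Theta\times\bar\Theta)$ with $a,b$ unmovable, write $ab=c\Delta^t$ in $\Delta$-form, and show $\dpt(c)=\dpt(a)+\dpt(b)-\zeta t-\varepsilon$ for some $\varepsilon\in\{0,1\}$ obeying the stated constraints. The key structural input is that every unmovable element of the dihedral monoid $M$ is, up to $\theta$-powers, one of a very short list of "syllable strings" $\Pi(s,t,r)$ or $\Pi(t,s,r)$ with $0\le r<m$ (these are exactly the simple elements and their left/right translates); so I would first classify unmovable elements of $M$ and compute $\dpt$ of each in terms of its syllable length $r$ — something like $\dpt(\Pi(s,t,r))=\lceil (r-1)/1\rceil$ appropriately scaled, but in fact, because $\dpt$ counts even indices in the $(N,M_1)$-alternating form, $\dpt$ of a $\theta$-free unmovable element is $0$ or $1$ and all the "depth" comes from $\theta$-powers governed by Lemma~\ref{lem4_8}'s dihedral analogue. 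Then computing $ab$ reduces to: multiply two syllable strings, push the product into $\Delta$-form by repeatedly applying the relation $(st)^k=(ts)^k$ to extract factors of $\Delta$ and $\theta$, and read off $\dpt(c)$ and $t$. This is a finite case analysis on the "syllable types" of $a$ and $b$ (does $a$ end in $s$ or $t$, does $b$ begin with $s$ or $t$, do they overlap enough to create a $\Delta$), much as the braid-group proof in Section~\ref{Sec4} was a finite analysis, but with the dihedral relation doing the bookkeeping.

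Concretely, the steps in order: (1) establish the dihedral analogues of Lemma~\ref{lem3_3}(1)–(6) — most are already stated in full generality in the excerpt, so I only need the explicit computation $\dpt(\theta^j a_0)=(k-1)j+1$, which follows from Condition A plus Lemma~\ref{lem3_3}(2); (2) prove Condition A as above (this is Proposition~\ref{prop5_?} in spirit, analogous to Proposition~\ref{prop4_5}); (3) prove a "reversal" lemma $\dpt(\rev(a))=\dpt(a)$ exactly as Lemma~\ref{lem4_7}, using that $\rev$ fixes $s$, $t$, $\theta$, $\Delta$; (4) prove $\dpt(a\theta^j)=\dpt(a)+(k-1)j$ for $a\in M\setminus M_1$, the analogue of Lemma~\ref{lem4_8}, by showing $\bh(a\theta)=\bh(a)+2(k-1)$ via an explicit alternating-form computation (here is where the dihedral relation $s(ts)^{k-1}=\Pi(t,s,m)\,t^{-1}\cdots$ type identities enter and I absorb exactly one syllable); (5) prove the analogue of Lemma~\ref{lem4_9}: for $a\in M_1$, $\dpt(ab)=\dpt(ba)=\dpt(b)$, and for $a\in\Theta$, $\dpt(ab)=\dpt(ba)=\dpt(a)+\dpt(b)-1$; (6) prove the analogues of Lemmas~\ref{lem4_10} and~\ref{lem4_11}; (7) assemble Condition B by the same four-case split as Proposition~\ref{prop4_12} ($a\in M_1$, $a\in\Theta$, $a\in M\setminus\bar\Theta$ times the symmetric options), where in the last case I replace the appeal to Fromentin–Paris' Theorem~\ref{thm2_5} by the direct syllable computation of $\dpt(ab)$ and of the $\Delta$-form of $ab$.

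The main obstacle I anticipate is step (4)/(5) and the precise form of the $\Delta$-extraction in case ``$a,b\in M\setminus\bar\Theta$'' of step (7): I need a clean normal form for unmovable elements of the dihedral monoid and an exact formula for how many $\Delta$'s and $\theta$'s pop out of a product of two such, together with the parity-of-syllable-count information that controls $\varepsilon$. In the braid case this delicate bookkeeping was outsourced to Theorem~\ref{thm2_5} (the Fromentin–Paris characterization of $s_1$-negativity via breadth); for dihedral groups there is no such ready-made black box, so the proof must grind through the syllable combinatorics of $(st)^k=(ts)^k$ by hand — this is exactly why the paper devotes a whole section to it and remarks that the odd case ``requires much more calculations.'' Everything else (Conditions A, the reversal lemma, the $M_1$ and $\Theta$ absorption lemmas) is routine once the normal form is pinned down.
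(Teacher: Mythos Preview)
Your overall strategy---verify Conditions A and B with constant $\zeta=k-1$, then invoke Theorem~\ref{thm3_2} and Proposition~\ref{prop3_1}---is exactly the paper's route; Corollary~\ref{corl5_3} itself is a one-line consequence of Proposition~\ref{prop3_1} once Corollary~\ref{corl5_2} is in hand, so the real content is Theorem~\ref{thm5_1}.

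There is, however, a genuine structural misconception in your plan for Condition B. You assert that ``every unmovable element of the dihedral monoid $M$ is, up to $\theta$-powers, one of a very short list of syllable strings $\Pi(s,t,r)$ or $\Pi(t,s,r)$'' and that ``$\dpt$ of a $\theta$-free unmovable element is $0$ or $1$.'' Both claims are false: for $m=2k\ge 6$ the element $sts$ is unmovable, is not a $\theta$-multiple, and has $\dpt(sts)=2$; more generally any word $t^{u_p}s^{v_p}\cdots s^{v_1}t^{u_0}$ with arbitrary positive exponents (and no alternating run of length $m$) is unmovable and has depth $p$. You are conflating \emph{unmovable} with \emph{simple}. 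The paper's key observation (Lemma~\ref{lem5_5}) is precisely that unmovable elements are exactly the elements with a \emph{unique} syllable decomposition, and that $\dpt$ equals the number of $s$-blocks. Once this is in place, both Conditions are elementary: for Condition~A one writes $\theta^p=(s(ts)^{k-1})^p$ and counts $s$-blocks (Proposition~\ref{prop5_6}); for Condition~B one factors $a=a_{p+1}a_p\cdots a_1$, $b=b_1\cdots b_pb_{p+1}$ with $a_ib_i=\Delta$ for $i\le p$ and $a_{p+1}b_{p+1}=c$, and the two rules of Lemma~\ref{lem5_7} (namely $\dpt(xy)=\dpt(x)+\dpt(y)$ unless $\tau(x)=\sigma(y)=s$, in which case one subtracts $1$; and $\dpt(x)+\dpt(y)=k$ whenever $xy=\Delta$) reduce everything to tracking first and last letters. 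Your proposed transplant of the Section~\ref{Sec4} machinery---a reversal lemma, a $\theta$-multiplication lemma, and a hand-rolled substitute for Fromentin--Paris---is unnecessary here and, resting as it does on the wrong picture of unmovable elements, would not go through as written. The ``finite case analysis on syllable types'' you gesture at in step~(7) is the correct instinct, but it must be grounded in Lemma~\ref{lem5_5}, not in a short list of simples.
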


In this section we denote by $r_1, \dots, r_{2k-1}$ the standard generators of the braid group $\BB_{2k}$ on $2k=m$ strands.
By Crisp \cite{Crisp1} we have an embedding $\iota : G \to \BB_{2k}$ which sends $s$ to $\prod_{i=0}^{k-1} r_{2i+1}$ and sends $t$ to $\prod_{i=1}^{k-1} r_{2i}$.
In the second part of the section we will show that the orders obtained from Corollary \ref{corl5_3} can be deduced from $\iota$ together with the Dehornoy order. 
More precisely, we show the following.

\begin{prop}\label{prop5_4}
Let $\alpha \in G$.
Then $\alpha$ is $(H, G_1)$-negative if and only if $\iota (\alpha)$ is $r_1$-negative.
\end{prop}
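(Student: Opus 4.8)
\textbf{Proof plan for Proposition \ref{prop5_4}.}
The plan is to compare the $(H,G_1)$-negative elements of $G = A_{I_2(2k)}$ with the $r_1$-negative elements of $\BB_{2k}$ via Crisp's embedding $\iota$, using the characterisation of $(H,G_1)$-negativity in terms of $\Delta$-forms and depth (Condition A gives $\dpt(\Delta^j) = (k-1)j+1$) on the source side, and Theorem \ref{thm1_1} (the disjoint union $\BB_{2k} = P_1'^+ \sqcup P_1'^- \sqcup \langle r_2, \dots, r_{2k-1}\rangle$) on the target side. First I would set up the dictionary: $\iota$ sends $s$ to $\sigma := \prod_{i=0}^{k-1} r_{2i+1}$, sends $t$ to the element I will call $\tau := \prod_{i=1}^{k-1} r_{2i} \in \langle r_2, \dots, r_{2k-1}\rangle$, sends $G_1 = \langle t\rangle$ into $\langle r_2, \dots, r_{2k-1}\rangle$, and sends the Garside element $\Delta = (st)^k$ to a central element of the braid group (indeed $\iota(\Delta)$ should be the full twist, or a central power of it). I would record that $\iota(\theta) = \iota(\Delta)\iota(\Delta_1)^{-1} = \iota(\Delta)\iota(t)^{-1}$ and, since $\iota(t) \in \langle r_2, \dots, r_{2k-1}\rangle$, the image under $\iota$ of any theta element $\theta^j a_0$ ($a_0 \in M_1$) lies in the $\langle r_2, \dots, r_{2k-1}\rangle$-coset of a central power of $\iota(\Delta)$.

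Next I would show the easy implication. If $\alpha \in G_1 = \langle t \rangle$ then $\iota(\alpha) \in \langle r_2, \dots, r_{2k-1}\rangle$, so $\iota(\alpha)$ is neither $r_1$-positive nor $r_1$-negative; by the definition of $(H,G_1)$-negative (which requires $k \ge 1$ in the $\Delta$-form, hence $\alpha \notin G_1$ by Lemma \ref{lem3_3}(5) and Lemma \ref{lem3_6}), $\alpha$ is also not $(H,G_1)$-negative. It therefore suffices, by the two disjoint unions $G = P \sqcup P^{-1} \sqcup G_1$ (Corollary \ref{corl5_2}, via Theorem \ref{thm3_2}) and $\BB_{2k} = P_1'^+ \sqcup P_1'^- \sqcup \langle r_2,\dots,r_{2k-1}\rangle$, to prove just one inclusion, say: if $\alpha$ is $(H,G_1)$-negative then $\iota(\alpha)$ is $r_1$-negative. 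Indeed, once that inclusion is known, applying it to $\alpha^{-1}$ gives that $\alpha$ $(H,G_1)$-positive implies $\iota(\alpha)$ is $r_1$-positive, and since $\iota$ is injective and the three classes on each side are disjoint and exhaustive, the three classes must match up exactly, giving the ``only if'' and ``if'' at once.

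For the remaining inclusion I would argue by induction on the word length $\lg(\alpha)$, or equivalently induct using the multiplicative structure $PP \subset P^{-1}$... more precisely $P^{-1}P^{-1}\subset P^{-1}$ (Lemma \ref{lem3_4}) and $G_1 P^{-1} G_1 \subset P^{-1}$ (Lemma \ref{lem3_5}). The point is that every $(H,G_1)$-negative element can be built from ``atomic'' negative pieces by these operations, and $\iota$ should turn the atomic negative pieces into $r_1$-negative braids. Concretely: write $\alpha = a\Delta^{-\ell}$ in $\Delta$-form with $\ell \ge 1$ and $\dpt(a) < \dpt(\Delta^\ell)$; I would analyse the alternating form $a = a_p \cdots a_1$ of $a$ with respect to $(N,M_1)$ and track how $\iota$ sends each alternating syllable (a power of $s$ or a power of $t$) and each factor $\Delta^{-1}$ into the braid group, using that $\iota(\Delta^{-1})$ is a negative central power and $\iota(s^j)$ involves $r_1$ with positive exponent sum while $\iota(t^j) \in \langle r_2,\dots,r_{2k-1}\rangle$. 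The depth inequality $\dpt(a) < \dpt(\Delta^\ell)$ is exactly what guarantees there are ``enough'' $\Delta^{-1}$ factors (each contributing $r_1$-negatively after conjugating the $s$-syllables past the central element) to make the whole product come out $r_1$-negative rather than $r_1$-positive or central. I expect the main obstacle to be precisely this bookkeeping: showing that $\iota$ of a $\Delta$-negative word, after normalising in $\BB_{2k}$, lands in $P_1'^-$ and not in $P_1'^+$ or $\langle r_2,\dots,r_{2k-1}\rangle$ — i.e., matching the depth/breadth threshold on the $G$ side with Theorem \ref{thm2_5}'s breadth threshold on the braid side through the non-obvious map $\iota$. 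One clean way to handle this is to first prove the statement for the \emph{generators and their inverses and small products} by direct computation of $\iota$, then leverage Theorem \ref{thm1_1}'s disjoint union together with $P_1'^- P_1'^- \subset P_1'^-$ and $\langle r_2,\dots\rangle P_1'^- \langle r_2,\dots\rangle \subset P_1'^-$ to bootstrap: if $\iota$ sends the ``$s_1$-negative generating data'' of $G$'s Dehornoy structure to $r_1$-negative braids, it sends all of $P^{-1}$ into $P_1'^-$, and then the disjointness/exhaustiveness argument of the previous paragraph closes the proof.
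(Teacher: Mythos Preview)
Your reduction step is right and matches the paper: using the disjoint unions $G = P \sqcup P^{-1} \sqcup G_1$ (Corollary~\ref{corl5_2}) and $\BB_{2k} = P' \sqcup P'^{-1} \sqcup G_1'$ together with $\iota(G_1) \subset G_1'$, it suffices to prove the single inclusion $\iota(P^{-1}) \subset P'^{-1}$. The problems are all in how you propose to prove that inclusion.

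First, a concrete error: $\iota(\Delta)$ is \emph{not} central. By Crisp, $\iota(\Delta) = \Omega_\BB$, the half-twist of $\BB_{2k}$, and conjugation $\Phi(\alpha) = \Omega_\BB \alpha \Omega_\BB^{-1}$ interchanges $M_1' = \langle r_2,\dots,r_{2k-1}\rangle^+$ and $N' = \langle r_1,\dots,r_{2k-2}\rangle^+$. Your plan to treat $\iota(\Delta^{-1})$ as a ``negative central power'' that can be slid past the $s$-syllables, and your appeal to an exponent-sum/Theorem~\ref{thm2_5} style count, do not work: $r_1$-negativity is not detected by exponent sums, Theorem~\ref{thm2_5} is about the braid monoid's alternating breadth and has no direct relation to $\iota(M)$, and $\Omega_\BB$ being non-central is precisely what makes the argument go.

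Second, you are missing the key technical fact (Lemma~\ref{lem5_9} in the paper): if $a \in M$ is unmovable with $\dpt(a) \le k-1$, then $\iota(a)$ factors as $b_1 b_2$ with $b_1 \in M_1'$ and $b_2 \in N'$. This is proved by a short induction on the depth, commuting the high-index $r_i$'s past the low-index ones. Granting it, for $\alpha = a\Delta^{-1}$ with $\dpt(a) \le k-1$ one has
\[
\iota(\alpha) = b_1 b_2\,\Omega_\BB^{-1} = b_1\,\Omega_\BB^{-1}\,\Phi(b_2),
\]
and since $b_1, \Phi(b_2) \in M_1' \subset G_1'$ while $\Omega_\BB^{-1}$ is $r_1$-negative, $\iota(\alpha) \in P'^{-1}$. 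For general $\alpha = a\Delta^{-p}$ with $\dpt(a) \le p(k-1)$, Lemma~\ref{lem5_5} lets one split $a = a_1 \cdots a_p$ with each $\dpt(a_i) \le k-1$, so $\alpha = (a_1\Delta^{-1})\cdots(a_p\Delta^{-1})$ and each factor maps into $P'^{-1}$. Your ``atomic pieces plus bootstrap'' intuition is in the right direction, but without the $b_1 b_2$ factorisation and the $\Phi$-flip it never gets off the ground: checking ``generators and small products'' by direct computation will not obviously cover all of $P^{-1}$, because an arbitrary $(H,G_1)$-negative element is not a product of generators known to be negative.
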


The proof of Theorem \ref{thm5_1} is based on the following observation whose proof is left to the reader.

\begin{lem}\label{lem5_5}
Let $a$ be an unmovable element of $M$.
Then $a$ is uniquely written in the form $a = t^{u_p} s^{v_p} \cdots t^{u_1} s^{v_1} t^{u_0}$ with $u_0, u_p \ge 0$, $u_1, \dots, u_{p-1} \ge 1$ and $v_1, \dots, v_p \ge 1$.
In this case $\dpt(a) = p$.
\end{lem}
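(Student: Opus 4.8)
The plan is to establish the normal form in Lemma \ref{lem5_5} by a direct induction on the word length $\lg(a)$, extracting one alternating block at a time, and then to verify the depth formula by showing this product is essentially the alternating form with respect to $(N, M_1)$. First I would recall that, since $G = A_{I_2(m)}$ with $m = 2k$, the monoid $M$ has the single defining relation $(st)^k = (ts)^k = \Delta$, and $\Div(\Delta)$ consists precisely of the prefixes of $\Pi(s,t,m)$ and of $\Pi(t,s,m)$, i.e. of the $2m = 4k$ simple elements $1, s, st, sts, \dots$ and $1, t, ts, tst, \dots$ up to $\Delta$. An element $a \in M$ is unmovable iff $\Delta \not\le_R a$, i.e. iff no expression of $a$ contains $(ts)^k$ or $(st)^k$ as a right subword in a way that would let $\Delta$ be cancelled; concretely, $a$ unmovable means that in the greedy normal form of $a$ the leftmost simple factor is a proper prefix of $\Delta$.

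The key structural step: every element of $M$ can be written as an alternating product of powers of $s$ and powers of $t$, because $\{s\} = N \setminus\{1\}$ generators and $\{t\} = M_1 \setminus\{1\}$ together generate $M$; grouping consecutive equal generators, any $a \in M$ has an expression $a = t^{u_p} s^{v_p} \cdots t^{u_1} s^{v_1} t^{u_0}$ with all exponents nonnegative and the interior ones positive. For \emph{existence} of such an expression this is immediate. The substantive claims are \textbf{uniqueness} and the depth computation. For uniqueness I would argue that, because the only relation is $(st)^k = (ts)^k$, a word of the above shape containing neither $(st)^k$ nor $(ts)^k$ as a subword (which is exactly the content of $a$ being unmovable, once one checks the exponents are $< k$ at the boundaries where a long alternating run could occur — more precisely that $v_1 + \cdots$ interleaved with the $u_i$ never produces an alternating run of length $m$) is a \emph{geodesic normal form}: two such words representing the same element of $M$ must be connected by applications of the relation, but the relation strictly changes the alternating pattern, so no nontrivial chain of rewrites is possible and the exponents are determined. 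Here I would invoke that $A_{I_2(m)}$ embeds in its Artin monoid (Paris \cite{Paris1}) and use the rewriting-system / combinatorial description of $I_2(m)$ directly, or alternatively compute $\tau_{M_1}$ and $\tau_N$ iteratively: $\tau_{M_1}(a) = t^{u_0}$, then $\tau_N(t^{-u_0}a) = s^{v_1}$, and so on, which by Proposition \ref{prop2_4} and the definition of alternating form pins down each exponent and simultaneously identifies $a = t^{u_p}s^{v_p}\cdots t^{u_0}$ with the alternating form with respect to $(N, M_1)$ when $p \ge 1$, so that $\bh(a) = 2p$ or $2p+1$ and hence $\dpt(a) = p$ by the definition of depth in Section \ref{Sec3}.

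For the depth formula I would reason as follows. If $a \in M_1$, then $a = t^{u_0}$, $\bh(a) = 1$, $\dpt(a) = 0 = p$. If $a \notin M_1$, write out the alternating form: the rightmost block is $\tau_{M_1}(a) = t^{u_0}$ (possibly $1$), the next is $\tau_N$ of what remains $= s^{v_1}$ (nontrivial since $a \notin M_1$ forces $v_1 \ge 1$), and continuing, the alternating form is exactly $a = t^{u_p} s^{v_p} \cdots s^{v_1} t^{u_0}$ with $\bh(a) = 2p$ if $u_p = 0$ (so the top block lies in $N$) and $\bh(a) = 2p+1$ if $u_p \ge 1$ (top block in $M_1$). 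In the first case $\dpt(a) = \bh(a)/2 = p$; in the second $\dpt(a) = (\bh(a)-1)/2 = p$. The one point needing care is that these $t^{u_i}$ and $s^{v_i}$ are genuinely the $N$- and $M_1$-tails and not something smaller — i.e. that $\tau_{M_1}(t^{u_p}s^{v_p}\cdots t^{u_i}) = t^{u_i}$ exactly — which follows because $s^{v_{i}} t^{u_{i-1}} \cdots$ contains no $t$ on its left end and, crucially, multiplying by the relation cannot create a leftmost $t$ unless an alternating run of length $\ge m$ appears, which is prevented precisely by unmovability.

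The main obstacle I expect is the uniqueness/normal-form statement: making rigorous that an unmovable element has a \emph{unique} such alternating expression requires either a careful analysis of the confluence of the single relation $(st)^k = (ts)^k$ on words of the prescribed shape, or a clean inductive argument peeling off $\tau_{M_1}$ and $\tau_N$ alternately and checking at each stage that the tail really is the full maximal power of the relevant generator — the subtlety being boundary blocks $u_0, u_p$ that may vanish and interior runs whose total alternating length must stay below $m$ for the element to be unmovable. Everything else (existence of the expression, and reading off $\dpt(a) = p$ from $\bh(a) \in \{2p, 2p+1\}$) is routine, which is presumably why the authors leave the proof to the reader.
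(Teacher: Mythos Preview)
The paper explicitly leaves the proof of this lemma to the reader, so there is no argument in the paper to compare against. Your approach is correct in substance, and the depth computation---identifying the block decomposition with the $(N,M_1)$-alternating form and reading off $\bh(a)\in\{2p,2p+1\}$, hence $\dpt(a)=p$---is exactly right.

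For uniqueness, your argument can be sharpened. The clean statement is: since $\Delta=(st)^k=(ts)^k$ is central, an element $a\in M$ is unmovable if and only if \emph{no} word over $\{s,t\}$ representing $a$ contains $(st)^k$ or $(ts)^k$ as a subword (any such subword would give a factorisation $a=x\Delta y=xy\Delta$). But the only defining relation of $M$ is $(st)^k=(ts)^k$, so for an unmovable $a$ the relation can never be applied to any representative word; hence $a$ is represented by a \emph{unique} word, and grouping consecutive equal letters gives the unique block form. Your parenthetical about ``exponents $<k$ at the boundaries'' is not the right condition and is unnecessary: unmovability rules out an alternating subword of length $m$ anywhere in the word, not just at the ends, and that is precisely what prevents the relation from firing. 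Once uniqueness of the word is established, the tail computations $\tau_{M_1}(a)=t^{u_0}$, $\tau_N(t^{u_p}\cdots s^{v_1})=s^{v_1}$, etc.\ follow immediately (each truncated element is still unmovable, hence has a unique word, whose last letter determines which atom right-divides it), and the block form coincides with the alternating form.
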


The first part of Theorem \ref{thm5_1} is a straightforward consequence of this lemma.

\begin{prop}\label{prop5_6}
The pair $(H, G_1)$ satisfies Condition A with constant $\zeta = k-1$.
\end{prop}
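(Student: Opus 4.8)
The plan is to exploit the very special structure of the even dihedral case, where $\Delta$ factors cleanly through $\theta$ and $\Delta_1$.

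First I would record that $\theta$ and $\Delta_1=t$ commute in $M$: indeed $\theta\Delta_1=\Delta$ by the definition $\theta=\Delta\Delta_1^{-1}$, while $\Delta_1\theta=\Delta_1\Delta\Delta_1^{-1}=\Delta$ since $\Delta$ is central in $G$. Hence $\Delta^n=(\theta\Delta_1)^n=\theta^n\Delta_1^n$ for all $n\ge 1$, so $\Delta^n$ is a theta element. Since the depth of an element of $M$ is unchanged by right multiplication by an element of $M_1$ (the observation used at the start of the proof of Lemma \ref{lem3_3}\,(2)), this gives $\dpt(\Delta^n)=\dpt(\theta^n\Delta_1^n)=\dpt(\theta^n)$, so it suffices to prove $\dpt(\theta^n)=(k-1)n+1$.

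Next I would note that $\theta^n=\theta^n\cdot 1$ is a theta element with trivial $M_1$-part, hence is unmovable by Lemma \ref{lem3_3}\,(3). Thus Lemma \ref{lem5_5} applies and gives $\theta^n$ a unique expression $\theta^n=t^{u_p}s^{v_p}\cdots t^{u_1}s^{v_1}t^{u_0}$ of the prescribed type, with $\dpt(\theta^n)=p$; it remains to compute $p$. I would write $\theta=(st)^{k-1}s$, a word in $\{s,t\}$ of length $2k-1$ with no two consecutive letters equal, beginning and ending with $s$ and containing $k$ occurrences of $s$ and $k-1$ of $t$. Concatenating $n$ copies, each of the $n-1$ junctions creates one block $s^2$ and causes no further amalgamation, so the word $\theta^n$ decomposes into $n(k-1)$ singleton $t$-blocks and $nk-(n-1)=n(k-1)+1$ blocks of $s$. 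Read from the right, this syllable decomposition has exactly the shape of Lemma \ref{lem5_5} with $u_0=u_p=0$, all interior $u_i=1$, and $p=n(k-1)+1$; hence $\dpt(\theta^n)=p=(k-1)n+1$.

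The one point needing care — really the only obstacle — is the right to read $p$ off the syllable decomposition of the concatenated word, i.e., that this word genuinely realizes the normal form of Lemma \ref{lem5_5} rather than an accidentally longer expression. I would settle this by observing that no defining relation of $M$ can be applied inside $\theta^n$: a relator $\Pi(s,t,m)=\Pi(t,s,m)$ has length $m=2k$, whereas the longest factor of $\theta^n$ with no two consecutive equal letters has length $2k-1$, since every junction breaks the alternation. So $\theta^n$ admits a single word over $\{s,t\}$, and its syllable decomposition is forced to be the normal form of Lemma \ref{lem5_5}; the count above is therefore exact. Everything else is bookkeeping, and combining the two displayed equalities yields $\dpt(\Delta^n)=(k-1)n+1$ for all $n\ge 1$, which is precisely Condition A with constant $\zeta=k-1$.
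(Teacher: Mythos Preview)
Your proof is correct and follows the same approach as the paper: reduce $\dpt(\Delta^n)$ to $\dpt(\theta^n)$ via $\Delta^n=\theta^n t^n$, write $\theta^n$ as the word $(s(ts)^{k-1})^n$, and read off $p=(k-1)n+1$ from the syllable form of Lemma~\ref{lem5_5}. Your final paragraph is unnecessary---the uniqueness clause in Lemma~\ref{lem5_5} already guarantees that any expression of the unmovable element $\theta^n$ in the stipulated block form must have the claimed value of $p$, so there is no need to argue separately that no defining relation applies---but the extra caution does no harm.
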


\begin{proof}
Let $p \ge 1$ be an integer.
We have $\theta = s(ts)^{k-1}$, hence $\theta^p = (s(ts)^{k-1})^p$.
By Lemma \ref{lem5_5} it follows that $\dpt (\theta^p) = p(k-1)+1$, hence $\dpt (\Delta^p) = \dpt( \theta^p t^p) = \dpt (\theta^p) = p(k-1)+1$.
\end{proof}

If $a \in M\setminus \{1\}$ is written as in Lemma \ref{lem5_5} we set $\sigma (a) = t$ if $u_p \neq 0$ and $\sigma (a) = s$ if $u_p=0$.
Similarly we set $\tau(a) = t$ if $u_0 \neq 0$ and $\tau(a) = s$ if $u_0 = 0$.
In other words $\sigma (a)$ is the first letter of $a$ and $\tau (a)$ is the last one. 
The following is a straightforward consequence of Lemma \ref{lem5_5}.

\begin{lem}\label{lem5_7}
\begin{itemize}
\item[(1)]
Let $a,b$ be two unmovable elements of $M$ such that $ab$ is unmovable. 
Then
\[
\dpt(ab) = \left\{ \begin{array}{ll}
\dpt(a) + \dpt(b) - 1 & \text{if } a\neq 1,\ b\neq 1 \text{ and } \tau (a) = \sigma (b) = s\,,\\
\dpt(a) + \dpt(b) &\text{otherwise}\,.
\end{array} \right.
\]
\item[(2)]
Let $a,b \in M$ such that $ab = \Delta$.
Then $\dpt(a) + \dpt(b) = \dpt(\Delta) = k$.
\end{itemize}
\end{lem}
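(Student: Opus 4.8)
The key to both parts is Lemma \ref{lem5_5}: the depth of an unmovable element equals the number $p$ of maximal $s$-blocks in its (unique) staircase expression $t^{u_p}s^{v_p}\cdots t^{u_1}s^{v_1}t^{u_0}$. The plan for (1) is to concatenate such expressions, and for (2) to do a short count inside $\Div(\Delta)$.

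\emph{Part (1).} Write $a = t^{u_p}s^{v_p}\cdots s^{v_1}t^{u_0}$ and $b = t^{u'_q}s^{v'_q}\cdots s^{v'_1}t^{u'_0}$ as in Lemma \ref{lem5_5}, so $p = \dpt(a)$ and $q = \dpt(b)$. If $a = 1$ or $b = 1$ there is nothing to prove, so assume $a,b \neq 1$; then $p,q \geq 1$. Concatenating gives
\[
ab \;=\; t^{u_p}s^{v_p}\cdots t^{u_1}s^{v_1}\,t^{\,u_0+u'_q}\,s^{v'_q}t^{u'_{q-1}}\cdots s^{v'_1}t^{u'_0}.
\]
If $\tau(a) \neq s$ or $\sigma(b) \neq s$, that is $u_0 > 0$ or $u'_q > 0$, then $u_0+u'_q \geq 1$ and the right-hand side is already of the shape in Lemma \ref{lem5_5}, with exactly $p+q$ maximal $s$-blocks. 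Since $ab$ is unmovable, the uniqueness in Lemma \ref{lem5_5} identifies this with the staircase expression of $ab$, whence $\dpt(ab) = p+q$. If $\tau(a) = \sigma(b) = s$, then $u_0 = u'_q = 0$, the adjacent blocks $s^{v_1}$ and $s^{v'_q}$ merge into $s^{\,v_1+v'_q}$, the word becomes a staircase expression with $p+q-1$ maximal $s$-blocks, and the same argument yields $\dpt(ab) = p+q-1$.

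\emph{Part (2).} Now $a$ is a left divisor and $b$ a right divisor of $\Delta$, so (since $\Delta$ is balanced) $a,b \in \Div(\Delta)$; also $\dpt(\Delta) = k$ by Proposition \ref{prop5_6} with $p = 1$. If $a = 1$ then $b = \Delta$ and the identity is clear, and symmetrically if $b = 1$. Otherwise $a$ and $b$ are proper divisors of $\Delta$, hence unmovable; being simple elements of the rank-two Artin monoid $M$ they are alternating words in $s,t$ with no repeated letter, so their staircase expressions have all blocks equal to $1$ and Lemma \ref{lem5_5} gives that $\dpt$ of each equals its number of $s$-letters. The number of occurrences of $s$ is a well-defined monoid homomorphism $M \to \N$, because the defining relator $\Pi(s,t,2k) = \Pi(t,s,2k)$ has $k$ letters $s$ on both sides; hence $\dpt(a)+\dpt(b)$ equals the number of $s$-letters of $ab = \Delta = (st)^k$, which is $k$. (One may instead simply run through the finitely many pairs in $\Div(\Delta)\times\Div(\Delta)$ whose product is $\Delta$ and check the identity directly with Lemma \ref{lem5_5}.)

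\emph{Main obstacle.} The only genuine point is the use of \emph{uniqueness} in Lemma \ref{lem5_5} in part (1): one must know that, beyond the single possible merging of two $s$-blocks, the concatenated staircase word admits no further simplification, and this is exactly what the hypothesis ``$ab$ is unmovable'' provides. In part (2) the parallel subtlety is that $\Delta$ is not unmovable, so Lemma \ref{lem5_5} does not apply to $\Delta$ directly; one circumvents this via the proper divisors of $\Delta$ (or the $s$-counting homomorphism) together with the already-known value $\dpt(\Delta) = k$.
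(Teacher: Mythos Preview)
Your argument is correct and is exactly the kind of verification the paper has in mind; the paper itself gives no proof, simply declaring Lemma~\ref{lem5_7} a ``straightforward consequence of Lemma~\ref{lem5_5}''. One small slip in Part~(1): from $a,b\neq 1$ you cannot conclude $p,q\ge 1$, since for instance $a=t^{u_0}$ with $u_0\ge 1$ has $p=0$; but then $\tau(a)=t$, so you land in the ``otherwise'' branch anyway, and your concatenation argument goes through with the obvious degeneration of the displayed formula (and symmetrically for $q=0$).
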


Now we can prove the second part of Theorem \ref{thm5_1}.

\begin{prop}\label{prop5_8}
The pair $(H, G_1)$ satisfies Condition B with constant $\zeta = k-1$.
\end{prop}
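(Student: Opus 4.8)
The plan is to verify Condition B directly from the combinatorial description of unmovable elements given in Lemma \ref{lem5_5}. Fix a pair $(a,b)\in(M\times M)\setminus(\bar\Theta\times\bar\Theta)$ with $a,b$ both unmovable, and write $ab=c\Delta^t$ for the $\Delta$-form of $ab$. First I would dispose of the trivial cases where $a=1$ or $b=1$: if $a\in M_1$ (in the dihedral setting this means $a=t^{u_0}$) and $b\notin\bar\Theta$, then multiplying by a power of $t$ on the left or right does not move $\Delta$ past anything (since $\Delta=(ts)^k$ and left/right multiplication by $t$ cannot create a new factor of $\Delta$ unless $\Delta_1\le b$, which is excluded), so $t=0$, $c=ab$, and one reads off $\dpt(c)=\dpt(a)+\dpt(b)$ from Lemma \ref{lem5_7}(1) with $\varepsilon=0$; the symmetric case is identical. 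The cases $a\in\Theta$ or $b\in\Theta$ are handled as in Lemma \ref{lem3_3}(6) and the type-$A$ argument: write $a=\theta^u a_0$, use Lemma \ref{lem5_7} together with Proposition \ref{prop5_6} ($\dpt(\theta^u)=u(k-1)+1$) to compute $\dpt(ab)$, observe $ab\notin\bar\Theta$ so $c\notin M_1$ and hence $\dpt(c\Delta^t)=\dpt(c)+t(k-1)$, and conclude with $\varepsilon=1$ as required by clause (b).

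The heart of the matter is the generic case $a,b\in M\setminus\bar\Theta$. Here I would use the explicit normal forms: write $a=t^{u_p}s^{v_p}\cdots t^{u_1}s^{v_1}t^{u_0}$ and similarly $b=t^{u'_{q}}s^{v'_{q}}\cdots t^{u'_1}s^{v'_1}t^{u'_0}$ in the sense of Lemma \ref{lem5_5}, so $\dpt(a)=p$, $\dpt(b)=q$. Concatenating gives an expression for $ab$; the only place a relation applies is at the junction $\cdots t^{u_0}t^{u'_q}s^{v'_q}\cdots$, where one must rewrite the middle block $t^{u_0}t^{u'_q}s^{v'_q}$ (or $s^{v_1}t^{u_0}t^{u'_q}s^{v'_q}$ when $u_0=0$) back into unmovable-times-$\Delta^t$ form. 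Since a syllable $s^v$ with $v\ge 1$ together with the surrounding $t$'s can absorb at most one factor of $\Delta=(ts)^k=(st)^k$ before the product ceases to be unmovable, the exponent $t$ is either $0$ or $1$, and in the worst case the rewriting merges two depth-contributing syllables into one, dropping the depth of the unmovable part by at most one relative to $\dpt(a)+\dpt(b)-t(k-1)$. This yields $\varepsilon\in\{0,1\}$ with $\dpt(c)=\dpt(a)+\dpt(b)-(k-1)t-\varepsilon$. It remains to show that if $c\in M_1$ then $\varepsilon=1$: I would establish the dihedral analogue of Lemma \ref{lem4_11}, namely that if $a,b\in M\setminus M_1$ and $ab=c\Delta^t$ with $c\in M_1$ and $\dpt(a)+\dpt(b)=(k-1)t+1$ (the $\varepsilon=0$ scenario), then forcibly $a,b\in\Theta$, contradicting $a,b\notin\bar\Theta$. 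This last implication should follow by counting $s$-syllables: $c\in M_1$ means all $s$'s in $ab$ are consumed into the $t$ copies of $\Delta$, which pins down $a$ and $b$ to be theta elements.

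The main obstacle I anticipate is the bookkeeping in the junction rewriting: controlling precisely how many copies of $\Delta$ can be pulled out of the block straddling the boundary between $a$ and $b$, and checking that the depth of the remaining unmovable part drops by exactly $0$ or $1$ and by $1$ in all the cases demanded by clause (b). This is the dihedral counterpart of the interval-calculus used via Theorem \ref{thm2_5} in the type-$A$ proof (Proposition \ref{prop4_12}), but here it must be done by hand with the $t^{u}s^{v}$-syllable normal form rather than quoted from a Dehornoy-order input; the even value $m=2k$ keeps $\Delta=(ts)^k$ a perfect alternating word, which is what makes the absorption count clean, and I would lean on Lemma \ref{lem5_7}(1)--(2) repeatedly to package the arithmetic.
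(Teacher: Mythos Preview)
Your plan contains a genuine error in the generic case: the assertion that ``the exponent $t$ is either $0$ or $1$'' is false. In the even dihedral monoid the number of copies of $\Delta$ that can be extracted from a product of two unmovable elements is not bounded by $1$. For a concrete counterexample take $k=2$ (so $\Delta=stst=tsts$, $\theta=sts$), $a=stss$ and $b=tstt$. Both are unmovable and neither lies in $\bar\Theta$, yet
\[
ab=stss\cdot tstt = sts\,(stst)\,t = sts\,(tsts)\,t = (st)^4 = \Delta^2,
\]
so the $\Delta$-form of $ab$ is $1\cdot\Delta^2$ with $p=2$. By iterating this pattern one obtains $p$ as large as one likes. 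Your picture that ``the only place a relation applies is at the junction'' is therefore wrong: a single application of $stst=tsts$ at the junction can realign letters so that further copies of $\Delta$ appear, and the extraction cascades through the whole word. The same misconception invalidates your trivial case $a\in M_1$: with $k=2$, $a=t$ and $b=stss$ one gets $ab=tstss=\Delta\cdot s$, so $p=1\neq 0$.

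The paper's proof avoids any bound on $p$ altogether. Instead it chooses factorisations $a=a_{p+1}a_p\cdots a_1$ and $b=b_1\cdots b_p b_{p+1}$ with $a_ib_i=\Delta$ for $i\le p$ and $a_{p+1}b_{p+1}=c$, and then tracks first and last letters $\sigma(\cdot),\tau(\cdot)$ of every piece. Lemma~\ref{lem5_7}(1) converts concatenation into an additive depth formula with a correction of $0$ or $1$ at each seam, and Lemma~\ref{lem5_7}(2) gives $\dpt(a_i)+\dpt(b_i)=k$ for each complementary pair; the key identity $u+v=p$ (coming from the fact that $\sigma(b_i)=\varphi(\sigma(a_i))$ when $a_ib_i=\Delta$) then collapses the sum to $\dpt(c)=\dpt(a)+\dpt(b)-p(k-1)-\varepsilon$ with $\varepsilon=\varepsilon_a+\varepsilon_b-\varepsilon_c$ depending only on the boundary letters at indices $p,p+1$. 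The verification that $\varepsilon\in\{0,1\}$ and that $\varepsilon=1$ in the $\Theta$/$M_1$ cases is then a finite table lookup. To repair your argument you would need this decomposition and the $u+v=p$ count; the single-junction heuristic cannot be salvaged.
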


\begin{proof}
We take two unmovable elements $a,b \in M$ such that $(a,b) \not\in \bar\Theta \times \bar\Theta$ and we denote by $ab = c \Delta^p$ the $\Delta$-form of $ab$.
We must show that there exists $\varepsilon \in \{0,1\}$ such that $\dpt (c) = \dpt (a) + \dpt (b) -p(k-1) - \varepsilon$ and that $\varepsilon = 1$ if either $a \in \Theta$, or $b \in \Theta$, or $c \in M_1$.
We write $a = a_{p+1} a_p \cdots a_1$ and $b = b_1 \cdots b_p b_{p+1}$ so that:
\begin{itemize}
\item
$a_i \neq 1$, $b_i \neq 1$ and $a_i b_i = \Delta$ for all $i \in \{1, \dots, p\}$;
\item
$a_{p+1} b_{p+1} = c$;
\item
We set $x_i = \tau (a_i)$, $x_i' = \sigma (a_i)$, $y_i = \sigma (b_i)$, $y_i' = \tau (b_i)$ for all $i \in \{1, \dots, p+1\}$.
Then $x_i' = x_{i+1}$ for all $i \in \{1, \dots, p-1\}$.
\end{itemize}
We denote by $\varphi: M \to M$ the isomorphism that sends $s$ to $t$ and $t$ to $s$.
Since $a_i b_i = \Delta$, we have $y_i = \varphi(x_i)$ and $y_i' = \varphi (x_i')$ for all $i \in \{1, \dots, p\}$.
In particular, $y_i' = \varphi (x_i') = \varphi (x_{i+1}) = y_{i+1}$ for all $i \in \{1, \dots ,p-1\}$.

Let $u = |\{i \in \{1, \dots, p\} \mid  x_i' = s \}|$.
By Lemma \ref{lem5_7}, $\dpt (a) = \dpt (a_{p+1}) + \sum_{i=1}^p \dpt(a_i) - u + \varepsilon_a$, where $\varepsilon_a$ is as follows. 
If $p \ge 1$ and $a_{p+1} \neq 1$, then: $\varepsilon_a = 0$ if $(x_p',x_{p+1}) \in \{ (s,s), (t,s), (t,t) \}$ and $\varepsilon_a = 1$ if $(x_p',x_{p+1}) = (s,t)$.
If $p \ge 1$ and $a_{p+1} = 1$, then: $\varepsilon_a = 0$ if $x_p'=t$ and $\varepsilon_a = 1$ if $x_p'=s$.
If $p=0$, then $\varepsilon_a = 0$.

Let $v = |\{i \in \{1, \dots, p\} \mid y_i' = s \}|$.
As for $a$, by applying Lemma \ref{lem5_7} we obtain $\dpt (b) = \dpt (b_{p+1}) + \sum_{i=1}^p \dpt (b_i) - v + \varepsilon_b$ where $\varepsilon_b$ is as follows. 
If $p \ge 1$ and $b_{p+1} \neq 1$, then: $\varepsilon_b = 0$ if $(y_p',y_{p+1}) \in \{ (s,s), (t,s), (t,t) \}$ and $\varepsilon_b = 1$ if $(y_p',y_{p+1}) = (s,t)$.
If $p \ge 1$ and $b_{p+1} = 1$, then: $\varepsilon_b = 0$ if $y_p'=t$ and $\varepsilon_b = 1$ if $y_p'=s$.
If $p=0$, then $\varepsilon_b = 0$.

By applying again Lemma \ref{lem5_7} we obtain $\dpt (c) = \dpt (a_{p+1}) + \dpt (b_{p+1}) + \varepsilon_c$ where $\varepsilon_c$ is as follows. 
If $a_{p+1} \neq 1$ and $b_{p+1}\neq 1$, then: $\varepsilon_c = -1$ if $(x_{p+1}, y_{p+1}) = (s,s)$ and $\varepsilon_c = 0$ if  $(x_{p+1}, y_{p+1}) \in \{ (s,t), (t,s), (t,t) \}$.
If either $a_{p+1} = 1$ or $b_{p+1} = 1$, then $\varepsilon_c = 0$.

Finally, by Lemma \ref{lem5_7}\,(2), we have $\sum_{i=1}^p (\dpt (a_i) + \dpt (b_i) ) = pk$.
On the other hand, since $y_i' = \varphi (x_i')$ for all $i \in \{1, \dots, p\}$, we have $u+v = p$.

Set $\varepsilon = \varepsilon_a + \varepsilon_b - \varepsilon_c$.
By the above we have $\dpt (c) = \dpt (a) + \dpt (b) -p(k-1) - \varepsilon$ and $\varepsilon$ is as follows. 
If $p \ge 1$, $a_{p+1} \neq 1$ and $b_{p+1} \neq 1$, then: $\varepsilon = 0$ if $(x_{p}', x_{p+1}, y_{p+1}) \in \{ (s,s,t), (t,t,s) \}$ and $\varepsilon = 1$ otherwise.
If $p \ge 1$, $a_{p+1} \neq 1$ and $b_{p+1} =1$, then: $\varepsilon = 0$ if $(x_{p}',x_{p+1}) = (s,s)$ and $\varepsilon = 1$ otherwise.
If $p \ge 1$, $a_{p+1} = 1$ and $b_{p+1} \neq 1$, then: $\varepsilon = 0$ if $(x_{p}',y_{p+1}) = (t,s)$ and $\varepsilon = 1$ otherwise.
If $p \ge 1$, $a_{p+1} = 1$ and $b_{p+1} = 1$, then $\varepsilon = 1$.
If $p = 0$, $a \neq 1$ and $b \neq 1$, then: $\varepsilon = 0$ if $(x_{p+1}, y_{p+1}) \in \{ (s,t), (t,s), (t,t) \}$ and $\varepsilon = 1$ otherwise.
If $p=0$ and either $a=1$ or $b=1$, then $\varepsilon = 0$.

Suppose that $a \in \Theta$.
Then $a$ is written $a = \theta^q$ with $q \ge 1$.
Set $b = t^r b'$ where $b' \neq 1$ (since $b \not\in \bar\Theta$) and $\sigma (b') = s$.
If $r=0$, then $p=0$, $a = \theta^q \neq 1$, $b = b' \neq 1$ and $(x_{p+1},y_{p+1}) = (s,s)$, hence $\varepsilon = 1$.
If $0 < r < q$, then $r=p > 0$, $a_{p+1} = \theta^{q-p} \neq 1$, $b_{p+1} = b' \neq 1$ and $(x_{p}', x_{p+1}, y_{p+1}) = (s,s,s)$, hence $\varepsilon = 1$.
If $r=q$, then $r=p=q$, $a_{p+1}=1$, $b_{p+1} = b' \neq 1$ and $(x_{p}',y_{p+1}) = (s,s)$, hence $\varepsilon = 1$.
If $r > q$, then $p = q$, $a_{p+1} = 1$, $b_{p+1}=t^{r-q} b'$ and $(x_{p}',y_{p+1}) = (s,t)$, hence $\varepsilon = 1$.
The case $b \in \Theta$ is proved in the same way.

Suppose that $c \in M_1$.
Then $p \ge 1$, since $(a,b) \not\in (\bar\Theta \times \bar\Theta)$.
If $a_{p+1} \neq 1$ and $b_{p+1} \neq 1$, then $(x_{p+1}, y_{p+1}) =(t,t)$, hence $\varepsilon = 1$.
If $a_{p+1} \neq 1$ and $b_{p+1} =1$, then $x_{p+1} =t$, hence $\varepsilon = 1$.
If $a_{p+1} = 1$ and $b_{p+1} \neq 1$, then $y_{p+1} = t$, hence $\varepsilon = 1$.
If $a_{p+1} = 1$ and $b_{p+1} = 1$, then $\varepsilon = 1$.
\end{proof}

We turn now to the proof of Proposition \ref{prop5_4}.
We denote by $G_1'$ (resp $M_1'$) the subgroup of $\BB_{2k}$ (resp. the submonoid of $\BB_{2k}^+$) generated by $r_2, \dots, r_{2k-1}$ and we denote by $H'$ (resp. $N'$) the subgroup of $\BB_{2k}$ (resp. the submonoid of $\BB_{2k}^+$) genetared by $r_1, \dots, r_{2k-2}$.
Note that $\iota (t) \in G_1'$, hence $\iota (G_1) \subset G_1'$.
We denote by $\Omega_\BB = (r_1 r_2 \cdots r_{2k-1}) \cdots (r_1 r_2) r_1$ the standard Garside element of $\BB_{2k}$ and by $\Phi : \BB_{2k} \to \BB_{2k}$, $\alpha \mapsto \Omega_\BB \alpha \Omega_\BB^{-1}$, the conjugation by $\Omega_\BB$.
Recall that $\Phi (r_i) = r_{2k-i}$ for all $i \in \{1, \dots, 2k-1\}$.
So, $\Phi(G_1') = H'$ and $\Phi (H') = G_1'$.

\begin{lem}\label{lem5_9}
Let $a$ be an unmovable element of $M$ such that $\dpt (a) \le k-1$.
Then there exist $b_1 \in M_1'$ and $b_2 \in N'$ such that $\iota (a) = b_1 b_2$.
\end{lem}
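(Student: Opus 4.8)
The plan is to combine the normal form of Lemma~\ref{lem5_5} with an elementary partial‑commutation (``heap'') analysis of the braid word representing $\iota(a)$.

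\emph{Setup.} First I would write, using Lemma~\ref{lem5_5}, $a=t^{u_p}s^{v_p}\cdots s^{v_1}t^{u_0}$ with $p=\dpt(a)\le k-1$; if $p=0$ then $\iota(a)=\iota(t)^{u_0}$ involves neither $r_1$ nor $r_{2k-1}$ and there is nothing to prove, so assume $p\ge 1$. Applying $\iota$, the element $\iota(a)$ is represented by the positive word
\[
W=\mathbf t^{u_p}\mathbf s^{v_p}\mathbf t^{u_{p-1}}\mathbf s^{v_{p-1}}\cdots\mathbf s^{v_1}\mathbf t^{u_0},\qquad
\mathbf s=\iota(s)=r_1r_3\cdots r_{2k-1},\quad \mathbf t=\iota(t)=r_2r_4\cdots r_{2k-2},
\]
over the generators $r_1,\dots,r_{2k-1}$ of $\BB_{2k}$. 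I would then reduce the lemma to: the word $W$ can be rewritten, using only the commutation relations $r_ir_j=r_jr_i$ with $|i-j|\ge 2$, as a product $w_1w_2$ in which $w_1$ is a word on $\{r_2,\dots,r_{2k-1}\}$ and $w_2$ a word on $\{r_1,\dots,r_{2k-2}\}$; for then $b_1=[w_1]\in M_1'$ and $b_2=[w_2]\in N'$ do the job.

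\emph{Reduction to a combinatorial claim.} I would equip the set of letter‑occurrences of $W$ with the partial order $\preceq$ generated by $x\prec y$ whenever $x$ precedes $y$ in $W$ and their generators fail to commute (equal or adjacent indices). Commutation‑equivalent positive words are exactly those with the same labelled poset, and reading off any linear extension of the poset yields such a word (representing the same element of $\BB_{2k}^+$). Hence it suffices to produce a $\preceq$‑down‑set $D$ containing every $r_{2k-1}$‑occurrence and no $r_1$‑occurrence: putting the letters of $D$ first, in a compatible order, gives $w_1$, and the rest gives $w_2$. Taking $D$ to be the down‑closure of the set of $r_{2k-1}$‑occurrences, this succeeds precisely when the following holds. \textbf{Claim.} If $p\le k-1$ then no $r_1$‑occurrence $x$ satisfies $x\preceq y$ for any $r_{2k-1}$‑occurrence $y$.

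\emph{Proof of the Claim --- the crux.} Assume $x\preceq y$ and pick a chain $x=z_0\prec z_1\prec\cdots\prec z_\ell=y$ of direct dependences. Along the chain the generator‑indices form a walk from $1$ to $2k-1$ with steps in $\{-1,0,+1\}$; grouping maximal blocks of consecutive $z_i$ lying in the $\mathbf s$‑blocks of $W$ (odd indices), resp. the $\mathbf t$‑blocks (even indices), into runs, the runs alternate in type and their indices form an honest $\pm1$ walk from $1$ to $2k-1$, so there are at least $k-1$ ``$\mathbf t$‑runs''. Now each $\mathbf t$‑block of $W$ is an interval of positions carrying only even‑indexed letters, whereas between two successive $\mathbf t$‑runs lies an $\mathbf s$‑run of odd‑indexed letters; the interval property forces distinct $\mathbf t$‑runs to meet pairwise disjoint families of $\mathbf t$‑blocks, so the chain meets at least $k-1$ distinct $\mathbf t$‑blocks, each with a letter strictly between $x$ and $y$ in $W$. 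On the other hand, if $x\in\mathbf s^{v_I}$ and $y\in\mathbf s^{v_{I'}}$ then $I\ge I'$ (because $x$ precedes $y$), and the only $\mathbf t$‑blocks of $W$ having a letter strictly between these two $\mathbf s$‑blocks are $\mathbf t^{u_{I-1}},\dots,\mathbf t^{u_{I'}}$, i.e. $I-I'\le p-1\le k-2$ of them --- a contradiction. This proves the Claim, and with it the lemma. The walk/interval bookkeeping in this last step is the one genuinely delicate point; everything else is routine heap combinatorics.
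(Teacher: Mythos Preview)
Your argument is correct and genuinely different from the paper's. The paper proceeds by a direct induction on $p=\dpt(a)$, proving the sharper statement that one may take $b_2\in\langle r_1,\dots,r_{2p}\rangle^+$: writing $a=a'\,s^{v_p}t^{u_p}$ with $\dpt(a')=p-1$, one applies the inductive hypothesis to $a'$ and then pushes the high-index factors of $\iota(s)^{v_p}\iota(t)^{u_p}$ leftwards past $b_2'\in\langle r_1,\dots,r_{2p-2}\rangle^+$ using only commutations. This is short, explicit, and yields the quantitative bound $b_2\in\langle r_1,\dots,r_{2p}\rangle^+$, which you do not obtain directly. Your heap/trace-monoid approach instead identifies the obstruction to the factorisation as a dependence chain from an $r_1$-occurrence to an $r_{2k-1}$-occurrence and rules it out by a neat pigeonhole count on the $\mathbf t$-blocks such a chain must traverse; this is more conceptual and makes transparent \emph{why} the bound $p\le k-1$ is the right one (a chain needs $k-1$ distinct $\mathbf t$-blocks but only $p-1$ are available between the two $\mathbf s$-blocks containing $x$ and $y$). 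Both proofs use nothing beyond the far-commutation relations in $\BB_{2k}^+$.
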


\begin{proof}
Let $p = \dpt (a)$.
By Lemma \ref{lem5_5}, $a$ can be written $a = t^{u_0} s^{v_1} t^{u_1} \cdots s^{v_p} t^{u_p}$ where $u_0, u_p \ge 0$, $u_1, \dots, u_{p-1} \ge 1$ and $v_1, \dots, v_p \ge 1$.
We show by induction on $p$ that there exist $b_1 \in M_1'$ and $b_2 \in \langle r_1, \dots, r_{2p} \rangle^+$ such that $\iota (a) = b_1 b_2$.
Since $p \le k-1$ this proves the lemma. 
The case $p=0$ is obvious because $\iota (t) \in M_1'$.
We assume that $1 \le p \le k-1$ and that the inductive hypothesis holds.
Set $a' = t^{u_0} s^{v_1} t^{u_1} \cdots s^{v_{p-1}} t^{u_{p-1}}$.
By induction there exist $b_1' \in M_1'$ and $b_2' \in \langle r_1, \dots, r_{2p-2} \rangle^+$ such that $\iota (a') = b_1' b_2'$.
Note that $b_2'$ commutes with $r_i$ for all $i \ge 2p$.
So, 
\begin{gather*}
\iota(a) = 
b_1' b_2' \left( \prod_{i=0}^{k-1} r_{2i+1}^{v_p} \right) \left( \prod_{i=1}^{k-1} r_{2i}^{u_p} \right) =
b_1'\left( \prod_{i=p}^{k-1} r_{2i+1}^{v_p} \right) b_2'\left( \prod_{i=0}^{p-1} r_{2i+1}^{v_p} \right) \left( \prod_{i=1}^{k-1} r_{2i}^{u_p} \right) =\\
b_1'\left( \prod_{i=p}^{k-1} r_{2i+1}^{v_p} \right) \left( \prod_{i=p+1}^{k-1} r_{2i}^{u_p} \right) b_2'\left( \prod_{i=0}^{p-1} r_{2i+1}^{v_p} \right) \left( \prod_{i=1}^{p} r_{2i}^{u_p} \right) = b_1 b_2\,,
\end{gather*}
where
\begin{gather*}
b_1 = b_1'\left( \prod_{i=p}^{k-1} r_{2i+1}^{v_p} \right) \left( \prod_{i=p+1}^{k-1} r_{2i}^{u_p} \right) \in M_1'\,,\\
b_2 = b_2'\left( \prod_{i=0}^{p-1} r_{2i+1}^{v_p} \right) \left( \prod_{i=1}^{p} r_{2i}^{u_p} \right) \in \langle r_1, \dots, r_{2p} \rangle^+\,.
\end{gather*}
\end{proof}

\begin{proof}[Proof of Proposition \ref{prop5_4}]
We denote by $P$ the set of $(H, G_1)$-positive elements of $G$ and by $P'$ the set of $r_1$-positive elements of $\BB_{2k}$.
By Corollary \ref{corl5_2} we have the disjoint union $G = P \sqcup P^{-1} \sqcup G_1$ and by Dehornoy \cite{Dehor1} we have the disjoint union $\BB_{2k} = P' \sqcup P'^{-1} \sqcup G_1'$.
It suffices to show that $\iota (P^{-1}) \subset P'^{-1}$.
Indeed, suppose that $\iota (P^{-1}) \subset P'^{-1}$.
Since $\iota$ is a homomorphism we also have $\iota (P) \subset P'$.
Since we also know that $\iota (G_1) \subset G_1'$, from the disjoint unions given above follows that $\alpha \in P^{-1}$ if and only if $\iota (\alpha) \in P'^{-1}$.

Let $\alpha$ be an element of $P^{-1}$.
Let $\alpha = a \Delta^{-p}$ be the $\Delta$-form of $\alpha$.
By definition we have $p \ge 1$ and $\dpt (a) \le p (k-1)$.
Suppose first that $p=1$ and $\dpt (a) \le k-1$.
By Lemma \ref{lem5_9} there exist $b_1 \in M_1'$ and $b_2 \in N'$ such that $\iota (a) = b_1 b_2$.
Moreover, by Crisp \cite{Crisp1}, $\iota (\Delta) = \Omega_\BB$.
Thus $\iota (\alpha) = b_1 b_2 \Omega_\BB^{-1} = b_1 \Omega_\BB^{-1} \Phi(b_2)$.
Since $b_1, \Phi(b_2) \in M_1'$ and $\Omega_\BB^{-1} \in P'^{-1}$, it follows that $\iota (\alpha) \in P'^{-1}$.

Now we consider the general case where $p \ge 1$ and $\dpt(a) \le p(k-1)$.
It is easily deduced from Lemma \ref{lem5_5} that $a$ can be written $a = a_1 a_2 \cdots a_p$ where $a_i$ is an unmovable element of $M$ such that $\dpt (a_i) \le k-1$ for all $i \in \{1, \dots, p\}$.
Note that $a_i$ may be equal to $1$ in the above expression. 
We have $\alpha = (a_1 \Delta^{-1}) (a_2 \Delta^{-1}) \cdots (a_p \Delta^{-1})$ and, by the above, $\iota (a_i \Delta^{-1}) \in P'^{-1}$ for all $i \in \{1, \dots, p\}$, hence $\iota (\alpha) \in P'^{-1}$.
\end{proof}


\section{Artin groups of dihedral type, the odd case}\label{Sec6}

Let $m= 2k+1 \ge 5$ be an odd integer and let $G = A_{I_2(m)} = \langle s,t \mid \Pi (s,t,m) = \Pi (t,s,m) \rangle$ be the Artin group of type $I_2 (m)$.
Let $M$ be the submonoid of $G$ generated by $\{s,t\}$ and let $\Omega = \Pi (s,t,m) = (st)^ks = (ts)^kt$.
Recall that, by Brieskorn--Saito \cite{BriSai1} and Deligne \cite{Delig1}, the triple $(G, M, \Omega)$ is a Garside structure on $G$. 
As pointed out in Section \ref{Sec5}, $\Omega$ is not central but $\Delta = \Omega^2$ is, and, by Dehornoy \cite{Dehor3}, $(G, M, \Delta)$ is also a Garside structure on $G$.
This is the Garside structure on $G$ that will be considered in the present section.

We denote by $G_1$ (resp. $M_1$) the subgroup of $G$ (resp. submonoid of $M$) generated by $t$, and by $H$ (resp. $N$) the subgroup of $G$ (resp. submonoid of $M$) generated by $s$.
Set $\Delta_1 = t^2$ and $\Lambda = s^2$.
Then, by Brieskorn--Saito \cite{BriSai1}, the triples $(G_1, M_1, \Delta_1)$ and $(H, N, \Lambda)$ are parabolic substructures of $(G, M, \Delta)$.
Moreover, $M_1 \cup N$ obviously generates $M$.
The main result of this section is the following.

\begin{thm}\label{thm6_1}
The pair $(H, G_1)$ satisfies Condition A with constant $\zeta = 2k-1$ and Condition B with constant $\zeta = 2k-1$.
\end{thm}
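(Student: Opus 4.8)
The plan is to reproduce, in the odd case $m=2k+1$, the scheme of Section~\ref{Sec5}: prove an odd-case analogue of Lemma~\ref{lem5_5} (a normal form for unmovable elements), deduce Condition~A as in Proposition~\ref{prop5_6}, then prove Condition~B as in Proposition~\ref{prop5_8}, and finally invoke Theorem~\ref{thm3_2}. The starting point is the normal form. Since here $\Delta=\Omega^{2}$ and $\Omega$ is not central, unmovability ($\Delta\not\le_{R}a$) is a much weaker constraint than in the even case — for instance $\Omega$ itself is unmovable — so I would first establish that every unmovable $a\in M$ has a unique expression as an alternating product of blocks $s^{v}$ and $t^{u}$, subject to constraints (involving $\Div(\Omega^{2})$ and the fact that here $\Delta_{1}=t^{2}$, $\Lambda=s^{2}$) that characterise unmovability, and that $\dpt(a)$ equals the number of $s$-blocks, i.e. the number of even indices in the $(N,M_{1})$-alternating form. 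Proving this clean description of unmovable elements (and of how the block pattern is constrained) is the first real piece of work.

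For Condition~A I would identify $\theta$ explicitly. From $\Omega=(st)^{k}s=(ts)^{k}t$ one gets $\Omega^{2}=(st)^{k}s\,(ts)^{k}t=(st)^{k}\bigl[s(ts)^{k}\bigr]t=(st)^{k}(ts)^{k}t^{2}$, and since $\theta t^{2}=t^{2}\theta=\Delta$, this gives $\theta=\Delta\Delta_{1}^{-1}=(st)^{k}(ts)^{k}\in M$. The word $\theta$ both begins and ends with $s$, so in $\theta^{p}=\bigl((st)^{k}(ts)^{k}\bigr)^{p}$ each of the $p-1$ junctions glues two $s$-blocks into one; as $\theta$ has $2k$ $s$-blocks, $\theta^{p}$ has $2kp-(p-1)=p(2k-1)+1$ of them, so $\dpt(\theta^{p})=p(2k-1)+1$. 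Appending the $t^{2p}$ on the right does not change the number of $s$-blocks, hence $\dpt(\Delta^{p})=\dpt(\theta^{p}t^{2p})=\dpt(\theta^{p})=(2k-1)p+1$, which is Condition~A with $\zeta=2k-1$.

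For Condition~B I would mimic Proposition~\ref{prop5_8}. Given unmovable $a,b$ with $(a,b)\notin\bar\Theta\times\bar\Theta$ and $\Delta$-form $ab=c\Delta^{p}$, I would peel off the $p$ copies of $\Delta$, writing $a=a_{p+1}a_{p}\cdots a_{1}$ and $b=b_{1}\cdots b_{p}b_{p+1}$ with $a_{i}b_{i}=\Delta$ for $1\le i\le p$ and $c=a_{p+1}b_{p+1}$, and use the involution $\varphi$ exchanging $s$ and $t$ to relate the first/last letters of the $a_{i}$ and of the $b_{i}$. An odd-case analogue of Lemma~\ref{lem5_7} — depth of a product of two unmovable elements loses $1$ exactly when the two meet along an $s$-block, together with the (near-)additivity $\dpt(a_{i})+\dpt(b_{i})=\dpt(\Delta)$ when $a_{i}b_{i}=\Delta$ — lets me write $\dpt(a)$, $\dpt(b)$, $\dpt(c)$ as the relevant $s$-block counts minus junction corrections $\varepsilon_{a},\varepsilon_{b},\varepsilon_{c}$, and, after using $\sum_{i=1}^{p}(\dpt(a_{i})+\dpt(b_{i}))=p\,\dpt(\Delta)$ and the counting identity forced by $\varphi$, to obtain $\dpt(c)=\dpt(a)+\dpt(b)-p(2k-1)-\varepsilon$ with $\varepsilon:=\varepsilon_{a}+\varepsilon_{b}-\varepsilon_{c}\in\{0,1\}$. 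It then remains to run through the cases according to the boundary letters of $a_{p+1},b_{p+1}$ (and whether these are trivial) to check that $\varepsilon=1$ whenever $a\in\Theta$, or $b\in\Theta$, or $c\in M_{1}$; the case $c\in M_{1}$ again needs an odd-case version of the Lemma~\ref{lem4_11}-type statement forcing $(a,b)\in\Theta\times\Theta$ when the correction is minimal. Once Conditions~A and~B hold, Theorem~\ref{thm3_2} yields that $(H,G_{1})$ is a Dehornoy structure, exactly as in Corollary~\ref{corl5_2}.

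The main obstacle I expect lies precisely in the combinatorics of these last two paragraphs. Because $\Delta=\Omega^{2}$ is a long word and $\Delta_{1}=t^{2}$ rather than $t$, the factorisations $a_{i}b_{i}=\Delta$ arising in the peeling argument split into several sub-cases according to boundary letters (and possibly exponents relative to the bound coming from $\Delta_{1}$), so the analogue of Lemma~\ref{lem5_7}(2) is genuinely more delicate than in the even case, and the analogue of the normal form Lemma~\ref{lem5_5} must be set up carefully enough that the $s$-block count is stable under the operations used. Establishing these two lemmas cleanly — the ``$-1$ exactly along an $s$-junction'' rule and the additivity of depth along $\Delta$-factorisations — is where the bulk of the work is, which is why the intro warns that the odd case ``requires much more calculations''.
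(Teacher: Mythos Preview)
Your plan for Condition~A is correct and matches the paper (this is Proposition~\ref{prop6_6}). The gap is in Condition~B, and it is precisely the first ``real piece of work'' you flag: the odd-case analogue of Lemma~\ref{lem5_5} that you want does \emph{not} exist. Unmovable here means $\Delta=\Omega^{2}\not\le_{R}a$; in particular $\Omega$ itself is unmovable, and $\Omega=(st)^{k}s=(ts)^{k}t$ has two distinct alternating $s$/$t$-block expressions, so the ``first letter'' $\sigma$ and ``last letter'' $\tau$ are not well-defined on unmovable elements. Concretely, $s\Omega=(ts)^{k}t^{2}$ has $\dpt(s\Omega)=k$, while your hoped-for junction rule $\dpt(ab)=\dpt(a)+\dpt(b)-[\tau(a)=\sigma(b)=s]$ gives $1+k$ or $1+k-1$ depending on which word you read $\Omega$ from. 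So neither the normal form nor the additivity rule survives at the level of $\Delta$-unmovable elements, and your $\Delta$-peeling with $a_ib_i=\Delta$ cannot be analysed via first/last letters.

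The paper's fix is to drop one level: introduce \emph{$\Omega$-unmovable} elements (those with $\Omega\not\le_{R}a$), for which the block decomposition \emph{is} unique (Lemma~\ref{lem6_5}(1)), and write every unmovable $a$ as $a'\Omega^{\varepsilon}$ with $a'$ $\Omega$-unmovable and $\varepsilon\in\{0,1\}$ (Lemma~\ref{lem6_5}(2)). The peeling is then done at the $\Omega$ level, not the $\Delta$ level: one writes $a'=a_{2p+1}\cdots a_1$, $b'=b_1\cdots b_{2p+1}$ with $a_ib_i=\Omega$ for $i$ odd and $a_i\,\varphi(b_i)=\Omega$ for $i$ even (the alternation with $\varphi$ is forced because $\Omega$ is not central). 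The analogue of Lemma~\ref{lem5_7}(2) then splits according to $\sigma(a)$ (Lemma~\ref{lem6_7}), and one must also control $\dpt(\varphi(c))$ and $\dpt(c\Omega)$ in terms of $\dpt(c)$ and the boundary letters. The upshot is eight cases rather than one: four choices for $(a,b)\in\{(a',b'),(a'\Omega,\varphi(b')),(a',b'\Omega),(a'\Omega,\varphi(b')\Omega)\}$ times two for whether the $\Delta$-form of $a'b'$ carries an extra $\Omega$; these are Lemmas~\ref{lem6_9} and~\ref{lem6_10}. This $\Omega$-level bookkeeping, not any analogue of Lemma~\ref{lem4_11}, is what makes the odd case ``require much more calculations''.
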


By Theorem \ref{thm3_2} this implies the following.

\begin{corl}\label{corl6_2}
The pair $(H, G_1)$ is a Dehornoy structure on $G$.
\end{corl}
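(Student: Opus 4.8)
The plan is to prove Theorem \ref{thm6_1} by establishing Condition A and Condition B separately, with constant $\zeta = 2k-1$, following the same two-step scheme as in the even case: an analogue of Proposition \ref{prop5_6} for Condition A and an analogue of Proposition \ref{prop5_8} for Condition B. The conceptual skeleton transfers verbatim from Section \ref{Sec5}; what makes the odd case heavier is that $\Delta_1 = t^2$ and $\Lambda = s^2$ rather than $t$ and $s$, that the defining relation has odd length, and that $\Delta = \Omega^2$ is a square, so the combinatorics of the element $\theta$, of the set $\bar\Theta$, and of how depths add under multiplication are all more delicate.

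I would first record the odd-case analogue of Lemma \ref{lem5_5}. Since $M_1 = \langle t\rangle^+$ and $N = \langle s\rangle^+$ exactly as in the even case, every element of $M$ still has a unique alternating $s/t$-syllable form $a = t^{u_p}s^{v_p}\cdots t^{u_1}s^{v_1}t^{u_0}$, and $\dpt(a)$ is the number of nontrivial $s$-syllables; the new feature is the description of unmovable elements, which now involves whether or not a factor of $\Omega$ can be split off (an unmovable element is of the form $\Omega^j w$ with $j\in\{0,1\}$ and $\Omega\not\le_R w$). For Condition A the key computation is that $\theta = \Delta\Delta_1^{-1} = (st)^k(ts)^k = (st)^{k-1}s\,t^2\,s(ts)^{k-1}$ (the analogue of $\theta = s(ts)^{k-1}$ in the even case), so $\dpt(\theta) = 2k$; and since forming $\theta^p$ from $p$ copies of $\theta$ creates exactly one $s$-syllable merge at each of the $p-1$ internal $s(ts)^{k-1}(st)^{k-1}s$ junctions and none at the $t^2$ separators, one gets $\dpt(\theta^p) = 2k + (p-1)(2k-1) = (2k-1)p+1$, hence $\dpt(\Delta^p) = \dpt(\theta^p\Delta_1^p) = \dpt(\theta^p) = (2k-1)p+1$.

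For Condition B I would reproduce the $\Delta$-peeling argument from the proof of Proposition \ref{prop5_8}. Given unmovable $a,b$ with $(a,b)\not\in\bar\Theta\times\bar\Theta$ and $\Delta$-form $ab = c\Delta^t$, one writes $a = a_{t+1}a_t\cdots a_1$ and $b = b_1\cdots b_tb_{t+1}$ with $a_ib_i = \Delta$ for $i\le t$, $a_{t+1}b_{t+1} = c$, and the usual junction conditions relating the first and last letters of consecutive factors. Two ingredients are needed: the analogue of Lemma \ref{lem5_7}\,(2), that $a_ib_i = \Delta$ forces $\dpt(a_i)+\dpt(b_i) = \dpt(\Delta) = 2k$; and the analogue of Lemma \ref{lem5_7}\,(1), a depth-additivity formula $\dpt(xy) = \dpt(x)+\dpt(y)-\eta$ for unmovable $x,y$ with $xy$ unmovable, in which the boundary correction $\eta$ depends not only on whether $x$ ends and $y$ begins with an $s$-syllable but also on the parities of the exponents of those syllables, since with $\Lambda = s^2$ a syllable $s^r$ only contributes through $r\bmod 2$ at a junction. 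Adding up the contributions, the $2k$'s from the $t$ internal junctions $a_ib_i = \Delta$ combine with the central junction $a_{t+1}b_{t+1}=c$ and the parity bookkeeping exactly so that $\dpt(c) = \dpt(a)+\dpt(b)-(2k-1)t-\varepsilon$ with $\varepsilon\in\{0,1\}$; a final case analysis, paralleling the last three paragraphs of the proof of Proposition \ref{prop5_8} and using the explicit form of $\theta$ found above, then shows $\varepsilon=1$ whenever $a\in\Theta$, or $b\in\Theta$, or $c\in M_1$.

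The step I expect to be the main obstacle is precisely this bookkeeping in the depth-additivity lemma and its use in the concluding case analysis: in the even case a junction is classified by a single letter, but here the relevant datum is a pair (letter, parity of exponent), so both the additivity formula and the final enumeration split into many more cases, and one must verify that the corrections contributed by the $t$ internal $\Delta$-junctions telescope correctly against the target term $-(2k-1)t$. Everything else — the reduction of Theorem \ref{thm6_1} to the two propositions, the identity $\dpt(ab_0) = \dpt(a)$ for $b_0\in M_1$, the existence of the $\Delta$-peeling decomposition, and the deduction of Corollary \ref{corl6_2} from Theorem \ref{thm3_2} — goes through exactly as in Section \ref{Sec5}.
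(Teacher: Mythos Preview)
Your Condition A argument is correct and matches the paper's Proposition \ref{prop6_6}. The gap is in your plan for Condition B.

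You propose to transplant the $\Delta$-peeling of Proposition \ref{prop5_8}: write $a = a_{t+1}\cdots a_1$, $b = b_1\cdots b_{t+1}$ with $a_ib_i = \Delta$, and control $\dpt$ via a junction formula in terms of first and last letters. But in the odd case $\Delta = \Omega^2$, so an unmovable element need not be $\Omega$-unmovable, and the unique syllable description of Lemma \ref{lem6_5}\,(1) is only valid for $\Omega$-unmovable elements. In particular the quantities $\sigma(\,\cdot\,)$ and $\tau(\,\cdot\,)$ are simply undefined for elements such as $\Omega$ itself (which has the two word representatives $(st)^ks$ and $(ts)^kt$), and more generally for any $a_i$, $b_i$, or partial product that contains an $\Omega$-factor. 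Your diagnosis that the new difficulty is a parity issue coming from $\Lambda = s^2$ is a red herring: the $(N,M_1)$-alternating form and hence $\dpt$ depend only on $N = \langle s\rangle^+$ and $M_1 = \langle t\rangle^+$, not on $\Lambda$ or $\Delta_1$; the real obstruction is the presence of $\Omega$-factors. Concretely, your depth-additivity lemma cannot be stated as ``$\eta$ is determined by the last letter of $x$ and the first letter of $y$'': for $x = \Omega$ and $y = s$ one gets $\eta = 1$, for $x = \Omega$ and $y = t$ one gets $\eta = 0$, yet $\tau(\Omega)$ is not well-defined.

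The paper avoids this by peeling at the $\Omega$-level rather than the $\Delta$-level: one writes $a = a'\Omega^{\varepsilon_a}$ and $b$ as $b'$ or $\varphi(b')$ times a possible $\Omega$ with $a',b'$ $\Omega$-unmovable, then decomposes $a'b'$ via $a_ib_i = \Omega$ (odd $i$) and $a_i\,\varphi(b_i) = \Omega$ (even $i$). The conjugation $\varphi$ by $\Omega$ (swapping $s$ and $t$) is what replaces your ``parity'' bookkeeping, and the analysis splits into eight cases (Lemmas \ref{lem6_9} and \ref{lem6_10}) according to whether each of $a$, $b$, $c$ carries an $\Omega$-factor. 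Your plan becomes correct once you make this change, but the resulting case analysis is substantially longer than what you outlined.
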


We denote by $P_1$ the set of $(H, G_1)$-positive elements of $G$ and we set $P_2 = \{t^n \mid n \ge 1 \}$.
For each $\epsilon = (\epsilon_1, \epsilon_2) \in \{ \pm 1 \}^2$ we set $P^\epsilon = P_1 ^{\epsilon_1} \cup P_2^{\epsilon_2}$.
Then by Proposition \ref{prop3_1} we have the following.

\begin{corl}\label{corl6_3}
The set $P^\epsilon$ is the positive cone for a left-order on $G$.
\end{corl}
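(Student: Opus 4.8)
The plan is to obtain Corollary \ref{corl6_3} as an immediate instance of Proposition \ref{prop3_1}, applied with $n = 1$. First I would check that the data of this section fit the hypotheses of that proposition: take $G_0 = G$ and $G_1 = \langle t \rangle$; since the subgroup of $G$ generated by a single standard generator is the Artin group of type $A_1$, it is infinite cyclic, so $G_1 \simeq \Z$. By Corollary \ref{corl6_2}, $(H_1, G_1) = (H, G_1)$ is a Dehornoy structure on $G_0 = G$, which is exactly the remaining hypothesis of Proposition \ref{prop3_1}. (Corollary \ref{corl6_2} is itself obtained from Theorem \ref{thm6_1} through Theorem \ref{thm3_2}.)

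Next I would match the notation. In the construction preceding Proposition \ref{prop3_1} one chooses a generator $\alpha_n$ of $G_n$ and sets $P_n = \{\alpha_n^k \mid k \ge 1\}$; here we take $\alpha_1 = t$, so that the set $P_2 = \{t^n \mid n \ge 1\}$ of this section plays the role of $P_1 = P_n$ there, while $P_1$, the set of $(H, G_1)$-positive elements of $G$, plays the role of $P_0$. Proposition \ref{prop3_1} then yields directly that for every $\epsilon = (\epsilon_1, \epsilon_2) \in \{\pm 1\}^2$ the set $P^\epsilon = P_1^{\epsilon_1} \sqcup P_2^{\epsilon_2}$ is a positive cone for a left-order on $G$, which is the assertion of the corollary.

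There is no genuine obstacle at this level: all of the substance sits in Theorem \ref{thm6_1}, i.e.\ in verifying Conditions A and B with constant $\zeta = 2k-1$. The delicate part there, and the reason the introduction flags the odd dihedral case as requiring much more calculation than the even one, is the depth bookkeeping behind Condition B: one must control $\dpt(c)$ for the $\Delta$-form $ab = c\Delta^t$ of a product of unmovable elements, together with the exceptional behaviour forced by the theta elements. That is where the effort of the section goes, not in the present corollary.
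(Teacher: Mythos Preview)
Your proposal is correct and follows exactly the paper's approach: the corollary is stated as an immediate consequence of Proposition \ref{prop3_1} (with $n=1$), using Corollary \ref{corl6_2} to verify the Dehornoy-structure hypothesis and the fact that $G_1=\langle t\rangle\simeq\Z$. Your matching of the index shift ($P_1,P_2$ here versus $P_0,P_1$ in Proposition \ref{prop3_1}) is also accurate.
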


Let $r_1, \dots, r_{2k}$ be the standard generators of the braid group $\BB_{2k+1}$ on $m = 2k+1$ strands.
Again, by Crisp \cite{Crisp1}, we have an embedding $\iota : G \to \BB_{2k+1}$ which sends $s$ to $\prod_{i=0}^{k-1} r_{2i+1}$ and $t$ to $\prod_{i = 1}^k r_{2i}$.
The proof of the following is substantially the same as the proof of Proposition \ref{prop5_4}, hence it is left to the reader.

\begin{prop}\label{prop6_4} 
Let $\alpha \in G$.
Then $\alpha$ is $(H, G_1)$-negative if and only if $\iota (\alpha)$ is $r_1$-negative.
\end{prop}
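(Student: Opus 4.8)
The plan is to adapt the proof of Proposition~\ref{prop5_4} essentially line by line to the present (odd) situation; the only genuinely new ingredient is the combinatorial factorization lemma that plays the role of Lemma~\ref{lem5_9}. First I would set up the ``target'' structure inside $\BB_{2k+1}$: let $G_1'$ (resp.\ $M_1'$) be the subgroup of $\BB_{2k+1}$ (resp.\ the submonoid of $\BB_{2k+1}^+$) generated by $r_2, \dots, r_{2k}$, and let $H'$ (resp.\ $N'$) be the subgroup (resp.\ submonoid) generated by $r_1, \dots, r_{2k-1}$. Since $\iota(t) = \prod_{i=1}^{k} r_{2i} \in M_1'$ we get $\iota(G_1) \subset G_1'$. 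Let $\Omega_\BB = (r_1 r_2 \cdots r_{2k}) \cdots (r_1 r_2) r_1$ be the Garside element of $\BB_{2k+1}$ and $\Phi$ the conjugation $\alpha \mapsto \Omega_\BB \alpha \Omega_\BB^{-1}$, so $\Phi(r_i) = r_{2k+1-i}$ and hence $\Phi(N') = M_1'$. By Crisp~\cite{Crisp1}, $\iota(\Omega) = \Omega_\BB$, so $\iota(\Delta) = \iota(\Omega^2) = \Omega_\BB^2$. Finally $\Omega_\BB$ is manifestly $r_1$-positive (one $r_1$ per parenthesised block, the pieces in between lying in $\langle r_2, \dots, r_{2k}\rangle$), hence $\Omega_\BB^{-1}$ is $r_1$-negative.

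By Corollary~\ref{corl6_2} we have the disjoint union $G = P \sqcup P^{-1} \sqcup G_1$ with $P = P_{H,G_1}$, and by Dehornoy~\cite{Dehor1} we have $\BB_{2k+1} = P' \sqcup P'^{-1} \sqcup G_1'$ with $P'$ the set of $r_1$-positive elements of $\BB_{2k+1}$. As in the proof of Proposition~\ref{prop5_4}, since $\iota$ is a homomorphism and $\iota(G_1) \subset G_1'$, it suffices to prove $\iota(P^{-1}) \subset P'^{-1}$; comparing the two disjoint unions then gives the equivalence in the statement. So let $\alpha = a\Delta^{-p}$ be the $\Delta$-form of an element of $P^{-1}$, so that $p \ge 1$ and $\dpt(a) \le (2k-1)p$. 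Using the odd-case analogue of Lemma~\ref{lem5_5} (a normal form for the unmovable elements of $M$) one writes $a = a_1 a_2 \cdots a_p$ with each $a_i$ unmovable and $\dpt(a_i) \le 2k-1$ (some $a_i$ possibly trivial); since $\Delta$ is central, $\alpha = (a_1\Delta^{-1})(a_2\Delta^{-1})\cdots(a_p\Delta^{-1})$, so it is enough to treat $\alpha = a\Delta^{-1}$ with $a$ unmovable and $\dpt(a) \le 2k-1$. For this case I would prove the odd-case analogue of Lemma~\ref{lem5_9}: if $a \in M$ is unmovable with $\dpt(a) \le 2k-1$, then $\iota(a) = b_1 b_2$ for some $b_1 \in M_1'$ and $b_2 \in N'$. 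Granting it, and using $b_2\Omega_\BB^{-1} = \Omega_\BB^{-1}\Phi(b_2)$,
\[
\iota(a\Delta^{-1}) = b_1 b_2 \Omega_\BB^{-2} = b_1\Omega_\BB^{-1}\Phi(b_2)\Omega_\BB^{-1} = \bigl(b_1\Omega_\BB^{-1}\Phi(b_2)\bigr)\Omega_\BB^{-1}.
\]
Here $b_1 \in M_1' \subset G_1'$, $\Phi(b_2) \in \Phi(N') = M_1' \subset G_1'$, and $\Omega_\BB^{-1} \in P'^{-1}$, so $b_1\Omega_\BB^{-1}\Phi(b_2) \in G_1' P'^{-1} G_1' \subset P'^{-1}$, whence $\iota(a\Delta^{-1}) \in P'^{-1} P'^{-1} \subset P'^{-1}$, invoking the elementary facts $G_1' P'^{-1} G_1' \subset P'^{-1}$ and $P'^{-1} P'^{-1} \subset P'^{-1}$ for the braid group. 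The only structural difference with Proposition~\ref{prop5_4} is this last pair of inclusions, forced on us because $\iota(\Delta) = \Omega_\BB^2$ carries two copies of $\Omega_\BB^{-1}$ rather than one.

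The hard part, exactly as in Section~\ref{Sec5}, is the factorization lemma. In the even case Lemma~\ref{lem5_9} is proved by induction on $\dpt(a)$: one peels off the leading syllables $s^{v}t^{u}$ of the normal form of $a$ and observes that, in $\iota(s)^{v}\iota(t)^{u}$, the generators of high index commute past everything already accumulated and are absorbed into $b_1 \in M_1'$, while those of low index extend $b_2$ and keep it inside $N'$. In the odd case the supports of $\iota(s) = \prod_{i=0}^{k-1} r_{2i+1}$ and of $\iota(t) = \prod_{i=1}^{k} r_{2i}$ together exhaust $\{r_1, \dots, r_{2k}\}$ — in particular $\iota(t)$ now reaches the top generator $r_{2k}$, so the two generators play much less symmetric roles — and the commutation bookkeeping becomes correspondingly tighter; arranging the induction so that $b_2$ still lands in $N' = \langle r_1, \dots, r_{2k-1}\rangle^+$ for every unmovable $a$ with $\dpt(a)$ as large as $2k-1$ is where the bulk of the work lies, and it is expected to require noticeably longer computations than in the even case (which is presumably why the authors leave the argument to the reader). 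Once this lemma is in place, everything else is the purely formal argument above.
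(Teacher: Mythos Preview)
Your proposal is correct and follows exactly the route the paper intends: the authors explicitly leave the proof to the reader with the remark that it is ``substantially the same as the proof of Proposition~\ref{prop5_4}'', and you have faithfully reproduced that structure---the reduction to $\iota(P^{-1})\subset P'^{-1}$, the splitting of $a\Delta^{-p}$ into elementary factors, and the combinatorial factorization lemma inside $\BB_{2k+1}^+$.

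One remark on the formulation of your factorization lemma. Rather than aiming for the statement ``$a$ unmovable with $\dpt(a)\le 2k-1$ implies $\iota(a)=b_1b_2$ with $b_1\in M_1'$, $b_2\in N'$'' and then splitting $a\Delta^{-p}$ into $p$ factors, it is somewhat cleaner to use $\iota(\Omega)=\Omega_\BB$ directly and split $a\Delta^{-p}=a\,\Omega^{-2p}$ into $2p$ factors of the form $a_j\Omega^{-1}$ (with the appropriate $\varphi$-twists, since $\Omega^{-1}x=\varphi(x)\Omega^{-1}$), where each $a_j$ is $\Omega$-unmovable with $\dpt(a_j)$ bounded by roughly $k$. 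Then the \emph{literal} analogue of Lemma~\ref{lem5_9}---same bound, same induction on $\dpt$, giving $b_2\in\langle r_1,\dots,r_{2p}\rangle^+$---applies to each piece, and each $\iota(a_j)\Omega_\BB^{-1}=b_1\Omega_\BB^{-1}\Phi(b_2)$ lands in $P'^{-1}$ exactly as in the even case. This avoids having to push the induction all the way to $\dpt=2k-1$ and keeps the argument truly ``substantially the same''. Your two-$\Omega_\BB^{-1}$ manipulation is equivalent in effect, and you have correctly identified that the asymmetry between $\iota(s)$ and $\iota(t)$ (both now reaching the extremal generators) is what makes the bookkeeping tighter than in Section~\ref{Sec5}.
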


We start now the proof of Theorem \ref{thm6_1}.
We say that an element $a \in M$ is \emph{$\Omega$-unmovable} if $\Omega \not \le_L a$ or, equivalently, if $\Omega \not \le_R a$.
The following is an observation.

\begin{lem}\label{lem6_5}
\begin{itemize} 
\item[(1)]
Let $a$ be an $\Omega$-unmovable element of $M$.
Then $a$ is uniquely written in the form $a = t^{u_p} s^{v_p}  \cdots t^{u_1} s^{v_1} t^{u_0}$, where $u_0, u_p \ge 0$, $u_1, \dots, u_{p-1} \ge 1$ and $v_1, \dots, v_p \ge 1$.
In this case we have $\dpt (a) = p$.
\item[(2)]
Let $a$ be an unmovable element of $M$.
Then $a$ is uniquely written in the form $a =a' \Omega^{\varepsilon}$ where $a'$ is $\Omega$-unmovable and $\varepsilon \in \{ 0, 1 \}$.
\end{itemize}
\end{lem}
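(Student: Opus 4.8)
\emph{Strategy for part (1).} The plan is to realise the desired factorization as the alternating form and to show that $\Omega$-unmovability is exactly what pins it down uniquely. Existence does \emph{not} use $\Omega$-unmovability: for $a\neq1$ write out the alternating form of $a$ with respect to $(N,M_1)$, where $N=\langle s\rangle^+$ and $M_1=\langle t\rangle^+$; its odd-index factors lie in $M_1$ (powers of $t$) and its even-index factors lie in $N$ (powers of $s$), only the bottom factor may be trivial, and the top factor is nontrivial, so it reads $a=t^{u_p}s^{v_p}\cdots t^{u_1}s^{v_1}t^{u_0}$ with $v_1,\dots,v_p\ge1$, $u_1,\dots,u_{p-1}\ge1$, $u_0,u_p\ge0$ (with $u_p=0$ precisely when the breadth is even), while $a=1$ is the case $p=0$; and $p$ is the number of even-index factors, which is $\dpt(a)$ by definition. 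So the real content of part (1) is uniqueness — and it genuinely needs the hypothesis, since without it $\Omega$ itself has two such factorizations, namely $\Pi(s,t,m)$ and $\Pi(t,s,m)$ written out.

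\emph{Proving uniqueness.} I would show that for $\Omega$-unmovable $a$ \emph{any} factorization of the stated shape coincides with the alternating form of $a$, hence is unique. The two ingredients are: (i) in $M$ one has $s\vee_R t=\Omega$ (Brieskorn--Saito \cite{BriSai1}), so that $s\le_R b$ and $t\le_R b$ together force $\Omega\le_R b$; and (ii) the identities $\Omega s=t\Omega$ and $\Omega t=s\Omega$, valid because $m=2k+1$ is odd and conjugation by $\Omega$ exchanges $s$ and $t$, which give $\Omega s^{v}t^{u}=t^{v}s^{u}\Omega$ and therefore: deleting a power of $s$ or of $t$ from the right of an $\Omega$-unmovable element again yields an $\Omega$-unmovable element (if $\Omega\le_R a t^{-u}$ then, substituting, $\Omega\le_R a$). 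Granting this, start from $a=t^{u_p}s^{v_p}\cdots s^{v_1}t^{u_0}$. The exponent $u_0$ must be maximal with $t^{u_0}\le_R a$: otherwise $t\le_R a':=at^{-u_0}$, but if $p\ge1$ then $a'$ ends in $s^{v_1}$ so $s\le_R a'$ as well, whence $\Omega\le_R a'$ and then $\Omega\le_R a$ by (ii), contradicting $\Omega$-unmovability (the case $p=0$ is immediate from cancellativity). Thus $t^{u_0}=\tau_{M_1}(a)$ is forced; $a'=at^{-u_0}$ is again $\Omega$-unmovable, and the symmetric argument forces $s^{v_1}=\tau_N(a')$. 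Iterating — the word length strictly drops whenever a nontrivial block is removed, so the process terminates — all the exponents, and in particular $p$, are determined, and the factorization is exactly the alternating form; in particular $\dpt(a)=p$.

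\emph{Part (2).} This is the $\Omega$-analogue of Theorem \ref{thm2_2}, which I would prove directly. Let $a\in M$ be unmovable, i.e.\ $\Delta=\Omega^2\not\le_R a$. If $\Omega\not\le_R a$, then $a$ is itself $\Omega$-unmovable and we take $a'=a$, $\varepsilon=0$. If $\Omega\le_R a$, write $a=a'\Omega$ with $a'\in M$; then $a'$ is $\Omega$-unmovable, for $\Omega\le_R a'$ would give $\Omega^2\le_R a'\Omega=a$, against unmovability, and we take $\varepsilon=1$. For uniqueness, if $a=a_1'\Omega^{\varepsilon_1}=a_2'\Omega^{\varepsilon_2}$ with $a_1',a_2'$ $\Omega$-unmovable and, say, $\varepsilon_1\ge\varepsilon_2$ in $\{0,1\}$, then cancelling $\Omega^{\varepsilon_2}$ gives $a_1'\Omega^{\varepsilon_1-\varepsilon_2}=a_2'$, so $\varepsilon_1-\varepsilon_2>0$ would force $\Omega\le_R a_2'$, impossible; hence $\varepsilon_1=\varepsilon_2$ and then $a_1'=a_2'$ by cancellativity.

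\emph{Main obstacle.} The only delicate point is the bookkeeping in part (1): one must check that deleting a boundary power preserves $\Omega$-unmovability — this is where the odd-$m$ identity $\Omega s=t\Omega$ is indispensable — and keep straight which of the boundary exponents $u_0,u_p$ is allowed to vanish. Once that is settled, the coincidence with the alternating form, and with it both the uniqueness and the equality $\dpt(a)=p$, are formal consequences of the theory of alternating forms; part (2) is then routine.
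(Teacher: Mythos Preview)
Your proof is correct. The paper does not supply a proof of this lemma: it is introduced with the phrase ``The following is an observation'' and no argument is given, so there is no approach to compare against. Your verification is careful and identifies the genuine content. For part (1), existence via the alternating form with respect to $(N,M_1)$ is indeed formal, and uniqueness really does require $\Omega$-unmovability (your counterexample $\Omega=\Pi(s,t,m)=\Pi(t,s,m)$ makes this explicit). The two ingredients you isolate---$s\vee_R t=\Omega$ and the odd-$m$ commutation $\Omega s=t\Omega$, $\Omega t=s\Omega$---are exactly what is needed to show that stripping a right block preserves $\Omega$-unmovability and hence that any factorization of the stated shape coincides with the alternating form; the equality $\dpt(a)=p$ then follows from the definition of depth. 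Part (2) is, as you say, the $\Omega$-analogue of the $\Delta$-form and is routine.
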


The first part of Theorem \ref{thm6_1} is a direct consequence of this lemma.

\begin{prop}\label{prop6_6}
The pair $(H, G_1)$ satisfies Condition A with constant $\zeta = 2k-1$.
\end{prop}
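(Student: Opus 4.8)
The plan is to compute the depth of $\Delta^p = \Omega^{2p}$ directly from the normal form described in Lemma \ref{lem6_5}, exactly as was done in the even case (Proposition \ref{prop5_6}), the only new feature being the presence of the extra $\Omega$-factor coming from Lemma \ref{lem6_5}\,(2). First I would recall that $\theta = \Delta \Delta_1^{-1} = \Omega^2 t^{-2}$, so it suffices to identify $\dpt(\theta^p)$ and then use that $\dpt$ is insensitive to multiplication by an element of $M_1 = \langle t \rangle$ on the right (as noted at the start of Section \ref{Sec3}), whence $\dpt(\Delta^p) = \dpt(\theta^p t^{2p}) = \dpt(\theta^p)$.

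The key computational step is to write $\theta$ in the $t,s$-syllable form of Lemma \ref{lem6_5}. Since $m = 2k+1$ we have $\Omega = (st)^k s$, so $\Omega^2 = (st)^k s (st)^k s = (st)^k (st)^k s \cdot s = \dots$; more convenient is to note $\theta = \Omega^2 t^{-2}$ and compute $\Omega^2 = \Omega \cdot \Omega$ using $\Omega = (st)^k s = (ts)^k t$, giving $\Omega^2 = (st)^k s (ts)^k t = (st)^{2k} t^{\,?}$ after the obvious cancellation; I would carry this out carefully to land on an explicit word of the form $t^{u_p} s^{v_p} \cdots s^{v_1} t^{u_0}$ (an $\Omega$-unmovable word, modulo a trailing $t^2$) and read off that its depth, i.e.\ the number $p$ of $s$-syllables, equals $2k-1$. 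Then for $\theta^p$ I would show, again by the syllable bookkeeping of Lemma \ref{lem6_5}\,(1) — the successive $\theta$-blocks concatenate with at most the boundary $s$-syllables merging, and in fact here they do merge in a controlled way — that $\dpt(\theta^p) = p(2k-1) + 1$. Since $2k-1 = \zeta$, this gives $\dpt(\Delta^p) = \zeta p + 1$ for all $p \ge 1$, which is precisely Condition A with constant $\zeta = 2k-1$.

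The main obstacle I expect is purely bookkeeping: making sure the cancellation in $\Omega^2 = (st)^k s (ts)^k t$ is handled correctly (the middle $s(ts)^k = (st)^k s$ identity from the braid relation must be used), and then counting the $s$-syllables in the power $\theta^p$ without an off-by-one error at the junctions between consecutive copies of $\theta$. This is the same phenomenon as in Lemma \ref{lem5_5}/Proposition \ref{prop5_6} for the even case, and the authors flagged that the odd case "requires much more calculations"; here, though, the statement itself is short, so the argument should reduce to one explicit word computation for $\theta$ plus one induction on $p$. I would present it by first stating the explicit syllable form of $\theta$ as a displayed equation, then giving the one-line induction for $\theta^p$, then concluding.
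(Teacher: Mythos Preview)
Your approach is essentially the same as the paper's: compute $\theta$ explicitly, read off $\dpt(\theta^p)$ from the syllable form of Lemma \ref{lem6_5}\,(1), and conclude via $\dpt(\Delta^p)=\dpt(\theta^p t^{2p})=\dpt(\theta^p)$. The paper simply records $\theta=(st)^k(ts)^k$ (which follows from $(st)^k(ts)^k t^2=(st)^k\Omega\, t=\Omega(ts)^k t=\Omega^2=\Delta$), observes that in $\theta^p=((st)^k(ts)^k)^p$ consecutive blocks share an $s$-syllable at each junction, giving $p\cdot 2k-(p-1)=p(2k-1)+1$ $s$-syllables, and is done.

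One small correction: the ``extra $\Omega$-factor'' from Lemma \ref{lem6_5}\,(2) that you flag as a new feature does not actually enter here, since $\theta^p$ is itself $\Omega$-unmovable and Lemma \ref{lem6_5}\,(1) applies directly; you can drop that part of the plan.
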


\begin{proof}
Let $p \ge 1$ be an integer.
We have $\theta = (st)^k (ts)^k$, hence $\theta^p = ((st)^k(ts)^k)^p$.
By Lemma \ref{lem6_5}\,(1) it follows that $\dpt (\theta^p) = p(2k-1) +1$, hence $\dpt (\Delta^p) = \dpt(\theta^p t^{2p}) = \dpt(\theta^p) = p (2k-1) +1$.
\end{proof}

The second part of Theorem \ref{thm6_1} will be much more difficult to prove.
Let $a \in M\setminus \{1\}$ be an $\Omega$-unmovable element that we write as in Lemma \ref{lem6_5}\,(1).
Then we set $\sigma (a) = t$ if $u_p \neq 0$ and $\sigma (a) =s$ if $u_p =0$.
Similarly, we set $\tau (a) = t$ if $u_0 \neq 1$ and $\tau(a) =s$ if $u_0 =0$.
In other words, $\sigma(a)$ is the first letter of $a$ and $\tau (a)$ is its last one. 
On the other hand, we denote by $\varphi : G \to G$ the automorphism which sends $s$ to $t$ and $t$ to $s$.
Note that $\varphi$ is the conjugation by $\Omega$, that is, $\varphi (\alpha) = \Omega \alpha \Omega^{-1}$ for all $\alpha \in G$.
The following is again a direct consequence of Lemma \ref{lem6_5}.

\begin{lem}\label{lem6_7}
\begin{itemize} 
\item[(1)]
Let $a,b \in M$ such that $ab$ is $\Omega$-unmovable. 
Then 
\[
\dpt(ab) = \left\{ \begin{array}{ll}
\dpt (a) + \dpt (b) - 1 & \text{if } a \neq 1,\ b \neq 1, \text{ and }\tau (a) = \sigma (b) =s\,,\\
\dpt (a) + \dpt (b) & \text{otherwise}\,.
\end{array} \right.
\]
\item[(2)]
Let $a,b \in M \setminus \{ 1 \}$ such that $ab = \Omega$.
Then 
\[
\dpt(a) + \dpt(b) = \left\{ \begin{array}{ll}
k+1 &\text{if } \sigma (a) = s\,,\\
k &\text{if } \sigma (a) = t\,.
\end{array} \right.
\]
\item[(3)]
Let $c$ be an $\Omega$-unmovable element of $M$.
Then 
\[
\dpt(\varphi (c)) = \left\{ \begin{array}{ll}
\dpt(c) + 1 &\text{if } c\neq 1 \text{ and } \sigma(c) = \tau(c) = t\,,\\
\dpt(c) - 1 &\text{if }c \neq 1 \text{ and } \sigma(c) = \tau(c) = s\,,\\
\dpt(c) &\text{otherwise}\,.
\end{array} \right.
\]
\item[(4)]
Let $c$ be an $\Omega$-unmovable element of $M$.
Then 
\[
\dpt(c \Omega) = \left\{ \begin{array}{ll}
\dpt (c) + k - 1 &\text{if } c\neq 1 \text{ and } \tau (c) = s\,,\\
\dpt (c) + k &\text{otherwise}\,.
\end{array} \right.
\]
\item[(5)]
Let $a,b \in M \setminus \{ 1 \}$ such that $a\, \varphi(b) = \Omega$.
Then 
\[
\dpt(a) + \dpt(b) = \left\{ \begin{array}{ll}
k+1 &\text{if } \tau (a) = s\,,\\
k &\text{if } \tau (a) = t\,.
\end{array} \right.
\]

\end{itemize}
\end{lem}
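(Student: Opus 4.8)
The plan is to extract all five statements from the explicit shape of $\Omega$-unmovable elements given by Lemma \ref{lem6_5}(1), using in addition two facts: conjugation by $\Omega$ is $\varphi$, i.e. $\Omega x=\varphi(x)\Omega$ for $x\in M$; and the single defining relation of $G$ involves $m$ letters on each side, which is the number of letters of $\Omega$, so the only words representing $\Omega$ are $\Pi(s,t,m)$ and $\Pi(t,s,m)$, both alternating.

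For part (1), I would first note that $a$ and $b$ are themselves $\Omega$-unmovable: since $a\le_L ab$ and $b\le_R ab$, if $\Omega\le_L a$ (resp. $\Omega\le_R b$) then $\Omega\le_L ab$ (resp. $\Omega\le_R ab$), against the hypothesis. Then I would write $a$ and $b$ in the form of Lemma \ref{lem6_5}(1), concatenate, and fuse the two blocks meeting at the junction when they are of the same letter; the result has the shape of Lemma \ref{lem6_5}(1), so by uniqueness it is the normal form of $ab$, and its number of $s$-blocks is $\dpt(a)+\dpt(b)$, one $s$-block being lost precisely when the last block of $a$ and the first block of $b$ are both $s$-blocks, i.e. when $a,b\neq1$ and $\tau(a)=\sigma(b)=s$. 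For part (2), since $a,b\neq1$ and $ab=\Omega$ both $a$ and $b$ are proper divisors of $\Omega$, hence written with fewer than $m$ letters; as the defining relation has $m$ letters, those words are the unique (hence normal-form) words of $a$ and $b$, and being a prefix and a suffix of $\Pi(s,t,m)$ or of $\Pi(t,s,m)$ they are alternating, so by Lemma \ref{lem6_5}(1) $\dpt(a),\dpt(b)$ are just the numbers of $s$'s in these two words. Their sum is the number of $s$'s in the corresponding word for $\Omega$: $k+1$ for $\Pi(s,t,m)$ (which occurs exactly when $\sigma(a)=s$) and $k$ for $\Pi(t,s,m)$ (when $\sigma(a)=t$). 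For part (3), $\varphi(\Omega)=\Omega$, so $\varphi$ preserves $\Omega$-unmovability, and $\dpt(\varphi(c))$ is the number of $t$-blocks in the normal form of $c$; comparing with the number $\dpt(c)$ of $s$-blocks, and using that for $c\neq1$ the normal form begins (resp. ends) with a $t$-block iff $\sigma(c)=t$ (resp. $\tau(c)=t$), gives the three cases.

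The main obstacle is part (4): $c\Omega$ is unmovable but \emph{not} $\Omega$-unmovable, since $\Omega\le_R c\Omega$, so Lemma \ref{lem6_5}(1) does not apply to it directly. Here the plan is to use $\Omega x=\varphi(x)\Omega$ to rewrite $c\Omega=\Omega\varphi(c)$, form the word $\Pi(s,t,m)\,w$ where $w$ is the normal-form word of $\varphi(c)$, count its $s$-blocks (this gives $\dpt(c)+k+[\tau(c)=t]$ after fusing at the junction), and then compute the right alternating form of $c\Omega$ by peeling blocks from the right. The key point, already visible for $c=1$ where $\dpt(\Omega)=k$ while $\Pi(s,t,m)$ has $k+1$ $s$-blocks, is that this word starts with the $m$-letter alternating word $\Pi(s,t,m)$ (or a longer alternating run), so normalisation is forced to invoke the length-$m$ relation there, and a careful tracking of the alternating form shows this costs exactly one $s$-block. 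Distinguishing according to $\tau(c)$ and treating separately $c=1$ and $\dpt(c)=0$ then yields $\dpt(c\Omega)=\dpt(c)+k-1$ when $\tau(c)=s$ and $\dpt(c)+k$ otherwise. Finally, for part (5) I would apply $\varphi$ to $a\,\varphi(b)=\Omega$ to obtain $\varphi(a)\,b=\Omega$, apply part (2) to this factorisation (so the sum equals $k+1$ or $k$ according to $\sigma(\varphi(a))=\varphi(\sigma(a))$), and convert $\dpt(\varphi(a))$ to $\dpt(a)$ via part (3); a short check over the four cases for $(\sigma(a),\tau(a))$ shows the answer depends only on $\tau(a)$, being $k+1$ if $\tau(a)=s$ and $k$ if $\tau(a)=t$.
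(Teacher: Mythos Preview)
Your proposal is correct and supplies the details the paper omits entirely: the paper simply declares Lemma~\ref{lem6_7} ``a direct consequence of Lemma~\ref{lem6_5}'' and gives no argument. Your treatment of parts (1), (2), (3), and (5) is the natural one and is exactly what that remark intends; in particular, your observation that $\Omega$-unmovable elements have a \emph{unique} word representative (since the defining relation has length $m$) is the key fact that makes the $s$-block count in Lemma~\ref{lem6_5}(1) well-defined and makes (1)--(3) immediate.

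For part (4) you correctly isolate the only real subtlety: $c\Omega$ is unmovable but not $\Omega$-unmovable, so its depth cannot be read off from an arbitrary word. Your route through $c\Omega=\Omega\varphi(c)$ and the $s$-block count of $\Pi(s,t,m)\cdot w$ gives the right answer, but the assertion that ``normalisation costs exactly one $s$-block'' is the crux and deserves a cleaner justification than an appeal to careful tracking. A more direct alternative is to compute the right alternating form of $c\Omega=c\,(ts)^k t$ by peeling tails: one checks inductively that after removing $a_1=t,\,a_2=s,\,\ldots,\,a_{2k}=s$ the remaining element $ct$ is still $\Omega$-unmovable (using that $\Omega\le_R ct$ would force $s\le_R c$ via $\Omega t=s\Omega$, contradicting $\tau(c)=t$; and similarly in the other case), so from that point on the alternating form coincides with the normal-form word and the depth is read off directly. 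Either way one obtains $\dpt(c\Omega)=\dpt(c)+k$ when $\tau(c)=t$ and $\dpt(c)+k-1$ when $\tau(c)=s$, as required.
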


\begin{lem}\label{lem6_8}
Let $a_1, a_2, b_1 ,b_2$ be four non-trivial $\Omega$-unmovable elements of $M$ such that $\sigma(a_1) = \tau(a_2)$, $\tau (b_1) = \sigma (b_2)$, $a_1 b_1 = \Omega$ and $a_2\, \varphi (b_2) = \Omega$.
Set $u = |\{ i \in \{1,2\} \mid \sigma(a_i)=s \}|$ and $v = |\{ i \in \{1,2\} \mid \tau(b_i)=s \}|$.
Then $\dpt (a_1) + \dpt (a_2) + \dpt (b_1) + \dpt (b_2) = 2k -1 + u + v$.
\end{lem}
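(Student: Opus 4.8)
The plan is to read each of the four depths off Lemma~\ref{lem6_7}\,(2) and~(5), and then carry out a short bookkeeping on the first and last letters $\sigma,\tau$ of the factors. The only input that needs an actual argument is an observation on how $\Omega$ splits as a product of two factors.

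That observation is a ``palindrome'' property of $\Omega$: if $c,d\in M\setminus\{1\}$ satisfy $cd=\Omega$, then $\sigma(c)=\tau(d)$. To prove it I would note that $c$ is then a nontrivial proper left divisor of $\Omega$, hence $c=\Pi(s,t,\ell)$ or $c=\Pi(t,s,\ell)$ for some $\ell$ with $1\le\ell\le m-1$; since $m$ is odd, the positive word $\Pi(s,t,m)$ begins and ends with $s$ and the positive word $\Pi(t,s,m)$ begins and ends with $t$, and since an element of length $<m$ has a unique expression as a positive word, the complementary factor $d$ ends with the first letter of $c$. I would also record that $\varphi$ is a length-preserving automorphism exchanging $s$ and $t$ with $\varphi(\Omega)=\Omega$, so it preserves $\Omega$-unmovability and satisfies $\sigma(\varphi(b))=\varphi(\sigma(b))$ and $\tau(\varphi(b))=\varphi(\tau(b))$; in particular $\varphi(b_2)$ is a nontrivial proper divisor of $\Omega$, so the palindrome property applies to the factorization $a_2\,\varphi(b_2)=\Omega$.

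Then I would apply the palindrome property together with Lemma~\ref{lem6_7}\,(2) to $a_1b_1=\Omega$, getting $\sigma(a_1)=\tau(b_1)$ and $\dpt(a_1)+\dpt(b_1)=k+\varepsilon_1$, where $\varepsilon_1=1$ if $\sigma(a_1)=s$ and $\varepsilon_1=0$ otherwise; and apply it together with Lemma~\ref{lem6_7}\,(5) to $a_2\,\varphi(b_2)=\Omega$, getting $\sigma(a_2)=\tau(\varphi(b_2))=\varphi(\tau(b_2))$ and $\dpt(a_2)+\dpt(b_2)=k+\varepsilon_2$, where $\varepsilon_2=1$ if $\tau(a_2)=s$ and $\varepsilon_2=0$ otherwise. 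Combining the hypotheses $\sigma(a_1)=\tau(a_2)$ and $\tau(b_1)=\sigma(b_2)$ with the identity $\sigma(a_1)=\tau(b_1)$ shows that $\sigma(a_1),\tau(a_2),\tau(b_1),\sigma(b_2)$ all coincide, say they all equal $x$; hence $\varepsilon_1=\varepsilon_2$ and $\dpt(a_1)+\dpt(a_2)+\dpt(b_1)+\dpt(b_2)=2k+2\varepsilon_1$. On the other hand $u$ counts the number of $s$'s among $\sigma(a_1)=x$ and $\sigma(a_2)$, and $v$ counts the number of $s$'s among $\tau(b_1)=x$ and $\tau(b_2)$; since $\sigma(a_2)=\varphi(\tau(b_2))$, exactly one of $\sigma(a_2),\tau(b_2)$ equals $s$, so $u+v=2\varepsilon_1+1$. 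Comparing the two expressions gives $\dpt(a_1)+\dpt(a_2)+\dpt(b_1)+\dpt(b_2)=2k-1+u+v$, as required.

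The only step needing genuine care is the palindrome observation about factorizations of $\Omega$, together with the compatibility of $\varphi$ with $\sigma$, $\tau$ and with $\Omega$-unmovability so that it may be applied to $\varphi(b_2)$; once these are in hand the rest is routine counting with the $\varepsilon$'s.
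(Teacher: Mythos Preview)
Your proof is correct and rests on the same inputs as the paper's, namely Lemma~\ref{lem6_7}\,(2) and~(5). The difference is purely organizational: the paper carries out an explicit four-case split on $(\sigma(a_1),\sigma(a_2))\in\{s,t\}^2$ and checks the identity in each case, whereas you extract the palindrome observation $\sigma(c)=\tau(d)$ for any nontrivial factorization $cd=\Omega$ (which the paper uses implicitly) and then do a single bookkeeping argument showing $\varepsilon_1=\varepsilon_2$ and $u+v=2\varepsilon_1+1$. Your route is a little more conceptual and avoids repeating the same verification four times; the paper's route is more mechanical but makes the dependence on the individual letters visible, which matches the style of the much longer case analyses in Lemmas~\ref{lem6_9} and~\ref{lem6_10} that follow.
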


\begin{proof}
If $\sigma (a_1) =s$ and $\sigma (a_2) = s$, then $\tau(a_2) = s$, $\tau (b_1) =s$ and $\tau (b_2) = t$, hence $u=2$, $v=1$ and, by Lemma \ref{lem6_7}, $\dpt (a_1) + \dpt (a_2) + \dpt (b_1) + \dpt (b_2) = 2k + 2 = 2k-1 + u + v$.
If $\sigma (a_1) =s$ and $\sigma (a_2) = t$, then $\tau(a_2) = s$, $\tau (b_1) =s$ and $\tau (b_2) = s$, hence $u=1$, $v=2$ and, by Lemma \ref{lem6_7}, $\dpt (a_1) + \dpt (a_2) + \dpt (b_1) + \dpt (b_2) = 2k + 2 = 2k-1 + u + v$.
If $\sigma (a_1) =t$ and $\sigma (a_2) = s$, then $\tau(a_2) = t$, $\tau (b_1) =t$ and $\tau (b_2) = t$, hence $u=1$, $v=0$ and, by Lemma \ref{lem6_7}, $\dpt (a_1) + \dpt (a_2) + \dpt (b_1) + \dpt (b_2) = 2k = 2k-1 + u + v$.
If $\sigma (a_1) =t$ and $\sigma (a_2) = t$, then $\tau(a_2) = t$, $\tau (b_1) =t$ and $\tau (b_2) = s$, hence $u=0$, $v=1$ and, by Lemma \ref{lem6_7}, $\dpt (a_1) + \dpt (a_2) + \dpt (b_1) + \dpt (b_2) = 2k = 2k-1 + u + v$.
\end{proof}

\begin{lem}\label{lem6_9}
Let $a,b$ be two $\Omega$-unmovable elements in $M$.
We assume that the $\Delta$-form of $ab$ is in the form $ab = c \Delta^p$ where $c$ is $\Omega$-unmovable. 
\begin{itemize}
\item[(1)]
Suppose that $(a, b) \not \in (\bar \Theta \times \bar \Theta)$.
There exists $\varepsilon \in \{0, 1 \}$ such that $\dpt (c) = \dpt (a) + \dpt (b) - p(2k-1) - \varepsilon$.
Moreover, $\varepsilon = 1$ if either $a \in \Theta$ or $b \in \Theta$ or $c \in M_1$.
\item[(2)]
Suppose that $(a \Omega, \varphi (b)) \not \in ( \bar \Theta \times \bar \Theta)$.
The exists $\varepsilon \in \{0, 1\}$ such that $\dpt (c \Omega) = \dpt (a \Omega) + \dpt (\varphi (b)) - p(2k-1) - \varepsilon$.
Moreover, $\varepsilon = 1$ if either $a \Omega \in \Theta$ or $\varphi (b) \in \Theta$.
\item[(3)]
Suppose that $(a, b \Omega) \not \in (\bar \Theta \times \bar \Theta)$.
There exists $\varepsilon \in \{0, 1\}$ such that $\dpt (c \Omega) = \dpt (a) + \dpt (b \Omega) - p(2k-1) - \varepsilon$.
Moreover, $\varepsilon = 1$ if either $a \in \Theta$ or $b \Omega \in \Theta$.
\item[(4)]
Suppose that $(a \Omega, \varphi (b)\, \Omega) \not \in (\bar \Theta \times \bar \Theta)$.
There exists $\varepsilon \in \{0, 1\}$ such that $\dpt (c) = \dpt (a \Omega) + \dpt (\varphi (b)\, \Omega) - (p+1)(2k-1) - \varepsilon$.
Moreover, $\varepsilon = 1$ if either $a \Omega \in \Theta$ or $\varphi (b)\, \Omega \in \Theta$ or $c \in M_1$.
\end{itemize}
\end{lem}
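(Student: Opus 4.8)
The plan is to prove the four assertions simultaneously by induction on $p$, with Part~(1) as the main case; Parts~(2)--(4) will then be deduced from Part~(1) at the same level by adjoining factors $\Omega$.

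\emph{Base case $p=0$.} Here $ab=c$, and the four identities become comparisons of $\dpt(ab)$, $\dpt(ab\,\Omega)$, $\dpt(\Omega\,\varphi(b))$ and $\dpt(ab\,\Omega^2)$ with suitable combinations of $\dpt(a)$ and $\dpt(b)$. Since $c=ab$, the first and last letters of $c$ are those of $a$ and $b$, so by Lemma~\ref{lem6_7}(1),(3),(4) each identity reduces to a short case check on the letters $\sigma(a),\tau(a),\sigma(b),\tau(b)$, in which the discrepancy always lies in $\{0,1\}$; the ``moreover'' clauses are read off from the fact that a nontrivial element of $M_1$ begins and ends with $t$, while an $\Omega$-unmovable element of $\Theta$ --- necessarily of the form $\theta^q$, since adjoining even a single $t$ makes $\Omega$ a right divisor (equivalently $s^b \le_R \Omega^a$ only for $b\le a$) --- begins and ends with $s$.

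\emph{Inductive step.} Assume $p\ge 1$. The engine will be a decomposition of $ab=c\,\Omega^{2p}$ into $\Omega$-blocks, analogous to the one used for Proposition~\ref{prop5_8}. Since $\Delta=\Omega^2$ is central we have $\Omega\le_L ab$ and $\Omega\le_R ab$; using the syllable normal form of Lemma~\ref{lem6_5}(1) together with the lattice structure and the commutation rule $\Omega x=\varphi(x)\Omega$, one peels two copies of $\Omega$ off the ``middle'' of $ab$, obtaining
\[
a=a'a_2a_1,\qquad b=b_1b_2b',\qquad a'b'=c\,\Omega^{2(p-1)},
\]
where $a_1,a_2,b_1,b_2$ are nontrivial $\Omega$-unmovable proper divisors of $\Omega$ with $a_1b_1=\Omega$, $a_2\,\varphi(b_2)=\Omega$, and $a',b'$ are $\Omega$-unmovable (iterating would give the full block decomposition $a=a_{2p+1}\cdots a_1$, $b=b_1\cdots b_{2p+1}$, $a_{2p+1}b_{2p+1}=c$). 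Then I would apply the inductive hypothesis, Part~(1) at level $p-1$, to $(a',b')$, giving $\varepsilon'\in\{0,1\}$ with $\dpt(c)=\dpt(a')+\dpt(b')-(p-1)(2k-1)-\varepsilon'$. Applying Lemma~\ref{lem6_7}(1) along $a=a'a_2a_1$ and $b=b_1b_2b'$ expresses $\dpt(a),\dpt(b)$ as $\dpt(a')+\dpt(a_2)+\dpt(a_1)$, $\dpt(b_1)+\dpt(b_2)+\dpt(b')$ minus correction terms in $\{0,1\}$ at the four junctions (all determined, since $a_1,a_2$ begin with $s$ and each $b_i$ is the complement of $a_i$ in $\Omega$), and Lemma~\ref{lem6_8} --- applicable because the peeling is taken maximal, so $\sigma(a_1)=\tau(a_2)$ and $\tau(b_1)=\sigma(b_2)$ --- evaluates $\dpt(a_1)+\dpt(a_2)+\dpt(b_1)+\dpt(b_2)=2k-1+u+v$. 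Assembling these, $\dpt(a)+\dpt(b)-p(2k-1)-\dpt(c)$ reduces to $\varepsilon'+u+v-(\text{corrections})$, and the crux will be that $u+v$ is cancelled by the corrections up to an error of $0$ or $1$; this yields Part~(1) at level $p$, the ``moreover'' clause again following from the first/last-letter constraints imposed by membership in $\Theta$ (only the $\theta^q$) or $M_1$, propagated through the decomposition. Finally, Parts~(2)--(4) come from rewriting $(a\Omega)\varphi(b)=a(b\Omega)=ab\,\Omega=c\,\Omega^{2p+1}$ and $(a\Omega)(\varphi(b)\Omega)=ab\,\Omega^2=c\,\Omega^{2p+2}$, expressing $\dpt(a\Omega),\dpt(\varphi(b)),\dpt(\varphi(b)\Omega),\dpt(c\Omega)$ via $\dpt(a),\dpt(b),\dpt(c)$ and the relevant first/last letters by Lemma~\ref{lem6_7}(3),(4), and re-running the finite letter bookkeeping on the identity of Part~(1) at level $p$.

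I expect the main obstacle to be precisely this pervasive bookkeeping. There will be many cases according to whether $p=0$, whether the end pieces $a,b,c$ are trivial, and which first/last letters occur at each junction; in every case one must verify that the accumulated $\{0,1\}$-corrections from Lemma~\ref{lem6_7} together with the per-pair contributions from Lemma~\ref{lem6_8} net out to a value in $\{0,1\}$, equal to $1$ in the prescribed cases. The presence of $\varphi$ (swapping $s\leftrightarrow t$) on the even blocks, and the three different ways of adjoining $\Omega$ in Parts~(2)--(4), multiply the number of patterns to be tracked, so the difficulty is organizational: the conceptual input is entirely contained in Lemmas~\ref{lem6_5}, \ref{lem6_7} and \ref{lem6_8}.
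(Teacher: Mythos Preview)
Your ingredients are exactly those of the paper: the block decomposition $a=a_{2p+1}\cdots a_1$, $b=b_1\cdots b_{2p+1}$ with $a_ib_i=\Omega$ (odd $i$) and $a_i\varphi(b_i)=\Omega$ (even $i$), together with Lemmas~\ref{lem6_7} and~\ref{lem6_8} and a long letter-by-letter case analysis. The paper, however, does not organise this as an induction on $p$ nor deduce Parts~(2)--(4) from Part~(1); it works with the full decomposition at once and computes $\dpt(a)$, $\dpt(a\Omega)$, $\dpt(b)$, $\dpt(\varphi(b))$, $\dpt(b\Omega)$, $\dpt(\varphi(b)\Omega)$, $\dpt(c)$, $\dpt(c\Omega)$ all as explicit functions of $\dpt(a_{2p+1})$, $\dpt(b_{2p+1})$, $u$, $v$ and a handful of letters, then reads off each $\varepsilon$ as the appropriate combination and runs the case check.

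Your reorganisation introduces two genuine gaps. First, the inductive step applies Part~(1) at level $p-1$ to $(a',b')$, but the hypothesis $(a',b')\notin\bar\Theta\times\bar\Theta$ is \emph{not} inherited from $(a,b)\notin\bar\Theta\times\bar\Theta$: for instance $a'$ can be trivial or a power of $t$ even when $a\notin\bar\Theta$. Second, your plan to derive Parts~(2)--(4) from Part~(1) by adjusting with Lemma~\ref{lem6_7}(3),(4) cannot work using only the first and last letters of $a$, $b$, $c$: the value of $\varepsilon$ in Part~(1) depends on the letter $x_{2p}'=\sigma(a_{2p})$ at the internal junction, which is not determined by $\sigma(a),\tau(a),\sigma(b),\tau(b)$, so the correction passing from $\varepsilon$ in Part~(1) to the $\varepsilon$ in Parts~(2)--(4) is not a function of those boundary letters alone. (Relatedly, the assertion that ``$a_1,a_2$ begin with $s$'' is false; the continuity conditions $x_{i+1}=x_i'$ leave both values possible.) Both problems disappear if you abandon the induction and the reduction and instead carry out the computation directly on the full block decomposition, which is precisely what the paper does.
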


\begin{proof}
We write $a$ and $b$ in the form $a = a_{2p+1} a_{2p} \cdots a_2 a_1$ and $b = b_1 b_2 \cdots b_{2p} b_{2p+1}$ so that:
\begin{itemize}
\item
$a_i \neq 1$, $b_i \neq 1$, $a_i b_i = \Omega$ if $i$ is odd, and $a_i\,\varphi (b_i) = \Omega$ if $i$ is even, for all $i \in \{1, \dots, 2p\}$;
\item
$c = a_{2p+1} b_{2p+1}$;
\item
Set $x_i = \tau(a_i)$, $x_i' = \sigma (a_i)$, $y_i = \sigma (b_i)$, $y_i' = \tau (b_i)$, for all $i \in \{1, \dots, 2p+1\}$.
Then $x_{i+1} = x_i'$ for all $i \in \{1, \dots, 2p-1\}$.
\end{itemize}
We have $y_i = \varphi (x_i)$ and $y_i' = x_i'$ if $i$ is odd, and $y_i = x_i$ and $y_i' = \varphi (x_i')$ if $i$ is even, for all $i \in \{1, \dots, 2p\}$.
Thus, if $i$ is odd, then $y_{i+1} = x_{i+1} = x_i' = y_i'$, and if $i$ is even, then $y_{i+1} = \varphi (x_{i+1}) = \varphi (x_i') = y_i'$, for $i \in \{1, \dots, 2p-1\}$.

Let $u = |\{ i \in \{1, \dots, 2p\} \mid x_i'=s \}|$.
By using Lemma \ref{lem6_7} we show successively the following equalities.  
\[
\begin{array}{c}
\dpt (a) = \dpt (a_{2p+1}) + \sum_{i=1}^{2p} \dpt (a_i) - u + \varepsilon_{1,a}\,,\\
\dpt(a \Omega) = \dpt (a_{2p+1}) + \sum_{i=1}^{2p} \dpt(a_i) - u + k + \varepsilon_{2,a}\,,
\end{array}
\]
where $\varepsilon_{1,a}$ and $\varepsilon_{2,a}$ are as follows. 
If $p \ge 1$ and $a_{2p+1} \neq 1$, then: 
\[
\begin{array}{l}
\varepsilon_{1,a} = \left\{ \begin{array}{ll}
0 & \text{if } (x_{2p}', x_{2p+1}) \in \{(s,s), (t,s), (t,t) \}\,,\\
1 &\text{if } (x_{2p}', x_{2p+1}) = (s,t)\,,
\end{array}\right.\\
\varepsilon_{2,a} = \left\{ \begin{array}{ll}
-1 &\text{if } (x_1, x_{2p}', x_{2p+1}) \in \{ (s,s,s), (s,t,s), (s,t,t) \}\,,\\
0 &\text{if } (x_1, x_{2p}', x_{2p+1}) \in \{ (s,s,t), (t,s,s), (t,t,s), (t,t,t) \}\,,\\
1 &\text{if } (x_1, x_{2p}', x_{2p+1}) = (t,s,t)\,.
\end{array} \right.
\end{array}
\]
If $p \ge 1$ and $a_{2p+1}=1$, then:
\[
\varepsilon_{1,a} = \left\{ \begin{array}{ll}
0 &\text{if } x_{2p}' = t\,,\\
1 &\text{if } x_{2p}' = s\,,
\end{array} \right.\
\varepsilon_{2,a} = \left\{ \begin{array}{ll}
-1 &\text{if } (x_1, x_{2p}') = (s,t)\,,\\
0 &\text{if } (x_1, x_{2p}') \in \{ (s,s), (t,t) \}\,,\\
1 &\text{if } (x_1, x_{2p}') = (t,s)\,.
\end{array} \right.
\]
If $p=0$ and $a \neq 1$, then:
\[
\varepsilon_{1,a} = 0\,,\
\varepsilon_{2,a} = \left\{ \begin{array}{ll}
-1 &\text{if } x_{2p+1} = s\,,\\
0 &\text{if } x_{2p+1} = t\,.
\end{array} \right.
\]
If $p=0$ and $a =1$, then $\varepsilon_{1,a} = \varepsilon_{2,a} = 0$.

Let $v = |\{ i \in \{1, \dots, 2p\} \mid y_i'=s \}|$.
Similarly, by using Lemma \ref{lem6_7} we prove successively the following equalities.
\[
\begin{array}{c}
\dpt(b) = \dpt(b_{2p+1}) + \sum_{i=1}^{2p} \dpt(b_i) - v + \varepsilon_{1,b}\,,\\
\dpt (\varphi (b)) = \dpt(b_{2p+1}) + \sum_{i=1}^{2p} \dpt(b_i) - v + \varepsilon_{2,b}\,,\\
\dpt (b \Omega) = \dpt(b_{2p+1}) + \sum_{i=1}^{2p} \dpt(b_i) - v + k + \varepsilon_{3,b}\,,\\
\dpt(\varphi(b)\, \Omega) = \dpt(b_{2p+1}) + \sum_{i=1}^{2p} \dpt(b_i) - v + k + \varepsilon_{4,b}\,,
\end{array} 
\]
where $\varepsilon_{1,b}$, $\varepsilon_{2,b}$, $\varepsilon_{3,b}$ and $\varepsilon_{4,b}$ are as follows. 
If $p \ge 1$ and $b_{2p+1} \neq 1$, then:
\[
\begin{array}{l}
\varepsilon_{1,b} = \left\{\begin{array}{ll}
0 &\text{if } (y_{2p}', y_{2p+1}) \in \{ (s,s), (t,s), (t,t) \}\,,\\
1 &\text{if } (y_{2p}', y_{2p+1}) = (s,t)\,,
\end{array}\right.\\
\varepsilon_{2,b} = \left\{ \begin{array}{ll}
-1  &\text{if } (y_{1}, y_{2p}', y_{2p+1}, y_{2p+1}') \in \{ (s,s,s,s), (s,t,s,s), (s,t,t,s) \}\,,\\
0  &\text{if } (y_{1}, y_{2p}', y_{2p+1}, y_{2p+1}') \in \{ (s,s,s,t), (s,s,t,s), (s,t,s,t),\\
& (s,t,t,t), (t,s,s,s), (t,t,s,s), (t,t,t,s) \}\,,\\
1  &\text{if } (y_{1}, y_{2p}', y_{2p+1}, y_{2p+1}') \in \{ (s,s,t,t), (t,s,s,t), (t,s,t,s),\\
& (t,t,s,t), (t,t,t,t) \}\,,\\
2  &\text{if } (y_{1}, y_{2p}', y_{2p+1}, y_{2p+1}') = (t,s,t,t)\,,
\end{array} \right.\\
\varepsilon_{3,b} = \left\{ \begin{array}{ll}
-1  &\text{if } (y_{2p}',y_{2p+1},y_{2p+1}') \in \{(s,s,s), (t,s,s), (t,t,s) \}\,,\\
0  &\text{if } (y_{2p}',y_{2p+1},y_{2p+1}') \in \{ (s,s,t), (s,t,s), (t,s,t), (t,t,t) \}\,,\\
1  &\text{if } (y_{2p}',y_{2p+1},y_{2p+1}') = (s,t,t)\,,
\end{array} \right.\\
\varepsilon_{4,b} = \left\{ \begin{array}{ll}
-1  &\text{if } (y_{1},y_{2p}',y_{2p+1}) \in \{ (s,s,s), (s,t,s), (s,t,t) \}\,,\\
0  &\text{if } (y_{1},y_{2p}',y_{2p+1}) \in \{ (s,s,t), (t,s,s), (t,t,s), (t,t,t) \}\,,\\
1  &\text{if } (y_{1},y_{2p}',y_{2p+1}) = (t,s,t)\,.
\end{array} \right.
\end{array}
\]
If $p\ge 1$ and $b_{2p+1} = 1$, then:
\begin{gather*}
\varepsilon_{1,b} = \left\{ \begin{array}{ll}
0 &\text{if } y_{2p}' = t\,,\\
1 &\text{if } y_{2p}' = s\,,
\end{array} \right.\
\varepsilon_{2,b} = \left\{ \begin{array}{ll}
0 &\text{if } y_{1} = s\,,\\
1 &\text{if } y_{1} = t\,,
\end{array} \right. \\
\varepsilon_{3,b} = 0\,,\
\varepsilon_{4,b} = \left\{ \begin{array}{ll}
-1 &\text{if } (y_{1},y_{2p}') = (s,t)\,,\\
0 &\text{if } (y_{1},y_{2p}') \in \{ (s,s), (t,t) \}\,,\\
1 &\text{if } (y_{1},y_{2p}') = (t,s)\,.
\end{array} \right.
\end{gather*}
If $p=0$ and $b \neq 1$, then: 
\begin{gather*}
\varepsilon_{1,b} = 0\,,\
\varepsilon_{2,b} = \left\{ \begin{array}{ll}
-1 &\text{if } (y_{2p+1},y_{2p+1}') = (s,s)\,,\\
0 &\text{if } (y_{2p+1},y_{2p+1}') \in \{ (s,t), (t,s) \}\,,\\
1 &\text{if } (y_{2p+1},y_{2p+1}') = (t,t)\,,
\end{array}\right.\\
\varepsilon_{3,b}=\left\{ \begin{array}{ll}
-1 &\text{if } y_{2p+1}' = s\,,\\
0 &\text{if } y_{2p+1}' = t\,,
\end{array}\right.\
\varepsilon_{4,b} = \left\{ \begin{array}{ll}
-1 &\text{if } y_{2p+1} = s\,,\\
0 &\text{if } y_{2p+1} = t\,.
\end{array}\right.
\end{gather*}
If $p=0$ and $b=1$, then $\varepsilon_{1,b} = \varepsilon_{2,b} = \varepsilon_{3,b} = \varepsilon_{4,b} = 0$.

Again, by applying Lemma \ref{lem6_7} we prove successively the following equalities.
\[
\begin{array}{c}
\dpt(c) = \dpt(a_{2p+1}) + \dpt(b_{2p+1}) + \varepsilon_{1,c}\,,\\
\dpt(c \Omega) = \dpt(a_{2p+1}) + \dpt(b_{2p+1}) + k + \varepsilon_{2,c}\,,
\end{array}
\]
where $\varepsilon_{1,c}$ and $\varepsilon_{2,c}$ are as follows. 
If $a_{2p+1} \neq 1$ and $b_{2p+1} \neq 1$, then:
\[
\begin{array}{l}
\varepsilon_{1,c} = \left\{ \begin{array}{ll}
-1 &\text{if } (x_{2p+1},y_{2p+1}) = (s,s)\,,\\
0 &\text{if } (x_{2p+1},y_{2p+1}) \in \{ (s,t), (t,s), (t,t) \}\,,
\end{array} \right.\\
\varepsilon_{2,c} = \left\{ \begin{array}{ll}
-2 &\text{if } (x_{2p+1}, y_{2p+1}, y_{2p+1}') = (s,s,s)\,,\\
-1 &\text{if } (x_{2p+1}, y_{2p+1}, y_{2p+1}') \in \{ (s,s,t), (s,t,s), (t,s,s), (t,t,s) \}\,,\\
0 &\text{if } (x_{2p+1}, y_{2p+1}, y_{2p+1}') \in \{ (s,t,t), (t,s,t), (t,t,t) \}\,.
\end{array} \right.
\end{array}
\]
If $a_{2p+1} \neq 1$ and $b_{2p+1} = 1$, then:
\[
\varepsilon_{1,c} = 0\,,\
\varepsilon_{2,c} = \left\{ \begin{array}{ll}
-1 &\text{if } x_{2p+1} = s\,,\\
0 &\text{if } x_{2p+1} = t\,.
\end{array} \right.
\]
If $a_{2p+1} = 1$ and $b_{2p+1} \neq 1$, then:
\[
\varepsilon_{1,c} = 0\,,\
\varepsilon_{2,c} = \left\{ \begin{array}{ll}
-1 &\text{if } y_{2p+1}' = s\,,\\
0 &\text{if } y_{2p+1}' = t\,.
\end{array} \right.
\]
If $a_{2p+1} = 1$ and $b_{2p+1} = 1$, then $\varepsilon_{1,c} = \varepsilon_{2,c} = 0$.

From Lemma \ref{lem6_8} we also get $\sum_{i=1}^{2p} ( \dpt(a_i) + \dpt (b_i) ) = p(2k-1) + u + v$.

{\it Part (1):}
Let $\varepsilon = \varepsilon_{1,a} + \varepsilon_{1,b} - \varepsilon_{1,c}$.
By the above we have $\dpt(c) = \dpt(a) + \dpt(b) - p(2k-1) - \varepsilon$, and $\varepsilon$ is as follows.
If $p \ge 1$, $a_{2p+1} \neq 1$ and $b_{2p+1} \neq 1$, then: $\varepsilon = 0$ if $(x_{2p}', x_{2p+1}, y_{2p+1}) \in \{ (s,s,t), (t,t,s) \}$, and $\varepsilon = 1$ otherwise. 
If $p \ge 1$, $a_{2p+1} \neq 1$ and $b_{2p+1} = 1$, then: $\varepsilon = 0$ if $(x_{2p}', x_{2p+1}) = (s,s)$, and $\varepsilon = 1$ otherwise. 
If $p \ge 1$, $a_{2p+1} = 1$ and $b_{2p+1} \neq 1$, then: $\varepsilon = 0$ if $(x_{2p}', y_{2p+1}) = (t,s)$, and $\varepsilon = 1$ otherwise. 
If $p \ge 1$, $a_{2p+1} = 1$ and $b_{2p+1} = 1$, then $\varepsilon = 1$.
If $p=0$, $a=a_{2p+1} \neq 1$ and $b= b_{2p+1} \neq 1$, then: $\varepsilon = 0$ if $(x_{2p+1},y_{2p+1}) \in \{ (s,t), (t,s), (t,t) \}$, and $\varepsilon = 1$ otherwise. 
If $p=0$ and $a=a_{2p+1} = 1$, then $\varepsilon = 0$.
If $p=0$ and $b= b_{2p+1} = 1$, then $\varepsilon = 0$.

Suppose that $a \in \Theta$.
Then $a$ is written $a = \theta^q$ with $q \ge 1$.
On the other hand we write $b= t^r b'$ where $b' \neq 1$ (since $b \not\in \bar \Theta$) and $\sigma(b') = s$.
If $r=0$, then $p=0$, $x_{2p+1}=s$ and $y_{2p+1}=s$, hence $\varepsilon = 1$.
If $1 \le r < 2q$, then $r=2p$, $a_{2p+1} = \theta^{q-p}$, $b_{2p+1} = b'$ and $(x_{2p}', x_{2p+1}, y_{2p+1}) = (s,s,s)$, hence $\varepsilon = 1$.
If $r \ge 2q$, then $q=p$, $a_{2p+1}=1$, $b_{2p+1}\neq 1$ and $x_{2p}'=s$, hence $\varepsilon = 1$.
The case $b \in \Theta$ is proved in the same way.

Suppose that $c \in M_1$.
Then $p \ge 1$, since $(a,b) \not\in \bar \Theta \times \bar \Theta$.
If $a_{2p+1} \neq 1$ and $b_{2p+1} \neq 1$, then $(x_{2p+1}, y_{2p+1}) = (t,t)$, hence $\varepsilon = 1$.
If $a_{2p+1} \neq 1$ and $b_{2p+1} = 1$, then $x_{2p+1} = t$, hence $\varepsilon = 1$.
If $a_{2p+1} = 1$ and $b_{2p+1} \neq 1$, then $y_{2p+1} = t$, hence $\varepsilon = 1$.
If $a_{2p+1} = 1$ and $b_{2p+1} = 1$, then $\varepsilon = 1$.

{\it Part (2):}
Let $\varepsilon = \varepsilon_{2,a} + \varepsilon_{2,b} - \varepsilon_{2,c}$.
By the above we have $\dpt (c \Omega) = \dpt (a \Omega) + \dpt (\varphi(b)) - p(2k-1) - \varepsilon$, and $\varepsilon$ is as follows. 
If $p \ge 1$, $a_{2p+1} \neq 1$ and $b_{2p+1} \neq 1$, then: $\varepsilon=0$ if $(x_{2p}',x_{2p+1}, y_{2p+1}) \in \{ (s,s,t), (t,t,s) \}$, and $\varepsilon = 1$ otherwise.
If $p \ge 1$, $a_{2p+1} \neq 1$ and $b_{2p+1} = 1$, then: $\varepsilon = 0$ if $(x_1, x_{2p}', x_{2p+1}) \in \{ (t,s,s), (t,t,s), (t,t,t) \}$, and $\varepsilon = 1$ otherwise.
If $p \ge 1$, $a_{2p+1} = 1$ and $b_{2p+1} \neq 1$, then: $\varepsilon = 0$ if $(x_{2p}',y_{2p+1}) = (t,s)$, and $\varepsilon = 1$ otherwise.
If $p \ge 1$, $a_{2p+1} = 1$ and $b_{2p+1} = 1$, then: $\varepsilon = 0$ if $x_{2p}' = t$, and $\varepsilon = 1$ otherwise.
If $p=0$, $a=a_{2p+1} \neq 1$ and $b=b_{2p+1} \neq 1$, then: $\varepsilon = 0$ if $(x_{2p+1}, y_{2p+1}) \in \{ (s,s), (s,t), (t,s) \}$, and $\varepsilon = 1$ otherwise.
If $p=0$, $a=a_{2p+1} = 1$ and $b=b_{2p+1} \neq 1$, then: $\varepsilon=0$ if $y_{2p+1}=s$, and $\varepsilon=1$ otherwise.
If $p=0$ and $b=b_{2p+1} = 1$, then $\varepsilon=0$.

Suppose that $a \Omega \in \Theta$.
Then $a \Omega$ is written $a \Omega = \theta^q t$ with $q \ge 1$, hence $a = \theta^{q-1} (st)^k$.
On the other hand we write $b = s^r b'$, where $b' \neq 1$ (since $\varphi (b') \not \in \bar \Theta$) and $\sigma (b') = t$.
We necessarily have $r=2p \le 2(q-1)$, hence $a_{2p+1} = \theta^{q-p-1} (st)^k$ and $b_{2p+1} = b'$.
If $p \ge 1$, then $(x_{2p}', x_{2p+1}, y_{2p+1}) = (t,t,t)$, hence $\varepsilon = 1$.
If $p = 0$, then $(x_{2p+1}, y_{2p+1}) = (t,t)$, hence $\varepsilon = 1$.

Suppose that $\varphi (b) \in \Theta$.
Then $\varphi (b)$ is written $\varphi (b) = \theta^q$ with $q \ge 1$, hence $b = ((ts)^k (st)^k)^q$.
On the other hand we write $a = a' s^r$ where either $a'=1$ or $\tau(a') = t$.
If $r = 0$ and $a'=1$, then $p=0$, $b_{2p+1} = b \neq 1$ and $y_{2p+1} = t$, hence $\varepsilon = 1$.
If $r = 0$ and $a' \neq 1$, then $p=0$, $a_{2p+1} = a' \neq 1$, $b_{2p+1} = b \neq 1$ and $(x_{2p+1}, y_{2p+1}) = (t,t)$, hence $\varepsilon = 1$.
If $0 < r < 2q$ and $a'=1$, then $r=2p$, $a_{2p+1} = 1$, $b_{2p+1} = ((ts)^k (st)^k)^{q-p} \neq 1$ and $(x_{2p}',y_{2p+1}) = (s,t)$, hence $\varepsilon = 1$.
If $0 < r < 2q$ and $a' \neq 1$, then $r=2p$, $a_{2p+1} = a' \neq 1$, $b_{2p+1} = ((ts)^k (st)^k)^{q-p} \neq 1$ and $(x_{2p}', x_{2p+1}, y_{2p+1}) = (s,t,t)$, hence $\varepsilon = 1$.
If $r=2q$ and $a' = 1$, then $r =2p$, $a_{2p+1}=1$, $b_{2p+1}=1$ and $x_{2p}'=s$, hence $\varepsilon=1$.
If $r=2q$ and $a' \neq 1$, then $r =2p$, $a_{2p+1} = a' \neq 1$, $b_{2p+1}=1$ and $(x_1, x_{2p}', x_{2p+1}) = (s,s,t)$, hence $\varepsilon = 1$.
If $r>2q$, then $p=q$, $a_{2p+1} = a' s^{r-2q} \neq 1$, $b_{2p+1} = 1$, and $(x_1, x_{2p}', x_{2p+1}) = (s,s,s)$, hence $\varepsilon = 1$.

{\it Part (3):}
Let $\varepsilon = \varepsilon_{1,a} + \varepsilon_{3,b} - \varepsilon_{2,c}$.
By the above we have $\dpt (c \Omega) = \dpt(a) + \dpt (b \Omega) - p(2k-1) - \varepsilon$, and $\varepsilon$ is as follows.
If $p \ge 1$,  $a_{2p+1} \neq 1$ and $b_{2p+1} \neq 1$, then: $\varepsilon = 0$ if $(x_{2p}',x_{2p+1},y_{2p+1}) \in \{ (s,s,t), (t,t,s) \}$, and $\varepsilon = 1$ otherwise. 
If $p \ge 1$, $a_{2p+1} \neq 1$ and $b_{2p+1} = 1$, then: $\varepsilon = 0$ if $(x_{2p}', x_{2p+1}) = (t,t)$, and $\varepsilon = 1$ otherwise. 
If $p \ge 1$, $a_{2p+1} = 1$ and $b_{2p+1} \neq 1$, then: $\varepsilon = 0$ if $(x_{2p}',y_{2p+1}) = (t,s)$, and $\varepsilon = 1$ otherwise. 
If $p \ge 1$, $a_{2p+1} = 1$ and $b_{2p+1} = 1$, then: $\varepsilon = 0$ if $x_{2p}' = t$, and $\varepsilon = 1$ otherwise. 
If $p = 0$, $a \neq 1$ and $b \neq 1$, then: $\varepsilon=0$ if $(x_{2p+1},y_{2p+1}) \in \{ (s,t), (t,s), (t,t) \}$, and $\varepsilon= 1$ otherwise. 
If $p = 0$, $a \neq 1$ and $b = 1$, then: $\varepsilon = 0$ if $x_{2p+1} = t$, and $\varepsilon = 1$ otherwise. 
If $p = 0$ and $a = 1$, then $\varepsilon = 0$.

Suppose that $a \in \Theta$.
Then $a$ is written $a = \theta^q$ with $q \ge 1$.
On the other hand we write $b = t^r b'$ where either $b'=1$ or $\sigma(b') = s$.
If $r=0$ and $b'=1$, then $p=0$, $a_{2p+1} = \theta^q$, $b_{2p+1}=1$ and $x_{2p+1}=s$, hence $\varepsilon = 1$.
If $r=0$ and $b' \neq 1$, then $p=0$, $a_{2p+1}= \theta^q$, $b_{2p+1} = b' \neq 1$ and $(x_{2p+1},y_{2p+1}) = (s,s)$, hence $\varepsilon = 1$.
If $0 < r < 2q$ and $b'=1$, then $r=2p$, $a_{2p+1} = \theta^{q-p} \neq 1$, $b_{2p+1} = 1$ and $(x_{2p}', x_{2p+1}) =(s,s)$, hence $\varepsilon = 1$.
If $0 < r < 2q$ and $b'\neq 1$, then $r=2p$, $a_{2p+1} = \theta^{q-p} \neq 1$, $b_{2p+1} = b' \neq 1$ and $(x_{2p}', x_{2p+1}, y_{2p+1}) = (s,s,s)$, hence $\varepsilon = 1$.
If $r=2q$ and $b'=1$, then $p=q$, $a_{2p+1} = 1$, $b_{2p+1} = 1$ and $x_{2p}' = s$, hence $\varepsilon = 1$.
If $r=2q$ and $b'\neq 1$, then $p=q$, $a_{2p+1} = 1$, $b_{2p+1} = b'\neq 1$ and $(x_{2p}',y_{2p+1}) = (s,s)$, hence $\varepsilon = 1$.
If $r>2q$, then $a_{2p+1}=1$, $b_{2p+1} = t^{r-2q} b' \neq 1$ and $(x_{2p}',y_{2p+1}) = (s,t)$, hence $\varepsilon = 1$.

Suppose that $b \Omega \in \Theta$.
Then $b \Omega$ is written $b \Omega = \theta^q t$ with $q \ge 1$, hence $b = \theta^{q-1} (st)^k$.
On the other hand we write $a = a' t^r$ where $a' \neq 1$ (since $a \not\in \bar\Theta$) and $\tau (a') = s$.
If $r=0$, then $p=0$, $a_{2p+1}=a' \neq 1$, $b_{2p+1} = \theta^{q-1} (st)^k$ and $(x_{2p+1},y_{2p+1}) = (s,s)$, hence $\varepsilon = 1$.
If $r > 0$, then $r=2p \le 2(q-1)$, $a_{2p+1} = a' \neq 1$, $b_{2p+1} = \theta^{q-p-1} (st)^k$ and $(x_{2p}', x_{2p+1}, y_{2p+1}) = (t,s,s)$, hence $\varepsilon = 1$.

{\it Part (4):}
Let $\varepsilon = 1 + \varepsilon_{2,a} + \varepsilon_{4,b} - \varepsilon_{1,c}$.
By the above we have $\dpt (c) = \dpt (a \Omega) + \dpt (\varphi(b) \Omega) - (p+1)(2k-1) - \varepsilon$, and $\varepsilon$ is as follows. 
If $p \ge 1$, $a_{2p+1} \neq 1$ and $b_{2p+1} \neq 1$, then: $\varepsilon=0$ if $(x_{2p}', x_{2p+1}, y_{2p+1}) \in \{(s,s,t), (t,t,s) \}$, and $\varepsilon=1$ otherwise.
If $p \ge 1$, $a_{2p+1} \neq 1$ and $b_{2p+1} = 1$, then: $\varepsilon=0$ if $(x_{2p}',x_{2p+1}) = (s,s)$, and $\varepsilon=1$ otherwise. 
If $p \ge 1$, $a_{2p+1} = 1$ and $b_{2p+1} \neq 1$, then: $\varepsilon=0$ if $(x_{2p}', y_{2p+1}) = (t,s)$, and $\varepsilon=1$ otherwise.
If $p \ge 1$, $a_{2p+1} = 1$ and $b_{2p+1} = 1$, then $\varepsilon = 1$.
If $p=0$, $a \neq 1$ and $b \neq 1$, then: $\varepsilon= 0$ if $(x_{2p+1},y_{2p+1}) \in \{ (s,s) , (s,t), (t,s) \}$, and $\varepsilon= 1$ otherwise. 
If $p=0$, $a \neq 1$ and $b = 1$, then: $\varepsilon= 0$ if $x_{2p+1} = s$, and $\varepsilon= 1$ otherwise. 
If $p=0$, $a = 1$ and $b \neq 1$, then: $\varepsilon = 0$ if $y_{2p+1} = s$, and $\varepsilon = 1$ otherwise. 
If $p=0$, $a = 1$ and $b = 1$, then $\varepsilon = 1$.

Suppose that $a \Omega \in \Theta$.
Then $a \Omega$ is written $a \Omega = \theta^q t$ with $q \ge 1$, hence $a = \theta^{q-1} (st)^k$.
On the other hand we write $b = s^r b'$, where either $b'=1$ or $\sigma (b') = t$.
If $r=0$ and $b'=1$, then $p=0$, $a=\theta^{q-1} (st)^k \neq 1$, $b=1$ and $x_{2p+1}=t$, hence $\varepsilon = 1$.
If $r=0$ and $b'\neq 1$, then $p=0$, $a=\theta^{q-1} (st)^k \neq 1$, $b=b' \neq 1$ and $(x_{2p+1},y_{2p+1}) = (t,t)$, hence $\varepsilon = 1$.
If $r>0$ and $b'=1$, then $r=2p \le 2(q-1)$, $a_{2p+1}=\theta^{q-p-1} (st)^k \neq 1$, $b_{2p+1} = b' = 1$ and $(x_{2p}',x_{2p+1}) = (t,t)$, hence $\varepsilon = 1$.
If $r>0$ and $b' \neq 1$, then $r = 2p \le 2(q-1)$, $a_{2p+1} = \theta^{q-p-1} (st)^k \neq 1$, $b_{2p+1} = b' \neq 1$ and $(x_{2p}', x_{2p+1}, y_{2p+1}) = (t,t,t)$, hence $\varepsilon = 1$.

Suppose that $\varphi(b) \Omega \in \Theta$.
Then $\varphi(b) \Omega$ is written $\varphi (b) \Omega = \theta^q t$ with $q \ge 1$, hence $b = (ts)^k \theta^{q-1}$.
On the other hand we write $a = a' s^r$ where either $a'=1$ or $\tau (a') = t$.
If $r=0$ and $a'=1$, then $p=0$, $a=1$, $b=(ts)^k \theta^{q-1} \neq 1$ and $y_{2p+1}=t$, hence $\varepsilon = 1$.
If $r=0$ and $a' \neq 1$, then $p=0$, $a=a' \neq 1$, $b=(ts)^k \theta^{q-1} \neq 1$ and $(x_{2p+1},y_{2p+1}) = (t,t)$, hence $\varepsilon = 1$.
If $r > 0$ and $a' = 1$, then $r = 2p \le 2(q-1)$, $a_{2p+1}=1$, $b_{2p+1} = (ts)^k \theta^{q-p-1} \neq 1$ and $(x_{2p}', y_{2p+1}) = (s,t)$, hence $\varepsilon = 1$.
If $r>0$ and $a' \neq 1$, then $r = 2p \le 2(q-1)$, $a_{2p+1} = a' \neq 1$, $b_{2p+1} = (ts)^k \theta^{q-p-1} \neq 1$ and $(x_{2p}', x_{2p+1}, y_{2p+1}) = (s,t,t)$, hence $\varepsilon = 1$.

Suppose that $c \in M_1$.
If $p \ge 1$, $a_{2p+1} \neq 1$ and $b_{2p+1} \neq 1$, then $(x_{2p+1}, y_{2p+1}) = (t,t)$, hence $\varepsilon=1$.
If $p \ge 1$, $a_{2p+1} \neq 1$ and $b_{2p+1} = 1$, then $x_{2p+1}=t$, hence $\varepsilon=1$.
If $p \ge 1$, $a_{2p+1} = 1$ and $b_{2p+1} \neq 1$, then $y_{2p+1}=t$, hence $\varepsilon=1$.
If $p \ge 1$, $a_{2p+1} = 1$ and $b_{2p+1} = 1$, then $\varepsilon=1$.
If $p=0$, $a \neq 1$ and $b \neq 1$, then $(x_{2p+1},y_{2p+1}) = (t,t)$, hence $\varepsilon=1$.
If $p=0$, $a \neq 1$ and $b = 1$, then $x_{2p+1} = t$, hence $\varepsilon=1$.
If $p=0$, $a = 1$ and $b \neq 1$, then $y_{2p+1} = t$, hence $\varepsilon=1$.
If $p=0$, $a = 1$ and $b = 1$, then $\varepsilon=1$.
\end{proof}

\begin{lem}\label{lem6_10}
Let $a,b$ be two $\Omega$-unmovable elements of $M$.
We assume that the $\Delta$-form of $ab$ is in the form $ab = (c \Omega) \Delta^p$ where $c$ is an $\Omega$-unmovable element of $M$ and $p \ge 0$.
\begin{itemize}
\item[(1)]
Suppose that $(a,b) \not\in (\bar \Theta \times \bar \Theta)$.
There exists $\varepsilon \in \{0, 1\}$ such that $\dpt (c \Omega) = \dpt(a) + \dpt (b) - p(2k-1) - \varepsilon$.
Moreover, $\varepsilon = 1$ if either $a \in \Theta$ or $b \in \Theta$.
\item[(2)]
Suppose that $( a \Omega, \varphi (b)) \not \in (\bar \Theta \times \bar \Theta)$.
There exists $\varepsilon \in \{0, 1\}$ such that $\dpt (c) = \dpt (a \Omega) + \dpt (\varphi (b)) - (p+1) (2k-1) - \varepsilon$.
Moreover, $\varepsilon =1$ if either $a \Omega \in \Theta$ or $\varphi (b) \in \Theta$ or $c \in M_1$.
\item[(3)]
Suppose that $(a, b \Omega) \not \in (\bar \Theta \times \bar \Theta)$.
There exists $\varepsilon \in \{0, 1 \}$ such that $\dpt(c) = \dpt(a) + \dpt (b \Omega) - (p+1) (2k-1) - \varepsilon$.
Moreover, $\varepsilon = 1$ if either $a \in \Theta$ or $b \Omega \in \Theta$ or $c \in M_1$.
\item[(4)]
Suppose that $(a \Omega, \varphi (b) \Omega) \not \in (\bar \Theta \times \bar \Theta)$.
There exists $\varepsilon \in \{0, 1\}$ such that $\dpt (c \Omega) = \dpt (a \Omega) + \dpt (\varphi (b)\, \Omega) - (p+1)(2k-1) - \varepsilon$.
Moreover, $\varepsilon = 1$ if either $a \Omega \in \Theta$ or $\varphi (b)\, \Omega \in \Theta$.
\end{itemize}
\end{lem}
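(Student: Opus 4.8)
The plan is to imitate the proof of Lemma \ref{lem6_9} almost line for line, the only structural change being a parity shift. Since $\Delta=\Omega^2$, the hypothesis $ab=(c\Omega)\Delta^p$ reads $ab=c\,\Omega^{2p+1}$, so the alternating factorization of $a$ and $b$ now involves an \emph{odd} number $2p+1$ of complete ``$\Omega$-pairs'' instead of the $2p$ pairs occurring in Lemma \ref{lem6_9}.

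First I would record this factorization. Exactly as in Lemma \ref{lem6_9} one writes $a=a_{2p+2}a_{2p+1}\cdots a_1$ and $b=b_1b_2\cdots b_{2p+1}b_{2p+2}$ with all $a_i,b_i$ $\Omega$-unmovable, with $a_i,b_i\neq 1$ for $i\le 2p+1$, with $a_ib_i=\Omega$ for odd $i\le 2p+1$ and $a_i\,\varphi(b_i)=\Omega$ for even $i\le 2p$, and with the boundary compatibility $\sigma(a_i)=\tau(a_{i+1})$. The one new feature is the tail: collapsing the middle $2p+1$ pairs to $\Omega^{2p+1}=\Omega\cdot\Omega^{2p}$ and pushing it past $b_{2p+2}$ using $\Omega\alpha=\varphi(\alpha)\Omega$ and the centrality of $\Delta$ gives $ab=a_{2p+2}\,\varphi(b_{2p+2})\,\Omega^{2p+1}$, hence $c=a_{2p+2}\,\varphi(b_{2p+2})$; the extra $\varphi$ is absent in Lemma \ref{lem6_9} precisely because there $\Omega^{2p}$ is central.

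From here the computation is the one in the proof of Lemma \ref{lem6_9}. Using parts (1)--(4) of Lemma \ref{lem6_7} one writes down the same eight depth identities, for $\dpt(a)$, $\dpt(a\Omega)$, $\dpt(b)$, $\dpt(\varphi(b))$, $\dpt(b\Omega)$, $\dpt(\varphi(b)\Omega)$, $\dpt(c)$ and $\dpt(c\Omega)$, in terms of $\sum_i\dpt(a_i)$, $\sum_i\dpt(b_i)$, the counts $u=|\{i\mid \sigma(a_i)=s\}|$, $v=|\{i\mid \tau(b_i)=s\}|$, and the boundary-correction terms $\varepsilon_{\cdot,a},\varepsilon_{\cdot,b},\varepsilon_{\cdot,c}$. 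The only arithmetical change is in the pair-sum: $\sum_{i=1}^{2p+1}(\dpt(a_i)+\dpt(b_i))$ is the contribution of the $p$ consecutive double-blocks, handled by Lemma \ref{lem6_8} as $p(2k-1)$ plus counts, together with the contribution of the one leftover odd pair $a_{2p+1}b_{2p+1}=\Omega$, which is $k$ or $k+1$ by Lemma \ref{lem6_7}\,(2); this extra $k$-order term is exactly what turns $p(2k-1)$ into $(p+1)(2k-1)$ in parts (2), (3), (4). For each of the four parts one then sets $\varepsilon$ to the appropriate signed combination of the $\varepsilon_{\cdot}$'s (with a leading constant $1$ in part (4), as in Lemma \ref{lem6_9}), reads off the asserted identity, and checks the side condition by the usual explicit substitutions $a=\theta^q$ when $a\in\Theta$, $a\Omega=\theta^qt$, that is $a=\theta^{q-1}(st)^k$, when $a\Omega\in\Theta$, and similarly for $b$, $\varphi(b)$, $b\Omega$, $\varphi(b)\Omega$, each reducing to a finite inspection of the letters $x_{2p}',x_{2p+1},y_{2p+1},y_{2p+1}'$ and of whether $a_{2p+2},b_{2p+2}$ are trivial.

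The obstacle is not conceptual but the bulk of the case analysis: there are again four correction tables to reassemble, now interlocked with the parity shift and with the tail $\varphi$, and one must verify in roughly a dozen sub-cases per part that the resulting $\varepsilon$ lies in $\{0,1\}$ and equals $1$ whenever the relevant element is a theta element or $c\in M_1$. The delicate point is keeping the $\varphi$-corrections of Lemma \ref{lem6_7}\,(3) consistent with the multiplication-by-$\Omega$ corrections of Lemma \ref{lem6_7}\,(4) throughout; no genuinely new idea beyond Lemma \ref{lem6_9} is required.
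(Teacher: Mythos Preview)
Your outline is correct and matches the paper's proof essentially line for line: the paper writes $a=a_{2p+2}\cdots a_1$, $b=b_1\cdots b_{2p+2}$ with $c=a_{2p+2}\,\varphi(b_{2p+2})$, establishes the same depth identities via Lemma~\ref{lem6_7}, records the pair-sum $\sum_{i=1}^{2p+1}(\dpt(a_i)+\dpt(b_i))=p(2k-1)+k+u+v+\varepsilon_d$ with an explicit extra correction $\varepsilon_d\in\{-1,0\}$ from the lone odd pair, and then does the four case-checks. Two small fixes to your sketch: the relevant boundary letters are $x_{2p+1}',x_{2p+2},y_{2p+2},y_{2p+2}'$ (indices shift by one), and the leading constant $1$ in the definition of $\varepsilon$ now appears in parts (2), (3) and (4), not only in part (4), because $p(2k-1)+2k=(p+1)(2k-1)+1$.
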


\begin{proof}
We write $a$ and $b$ in the form $a = a_{2p+2} a_{2p+1} \cdots a_2 a_1$ and $b = b_1 b_2 \cdots b_{2p+1}
\allowbreak
b_{2p+2}$ so that:
\begin{itemize}
\item
$a_i \neq 1$, $b_i \neq 1$, $a_i b_i = \Omega$ if $i$ is odd, and $a_i\, \varphi (b_i) = \Omega$ if $i$ is even, for all $i \in \{1, \dots, 2p+1\}$;
\item
$c = a_{2p+2}\, \varphi (b_{2p+2})$.
\item
Set $x_i = \tau (a_i)$, $x_i' = \sigma (a_i)$, $y_i = \sigma (b_i)$ and $y_i' = \tau (b_i)$ for all $i \in \{1, \dots, 2p+2\}$.
Then $x_{i+1} = x_i'$ for all $i \in \{1, \dots, 2p\}$.
\end{itemize}
For $i \in \{1, \dots, 2p+1\}$ we have $y_i = \varphi (x_i)$ and $y_i' = x_i'$ if $i$ is odd and $y_i = x_i$ and $y_i' = \varphi (x_i')$ if $i$ is even.
So, if $i \in \{1, \dots, 2p\}$, then $y_{i+1} = x_{i+1} = x_i' = y_i'$ if $i$ is odd, and $y_{i+1} = \varphi(x_{i+1}) = \varphi (x_i') = y_i'$ if $i$ is even. 

Let $u = | \{ i \in \{1, \dots, 2p+1\} \mid x_i' = s \}|$.
By using Lemma \ref{lem6_7} we obtain successively the following equalities. 
\[
\begin{array}{c}
\dpt(a)  =  \dpt (a_{2p+2}) + \sum_{i=1}^{2p+1} \dpt(a_i) - u + \varepsilon_{1,a}\,,\\
\dpt(a\Omega) =  \dpt (a_{2p+2}) + \sum_{i=1}^{2p+1} \dpt(a_i) - u + k + \varepsilon_{2,a}\,,
\end{array}
\]
where $\varepsilon_{1,a}$ and $\varepsilon_{2,a}$ are as follows. 
If $a_{2p+2} \neq 1$, then:
\[
\begin{array}{l}
\varepsilon_{1,a} = \left\{ \begin{array}{ll}
0 & \text{if } (x_{2p+1}',x_{2p+2}) \in \{ (s,s), (t,s), (t,t) \}\,,\\ 
1 & \text{if }(x_{2p+1}',x_{2p+2}) = (s,t)\,,
\end{array}\right.\\
\varepsilon_{2,a} = \left\{
\begin{array}{ll}
-1 & \text{if } (x_1,x_{2p+1}',x_{2p+2}) \in \{ (s,s,s), (s,t,s), (s,t,t) \}\,,\\ 
0 & \text{if }(x_1,x_{2p+1}',x_{2p+2}) \in \{ (s,s,t), (t,s,s), (t,t,s), (t,t,t) \}\,,\\ 
1 & \text{if } (x_1,x_{2p+1}',x_{2p+2}) = (t,s,t)\,.
\end{array} \right. \end{array}
\]
If $a_{2p+2}=1$, then:
\[
\varepsilon_{1,a} = \left\{ \begin{array}{ll}
0 & \text{if } x_{2p+1}' = t\,,\\
1 & \text{if }x_{2p+1}' = s\,,
\end{array}\right.\
\varepsilon_{2,a} = \left\{ \begin{array}{ll}
-1 & \text{if }(x_1,x_{2p+1}') = (s,t)\,,\\ 
0 & \text{if } (x_1,x_{2p+1}') \in \{ (s,s), (t,t) \}\,,\\
1 & \text{if } (x_1,x_{2p+1}') = (t,s)\,.
\end{array}\right.
\]

Let $v = |\{i \in \{1, \dots, 2p+1\} \mid y_i'= s \}|$.
Similarly, by using Lemma \ref{lem6_7} we obtain successively the following equalities.
\[
\begin{array}{c}
\dpt(b) = \dpt(b_{2p+2}) + \sum_{i=1}^{2p+1} \dpt (b_i) - v + \varepsilon_{1,b}\,,\\
\dpt(\varphi (b)) = \dpt(b_{2p+2}) + \sum_{i=1}^{2p+1} \dpt (b_i) - v + \varepsilon_{2,b}\,,\\
\dpt(b \Omega) = \dpt(b_{2p+2}) + \sum_{i=1}^{2p+1} \dpt (b_i) - v + k + \varepsilon_{3,b}\,,\\
\dpt(\varphi (b)\,\Omega) = \dpt(b_{2p+2}) + \sum_{i=1}^{2p+1} \dpt (b_i) - v + k + \varepsilon_{4,b}\,,
\end{array}
\]
where $\varepsilon_{1,b}$, $\varepsilon_{2,b}$, $\varepsilon_{3,b}$ and $\varepsilon_{4,b}$ are as follows. 
If $b_{2p+2} \neq 1$, then:
\[
\begin{array}{l}
\varepsilon_{1,b} = \left\{ \begin{array}{ll}
0 & \text{if } (y_{2p+1}',y_{2p+2}) \in \{ (s,s), (t,s), (t,t) \}\,,\\
1 & \text{if } (y_{2p+1}',y_{2p+2}) = (s,t)\,,
\end{array} \right.\\
\varepsilon_{2,b} = \left\{ \begin{array}{ll}
-1 & \text{if } (y_1, y_{2p+1}', y_{2p+2}, y_{2p+2}') \in \{(s,s,s,s), (s,t,s,s), (s,t,t,s) \}\,,\\ 
0 & \text{if } (y_1, y_{2p+1}', y_{2p+2}, y_{2p+2}') \in \{ (s,s,s,t), (s,s,t,s), (s,t,s,t),\\
& (s,t,t,t), (t,s,s,s), (t,t,s,s), (t,t,t,s) \}\,,\\
1 & \text{if } (y_1, y_{2p+1}', y_{2p+2}, y_{2p+2}') \in \{ (s,s,t,t), (t,s,s,t), (t,s,t,s),\\
& (t,t,s,t), (t,t,t,t) \}\,,\\
2  & \text{if } (y_1, y_{2p+1}',y_{2p+2}, y_{2p+2}') = (t,s,t,t)\,,
\end{array} \right.\\
\varepsilon_{3,b} = \left\{ \begin{array}{ll}
-1 & \text{if } (y_{2p+1}', y_{2p+2}, y_{2p+2}') \in \{(s,s,s), (t,s,s), (t,t,s) \}\,,\\ 
0 & \text{if } (y_{2p+1}', y_{2p+2}, y_{2p+2}') \in \{ (s,s,t), (s,t,s), (t,s,t), (t,t,t) \}\,,\\ 
1 & \text{if } (y_{2p+1}', y_{2p+2}, y_{2p+2}') = (s,t,t)\,,
\end{array}\right.\\
\varepsilon_{4,b} = \left\{ \begin{array}{ll}
-1 & \text{if } (y_1, y_{2p+1}', y_{2p+2}) \in \{ (s,s,s), (s,t,s), (s,t,t) \}\,,\\ 
0 & \text{if } (y_1, y_{2p+1}', y_{2p+2}) \in \{ (s,s,t), (t,s,s), (t,t,s), (t,t,t) \}\,,\\ 
1 & \text{if } (y_1, y_{2p+1}', y_{2p+2}) = (t,s,t)\,.
\end{array} \right. \end{array}
\]
If $b_{2p+2}=1$, then:
\begin{gather*}
\varepsilon_{1,b} = \left\{ \begin{array}{ll}
0 & \text{if } y_{2p+1}' = t\,,\\
1 & \text{if } y_{2p+1}' = s\,,
\end{array} \right.\
\varepsilon_{2,b} = \left\{ \begin{array}{ll}
0 & \text{if } y_1 = s\,,\\ 
1 & \text{if } y_1 = t\,,
\end{array}\right.\\
\varepsilon_{3,b} = 0\,,\
\varepsilon_{4,b}= \left\{ \begin{array}{ll}
-1 & \text{if } (y_1,y_{2p+1}') = (s,t)\,,\\ 
0 & \text{if } (y_1, y_{2p+1}') \in \{ (s,s), (t,t) \}\,,\\ 
1 & \text{if } (y_1, y_{2p+1}') = (t,s)\,.
\end{array} \right.
\end{gather*}

Again, by using Lemma \ref{lem6_7} we obtain the following equalities.
\[
\begin{array}{c}
\dpt(c) = \dpt(a_{2p+2}) + \dpt(b_{2p+2}) + \varepsilon_{1,c}\,,\\
\dpt(c \Omega) = \dpt(a_{2p+2}) + \dpt(b_{2p+2}) + k + \varepsilon_{2,c}\,,
\end{array}
\]
where $\varepsilon_{1,c}$ and $\varepsilon_{2,c}$ are as follows. 
If $a_{2p+2}\neq 1$ and $b_{2p+2}\neq 1$, then: 
\[
\begin{array}{l}
\varepsilon_{1,c} = \left\{ \begin{array}{ll}
-1 & \text{if } (x_{2p+2}, y_{2p+2}, y_{2p+2}') \in \{ (s,s,s), (s,t,s), (t,s,s) \}\,,\\ 
0 &\text{if } (x_{2p+2}, y_{2p+2}, y_{2p+2}') \in \{ (s,s,t), (s,t,t), (t,s,t), (t,t,s) \}\,,\\ 
1 &\text{if } (x_{2p+2}, y_{2p+2}, y_{2p+2}') = (t,t,t)\,,
\end{array} \right.\\
\varepsilon_{2,c} = \left\{ \begin{array}{ll}
-1 & \text{if } (x_{2p+2},y_{2p+2}) \in \{ (s,s), (s,t), (t,s) \}\,,\\ 
0 & \text{if } (x_{2p+2},y_{2p+2}) = (t,t)\,.
\end{array} \right. \end{array}
\]
If $a_{2p+2}\neq 1$ and $b_{2p+2}=1$, then:
\[
\varepsilon_{1,c} = 0\,, \
\varepsilon_{2,c} =\left\{ \begin{array}{ll}
-1 & \text{if } x_{2p+2}=s\,,\\ 
0 &\text{if } x_{2p+2}=t\,.
\end{array} \right.
\]
If $a_{2p+2}=1$ and $b_{2p+2}\neq 1$, then:
\[
\varepsilon_{1,c} = \left\{ \begin{array}{ll}
-1 &\text{if } (y_{2p+2},y_{2p+2}') = (s,s)\,,\\
0 &\text{if } (y_{2p+2},y_{2p+2}') \in \{ (s,t), (t,s) \}\,,\\ 
1 & \text{if } (y_{2p+2},y_{2p+2}') = (t,t)\,,
\end{array}\right.\
\varepsilon_{2,c} = \left\{ \begin{array}{ll}
-1 &\text{if } y_{2p+2} = s\,,\\ 
0 &\text{if } y_{2p+2} = t\,.
\end{array} \right.
\]
If $a_{2p+2}=1$ and $b_{2p+2}=1$, then $\varepsilon_{1,c} = \varepsilon_{2,c} = 0$.

Finally, from Lemma \ref{lem6_7} and Lemma \ref{lem6_8} follows that
\[
\sum_{i=1}^{2p+1} ( \dpt(a_i) + \dpt (b_i)) = p(2k-1) + k + u + v + \varepsilon_d\,,
\]
where $\varepsilon_d = -1$ if $x_{2p+1}' = s$, and $\varepsilon_d = 0$ if $x_{2p+1}' = t$.

{\it Part (1):}
Let $\varepsilon = \varepsilon_{1,a} + \varepsilon_{1,b} - \varepsilon_{2,c} + \varepsilon_d$.
By the above we have $\dpt(c \Omega) = \dpt(a) + \dpt(b) -p(2k-1) - \varepsilon$, where $\varepsilon$ is as follows. 
If $a_{2p+2} \neq 1$ and $b_{2p+2} \neq 1$, then $\varepsilon = 0$ if $(x_{2p+1}', x_{2p+2}, y_{2p+2}) \in \{(s,s,s), (t,t,t) \}$, and $\varepsilon = 1$ otherwise.
If $a_{2p+2} \neq 1$ and $b_{2p+2} = 1$, then $\varepsilon = 0$ if $(x_{2p+1}',x_{2p+2}) = (t,t)$, and $\varepsilon = 1$ otherwise. 
If $a_{2p+2} = 1$ and $b_{2p+2} \neq 1$, then $\varepsilon = 0$ if $(x_{2p+1}',y_{2p+2}) = (t,t)$, and $\varepsilon = 1$ otherwise. 
If $a_{2p+2} = 1$ and $b_{2p+2} = 1$, then $\varepsilon = 0$ if $x_{2p+1}' = t$, and $\varepsilon = 1$ otherwise.

Suppose that $a \in \Theta$. 
Then $a$ is written $a = \theta^q$ with $q \ge 1$. 
On the other hand we write $b = t^r b'$ where $b' \neq 1$ (since $b \not\in \bar \Theta$) and $\sigma (b') = s$.  
We necessarily have $r=2p+1<2q$, $a_{2p+2} = \theta^{q-p-1} (st)^k$, $b_{2p+2}=b'$, and $(x_{2p+1}', x_{2p+2}, y_{2p+2}) = (t,t,s)$, hence $\varepsilon = 1$.
The case $b \in \Theta$ is proved in a similar way.

{\it Part (2):}
Let $\varepsilon = 1 + \varepsilon_{2,a} + \varepsilon_{2,b} - \varepsilon_{1,c} + \varepsilon_d$.
By the above we have $\dpt(c) = \dpt (a \Omega) + \dpt( \varphi (b)) -(p+1)(2k-1) - \varepsilon$, and $\varepsilon$ is as follows. 
If $a_{2p+2} \neq 1$ and $b_{2p+2} \neq 1$, then $\varepsilon = 0$ if $(x_{2p+1}', x_{2p+2}, y_{2p+2}) \in \{ (s,s,s), (t,t,t) \}$, and $\varepsilon = 1$ otherwise. 
If $a_{2p+2} \neq 1$ and $b_{2p+2}=1$, then $\varepsilon = 0$ if $(x_{2p+1}', x_{2p+2}) = (s,s)$, and $\varepsilon = 1$ otherwise. If $a_{2p+2}=1$ and $b_{2p+2} \neq 1$, then $\varepsilon = 0$ if $(x_{2p+1}', y_{2p+2}) = (t,t)$, and $\varepsilon = 1$ otherwise. 
If $a_{2p+2}=1$ and $b_{2p+2} = 1$, then $\varepsilon = 1$.

Suppose that $a \Omega \in \Theta$.
Then $a\Omega$ is written $a\Omega = \theta^q t$ with $q \ge 1$, hence $a = \theta^{q-1}(st)^k$.
On the other hand we write $b = s^rb'$ where $b' \neq 1$ (since $\varphi (b) \not \in \bar \Theta$) and $\sigma (b') = t$.
If $r<2(q-1) + 1$, then $r = 2p+1$, $a_{2p+2} = \theta^{q-1-p} \neq 1$, $b_{2p+2} = b' \neq 1$, and $(x_{2p+1}', x_{2p+2}, y_{2p+2}) = (s,s,t)$, hence $\varepsilon = 1$.
If $r \ge 2(q-1)+1$, then $a_{2p+2} =1$, $b_{2p+2} \neq 1$ and $x_{2p+1}' = s$, hence $\varepsilon = 1$.

Suppose that $\varphi (b) \in \Theta$.
Then $\varphi(b)$ is written $\varphi(b) = \theta^q$ with $q \ge 1$, hence $b = ((ts)^k(st)^k)^q$.
On the other hand we write $a = a' s^r$ where either $a' = 1$ or $\tau (a') = t$.
We necessarily have $r=2p+1 < 2q$, $a_{2p+2} = a'$ and $b_{2p+2} = ((ts)^k(st)^k)^{q-p-1} (ts)^k$, hence $x_{2p+1}' =s$ and $x_{2p+2} = t$ if $a' \neq 1$, and therefore $\varepsilon  = 1$.

Suppose that $c \in M_1$.
If $a_{2p+2} \neq 1$ and $b_{2p+2} \neq 1$, then $x_{2p+2} = t$ and $y_{2p+2} = s$, hence $\varepsilon = 1$.
If $a_{2p+2} \neq 1$ and $b_{2p+2} = 1$, then $x_{2p+2} = t$, hence $\varepsilon = 1$.
If $a_{2p+2} = 1$ and $b_{2p+2} \neq 1$, then $y_{2p+2} = s$, hence $\varepsilon = 1$.
If $a_{2p+2} = 1$ and $b_{2p+2} = 1$, then $\varepsilon = 1$.

{\it Part (3):}
Let $\varepsilon = 1 + \varepsilon_{1,a} + \varepsilon_{3,b} - \varepsilon_{1,c} + \varepsilon_d$.
By the above we have $\dpt(c) = \dpt(a) + \dpt(b \Omega) -(p+1)(2k-1) - \varepsilon$, and $\varepsilon$ is as follows.  
If $a_{2p+2} \neq 1$ and $b_{2p+2} \neq 1$, then $\varepsilon = 0$ if $(x_{2p+1}', x_{2p+2}, y_{2p+2}) \in \{ (s,s,s), (t,t,t) \}$, and $\varepsilon = 1$ otherwise. 
If $a_{2p+2} \neq 1$ and $b_{2p+2} = 1$, then $\varepsilon = 0$ if $(x_{2p+1}',x_{2p+2}) = (s,s)$, and $\varepsilon = 1$ otherwise.  
If $a_{2p+2}=1$ and $b_{2p+2} \neq 1$, then $\varepsilon = 0$ if $(x_{2p+1}', y_{2p+2}) = (t,t)$, and $\varepsilon = 1$ otherwise. 
If $a_{2p+2}=1$ and $b_{2p+2}=1$, then $\varepsilon = 1$.

Suppose that $a \in \Theta$.
Then $a$ is written $a = \theta^q$ with $q \ge 1$. 
On the other hand we write $b = t^r b'$ where either $b' = 1$ or $\sigma (b') = s$. 
We necessarily have $r = 2p+1 < 2q$, hence $a_{2p+2} = \theta^{q-p-1} (st)^k$ and $b_{2p+2} = b'$. 
If $b' \neq 1$, then $(x_{2p+1}', x_{2p+2}, y_{2p+2}) = (t,t,s)$, hence $\varepsilon = 1$.
If $b' = 1$, then $(x_{2p+1}',x_{2p+2}) = (t,t)$, hence $\varepsilon = 1$.

Suppose that $b \Omega \in \Theta$. 
Then $b \Omega$ is written $b \Omega = \theta^q t$ with $q \ge 1$, hence $b = \theta^{q-1} (st)^k$.
On the other hand we write $a = a' t^r$ where $a'\neq 1$ (since $a \not \in \bar \Theta$) and $\tau (a') = s$.
If $r \ge 2q-1$, then $p= q-1$, $a_{2p+2} = a' t^{r-2p-1}$ and $b_{2p+2}=1$, hence $x_{2p+1}' = t$, and therefore $\varepsilon = 1$.
If $r <2q-1$, then $r =2p+1$, $a_{2p+2} = a'$ and $b_{2p+2} = \theta^{q-p-1} \neq 1$, hence $(x_{2p+1}', x_{2p+2}, y_{2p+2}) =(t,s,s)$, and therefore $\varepsilon = 1$.

Suppose that $c \in M_1$.
If $b_{2p+2} \neq 1$ and $a_{2p+2} \neq 1$, then $x_{2p+2} = t$ and $y_{2p+2} =s$, hence $\varepsilon  = 1$.
If $a_{2p+2} \neq 1$ and $b_{2p+2} = 1$, then $x_{2p+2} = t$, hence $\varepsilon = 1$.
If $a_{2p+2}=1$ and $b_{2p+2} \neq 1$, then $y_{2p+2} = s$, hence $\varepsilon = 1$.
If $a_{2p+2} = 1$ and $b_{2p+2} = 1$, then $\varepsilon = 1$.

{\it Part (4):}
Let $\varepsilon = 1 + \varepsilon_{2,a} + \varepsilon_{4,b} - \varepsilon_{2,c} + \varepsilon_d$.
By the above we have $\dpt(c \Omega) = \dpt(a \Omega) + \dpt(\varphi (b) \Omega) -(p+1)(2k-1) - \varepsilon$, and $\varepsilon$ is as follows.
If $a_{2p+2} \neq 1 $ and $b_{2p+2}\neq 1$, then: $\varepsilon = 0$ if $(x_{2p+1}', x_{2p+2}, y_{2p+2}) \in \{ (s,s,s), (t,t,t) \}$, and $\varepsilon = 1$ otherwise. 
If $a_{2p+2} \neq 1$ and $b_{2p+2} = 1$, then: $\varepsilon = 0$ if $(x_{2p+1}', x_{2p+2}) = (t,t)$, and $\varepsilon = 1$ otherwise.
If $a_{2p+2} = 1$ and $b_{2p+2} \neq 1$, then: $\varepsilon = 0$ if $(x_{2p+1}', y_{2p+2}) = (t,t)$, and $\varepsilon = 1$ otherwise.
If $a_{2p+2}=1$ and $b_{2p+2}=1$, then $\varepsilon = 0$ if $x_{2p+1}'=t$, and $\varepsilon = 1$ otherwise.

Suppose that $a \Omega \in \Theta$.
Then $a \Omega$ is written $a \Omega = \theta^q t$ with $q \ge 1$, hence $a = \theta^{q-1} (st)^k$.
On the other hand we write $b=s^r b'$ where either $b'=1$ or $\sigma (b') = t$.
If $r \ge 2q-1$, then $a_{2p+2} = 1$ and $x_{2p+1}' = s$, hence $\varepsilon = 1$.
If $r<2q-1$, then $r=2p+1$, $a_{2p+2} = \theta^{q-p-1}$ and $b_{2p+2}=b'$, hence $x_{2p+1}' = s$, $x_{2p+2}=s$ and either $b_{2p+2}=1$ or $y_{2p+2}=t$, and therefore $\varepsilon = 1$.

Suppose that $\varphi (b) \Omega \in \Theta$.
Then $\varphi (b) \Omega$ is written $\varphi (b) \Omega = \theta^q t$ with $q \ge 1$, hence $b = ((ts)^k (st)^k)^{q-1} (ts)^k$.
On the other hand we write $a = a' s^r$ where either $a'=1$ or $\tau (a') = t$.
If $r \ge 2q-1$, then $b_{2p+2} = 1$ and $x_{2p+1}' = s$, hence $\varepsilon = 1$.
If $r < 2q-1$, then $r=2p+1$, $a_{2p+2}=a'$ and $b_{2p+2} = ((ts)^k (st)^k)^{q-p-1}$, hence $x_{2p+1}'=s$ and $y_{2p+2}=t$, and therefore $\varepsilon = 1$.
\end{proof}

Now, the second part of Theorem \ref{thm6_1} is a direct consequence of the previous two lemmas.

\begin{prop}\label{prop6_11}
The pair $(H,G_1)$ satisfies Condition B with constant $\zeta  = 2k - 1$.
\end{prop}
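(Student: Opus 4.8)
The plan is to deduce Condition~B for $(H,G_1)$ from Lemmas~\ref{lem6_9} and \ref{lem6_10}, which already contain all the genuine computation; the only extra ingredient is the decomposition of Lemma~\ref{lem6_5}\,(2). Let $(a,b)\in(M\times M)\setminus(\bar\Theta\times\bar\Theta)$ with $a,b$ unmovable. First I would write $a=A\,\Omega^{\epsilon_a}$ and $b=B\,\Omega^{\epsilon_b}$ with $A,B$ $\Omega$-unmovable and $\epsilon_a,\epsilon_b\in\{0,1\}$, as permitted by Lemma~\ref{lem6_5}\,(2). Since $\varphi$ is the conjugation by $\Omega$ and $\varphi(\Omega)=\Omega$ (this is just the relation $\Pi(s,t,m)=\Pi(t,s,m)$), the automorphism $\varphi$ preserves $\Omega$-unmovability; hence $\tilde a:=A$ and $\tilde b:=\varphi^{\epsilon_a}(B)$ are $\Omega$-unmovable, and, pushing the $\Omega$'s to the right via $\Omega\beta=\varphi(\beta)\,\Omega$, one gets $ab=\tilde a\,\tilde b\,\Omega^{\epsilon_a+\epsilon_b}$.

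Next I would pass to $\Delta$-forms. Let $\tilde a\tilde b=c_*\Delta^{p}$ be the $\Delta$-form of the product of the $\Omega$-unmovable elements $\tilde a$ and $\tilde b$; by Lemma~\ref{lem6_5}\,(2) again $c_*=c_0\,\Omega^{\eta}$ with $c_0$ $\Omega$-unmovable and $\eta\in\{0,1\}$, so $ab=c_0\,\Omega^{\eta+\epsilon_a+\epsilon_b}\Delta^{p}$. Writing $\eta+\epsilon_a+\epsilon_b=2\delta+\eta'$ with $\delta,\eta'\in\{0,1\}$ and using $\Omega^{2}=\Delta$, the $\Delta$-form of $ab$ is $c\,\Delta^{t}$ with $c=c_0\,\Omega^{\eta'}$ (unmovable, since $c_0$ is $\Omega$-unmovable) and $t=p+\delta$. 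Then I would apply the relevant part of Lemma~\ref{lem6_9} (if $\eta=0$) or of Lemma~\ref{lem6_10} (if $\eta=1$) to the pair $(\tilde a,\tilde b)$: part~(1) when $(\epsilon_a,\epsilon_b)=(0,0)$, part~(3) when $(0,1)$, part~(2) when $(1,0)$ and part~(4) when $(1,1)$. In each of these eight cases a direct check shows that the ``first element'' occurring in that part equals $\tilde a\,\Omega^{\epsilon_a}=a$, the ``second element'' equals $\varphi^{\epsilon_a}(\tilde b)\,\Omega^{\epsilon_b}=b$ (using $\varphi^{2}=\mathrm{id}$), the hypothesis of the part is exactly $(a,b)\notin\bar\Theta\times\bar\Theta$, and the $\Delta$-exponent it produces, $p$ or $p+1$, equals $t$; its displayed equality then reads $\dpt(c)=\dpt(a)+\dpt(b)-(2k-1)\,t-\varepsilon$ for some $\varepsilon\in\{0,1\}$, which is Condition~B\,(a).

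It remains to match Condition~B\,(b). In the applied part the hypotheses ``first element in $\Theta$'' and ``second element in $\Theta$'' become ``$a\in\Theta$'' and ``$b\in\Theta$''. For the clause ``$c\in M_1$'': if $\eta'=1$ then $c=c_0\Omega\notin M_1$ (otherwise $1\le_R\Omega\le_R c$ with $1,c\in G_1$ would force $\Omega\in G_1$ by Theorem~\ref{thm2_3}\,(3), which is false since $\Omega$ is not a power of $t$), so the clause is vacuous; if $\eta'=0$ then $c=c_0$ may lie in $M_1$, but in precisely these cases the applied part is one of Lemmas~\ref{lem6_9}\,(1), \ref{lem6_9}\,(4), \ref{lem6_10}\,(2), \ref{lem6_10}\,(3), each of which does contain the assertion ``$\varepsilon=1$ if $c\in M_1$''. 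Hence $(a,b)$ satisfies Condition~B with constant $2k-1$, which proves the proposition.

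I expect the only real difficulty to be organisational rather than conceptual: keeping the $\Omega$-bookkeeping straight — the interplay of $\varphi$, of $\Omega\beta=\varphi(\beta)\Omega$, and of $\Omega^{2}=\Delta$ — so that the eight cases indexed by $(\epsilon_a,\epsilon_b,\eta)$ are routed to the correct parts of Lemmas~\ref{lem6_9}--\ref{lem6_10} with the $\Delta$-exponents and the $\Theta$- and $M_1$-side conditions all lining up. All the genuine estimates have already been carried out in those two lemmas.
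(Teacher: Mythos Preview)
Your proposal is correct and follows essentially the same route as the paper's proof: both reduce the unmovable pair $(a,b)$ to an $\Omega$-unmovable pair via Lemma~\ref{lem6_5}\,(2), then dispatch the resulting eight cases---indexed by $(\epsilon_a,\epsilon_b,\eta)$ in your notation---to the four parts of Lemma~\ref{lem6_9} or of Lemma~\ref{lem6_10} according as $\eta=0$ or $\eta=1$. Your treatment is in fact a bit more explicit than the paper's about the $\Omega$-bookkeeping and about why the ``$c\in M_1$'' clause is needed precisely in the parts where $\eta'=0$, but the substance is the same.
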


\begin{proof}
We take two unmovable elements $a,b \in M$, and we consider the $\Delta$-form $ab = c \Delta^p$ of $ab$. 
We should prove that there exists $\varepsilon \in \{0,1\}$ such that $\dpt (c) = \dpt (a) + \dpt (b) -p (2k-1) -\varepsilon$, and $\varepsilon=1$ if either $a \in \Theta$ or $b \in \Theta$ or $c \in M_1$.
Clearly, there exist two $\Omega$-unmovable elements $a',b' \in M$ such that $(a,b) \in \{ (a',b'), (a' \Omega, \varphi(b')),
(a',b' \Omega), (a' \Omega, \varphi(b')\Omega) \}$.
Let $a'b' = d \Delta^q$ be the $\Delta$-form of $a'b'$.
Then, again, there exists an $\Omega$-unmovable element $c' \in M$ such that $d \in \{c', c'\Omega\}$.
Suppose that $d = c'$.
Then: $c=c'$ and $p=q$ if $(a,b) = (a',b')$, $c=c'\Omega$ and $p=q$ if either $(a,b) = (a' \Omega, \varphi(b'))$ or $(a,b) =(a',b' \Omega)$, and $c=c'$ and $p=q+1$ if $(a,b) = (a' \Omega, \varphi(b')\Omega)$.
These four cases are covered by Lemma \ref{lem6_9}.
Suppose that $d = c' \Omega$.
Then: $c = c' \Omega$ and $p=q$ if $(a,b) = (a',b')$, $c=c'$ and $p=q+1$ if either $(a,b) = (a' \Omega, \varphi(b'))$ or $(a,b) =(a',b' \Omega)$, and $c = c' \Omega$ and $p=q+1$ if $(a,b) = (a' \Omega, \varphi(b')\Omega)$.
These four cases are covered by Lemma \ref{lem6_10}.
\end{proof}



\end{document}